\newtheorem{theorem}[equation]{Theorem}
\newtheorem{lemma}[equation]{Lemma}
\newtheorem{proposition}[equation]{Proposition}
\newtheorem{corollary}[equation]{Corollary}
\newtheorem{conjecture}[equation]{Conjecture}
\theoremstyle{definition}
\newtheorem{definition}[equation]{Definition}
\newtheorem{example}[equation]{Example}
\theoremstyle{remark}
\newtheorem{remark}[equation]{Remark}
\numberwithin{equation}{subsection}
\DeclareMathAlphabet{\mathpzc}{OT1}{pzc}{m}{n}
\DeclareMathAlphabet{\matheur}{U}{eur}{m}{n}
\newenvironment{Subsubsec}[1]{\vspace{\topsep}
\noindent\refstepcounter{equation}\theequation.\ \textit{#1.}}{\vspace{\topsep}}
\definecolor{lava}{RGB}{207,16,32}
\definecolor{purple}{RGB}{148,0,211}
\newcommand{\FF}{\mathbb{F}}
\newcommand{\ZZ}{\mathbb{Z}}
\newcommand{\QQ}{\mathbb{Q}}
\newcommand{\LL}{\mathbb{L}}
\newcommand{\TT}{\mathbb{T}}
\newcommand{\GG}{\mathbb{G}}
\newcommand{\CC}{\mathbb{C}}
\newcommand{\OO}{\mathbb{O}}
\newcommand{\PP}{\mathbb{P}}
\newcommand{\KK}{\mathbb{K}}
\newcommand{\NN}{\mathbb{N}}
\newcommand{\MM}{\mathbb{M}}
\newcommand{\bm}{\mathbf{m}}
\newcommand{\bK}{\mathbf{K}}
\newcommand{\bX}{\mathbf{X}}
\newcommand{\bz}{\mathbf{z}}
\newcommand{\bI}{\mathbf{I}}
\newcommand{\bU}{\mathbf{U}}
\newcommand{\cO}{\mathcal{O}}
\newcommand{\cR}{\mathcal{R}}
\newcommand{\cT}{\mathcal{T}}
\newcommand{\eK}{\matheur{K}}
\newcommand{\eM}{\matheur{M}}
\newcommand{\eN}{\matheur{N}}
\newcommand{\eF}{\matheur{F}}
\newcommand{\eC}{\matheur{C}}
\newcommand{\eI}{\matheur{I}}
\newcommand{\ek}{\matheur{k}}
\newcommand{\eum}{\matheur{m}}
\newcommand{\en}{\matheur{n}}
\DeclareMathOperator{\Aut}{Aut}
\DeclareMathOperator{\Lie}{Lie}
\DeclareMathOperator{\Ker}{Ker}
\DeclareMathOperator{\GL}{GL}
\DeclareMathOperator{\Mat}{Mat}
\DeclareMathOperator{\Cent}{Cent}
\DeclareMathOperator{\End}{End}
\DeclareMathOperator{\Gal}{Gal}
\DeclareMathOperator{\Hom}{Hom}
\DeclareMathOperator{\Res}{Res}
\DeclareMathOperator{\trdeg}{tr.deg}
\DeclareMathOperator{\wt}{wt}
\DeclareMathOperator{\Emb}{Emb}
\DeclareMathOperator{\Inf}{Inf}
\DeclareMathOperator{\Spec}{Spec}
\DeclareMathOperator{\Div}{Div}
\DeclareMathOperator{\Supp}{Supp}
\DeclareMathOperator{\rank}{rank}
\DeclareMathOperator{\ord}{ord}
\DeclareMathOperator{\Gr}{Gr}
\newcommand{\ok}{\bar{k}}
\newcommand{\sep}{\mathrm{sep}}
\newcommand{\power}[2]{{#1 [\![ #2 ]\!]}}
\newcommand{\laurent}[2]{{#1 (\!( #2 )\!)}}
\newcommand{\bomega}{\boldsymbol{\omega}}
\newcommand{\Ifk}{\mathfrak{I}}
\newcommand{\Jfk}{\mathfrak{J}}
\newcommand{\Mfk}{\mathfrak{M}}
\newcommand{\Pfk}{\mathfrak{P}}
\newcommand{\Fcal}{\CMcal{F}}
\newcommand{\Gcal}{\CMcal{G}}
\newcommand{\Hcal}{\CMcal{H}}
\newcommand{\Lcal}{\CMcal{L}}
\newcommand{\Mcal}{\CMcal{M}}
\newcommand{\Ocal}{\CMcal{O}}
\newcommand{\Pcal}{\CMcal{P}}
\newcommand{\Tcal}{\CMcal{T}}
\newcommand{\Jcal}{\CMcal{J}}
\newcommand{\Xcal}{\CMcal{X}}
\newcommand{\Mscr}{\mathscr{M}}
\def\rank{\operatorname{rank}}
\newcommand{\id}{\operatorname{id}}
\newcommand{\divv}{\operatorname{div}}
\newcommand{\Frob}{\operatorname{Frob}}
\newcommand{\assign}{\mathrel{\vcenter{\baselineskip0.5ex \lineskiplimit0pt
                     \hbox{\scriptsize.}\hbox{\scriptsize.}}}%
                     =}
\newcommand{\rassign}{=%
                     \mathrel{\vcenter{\baselineskip0.5ex \lineskiplimit0pt
                     \hbox{\scriptsize.}\hbox{\scriptsize.}}}%
                     }
\begin{document}

\title[Shimura's conjecture on period symbols]{Function field analogue of
 Shimura's conjecture \\ on period symbols}
\author[W.\ D.\ Brownawell]{W.\ Dale Brownawell}
\address{Department of Mathematics, Penn State University, University Park, PA 16802, U.S.A.}
\email{wdb@math.psu.edu}

\author[C.-Y.\ Chang]{Chieh-Yu Chang}
\address{Department of Mathematics, National Tsing Hua University and National Center for Theoretical Sciences, Hsinchu City 30042, Taiwan
  R.O.C.}
\email{cychang@math.nthu.edu.tw}

\author[M.\ A.\ Papanikolas]{Matthew A.\ Papanikolas}
\address{Department of Mathematics, Texas A{\&}M University, College Station,
TX 77843-3368, U.S.A.} \email{papanikolas@tamu.edu}

\author[F.-T.\ Wei]{Fu-Tsun Wei}
\address{Department of Mathematics, National Tsing Hua University and National Center for Theoretical Sciences, Hsinchu City 30042, Taiwan
  R.O.C.}
\email{ftwei@math.nthu.edu.tw}

\thanks{The second author is partially supported by NSTC grant 107-2628-M-007-002-MY4 and 111-2628-M-007-002.
The fourth author is partially supported by NSTC grant 109-2115-M-007-017-MY5
and the National Center for Theoretical Sciences.
}

\subjclass[2020]{Primary 11J93; Secondary 11G09, 11G15}

\date{August 18, 2022}

\begin{abstract}
In this paper we introduce the notion of Shimura's period symbols over function fields in positive characteristic and establish their fundamental properties. We further formulate and prove a function field analogue of Shimura's conjecture on the algebraic independence of period symbols. Our results enable us to verify the algebraic independence of the coordinates of any nonzero period vector of an abelian $t$-module with complex multiplication whose CM type is non-degenerate and defined over an algebraic function field. This is an extension of Yu's work on Hilbert-Blumenthal $t$-modules.
\end{abstract}

\keywords{period symbols, CM types, $t$-modules, $t$-motives}

\maketitle
\tableofcontents
\section{Introduction}

\subsection{Motivation}\label{Sec:Motivation}
The aim of this paper is to formulate and prove a function field analogue of Shimura's conjecture on the algebraic independence of period symbols. To state Shimura's conjecture, we first review the theory of Shimura's period symbols developed in~\cite{Sh98}.

Let $K/\QQ$ be a CM field of degree $2n$, and denote by $J_{K}$ the set of all embeddings of $K$ into $\CC$ and by $I_{K}$ the free abelian group generated by $J_{K}$.
Let $\Phi \subset J_K$ be a CM type of $K$, i.e., $\Phi \lor \Phi
\rho=J_{K}$, where $\rho$ is complex conjugation. For each $\sigma\in \Phi$, Shimura constructed a constant $p_{K}(\sigma,\Phi)\in \CC^{\times}$ via integration of a holomorphic differential form on an abelian variety defined over $\bar{\QQ}$ with CM type $(K,\Phi)$. He showed that it is unique up to algebraic multiples, and so we view the special value $p_{K}(\sigma,\Phi)$ as an element of $\CC^{\times}/\bar{\QQ}^{\times}$. For $\tau\in \Phi\rho$, Shimura defined
\[
p_{K}(\tau,\Phi)\assign p_{K}(\tau \rho, \Phi)^{-1}\in \CC^{\times}/\bar{\QQ}^{\times},
\]
and further showed that $p_{K}$ can be extended to a bilinear map
\[
p_{K}:I_{K}\times I_{K}\rightarrow \CC^{\times}/\bar{\QQ}^{\times}
\]
that has the following properties:
\begin{enumerate}
    \item[(i)] $p_{K}(\xi \rho,\eta)=p_{K}(\xi,\eta \rho)=p_{K}(\xi,\eta)^{-1}$ for every $\xi,\eta\in I_{K}$;
    \item[(ii)] $p_{K}(\Res_{L/K}(\xi), \eta)=p_{L}(\xi,\Inf_{L/K}(\eta))$ for $\xi\in I_{L}$ and $\eta\in I_{K}$, where $L$ is a CM field containing $K$;
\item[(iii)] $p_L(\Inf_{L/K}(\xi),\eta)=p_K(\xi, \Res_{L/K}(\eta))$ for $\xi\in I_{K}$ and $\eta\in I_{L}$, where $L$ is a CM field containing $K$;
\item[(iv)] $p_{K'}(\gamma \xi,\gamma \eta)=p_{K}(\xi,\eta)$ if $\gamma:K' \stackrel{\sim}{\to} K$ is an isomorphism.
\end{enumerate}
We note that property~(i) shows that if $\{\sigma_i\}_{i=1}^n$ is a CM type for $K$, then
\[E_{K}\assign
\overline{\QQ}\left(p_K(\iota,\id)\mid \iota\in J_K\right) = \overline{\QQ}\left(p_K(\sigma_1,\id)),\ldots,p_K(\sigma_n,\id)\right),
\]
and so
\[
\trdeg_{\overline{\QQ}}\overline{\QQ}\left(p_K(\iota,\id)\mid \iota\in J_K \right) = \trdeg_{\overline{\QQ}}\overline{\QQ}\left(p_K(\sigma_1,\id)),\ldots,p_K(\sigma_n,\id)\right) \le n.
\]
In~\cite[p.~319]{Sh80}, Shimura proposed the following conjecture/question
asking whether we actually have equality here.

\begin{conjecture}\label{C:Shimura}
Let $K$ be a CM field of degree $2n$. If $\{ \sigma_{i}\}_{i=1}^n$ is a CM type of $K$, then
\[p_{K}(\sigma_{1},\id),\ldots,p_{K}(\sigma_{n}, \id) \]
are algebraically independent over $\overline{\QQ}$.
\end{conjecture}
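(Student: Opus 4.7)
The plan is to interpret each $p_K(\sigma_i,\id)$ as a period of an explicit CM abelian variety, compute the motivic Galois group of the associated motive, and then invoke a period conjecture to translate this dimension into a transcendence degree. Concretely, I would first fix an abelian variety $A/\Qbar$ of dimension $n$ with complex multiplication by $\cO_K$ of type $\Phi=\{\sigma_1,\ldots,\sigma_n\}$, as provided by Shimura--Taniyama theory. Over $\CC$ the tangent space of $A$ decomposes into $K$-eigenlines indexed by $\Phi$, so $H^0(A,\Omega^1)$ admits a $\Qbar$-rational eigenbasis $\omega_{\sigma_1},\ldots,\omega_{\sigma_n}$ on which $K$ acts through $\sigma_1,\ldots,\sigma_n$ respectively. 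By Shimura's construction, the class $p_K(\sigma_i,\id)\in \CC^{\times}/\Qbar^{\times}$ is represented by the period $\int_\gamma \omega_{\sigma_i}$ for an appropriate cycle $\gamma\in H_1(A(\CC),\ZZ)$.

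Next I would pass to the motive $M:=h^1(A)$ and study its motivic Galois group $G_{\mathrm{mot}}(M)$. Deligne's theorem that every Hodge class on a CM abelian variety is absolutely Hodge identifies $G_{\mathrm{mot}}(M)$ with the Mumford--Tate group $\mathrm{MT}(A)$, a $\QQ$-subtorus of $\Res_{K/\QQ}\Gm$ whose cocharacter lattice is read off combinatorially from $\Phi$. A character-theoretic argument in this lattice, using properties~(i)--(iv) together with the CM condition $\Phi \lor \Phi\rho = J_K$, should show that $\dim \mathrm{MT}(A) \ge n+1$; the Tate twist $\QQ(1)$ accounts for the extra $+1$ (corresponding to the factor $2\pi i$), while the quotient by this $\Gm$ is the $n$-dimensional torus that would govern exactly the $n$ period symbols in the statement.

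The final step would be to apply Grothendieck's period conjecture to $M$,
\[
\trdeg_{\Qbar}\Qbar\bigl(\text{periods of }M\bigr) = \dim G_{\mathrm{mot}}(M),
\]
which, combined with the dimension bound above and the identification of the periods of $M$ with the $p_K(\sigma_i,\id)$ together with $2\pi i$, would yield $\trdeg_{\Qbar}\Qbar\bigl(p_K(\sigma_1,\id),\ldots,p_K(\sigma_n,\id)\bigr) = n$.

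The main obstacle, and the reason this strategy does not actually settle Conjecture~\ref{C:Shimura} in the number field setting, is that Grothendieck's period conjecture is itself wide open. The only unconditional lower bound presently available comes from W\"ustholz's analytic subgroup theorem, which gives $\Qbar$-linear independence among the $p_K(\sigma_i,\id)$ but falls short of algebraic independence. This transcendence gap is precisely the motivation for pivoting to the function field analogue pursued in this paper: in positive characteristic the role of the period conjecture can be played by the Anderson--Brownawell--Papanikolas criterion, which is an unconditional theorem, so one can realistically hope to recover the full algebraic independence statement after setting up the analogous theory of CM $t$-motives, computing their $t$-motivic Galois groups, and formulating the correct non-degeneracy hypothesis on the CM type.
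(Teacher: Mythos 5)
You have correctly recognized that Conjecture~\ref{C:Shimura} is not something the paper proves --- it is an open conjecture in the number field setting, stated purely as motivation --- and your account of why it is out of reach matches the paper's own framing. The paper (Section~\ref{Sec:Motivation}) notes that Yoshida's refinement, Conjecture~\ref{C:Yoshida}, follows from Deligne's period conjecture for CM abelian varieties, i.e., a special case of Grothendieck's period conjecture, exactly as in your sketch: realize the period symbols as period integrals on a CM abelian variety, identify the motivic Galois group with the Mumford--Tate group via Deligne's absolute-Hodge theorem, count the rank of the reflex lattice, and then invoke the period conjecture to convert dimension to transcendence degree. Your final paragraph correctly identifies both the obstruction (the period conjecture is open; W\"ustholz gives only linear independence) and the reason the paper pivots to positive characteristic.

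One small imprecision worth flagging: the function field substitute for Grothendieck's period conjecture that this paper actually uses is Papanikolas's theorem (Theorem~\ref{T:dim=trdeg}, from \cite{Papanikolas}), which equates $\dim\Gamma_M$ with the transcendence degree of the field generated by the entries of $\Uppsi(\theta)$. The ``ABP criterion'' you mention is the linear independence criterion of \cite{ABP}; it is a key input to the proof of Papanikolas's theorem and is used elsewhere in the theory (e.g.\ for the soliton constructions), but it is not itself the analogue of the period conjecture. The rest of your outline of the function field strategy --- construct CM dual $t$-motives, compute their $t$-motivic Galois groups, and impose a non-degeneracy hypothesis on the CM type --- is exactly what the paper carries out in Sections~\ref{sec: CM dual t-motives}--\ref{sec: H-P theory}.
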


In other words, Shimura's conjecture asserts that, no matter the CM type $\Phi$ of $K$,  the set
\[
\left\{ p_{K}(\sigma_{1},\id),\ldots,p_{K}(\sigma_{n}, \id) \mid \sigma_{i}\in \Phi \right\}
\]
is a generating transcendence basis of $E_{K}$ over $\overline{\QQ}$. Thus property (i) above generates all $\overline{\QQ}$-algebraic relations on the period symbols $\{p_K(\iota,\id) \mid {\iota\in J_K}\}$. In~\cite[p.~270]{Yo03}, Yoshida described the following conjecture, which also includes $\pi$, and  derived it from Deligne's conjecture~\cite{De82}, which is a special case of Grothendieck's period conjecture (see~\cite{Ay14} and prologue of \cite{HuberWustholz}).

\begin{conjecture}\label{C:Yoshida}
Let $K$ be a CM field which is Galois over $\QQ$, and let $\Phi$ be a CM type of $K$. Then we have
\[
\trdeg_{\overline{\QQ}} \overline{\QQ}\left(\pi,\, p_{K}(\id,\sigma)\mid \sigma\in \Phi \right)=1+ [K:\QQ]/2.
\]
\end{conjecture}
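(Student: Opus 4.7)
The plan is to derive Conjecture~\ref{C:Yoshida} from Grothendieck's period conjecture, following Yoshida's strategy in~\cite[p.~270]{Yo03}, via a Mumford--Tate computation enabled by the Galois hypothesis on $K$. Fix a CM abelian variety $A$ defined over $\overline{\QQ}$ of dimension $n = [K:\QQ]/2$ with CM type $(K,\Phi)$. The $K$-equivariant de Rham/Betti comparison isomorphism
\[
H^1_{\mathrm{dR}}(A) \otimes_{\overline{\QQ}} \CC \iso H^1_{\mathrm{B}}(A,\QQ) \otimes_\QQ \CC,
\]
decomposed along the embeddings $\sigma : K \hookrightarrow \CC$, produces (by Shimura's construction in~\cite{Sh98}) a period matrix whose entries represent $p_K(\id,\sigma)$ for $\sigma \in \Phi$ modulo $\overline{\QQ}^\times$. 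Adjoining the Tate motive $\QQ(1)$, whose unique period is $2\pi i$, one sees that the field generated over $\overline{\QQ}$ by the periods of the motive $M \assign h^1(A) \oplus \QQ(1)$ coincides with $\overline{\QQ}\bigl(\pi,\, p_K(\id,\sigma) \mid \sigma \in \Phi\bigr)$.

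Grothendieck's period conjecture (see~\cite{Ay14}) predicts
\[
\trdeg_{\overline{\QQ}}\,\overline{\QQ}\bigl(\text{periods of }M\bigr) = \dim_\QQ G_{\mathrm{mot}}(M).
\]
Deligne's theorem that Hodge cycles on abelian varieties are absolute Hodge identifies $G_{\mathrm{mot}}(h^1(A))$ with the Mumford--Tate group $\mathrm{MT}(A)$, and the weight cocharacter coming from $\QQ(1)$ factors through $\mathrm{MT}(A)$, so $G_{\mathrm{mot}}(M) = \mathrm{MT}(A)$. The proof thus reduces to the purely algebraic-group identity $\dim_\QQ \mathrm{MT}(A) = n+1$. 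Viewing $\mathrm{MT}(A)$ as a $\QQ$-subtorus of the CM torus $\Res_{K/\QQ}\Gm$, its character lattice is the quotient of $\ZZ[\Gal(K/\QQ)]$ by the kernel of the incidence pairing $(\chi,\mu_\Phi)$; under the Galois hypothesis (with the implicit non-degeneracy of $\Phi$ that the conjecture requires), the standard analysis of the reflex norm of Shimura--Taniyama yields rank exactly $n+1$, which completes the derivation.

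The principal obstacle is Grothendieck's period conjecture itself, which remains wide open for CM abelian varieties beyond dimension one: Chudnovsky's theorem on the algebraic independence of $\pi$ and a nonzero period of a CM elliptic curve handles $n = 1$, but no unconditional analogue is known for $n \geq 2$. The Mumford--Tate computation, by contrast, is essentially classical (cf.\ Dodson, Kubota, Ribet) and presents no difficulty. Thus Conjecture~\ref{C:Yoshida} is algebraically reduced to, but analytically as deep as, the transcendence side of the period conjecture --- and it is precisely this dichotomy, with the transcendence input inaccessible over $\QQ$, that motivates the passage to the function-field setting pursued in the body of the present paper.
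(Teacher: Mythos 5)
The statement you have set out to prove is Conjecture~\ref{C:Yoshida}, which in the classical setting is an open conjecture: the paper does not prove it, and only records (following Yoshida) that it can be derived from Deligne's conjecture, i.e.\ from a special case of Grothendieck's period conjecture; what the paper actually establishes is the function-field analogue (Theorem~\ref{Main Theorem}, Corollary~\ref{col: SC}). Your proposal is precisely such a conditional derivation, and you concede the decisive point yourself: the transcendence input, Grothendieck's period conjecture for CM abelian varieties of dimension at least two, is unavailable. Reducing the conjecture to another open conjecture is not a proof of it, so the proposal does not establish the statement; it only reproduces the motivation already described in Section~\ref{Sec:Motivation}.

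Even viewed as a conditional derivation, two steps are gapped. First, the field $\overline{\QQ}\bigl(\pi,\,p_K(\id,\sigma)\mid\sigma\in\Phi\bigr)$ is not the period field of $h^1(A)\oplus\QQ(1)$ for $A$ with CM type $(K,\Phi)$: the periods of such an $A$ represent the symbols $p_K(\tau,\Phi)=\prod_{\sigma\in\Phi}p_K(\tau,\sigma)$ for $\tau\in J_K$, whereas the conjecture involves the individual symbols $p_K(\id,\sigma)$. One must first use property (iv) with the Galois hypothesis to rewrite $p_K(\id,\sigma)=p_K(\sigma^{-1},\id)$, and property (i) to see $p_K(\id,\sigma\rho)=p_K(\id,\sigma)^{-1}$, so that the field in question is independent of $\Phi$; this bookkeeping, which is exactly where ``$K$ Galois'' enters, is skipped in your write-up. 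Second, the identity $\dim \mathrm{MT}(A)=n+1$ fails for degenerate CM types, and degenerate CM types of Galois (even abelian) CM fields exist; your parenthetical ``implicit non-degeneracy of $\Phi$'' imports a hypothesis the conjecture does not make, and by the $\Phi$-independence just noted it should not need. The correct shape of the argument --- the one mirrored in the paper's function-field proof --- is to exploit that independence and compute the rank of the Galois module generated by a suitable auxiliary non-degenerate (generalized) type, as in Lemma~\ref{lem: non-degenerate}, rather than $\dim\mathrm{MT}(A_\Phi)$ itself.
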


In what follows, we mention that Conjecture~\ref{C:Yoshida} implies the Lang-Rohrlich conjecture on special $\Gamma$-values.  By {\it{special $\Gamma$-values}} we mean the values of Euler's $\Gamma$-function at proper fractions. The  Lang-Rohrlich conjecture~\cite{Lang} asserts that all $\overline{\QQ}$-algebraic relations among  special $\Gamma$-values and $2\pi\sqrt{-1}$ are explained by the standard functional equations of $\Gamma$-function. An equivalent formulation of the Lang-Rohrlich conjecture is that for any integer $n>2$, we have
\[
\trdeg_{\overline{\QQ}} \overline{\QQ}\left(\pi, \Gamma(\frac{1}{n}), \Gamma(\frac{2}{n}),\ldots,\Gamma(\frac{n-1}{n}) \right)=1+ \frac{\phi(n)}{2}.
\]
Note that if we let $K_n$ be the $n$th cyclotomic extension over $\QQ$, then $K_{n}/ \QQ$ is an abelian CM field with $[K_n:\QQ]=\phi(n)$. Using Deligne's theorem~\cite[Thm.~4.7]{A82} and Anderson's work~\cite{A82}, one can show that
\[
\trdeg_{\overline{\QQ}} \overline{\QQ}\left(\pi, p_{K_n}(\id,\sigma)\mid \sigma\in \Phi \right)= \trdeg_{\overline{\QQ}} \overline{\QQ}\left(\pi, \Gamma(\frac{1}{n}), \Gamma(\frac{2}{n}),\ldots,\Gamma(\frac{n-1}{n}) \right)
\] for any CM type $\Phi$ of $K_{n}$. So Conjecture~\ref{C:Yoshida} implies the Lang-Rohrlich conjecture. The goal of this paper is to prove function field analogues of Conjecture~\ref{C:Shimura} and Conjecture~\ref{C:Yoshida}.

\subsection{\texorpdfstring{CM types and CM dual $t$-motives}{CM types and CM dual t-motives}}

In the theory of global function fields, Anderson~\cite{Anderson86} introduced abelian $t$-modules (and their $t$-motives) to serve as the analogue of commutative algebraic groups in the classical theory. As with classical motives, $t$-motives possess  linear algebra data and they capture Betti, de~Rham, and $v$-adic Tate module realizations. The primary algebro-geometric objects here will be \emph{uniformizable} abelian $t$-modules with complex multiplication that are analogous to CM abelian varieties, and our period symbols arise from the periods of these CM $t$-modules as in the case of Shimura's period symbols. Uniformizability is automatic in the classical case for abelian varieties over $\CC$.

Let $\FF_q$ be a finite field with $q$ elements, and fix a variable $t$ over $\FF_q$.
By a \emph{CM field} (over the rational function field $\FF_q(t)$), we mean a finite separable extension $\eK$ over $\FF_q(t)$ containing a ``totally real'' subfield $\eK^+$ (i.e., the infinite place $\infty$ of $\FF_q(t)$ splits completely in $\eK^+$) so that every place of $\eK^+$ lying above $\infty$ is non-split in $\eK$ (cf.\ Definition~\ref{Def: CM fields}),  namely it has a unique extension to $\eK$.
We call $\eK^+$ the \textit{maximal totally real subfield of $\eK$}.
Unlike the classical case, the field extension $\eK/\eK^+$ may not be Galois, and the degree of $\eK$ over $\eK^+$ can be arbitrarily large.

As $\eK$ (resp.\ $\eK^+$) corresponds to a smooth projective curve $X$ (resp.\ $X^+$) over the constant field $\FF_\eK$ of $\eK$ (resp.\ $\FF_q$),
we shall introduce our CM types of $\eK$ in a geometric way as follows.
Fix another variable $\theta$ over $\FF_q$. Let $k \assign \FF_q(\theta)$, $k_\infty \assign \FF_q(\!(1/\theta)\!)$, and $\CC_\infty \assign\widehat{\bar{k}}_\infty$, the completion of a chosen algebraic closure of $k_\infty$.
Let $\bar{k}$ be the algebraic closure of $k$ in $\CC_\infty$.
Viewing $X$ as an $\FF_q$-scheme,
the embedding $\FF_q(t)\hookrightarrow \eK$ corresponds to a finite $\FF_q$-morphism $\pi_{X/\PP^1}: X\rightarrow \PP^1$.
Let $\bX$ (resp.\ $\pi_{\bX/\PP^1}$) be the base change of $X$ (resp.\ $\pi_{X/\PP^1}$) from $\FF_q$ to $\CC_\infty$.
We set $J_\eK \assign \pi_{\bX/\PP^1}^{-1}(\theta)$ (i.e.\ the preimage of the point $t=\theta$ on the projective $t$-line $\PP^1$ over $\CC_\infty$).
Let $\pi_{X/X^+}:X\rightarrow X^+$ be the $\FF_q$-morphism corresponding to the embedding $\eK^+\hookrightarrow \eK$, and $\pi_{\bX/\bX^+}$ be the base change of $\pi_{X/X^+}$ to $\CC_\infty$.

\begin{definition}
\text{\rm (cf.\ Definition~\ref{defn: CM type})} Let $\eK$ be a CM field over $\FF_q(t)$.
A \emph{generalized CM type of $\eK$} is an effective divisor $\Xi$ of $\bX$ of the following form
\[
\Xi = \sum_{\xi \in J_\eK} m_\xi \xi,
\]
such that there exists a positive integer ${\wt}(\Xi)$ satisfying
\begin{equation}\label{eqn: wt cond}
{\wt}(\Xi) = \sum_{\substack{\xi\in J_{\eK} \\ \pi_{\bX/\bX^+}(\xi) = \xi^+}} m_\xi, \quad \forall \xi^+ \in J_{\eK^+}.
\end{equation}
We call $\Xi$ a \emph{CM type of $\eK$} if ${\wt}(\Xi) = 1$.
\end{definition}

\begin{remark}
Let $\Emb(\eK,\CC_\infty)$ be the set of $\FF_q$-linear embeddings from $\eK$ to $\CC_\infty$ sending $t$ to $\theta$. ``Evaluating at points in $J_\eK$'' gives a one-to-one correspondence $(\xi \leftrightarrow \nu_\xi)$ between $J_\eK$ and $\Emb(\eK,\CC_\infty)$ (see Section~\ref{sec: CM types}). In particular, for each CM type $\Xi$ of $\eK$, we may write $\Xi = \sum_{i=1}^d \xi_i$ where $d = [\eK^+:\FF_q(t)]$, and obtain
\[
\Emb(\eK^+,\CC_\infty) = \bigl\{\nu_{\xi_1}\big|_{\eK^+},\ldots,\nu_{\xi_d}\big|_{\eK^+} \bigr\}.
\]
Under this identification between $J_\eK$ and $\Emb(\eK,\CC_\infty)$, our CM types are analogous to classical ones.
\end{remark}

Just as in the classical case, our period symbols are closely connected with periods of abelian $t$-modules with CM, but we make this precise through the theory of \emph{dual $t$-motives}, which were introduced in~\cite{ABP} and are dual to the $t$-motives in~\cite{Anderson86}. The category of uniformizable abelian $t$-modules is equivalent to the category of uniformizable dual $t$-motives (see~\cite{Mau21}). Thus the investigation of CM uniformizable abelian $t$-modules is transformed to the study of CM uniformizable dual $t$-motives (see Proposition~\ref{prop: comp-dR-pairing}).
Recall that in the classical case, for any CM type $(K,\Phi)$, Shimura was able to contruct a CM abelian variety with CM type $(K,\Phi)$. Our initial step is to establish the analogue of Shimura's construction following~\cite{Sinha97, BP02, ABP}.

Let $\eK$ be a CM field and $\Xi$ be a (generalized) CM type of $\eK$.
We use indifferently the terminology that ``$\Xi$ is a (generalized) CM type of $\eK$'' and that ``$(\eK,\Xi)$ is a (generalized) CM type.''
In Section~\ref{sec: CM dual t-motives}, we construct a CM dual $t$-motive with given generalized CM type $(\eK,\Xi)$. However, when the CM field in question is not geometric, there are complexities in proving that the constructed underlying space is a dual $t$-motive, and we work out all of the details in the appendix. For simplicity,
here we omit the precise definitions and refer readers to Definition~\ref{defn: CM dual t-motives} and \eqref{E:M(W,h} when $\FF_\eK = \FF_q$ (resp.\ Definition~\ref{defnB: CM dual t-motives} and \eqref{e:Appendix M(W,h)} when $\FF_\eK \supsetneq \FF_q$).

\begin{remark}
Hartl and Singh \cite{HS20, HS21}, introduced the notion of ``CM'' Anderson $t$-motives for semi-simple algebras $K$ and define ``CM-type'' by using their Hodge-Pink weights.
Through the comparison in Proposition~\ref{prop: HPW-CM},
our CM types seem analogous to Hartl and Singh's,
although Theorem \ref{thm: Uniform} and Proposition~\ref{prop: being CM} show that ours involve the following three properties instead:
\begin{itemize}
    \item[(1)] $K$ is a CM field;
    \item[(2)] $\Xi$ satisfies \eqref{eqn: wt cond};
    \item[(3)] $\eM$ is pure.
\end{itemize}
These properties guarantee in our setting that the following essential result holds (see Proposition~\ref{L:IsogW1W2} and Theorem \ref{thm: Uniform}):
\end{remark}

\begin{theorem}\label{thm: CM prop}
Let $\KK$ be an algebraically closed field with $\bar{k} \subset \KK \subset \CC_\infty$.
\begin{itemize}
    \item[(1)] Any two CM dual $t$-motives over $\KK$ with the same (generalized) CM type must be isogeneous over $\KK$.
    \item[(2)] Every CM dual $t$-motive over $\KK$ is uniformizable (and pure).
\end{itemize}
\end{theorem}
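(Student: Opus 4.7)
The plan is to prove part (2) first and then deduce part (1) as a partial consequence, with both parts resting on a close analysis of the explicit construction $\eM = \eM(\eK,\Xi,\bh)$ referenced before the statement. The underlying mechanism is the commuting action of $\cO_\eK$ (the integral closure of $\FF_q[t]$ in $\eK$) built into every CM dual $t$-motive of type $(\eK,\Xi)$, which promotes $\eM$ to a module over a ring very close to $\cO_\eK \otimes_{\FF_q} \KK$.

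For (2), I would construct an explicit rigid analytic trivialization. For each $\xi \in \Supp(\Xi)$, viewed via the bijection $\xi \leftrightarrow \nu_\xi$ as an embedding $\eK \hookrightarrow \CC_\infty$, I would write down an Anderson-type generating function $f_\xi(t) \in \eM \otimes_{\KK[t]} \TT$ satisfying the twist equation $f_\xi^{(-1)} = \Phi\, f_\xi$, where $\Phi$ is the matrix of the Frobenius twist $\sigma$ on $\eM$. The weight condition \eqref{eqn: wt cond} for a generalized CM type controls the pole orders of $f_\xi$ at the Frobenius translates $t=\theta^{q^n}$, forcing convergence of the resulting series on the appropriate rigid analytic regions; once this is in place, assembling the $f_\xi$'s into a matrix $\Upsilon \in \GL_r(\TT)$ produces the desired analytic trivialization. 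Purity of $\eM$ then guarantees that the $f_\xi$'s are linearly independent, so that $\Upsilon$ is really full rank.

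For (1), given $\eM_1$ and $\eM_2$ of the same type $(\eK,\Xi)$, my plan is to exploit the Hodge--Pink comparison of Proposition~\ref{prop: HPW-CM}. After extending scalars on the coefficient side to the fraction field of $\cO_\eK$, each $\eM_i$ becomes a rank-one object whose Frobenius matrix is pinned down by $\Xi$ up to a unit. Matching the two Frobenius matrices inside a common ambient module yields an $\cO_\eK$-equivariant nonzero homomorphism $\eM_1 \to \eM_2$; since both sides are pure of the same rank and weight, any such nonzero map automatically has finite cokernel and is therefore an isogeny over $\KK$. Alternatively, one can first invoke uniformizability from part (2) to realize each $\eM_i$ through its Betti lattice as a rank-one $\cO_\eK$-module in a common analytic space and then descend the resulting linear identification via rigidity to an algebraic isogeny.

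The main obstacle is the verification of purity of $\eM$, which underlies both parts. One has to compute the Newton polygon of $\Phi$ at infinity and check that every slope equals $\wt(\Xi)/[\eK^+ : \FF_q(t)]$; condition~\eqref{eqn: wt cond} is precisely what is needed to balance these slopes across the fibers of $\pi_{\bX/\bX^+}$. The case $\FF_\eK \supsetneq \FF_q$ adds genuine technical weight, since the constant-field extension forces one to work with $\FF_\eK[t]$-linear structures and to verify that the abstract module produced by the construction remains a dual $t$-motive; this is the source of the complications that occupy the appendix, but it does not alter the conceptual shape of the argument.
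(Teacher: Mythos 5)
Your plan for part (2) heads straight into the difficulty that the paper is explicitly structured to avoid. You propose to assemble Anderson-type generating functions $f_\xi$ into an explicit rigid analytic trivialization $\Upsilon\in\GL_r(\TT)$, with the weight condition~\eqref{eqn: wt cond} ``forcing convergence'' of the relevant series. But convergence is precisely the problem: Sinha's and the ABP approach establishes convergence only under additional hypotheses on the shtuka functions, and the authors state outright that for arbitrary CM fields, ``constructing a rigid analytic trivialization explicitly as in [ABP] for all the dual $t$-motives in question seems impractical.'' The weight condition~\eqref{eqn: wt cond} gives a combinatorial constraint on the divisor $\Xi$ and on the Hodge--Pink weights (Proposition~\ref{prop: HPW-CM}); it does not by itself bound the infinite product of twists of the shtuka function $h$, and you give no argument for why it would. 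You also invert the logical order when you say ``Purity of $\eM$ then guarantees that the $f_\xi$'s are linearly independent'': purity is one of the things to be proved, not an available hypothesis. The paper's actual route for (2) is entirely different: it reduces the case of a CM type (weight $1$) to Anderson's lattice-theoretic construction of Hilbert--Blumenthal $O_{\eK^+}$-modules (where purity and uniformizability are known), establishes via Proposition~\ref{prop: being CM-0} that the resulting $\eM(\rho^\Xi)$ is CM with CM type $(\eK,\Xi)$, handles generalized CM types by the tensor construction of Proposition~\ref{prop: tensor CM type}, and transports purity and uniformizability along isogenies. That last step already uses part (1), so in the paper's organization (1) comes first; your second, ``alternative'' route to (1) through Betti lattices therefore risks circularity with (2).

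Part (1) is also underspecified exactly where the work has to happen. Saying that ``matching the two Frobenius matrices inside a common ambient module yields an $O_\eK$-equivariant nonzero homomorphism'' assumes what you need to prove. Each $\eM_i=\eM_{(W_i,h_i)}$ is a rank-one $O_{\bK}$-module inside $\bK$ with $\sigma$ acting by $h_i\cdot(\cdot)^{(-1)}$; to produce an actual nonzero $\KK[t,\sigma]$-linear map $\eM_1\to\eM_2$ one must exhibit $f\in\bK^\times$ with $f^{(1)}/f=h_2/h_1$ and with $\divv(f)$ making multiplication by $f^{(1)}$ land in $\eM_2$. The divisor equation $\divv(h_2/h_1)=(W_2-W_1)^{(1)}-(W_2-W_1)$ only tells you that the class $[W_2-W_1]$ (after correcting by a degree-one divisor) is Frobenius-fixed in $J_\bX(\KK)$. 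Turning this into an honest function $f$ requires knowing that the Frobenius-fixed points of the Jacobian are represented by $\FF_q$-rational degree-zero divisors, which the paper gets from Chow's theorem together with the identification $J_{\bX}(\FF_q)\simeq\Pic^0_{\bX}(\FF_q)$, and then a Riemann--Roch adjustment to make the relevant divisor effective on $\bU$. Your proposal skips this arithmetic step entirely, and it is the heart of the argument. I would also caution that a nonzero morphism between pure dual $t$-motives of the same weight and rank is injective with finite cokernel only after you know both sides are $O_\bK$-torsion-free of the same $O_\bK$-rank one; that is true here, but it is worth stating, and it still does not substitute for producing the morphism in the first place.

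Finally, a smaller point: the Newton-polygon computation you mention is a reasonable sanity check for purity of the constructed $\eM_{(W,h)}$, but the paper does not perform it. Purity enters through the Hilbert--Blumenthal module, is inherited by tensor powers and quotients by \cite[Proposition 2.4.10]{HJ20}, and is preserved under isogeny; this is cleaner than a direct slope computation and in the non-geometric case (constant field larger than $\FF_q$) it bypasses significant bookkeeping.
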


\begin{remark}
The existence of a CM dual $t$-motive defined over $\ok$ with any given generalized CM type follows
directly from the geometric construction in \eqref{E:M(W,h} and \eqref{e:Appendix M(W,h)}.
Theorem~\ref{thm: CM prop}~(1) then ensures in particular that every CM dual $t$-motive is isomorphic to one defined over $\ok$.
\end{remark}

We note that uniformization of $t$-modules,  $t$-motives or dual $t$-motives is a non-trivial problem, and the situation in this paper is entirely different from classical approaches. When the CM field is a Carlitz cyclotomic extension arising from the study of special geometric $\Gamma$-values, Sinha~\cite{Sinha97} first constructed a CM $t$-module associated with the CM field in question, and successfully proved the uniformization using shtuka functions. Sinha's approaches were later on applied in \cite{BP02}. In \cite{ABP}, the framework above was transferred to the study of dual $t$-motives, and the uniformization issue was proved by giving explicit rigid analytic trivializations using generalized Coleman's functions playing the role of shtuka functions.

However, there is an assumption on the convergence of the infinite product of the twists of shtuka functions in Sinha's approach for arbitrary CM fields, and hence difficulty and complexity arise once the study of uniformization is approached through shtuka functions. As the CM fields we consider are arbitrary, constructing a rigid analytic trivialization explicitly as in \cite{ABP} for all the dual $t$-motives in question seems impractical.

To circumvent the aforementioned difficulty, we first consider the case of CM fields with CM types and  adopt the correspondence between certain lattices and Hilbert-Blumenthal $t$-modules established by Anderson~\cite{Anderson86}. This enables us to show that the constructed CM dual $t$-motive is uniformizable, and the case of generalized CM types can be achieved by the fundamental property of CM dual $t$-motives  in Proposition~\ref{prop: tensor CM type} and the isogeny property in Theorem~\ref{thm: CM prop}~(1). We refer the reader to Section~\ref{AppendixSubsec: Unif. of CM dual t-motives} for the details.

\subsection{Analogue of Shimura’s period symbol} \label{sec:introAnalogue}

To motivate the definition of our period symbols, we first recall that in Shimura's theory his period symbols are defined via period integrals on the CM abelian variety in question. As mentioned in the previous subsection, the Betti and de~Rham realizations of CM uniformizable abelian $t$-modules factor through their uniformizable dual $t$-motives.
So we can use the comparison isomorphism of the Betti and de~Rham modules of the CM uniformizable dual $t$-motive in question together with Theorem~\ref{thm: CM prop} to define our period symbols.

Let $\eM$ be a uniformizable dual $t$-motive over $\bar{k}$.
Let $H_{\mathrm{Betti}}(\eM)$ be the Betti module of $\eM$ (given in Remark~\ref{rem: HB(M)}), and
$H_{\mathrm{dR}}(\eM,\bar{k})$ be the de Rham module of $\eM$ over $\bar{k}$ (following \cite[Section 4.5]{HJ20}, see Section~\ref{sec: dR module}).
We introduce the \emph{de Rham pairing} in \eqref{E:deRham pairing}:
\begin{center}
\begin{tabular}{rclcc}
$H_{\mathrm{dR}}(\eM,\bar{k})$ &$\times$ & $H_{\mathrm{Betti}}(\eM)$ & $\longrightarrow$ & $\CC_\infty$, \\
$(\omega$ & $,$ & $\gamma)$ & $\longmapsto$ & $\displaystyle \int_\gamma \omega$,
\end{tabular}
\end{center}
which is non-degenerate (see Lemma~\ref{lem: dR}).
The result of a nonzero ``cycle integration'' $\int_\gamma \omega$ is called a \emph{period of $\eM$}, which matches with the terminology in \cite{Papanikolas} by Proposition~\ref{P:fields equality}.

Suppose $\eM$ is a CM dual $t$-motive with generalized CM type $(\eK,\Xi)$ over $\bar{k}$.
For each $\xi \in J_\eK$, one may associate a nonzero differential $\omega_{\eM,\xi}$, unique up to $\bar{k}^\times$-multiples, so that (see Proposition~\ref{Prop: delta_M})
\[
\alpha^*\omega_{\eM,\xi}  = \nu_\xi(\alpha) \cdot \omega_{\eM,\xi}
\]
for every $\alpha$ in the integral closure $O_\eK$ of $\FF_q[t]$ in $\eK$.
Here we view $\alpha$ as an endomorphism of $\eM$, and $\alpha^* \omega$ is the \emph{pull-back of $\omega$ by $\alpha$} for $\omega \in H_{\mathrm{dR}}(\eM,\bar{k})$ (see Section~\ref{sec: Pf-Pb}).
From Theorem~\ref{thm: CM prop} (2), we derive that up to $\bar{k}^\times$-multiples, the quantity
$\int_\gamma \omega_{\eM,\xi}$
depends only on the chosen $\xi$ and the CM type $\Xi$ of $\eK$.
Our \emph{period symbol} is the unique element $\Pcal_\eK(\xi,\Xi)$ in $\CC_\infty^\times/\bar{k}^\times$ represented by $\int_\gamma \omega_{\eM,\xi}$ for any nonzero $\gamma \in H_{\mathrm{Betti}}(\eM)$ (see Definition~\ref{defn: period quantity}), which is analogous to the ``motivic version'' in \cite[p.~28]{De80} of Shimura's classical one.

Throughout this paper, we fix a fundamental period $\tilde{\pi}$ of the Carlitz $\FF_{q}[t]$-module (see \cite{Goss, Thakur}). Let $I_\eK$ be the free abelian group generated by $J_\eK$, and $I_\eK^0$ be the subgroup generated by (generalized) CM types of $\eK$. We prove the following (see Theorem~\ref{thm: Period Symbol}).

\begin{theorem}\label{thm: PS}
For each CM field $\eK/\FF_{q}(t)$, there exists a unique bi-additive pairing
\[
\Pcal_\eK: I_\eK \times I_\eK^0 \rightarrow \CC_\infty^\times/\bar{k}^\times
\]
satisfying:
\begin{itemize}
\item[(1)] For each point $\xi \in J_\eK$ and each generalized CM type $\Xi$ of $\eK$, $\Pcal_\eK(\xi,\Xi)$ is the period symbol given above (see Definition~\ref{defn: period quantity}).
\item[(2)] Let $\eK'/\FF_{q}(t)$ be a CM field containing $\eK$. For $\Phi' \in I_{\eK'}$ and $\Phi^0 \in I_\eK^0$,
\[
    \Pcal_{\eK}\bigl({\Res}_{\eK'/\eK}(\Phi'),\Phi^0\bigr) = \Pcal_{\eK'}\bigl(\Phi',{\Inf}_{\eK'/\eK}(\Phi^0)\bigr),
\]
where ${\Res}_{\eK'/\eK}$ and ${\Inf}_{\eK'/\eK}$ are the restriction and inflation, respectively (see Section~\ref{sub:Res and Inf}).
\item[(3)]
Let $\eK'/\FF_{q}(t)$ be a CM field containing $\eK$ and let $\eK'^{+}$ be the maximal totally real subfield of $\eK'$.
For $\Phi \in I_\eK$ and $\Phi^{\prime,0} \in I_{\eK'}^0$,  we have
\[
    \Pcal_{\eK'}\bigl({\Inf}_{\eK'/\eK}(\Phi),\Phi^{\prime,0} \bigr) = \Pcal_{\eK}\bigl(\Phi,{\Res}_{\eK'/\eK}(\Phi^{\prime,0})\bigr).
\]
\item[(4)] Let $\eK'/\FF_{q}(t)$ be a CM field together with an isomorphism $\varrho: \eK'\rightarrow \eK$ over $\FF_q(t)$, and let $\pi_\varrho : X \rightarrow X'$ be the isomorphism corresponding to $\varrho$.
Then
\[
    \Pcal_{\eK'}\bigl((\pi_\varrho)_*(\Phi),(\pi_\varrho)_*(\Phi^0)\bigr) = \Pcal_\eK(\Phi,\Phi^0), \quad \forall (\Phi,\Phi^0) \in I_\eK\times I_\eK^0.
\]
\item[(5)] Let $\eK^+$ be the maximal totally real subfield of $\eK$.
Given $\xi^+ \in J_{\eK^+}$ and $\Phi^0 \in I_\eK^0$, one has that
\[
   \tilde{\pi}^{\text{\rm wt}(\Phi^0)} \cdot \bar{k}^\times = \prod_{\substack{\xi \in J_\eK \\ \pi_{\bX/\bX^+}(\xi) = \xi^+}}\Pcal_\eK(\xi,\Phi^0)\quad \in \CC_{\infty}^{\times}/ {\ok}^{\times}.
\]
\end{itemize}
\end{theorem}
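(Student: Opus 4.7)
The plan is to build the pairing by first defining it on a generating family of $I_\eK\times I_\eK^0$ and then extending bi-additively. Uniqueness is immediate: $J_\eK$ generates $I_\eK$, and the generalized CM types generate $I_\eK^0$ by the very definition of $I_\eK^0$, so property~(1) together with bi-additivity pins down the pairing completely.

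For existence, I would set $\Pcal_\eK(\xi,\Xi) := \int_\gamma \omega_{\eM,\xi}$ in $\CC_\infty^\times/\ok^\times$ for $\xi\in J_\eK$ and a generalized CM type $\Xi$, choosing any CM dual $t$-motive $\eM$ over $\ok$ with generalized CM type $(\eK,\Xi)$ and any nonzero $\gamma\in H_{\mathrm{Betti}}(\eM)$. Independence of these choices modulo $\ok^\times$ is ensured by Proposition~\ref{Prop: delta_M} (which determines $\omega_{\eM,\xi}$ up to $\ok^\times$ via its $O_\eK$-transformation property) together with Theorem~\ref{thm: CM prop}(1) (which provides an isogeny between any two candidate motives $\eM$, inducing compatible $\ok^\times$-rescalings on both the de Rham and Betti sides). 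I would then extend bi-additively: for $\Phi=\sum n_i \xi_i$ and $\Phi^0=\sum a_j \Xi_j$, set $\Pcal_\eK(\Phi,\Phi^0):=\prod_{i,j}\Pcal_\eK(\xi_i,\Xi_j)^{n_i a_j}$. The substantive step is consistency in the second argument: if $\sum a_j \Xi_j = \sum b_k \Xi_k'$ as effective divisors on $\bX$ with both sides non-negative integral combinations of generalized CM types, I need $\prod_j \Pcal_\eK(\xi,\Xi_j)^{a_j} = \prod_k \Pcal_\eK(\xi,\Xi_k')^{b_k}$. For this I would invoke the tensor compatibility of Proposition~\ref{prop: tensor CM type}: both $\bigotimes_j \eM_j^{\otimes a_j}$ and $\bigotimes_k \eM_k'^{\otimes b_k}$ are CM dual $t$-motives with the same generalized CM type, hence isogenous by Theorem~\ref{thm: CM prop}(1), and the periods of tensor-product differentials against tensor-product cycles factor through products.

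Properties (2), (3), (4) would then follow from functoriality of the CM dual $t$-motive construction with respect to restriction, inflation, and isomorphism of CM fields. In each case the plan is to identify the CM dual $t$-motive used to compute one side of the identity with the one used to compute the other, via the natural functor (restriction or inflation of scalars along $\eK\hookrightarrow\eK'$, or transport of structure along $\pi_\varrho$), in such a way that the relevant $\omega_{\eM,\xi}$ and Betti cycles on the two sides are matched and the period integrals are equal modulo $\ok^\times$.

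The main obstacle is property~(5). I would first reduce to the case $\Phi^0=\Xi$, a single generalized CM type of weight $w$, using bi-additivity and the $\ZZ$-linearity of $\wt$ on $I_\eK^0$. The plan is then to interpret $\prod_{\xi\mapsto \xi^+}\Pcal_\eK(\xi,\Xi)$ as a single period attached to a ``top-determinant'' construction along the fiber of $\xi^+$: the wedge $\bigwedge_{\xi\mapsto\xi^+}\omega_{\eM,\xi}$ transforms under the $O_{\eK^+}$-action on $\eM$ by $\Nr_{\eK/\eK^+}$, which is precisely the transformation law characterizing a Hilbert--Blumenthal type dual $t$-motive over $\eK^+$. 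Identifying this determinant object with a concrete Hilbert--Blumenthal dual $t$-motive over $\eK^+$ and appealing to Yu's theorem on Hilbert--Blumenthal $t$-modules (the work cited in the abstract as the motivating precedent), the resulting period is forced into $\tpi^w\cdot\ok^\times$. The independence of the final answer from the chosen fiber $\xi^+$ then falls out automatically, since the determinant motive over $\eK^+$ does not depend on $\xi^+$. I anticipate the identification of the fiber-wedge as a Hilbert--Blumenthal period, and the extraction of the correct power of $\tpi$ from it, to be the most intricate step of the proof.
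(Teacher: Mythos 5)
Your uniqueness argument, the definition of $\Pcal_\eK$ on generators, the bi-additive extension, and the consistency check via tensor products and isogenies are essentially the paper's proof (Lemma~\ref{lem: tensor diff}, Proposition~\ref{prop: tensor CM type}, Proposition~\ref{L:IsogW1W2}). For (2)--(4) your ``functoriality'' sketch points in the right direction, but note that the two directions are not symmetric: (2) is proved by base change $\eM' = O_{\eK'}\otimes_{O_\eK}\eM$ (and one must verify via Proposition~\ref{prop: being CM-0} that this is again a CM dual $t$-motive, the purity coming from the surjection $\eM^{\oplus[\eK':\FF_q(t)]}\twoheadrightarrow\eM'$), while (3) cannot be done by ``restriction of scalars'' alone, since viewing an $\eK'$-motive as an $O_\bK$-module gives rank $[\eK':\eK]$, not rank one; the paper takes the top exterior power $\bigwedge^{[\eK':\eK]}_{O_\bK}\eM'$ and computes the period of the wedged differential against a wedged cycle as a determinant of periods, which modulo $\ok^\times$ is the product $\prod_j p_{\eK'}(\xi_j',\Xi')$. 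You clearly have this wedge idea in mind (you use it in (5)), but it belongs to (3) and needs the CM/purity verification there.

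The genuine gap is your treatment of (5). Yu's theorem on Hilbert--Blumenthal $t$-modules is a \emph{transcendence} statement: it says the coordinates of nonzero period vectors are transcendental over $k$. Transcendence cannot force a period into the coset $\tilde{\pi}^{\wt(\Phi^0)}\cdot\ok^\times$; for a general Hilbert--Blumenthal $t$-module the periods are not algebraic multiples of powers of $\tilde{\pi}$ at all, and indeed the whole point of the paper's main theorem is that they usually are not. What actually pins down the value is that your fiber-wedge object over $\eK^+$ has generalized CM type $\Res_{\eK/\eK^+}(\Phi^0)=\wt(\Phi^0)\cdot\Inf_{\eK^+/\FF_q(t)}(\xi_\theta)$, i.e.\ a type \emph{inflated from} $\FF_q(t)$, so after one more application of property (2) one lands on the period symbol $p_{\FF_q(t)}(\xi_\theta,\wt(\Phi^0)\,\xi_\theta)$ of the Carlitz tensor power $\eC^{\otimes \wt(\Phi^0)}$, which is evaluated explicitly as $\tilde{\pi}^{\wt(\Phi^0)}$ using the function $\Omega$ (Proposition~\ref{prop: carlitz-ps}). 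This is how the paper proves (5): it is a purely formal consequence of bi-additivity, (2), (3), (4), and that one explicit Carlitz computation. Replace the appeal to Yu's theorem by this descent to $\FF_q(t)$ plus the $\Omega$-computation; without it, your argument identifies the product of period symbols as a single Hilbert--Blumenthal-type period but never evaluates it.
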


\begin{remark}\label{rem: ps}
Following Shimura, we write $x\sim y$ for any $x$, $y \in \CC_\infty^\times$ with $x/y \in \bar{k}^\times$.
    Given $\Phi \in I_\eK$ and $\Phi^0 \in I_\eK^0$, we use the notation $p_\eK(\Phi,\Phi^0)$ for an arbitrary representative of the coset $\Pcal_\eK(\Phi,\Phi^0) \in \CC_\infty^\times/\bar{k}^\times$.
    We also call $p_\eK(\Phi,\Phi^0)$ a period symbol if no confusion will arise.
    The property in Theorem~\ref{thm: PS} (5) is then equivalent to:
\[
    \tilde{\pi}^{\text{\rm wt}(\Phi^0)} \sim \prod_{\substack{\xi \in J_\eK \\ \pi_{\bX/\bX^+}(\xi) = \xi^+}}p_\eK(\xi,\Phi^0),
    \quad \forall \xi^+ \in J_{\eK^+}.
\]
\end{remark}

In Section~\ref{sec: EPS}, we extend $\Pcal_{\eK}$ bi-additively  to a pairing (still denoted by)
\[
\Pcal_\eK:I_\eK \times I_\eK \rightarrow \CC_{\infty}^{\times}/\ok^{\times},
\]
and keep using the notation $p_\eK(\Phi_1,\Phi_2)$ for an arbitrary representative of $\Pcal_\eK(\Phi_1,\Phi_2)$ for every $\Phi_1,\Phi_2 \in I_\eK$.
We now present our main theorem.

\begin{theorem}[{See Theorem~\ref{T:Shimura's conjecture}}] \label{Main Theorem}
Let $\eK$ be a CM field over $\FF_q(t)$  and $\eK^+$ its maximal totally real subfield.
For fixed $\xi_0 \in J_\eK$, all algebraic relations among the period symbols $p_\eK(\xi,\xi_0)$ varying over $\xi \in J_\eK$ are generated by the ``Legendre relation'' (see Lemma~\ref{L:Algebraic relation}):
\[
\tilde{\pi}^{1/[\eK^+:\FF_q(t)]}  \sim  \prod_{\xi \in  \pi_{\bX/\bX^+}^{-1}(\xi^+)}p_\eK(\xi,\xi_0), \quad \text{$\forall\, \xi_0 \in J_\eK$ and $\xi^+ \in J_{\eK^+}$.}
\]
Equivalently, we have
\[
\trdeg_{\ok}\, \ok(p_\eK(\xi,\xi_0)\mid \xi \in J_\eK) = 1 + \frac{([\eK:\eK^+]-1)}{[\eK:\eK^+]} \cdot [\eK:\FF_q(t)].
\]
\end{theorem}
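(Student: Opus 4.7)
The plan is to combine the Legendre relations of Theorem~\ref{thm: PS}(5) with Papanikolas' Tannakian theorem~\cite{Papanikolas}, which identifies the transcendence degree over $\ok$ of the period field of a uniformizable dual $t$-motive with the dimension of its motivic Galois group. Set $n \assign [\eK:\FF_q(t)]$, $d \assign [\eK^+:\FF_q(t)]$, and $e \assign [\eK:\eK^+]$, so that the target equality reads $\trdeg = 1 + (e-1)d = n+1-d$.

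For the upper bound, the bi-additive extension $\Pcal_\eK : I_\eK \times I_\eK \to \CC_\infty^\times/\ok^\times$ of Section~\ref{sec: EPS} combined with property~(5) of Theorem~\ref{thm: PS} gives, for each $\xi^+ \in J_{\eK^+}$, the Legendre relation
$$\tilde\pi^{1/d} \sim \prod_{\xi \in \pi_{\bX/\bX^+}^{-1}(\xi^+)} p_\eK(\xi,\xi_0).$$
A direct $\ZZ$-linear independence check (as in Lemma~\ref{L:Algebraic relation}) shows that these $d$ multiplicative relations among the $n+1$ quantities $\{\tilde\pi^{1/d}\} \cup \{p_\eK(\xi,\xi_0) : \xi \in J_\eK\}$ are independent, forcing $\trdeg_{\ok}\ok(p_\eK(\xi,\xi_0) \mid \xi \in J_\eK) \leq n+1-d$.

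For the lower bound, choose a finite family of CM types $\Xi_1,\ldots,\Xi_N$ of $\eK$, each containing $\xi_0$, so that every $\xi \in J_\eK$ occurs in at least one $\Xi_j$, and form $\eM \assign \bigoplus_j \eM_{\Xi_j}$, where $\eM_{\Xi_j}$ is a CM dual $t$-motive over $\ok$ with CM type $(\eK, \Xi_j)$ (produced by the construction of Section~\ref{sec: CM dual t-motives} and uniformizable by Theorem~\ref{thm: CM prop}(2)). Using the special differentials $\omega_{\eM_{\Xi_j},\xi}$ of Proposition~\ref{Prop: delta_M} together with the bi-additive extension of $\Pcal_\eK$, every $p_\eK(\xi,\xi_0)$ appears (up to $\ok^\times$) among the periods of $\eM$, so the period field of $\eM$ equals $L \assign \ok(p_\eK(\xi,\xi_0) \mid \xi \in J_\eK)$ (which contains $\tilde\pi$ by the Legendre relation). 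Papanikolas' theorem then gives $\trdeg_{\ok}L = \dim \Gamma_\eM$.

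The crucial inequality is $\dim \Gamma_\eM \geq n+1-d$. The CM action of $O_\eK$ on each summand embeds $\Gamma_\eM$ into the torus $T \assign (\Res_{\eK/\FF_q(t)}\Gm) \times \Gm$, where the extra $\Gm$-factor encodes the Carlitz weight via $\tilde\pi$, and each CM type $\Xi_j$ contributes a cocharacter $\mu_{\Xi_j} = \sum_{\xi \in \Xi_j} \xi^\vee + \wt(\Xi_j)\cdot \eta^\vee \in X_*(T)$ to $X_*(\Gamma_\eM)$. The main obstacle is the combinatorial rank computation: show that the $\Xi_j$'s (all containing $\xi_0$) can be chosen so that the $\mu_{\Xi_j}$'s span a sublattice of $X_*(T)$ of rank exactly $n+1-d$, equivalently that the only characters of $T$ annihilated by all $\mu_{\Xi_j}$ are $\ZZ$-linear combinations of the Legendre characters attached to $\xi^+ \in J_{\eK^+}$. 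Since each fiber $\pi_{\bX/\bX^+}^{-1}(\xi^+)$ with $\xi^+ \neq \pi_{\bX/\bX^+}(\xi_0)$ contributes $e-1$ independent degrees of freedom (once the $\xi_0$-fiber coordinate is fixed), the counting gives the required rank. The isogeny uniqueness of Theorem~\ref{thm: CM prop}(1) guarantees that distinct CM types yield genuinely distinct cocharacters in $X_*(\Gamma_\eM)$, and the purity in Theorem~\ref{thm: CM prop}(2) ensures that $\Gamma_\eM$ faithfully reflects the full CM torus action. Tannakian duality then simultaneously yields the dimension equality and the claim that every algebraic relation among the $p_\eK(\xi,\xi_0)$ is generated by the Legendre relation.
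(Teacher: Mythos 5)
There is a genuine gap in your lower bound, and it occurs at the two places where the real work lies. First, the field you feed into Papanikolas' theorem is the wrong one. By Proposition~\ref{P:fields equality}, the periods of $\eM=\bigoplus_j\eM_{\Xi_j}$ are (up to $\ok^\times$) the quantities $p_\eK(\xi,\Xi_j)\sim\prod_{\xi'\in\Xi_j}p_\eK(\xi,\xi')$, whose \emph{second} arguments range over all points of $\Xi_j$. The symbol $p_\eK(\xi,\xi_0)$ is not itself a period of any of these motives: by the extension \eqref{E:Def of P} it is defined only through an $[\eK:\eK^+]$-th root, and conversely the products $\prod_{\xi'\in\Xi_j}p_\eK(\xi,\xi')$ involve symbols with second argument $\xi'\neq\xi_0$ that have no reason to lie in $L\assign\ok\bigl(p_\eK(\xi,\xi_0)\mid\xi\in J_\eK\bigr)$. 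So your claim that the period field of $\eM$ equals $L$ is unjustified (and false in general), and $\dim\Gamma_\eM$ then bounds the transcendence degree of a different, typically larger, field --- which gives no lower bound for $\trdeg_{\ok}L$. The paper circumvents this by using a single CM dual $t$-motive with the generalized CM type $\Xi_0$ of \eqref{E:Xi_0}, for which $p_\eK(\xi,\Xi_0)\sim\tilde{\pi}^{\,1-1/[\eK^+:\FF_q(t)]}\,p_\eK(\xi,\xi_0)^{[\eK:\eK^+]}$, so that $\ok\bigl(p_\eK(\xi,\Xi_0)\mid\xi\bigr)\subseteq L$ (Lemma~\ref{lem: non-degenerate}) and a lower bound for this smaller period field transfers to $L$.

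Second, your supply of cocharacters is not legitimate. Hodge--Pink theory attaches \emph{one} cocharacter $\chi_\eM^\vee$ to a motive; by itself it generates only a one-dimensional subtorus. The paper obtains the full rank by taking all $\Gal(k^{\sep}/k)$-conjugates of $\chi_\eM^\vee$ (permissible because $\Gamma_\eM$ is defined over $\FF_q(t)$) and then proving the decisive combinatorial input, the non-degeneracy of $\Xi_0$: the Galois orbit of $\Xi_0$ spans $\QQ\otimes_\ZZ I_\eK^0$, whence the subtorus has dimension $\rank_\ZZ I_{\Xi_0}^0=\rank_\ZZ I_\eK^0$ (Lemma~\ref{lem: non-degenerate} and Theorem~\ref{thm: lower-bound}). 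Your substitute --- one cocharacter per summand $\Xi_j$, ``distinct by isogeny uniqueness,'' plus a degrees-of-freedom count --- does not do this job: distinctness is far weaker than $\ZZ$-linear independence; the cocharacters you write down live a priori in the quotients $\Gamma_{\eM_{\Xi_j}}$ of $\Gamma_\eM$, and assembling them into a subgroup of $X_*(\Gamma_\eM)$ of rank $1+\frac{[\eK:\eK^+]-1}{[\eK:\eK^+]}[\eK:\FF_q(t)]$ would require controlling the relations among the periods of the different summands, which is precisely what is to be proved; and the fiber-counting assertion is stated, not proved (in the paper it is replaced by the explicit Galois computation in Lemma~\ref{lem: non-degenerate}). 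Your upper bound via the Legendre relations and your appeal to Theorem~\ref{T:dim=trdeg} are fine, and the extra $\Gm$-factor in your torus is harmless but unnecessary, since already $\Gamma_\eM\hookrightarrow\Res_{\eK/\FF_q(t)}\Gm$; but as it stands the lower-bound half of the argument does not go through.
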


\begin{remark}
As one purpose of this paper is to study the transcendence theory of ``CM periods,'' our extension of $\Pcal_\eK$ in Section~\ref{sec: EPS} is slightly different from the one in Shimura's setting.
In particular,
the Legendre relation given in Theorem~\ref{Main Theorem} shows that the Carlitz period $\tilde{\pi}$ is always contained in $\ok(p_\eK(\xi,\xi_0)\mid \xi_1 \in J_\eK)$.
\end{remark}

\begin{remark}
In the classical case, nonzero values of arithmetic (quasi) modular forms and their Serre derivatives at  CM points can be interpreted using Shimura's period symbols (see~\cite{Sh80}, and~\cite{C12-2} for the case of arithmetic quasi-modular forms). In the function field setting, we have the same result for any arithmetic Drinfeld modular form~$f$ of rank two at a (separable) CM point $\alpha$. More precisely, if $f(\alpha)\neq 0$, it is shown in~\cite[Thm.2.2.1]{C12} that $f(\alpha)\sim \left(\omega_{\alpha}/\tilde{\pi} \right)^{\ell}$, where $\omega_{\alpha}$ is a nonzero period of a CM Drinfeld $\FF_{q}[t]$-module $E_{\rho}$ of rank two defined over $\ok$ with CM field $\eK$, which is imaginary quadratic over $\FF_{q}(t)$, and $\ell$ is the weight of $f$. Note that $E_{\rho}$ is a Hilbert-Blumenthal $t$-module of dimension one. Therefore, from the proof of Theorem~\ref{thm: CM HB t-mod} one sees that $\omega_{\alpha}\sim p_{\eK}(\xi,\Xi)$ for the CM type $(\eK,\Xi)$, where $\xi$ is any element of $J_{\eK}$ and $\Xi=\xi$. It follows that
\[
f(\alpha) \sim \frac{p_{\eK}(\xi,\Xi)^{\ell}}{{\tilde{\pi}}^{\ell}}.
\]
In particular, when $f$ has full-level, its \emph{Serre derivative} $\partial_\ell (f)$ (introduced by \cite[(8.5)]{Ge88})
becomes an arithmetic Drinfeld modular form of weight $\ell+2$.
Hence
\[
\partial_\ell(f)(\alpha) \sim \frac{p_{\eK}(\xi,\Xi)^{\ell+2}}{{\tilde{\pi}}^{\ell+2}}\quad \textup{if $\partial_\ell(f)(\alpha) \neq 0$.}
\]

Furthermore, for any \emph{arithmetic quasi-modular form} $\tilde{f}$ of weight $\ell$ (see \cite[p.~542--543]{C12-2}), by~\cite[Theorem~3.3~(c)]{C12-2} we may express $\tilde{f}(\alpha)$ in the following form:
\[
\tilde{f}(\alpha) = \sum_{0\leq a \leq \ell/2}\beta_a \cdot \frac{p_\eK(\xi,\Xi)^{\ell-2a}}{\tilde{\pi}^{\ell-a}}, \quad \text{ where }\beta_a \in \ok \text{ for $0\leq a \leq \ell/2$}.
\]
Theorem~\ref{Main Theorem} shows that if $f(\alpha)$ (resp.\ $\partial_\ell(f)(\alpha)$, $\tilde{f}(\alpha)$) is nonzero, then it must be transcendental over $\ok$, which agrees with the results in \cite[Theorem~1.2.1]{C12} and \cite[Theorem~1.2]{C12-2}.
Moreover, we would naturally expect to determine the algebraic independence of the CM values in question as well from Theorem~\ref{Main Theorem}.
\end{remark}

As an application, Theorem~\ref{Main Theorem} leads to the following result that matches the spirit of Conjectures~\ref{C:Shimura} and~\ref{C:Yoshida}.

\begin{corollary}\label{col: SC}
Let $\eK$ be a CM field over $\FF_q(t)$ and $\eK^+$ its maximal totally real subfield.
\begin{itemize}
    \item[(1)] Suppose that $\eK \neq \eK^+$, and let
    $\Xi=\xi_1+\cdots + \xi_r$ be a  CM type of $\eK$.
    Given $\xi_0 \in J_\eK$, the period symbols
    $p_\eK(\xi_1,\xi_0),\ldots ,p_\eK(\xi_r,\xi_0)$
    are algebraically independent over $\ok$.
    \item[(2)] Suppose that $\eK$ is Galois over $\FF_{q}(t)$.
    Then for each $\xi_0 \in J_\eK$ we have
\[
\trdeg_{\ok}\, \ok(\tilde{\pi}, p_\eK(\xi_0,\xi)\mid \xi \in J_\eK) = 1+ \frac{([\eK:\eK^+]-1)}{[\eK:\eK^+]}\cdot [\eK:\FF_{q}(t)].
\]
\end{itemize}
\end{corollary}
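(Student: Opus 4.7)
My plan is to deduce both parts of Corollary~\ref{col: SC} directly from Theorem~\ref{Main Theorem}, with most of the work concentrated in part~(1). Put $r \assign [\eK^+:\FF_q(t)]$ and $s \assign [\eK:\FF_q(t)]$, so $[\eK:\eK^+] = s/r$ and Theorem~\ref{Main Theorem} reads $\trdeg_{\ok} \ok(p_\eK(\xi,\xi_0) \mid \xi \in J_\eK) = 1 + s - r$.

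For part~(1), I would first observe that since $\Xi$ is a CM type of weight~$1$, $r = |\Xi|$ and $\Xi \cap \pi_{\bX/\bX^+}^{-1}(\xi^+)$ is a singleton for every $\xi^+ \in J_{\eK^+}$. Next, I would read Theorem~\ref{Main Theorem} as asserting that the ideal of algebraic relations satisfied by $\{p_\eK(\xi,\xi_0)\}_{\xi}$ together with $\tilde\pi$ inside $\ok[\{X_\xi\}_{\xi \in J_\eK}, Y]$ is generated by the $r$ Legendre polynomials
\[
L_{\xi^+}(X,Y) \assign \Bigl(\prod_{\xi \in \pi_{\bX/\bX^+}^{-1}(\xi^+)} X_\xi\Bigr)^{r} - c_{\xi^+}\, Y,\qquad \xi^+ \in J_{\eK^+},\ c_{\xi^+} \in \ok^\times.
\]
Using $L_{\xi_1^+}$ to eliminate $Y$, the remaining $r-1$ relations simply equate the $r$-th powers of fiber products across fibers. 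I would then show that $X_{\xi_1}, \ldots, X_{\xi_r}$ remain algebraically independent in the quotient ring by exhibiting a transcendence basis of the expected size $s - r + 1$ that contains them: freely choose these $r$ variables together with all $X_\eta$ in the fiber over $\xi_1^+$ and all-but-one $X_\eta$ in each of the remaining fibers, leaving the omitted $X_\eta$ in each non-initial fiber determined, up to an $r$-th root of unity, by the corresponding fiber equation. The total free parameter count is $r + r([\eK:\eK^+]-1) - (r-1) = s - r + 1$, matching the trdeg, and $\{X_{\xi_i}\}_{i=1}^r$ sits inside it. A nonzero polynomial relation among $p_\eK(\xi_1,\xi_0), \ldots, p_\eK(\xi_r,\xi_0)$ would by Theorem~\ref{Main Theorem} lie in the Legendre ideal, contradicting this independence. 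Here the hypothesis $[\eK:\eK^+] \geq 2$ is essential: it guarantees that each $L_{\xi^+}$ genuinely involves some $X_\eta$ outside $\{X_{\xi_i}\}$, so no nonzero polynomial in the $X_{\xi_i}$ alone can belong to the Legendre ideal.

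For part~(2), I would exploit property~(4) of Theorem~\ref{thm: PS} together with the Galois hypothesis, which makes the action of $\Gal(\eK/\FF_q(t))$ on $J_\eK$ simply transitive. For each $\xi \in J_\eK$, pick the unique $\varrho \in \Gal(\eK/\FF_q(t))$ with $\pi_\varrho(\xi) = \xi_0$; property~(4) then yields
\[
\Pcal_\eK(\xi,\xi_0) = \Pcal_\eK\bigl(\pi_\varrho(\xi), \pi_\varrho(\xi_0)\bigr) = \Pcal_\eK\bigl(\xi_0, \pi_\varrho(\xi_0)\bigr),
\]
and $\xi \mapsto \pi_\varrho(\xi_0)$ is a bijection of $J_\eK$. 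Consequently the multisets $\{p_\eK(\xi,\xi_0)\}_\xi$ and $\{p_\eK(\xi_0,\xi)\}_\xi$ coincide modulo $\ok^\times$ and generate the same extension of $\ok$. Since the Legendre relation already places $\tilde\pi$ in $\ok(p_\eK(\xi,\xi_0) : \xi \in J_\eK)$, Theorem~\ref{Main Theorem} delivers the claimed transcendence degree.

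The hard part will be part~(1): turning the phrase ``all algebraic relations are generated by the Legendre relation'' into a precise ideal-theoretic statement, and then extracting algebraic independence of the distinguished subset $\{X_{\xi_i}\}_{i=1}^{r}$ from that description. The parameter count itself is elementary, but some care is needed to track the $\ok^\times$-constants and $r$-th roots of unity entering the Legendre polynomials, and to ensure that the transcendence structure of the quotient ring really matches the count provided by Theorem~\ref{Main Theorem}.
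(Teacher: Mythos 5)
Your part (2) is correct and is essentially the paper's own argument: simple transitivity of $\Gal(\eK/\FF_q(t))$ on $J_\eK$ plus Theorem~\ref{thm: Period Symbol}(4) identifies $\ok(p_\eK(\xi,\xi_0)\mid \xi\in J_\eK)$ with $\ok(p_\eK(\xi_0,\xi)\mid \xi\in J_\eK)$, the Legendre relation already places $\tilde{\pi}$ in this field, and Theorem~\ref{Main Theorem} gives the transcendence degree. The gap is in part (1). You convert Theorem~\ref{Main Theorem} into the statement that the ideal of \emph{all} polynomial relations among $\{p_\eK(\xi,\xi_0)\}_{\xi\in J_\eK}$ and $\tilde{\pi}$ is generated by the polynomials $L_{\xi^+}$, and your decisive step (``a nonzero relation among the CM-type symbols would lie in the Legendre ideal'') rests entirely on that reading. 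But the precise content of the theorem is the transcendence-degree identity (equivalently, that the \emph{monomial} relations modulo $\ok^\times$ are generated by the Legendre relations); the ideal-theoretic statement is strictly stronger and is in fact false once $\eK^+\neq\FF_q(t)$. Indeed, the ideal of all relations is prime (it is the kernel of an evaluation map into $\CC_\infty$), whereas the ideal generated by your $L_{\xi^+}$ is not: for two points $\xi_1^+,\xi_2^+\in J_{\eK^+}$ the Legendre relations give $\prod_{\pi_{\bX/\bX^+}(\xi)=\xi_2^+}p_\eK(\xi,\xi_0)\sim \prod_{\pi_{\bX/\bX^+}(\xi)=\xi_1^+}p_\eK(\xi,\xi_0)$, and the corresponding difference of the two fiber-product monomials is a genuine relation which is only a proper factor of an element of your ideal (after eliminating $Y$ one gets the $r$-th-power relation, not this factor), and one checks it does not lie in the ideal. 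For the same reason a ``transcendence basis of the quotient ring'' by the Legendre ideal is not well defined, as that quotient is not a domain. So the contradiction in part (1), as written, does not go through.

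The repair is short, needs no ideal theory, and is what makes (1) ``follow directly'' from the theorem as in the paper. Put $s=[\eK:\FF_q(t)]$ and $r=[\eK^+:\FF_q(t)]$; since $\eK\neq\eK^+$, each fiber $\pi_{\bX/\bX^+}^{-1}(\xi_i^+)\cap J_\eK$ has $[\eK:\eK^+]\geq 2$ points, so for $i=2,\dots,r$ choose $\eta_i$ in the fiber over $\xi_i^+$ with $\eta_i\neq\xi_i$. By Lemma~\ref{L:Algebraic relation}, $p_\eK(\eta_i,\xi_0)$ equals, up to an element of $\ok^\times$, the quotient of $\prod_{\pi_{\bX/\bX^+}(\xi)=\xi_1^+}p_\eK(\xi,\xi_0)$ by $\prod_{\xi\neq\eta_i}p_\eK(\xi,\xi_0)$ (the latter product over the fiber of $\xi_i^+$). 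Hence the full field $\ok(p_\eK(\xi,\xi_0)\mid\xi\in J_\eK)$ is already generated by the $s-r+1$ symbols indexed by $J_\eK\setminus\{\eta_2,\dots,\eta_r\}$. Its transcendence degree over $\ok$ is $1+s-r$ by Theorem~\ref{T:Shimura's conjecture}, so these $s-r+1$ generators are algebraically independent over $\ok$; they include $p_\eK(\xi_1,\xi_0),\dots,p_\eK(\xi_r,\xi_0)$ because no CM-type point was omitted. This is exactly the count you carry out, but performed on the actual period symbols, where the only inputs are the Legendre relation among the numbers and the transcendence-degree form of the main theorem.
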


\subsection{Strategy of proof of the main theorem}

Before describing our strategy and ideas about proofs of the main results above, we first say a few words about the classical case and illustrate the essential differences between it and our situation. As mentioned in Sec.~\ref{Sec:Motivation}, the period conjecture of Deligne/Grothendieck implies Conjecture~\ref{C:Yoshida}, and in this case it suffices to compute the dimension of the Mumford-Tate group in question. As pointed out by Yoshida, from the description of the Mumford-Tate group~\cite[p.~270,~(4)]{Yo03} in that case there is no difficulty to see that the dimension in question is given by the rank of the subgroup generated by the reflex of the given CM type.

In the function field setting, it is shown in~\cite{Papanikolas} that there is an affine algebraic group $\Gamma_{\eM}$ associated to a uniformizable dual $t$-motive from Tannakian duality, and one views $\Gamma_{\eM}$ as playing a role analogous to that of the Mumford-Tate group in the classical theory. It is further established in~\cite{Papanikolas} that the dimension of $\Gamma_{\eM}$ is equal to the transcendence degree of the field generated by the periods of $\eM$ over $\ok$. However, it can be difficult to compute the dimension of $\Gamma_{\eM}$ in general, as the defining equations of $\Gamma_{\eM}$ arising from the rigid analytic trivialization of $\eM$ can be complicated to determine. Unlike the classical case, in our case of CM dual $t$-motives it is not obvious and direct to compute $\dim \Gamma_{\eM}$ in terms of the given generalized CM type. In fact, we appeal to Hodge-Pink's theory to generate a subtorus of the $\Gamma_{\eM}$ in question. A detailed outline is given as follows.

The overall strategy of proving Theorem~\ref{Main Theorem} is to give upper and lower bounds for the transcendence degree in question. We first observe that
the Legendre relations among period symbols provides the inequality
\[
\trdeg_{\ok}\, \ok(p_\eK(\xi,\xi_0)\mid \xi \in J_\eK) \leq 1 + \frac{([\eK:\eK^+]-1)}{[\eK:\eK^+]} \cdot [\eK:\FF_q(t)].
\]
We derive the opposite inequality through the following steps:
\begin{itemize}
    \item[(1)] Take a generalized CM type $\Xi_0$ of $\eK$ so that $\bar{k}\bigl(p_\eK(\xi,\xi_0)\mid \xi \in J_\eK\bigr)$ is algebraic over $\bar{k}\bigl(p_\eK(\xi,\Xi_0)\mid \xi \in J_\eK\bigr)$ (see Lemma~\ref{lem: non-degenerate}).
    \item[(2)] Let $\eM$ be a CM dual $t$-motive with  generalized CM type $(\eK,\Xi_0)$ over $\bar{k}$.
    Apply the function field analogue of Grothendieck's period conjecture (from~\cite{Papanikolas}, see Theorem~\ref{T:dim=trdeg}) to obtain
\[
    \trdeg_{\ok}\, \ok\bigl(p_\eK(\xi,\Xi_0)\mid \xi \in J_\eK\bigr) = \dim \Gamma_\eM,
\]
    where $\Gamma_\eM$ is the \emph{$t$-motivic Galois group of $\eM$} (see Proposition~\ref{P:fields equality} and Theorem~\ref{T:dim=trdeg}).
    \item[(3)] Use the \emph{Hodge-Pink filtration} of $\eM$ (see Definition~\ref{defn: HPF}) to construct a cocharacter $\chi_\eM$ of $\Gamma_\eM$ over $k^{\rm sep}$ (see \eqref{E:chiMvee}).
    As $\Gamma_\eM$ is defined over $\FF_q(t)$,
    the image of all Galois conjugates of $\chi_\eM$ produces a subtorus $T_\eM$ inside $\Gamma_{\eM,\CC_\infty}$, the base change of $\Gamma_\eM$ to $\CC_\infty$.
    \item[(4)] From the connection between the ``Hodge-Pink weights'' of $\eM$ and the given CM type $\Xi_0$ in \eqref{eqn: HPc-CMt}, we show that $\dim T_\eM$ is equal to the rank of the ``Galois submodule'' $I_{\Xi_0}^0$ generated by $\Xi_0$ in $I_\eK^0$ (defined in Remark~\ref{rem: TM} (2)).
    \item[(5)] Finally, from the \emph{non-degeneracy} of the chosen $\Xi_0$ proved in Lemma~\ref{lem: non-degenerate}, i.e.,
    \[
    \rank_\ZZ I_{\Xi_0}^0 = \rank_\ZZ I_\eK^0,
    \]
    we obtain that
\begin{align*}
    \trdeg_{\bar{k}}\bar{k}\bigl(p_\eK(\xi,\xi_0)\mid \xi \in J_\eK\bigr) &= \trdeg_{\bar{k}}\bar{k}\bigl(p_\eK(\xi,\Xi_0)\mid \xi \in J_\eK\bigr) \\
    &= \begin{aligned}[t]
    \dim \Gamma_\eM
    &\geq  \dim T_\eM = \rank_\ZZ I_{\Xi_0}^0 = \rank_\ZZ I_\eK^0 \\
    &= 1 + \frac{([\eK:\eK^+]-1)}{[\eK:\eK^+]} \cdot [\eK:\FF_q(t)],
    \end{aligned}
\end{align*}
    as desired.
\end{itemize}

\begin{remark}
(1) For a CM dual $t$-motive $\eM$ with generalized CM type $(\eK,\Xi)$ over $\bar{k}$, the field generated by the periods of $\eM$ is (see \eqref{E:field equality})
\[
\bar{k}\Bigl(\int_\gamma \omega \Bigm| \gamma \in H_{\mathrm{Betti}}(\eM),\ \omega \in H_{\mathrm{dR}}(\eM,\bar{k})\Bigr) = \bar{k}(p_\eK(\xi,\Xi)\mid \xi \in J_\eK\bigr).
\]
When $\Xi$ is non-degenerate, we then have that
\begin{multline*}
\trdeg_{\bar{k}}\, \bar{k}\Bigl(\int_\gamma \omega \Bigm| \gamma \in H_{\mathrm{Betti}}(\eM),\ \omega \in H_{\mathrm{dR}}(\eM,\bar{k})\Bigr)
=
1 + \frac{([\eK:\eK^+]-1)}{[\eK:\eK^+]} \cdot [\eK:\FF_q(t)].
\end{multline*}
(2) As $\eM$ is a CM dual $t$-motive, the $t$-motivic Galois group $\Gamma_\eM$ is commutative (see the discussion after Proposition~\ref{prop: HPW-CM}).
If $\Gamma_{\eM,\CC_\infty}$ is generated by the images of all ``Hodge-Pink cocharacters'' of $\eM$ (cf.\ \cite[Theorem 7.11]{Pink}), i.e.\ $\Gamma_{\eM,\CC_\infty} = T_\eM$,
then we actually obtain
\[
\trdeg_{\ok}\, \bar{k}\bigl(p_\eK(\xi,\Xi)\mid \xi \in J_\eK\bigr) = \dim \Gamma_\eM = \dim T_\eM = \rank_{\ZZ} I_\Xi^0,
\]
which says that the transcendence degree of the field generated by all period symbols $p_\eK(\xi,\Xi)$ for $\xi \in J_\eK$ is determined by finding the rank of the corresponding Galois submodule $I_\Xi^0$ in $I_\eK^0$.
\end{remark}

\subsection{Applications}

Let $E_\rho$ be an abelian $t$-module over $\bar{k}$ (recalled in Definition~\ref{defn: t-module}).
Put $\Mscr(\rho)\assign \Hom_{\FF_q}(E_\rho,\GG_a)$ (introduced by Anderson \cite{Anderson86}, see Definition~\ref{defn: t-module} (2)).
Let $\eM(\rho)\assign \Hom_{\bar{k}[t]}(\tau\Mscr(\rho),\bar{k}[t]dt)$ (constructed by Hartl and Juschka \cite{HJ20}, see Section~\ref{sec: M(rho)}),
where $\tau$ is identified with the  $q$-power Frobenius endomorphism on $\GG_a$.
We know that $\eM(\rho)$ is a dual $t$-motive (and $E_\rho$ is always ``$A$-finite,'' see Theorem~\ref{thm: M(rho)}), called the \emph{Hartl-Juschka dual $t$-motive associated with $E_\rho$}.
It is worth clarifying that the traditional dual $t$-motive associated with a given $t$-module $E_\rho$ is usually regarded as $\eN(\rho)\assign \Hom_{\FF_q}(\GG_a,E_\rho)$ (also introduced by Anderson, see Definition~\ref{defn: t-module} (2)), and there is an isomorphism between $\eM(\rho)$ and $\eN(\rho)$ by \cite[Theorem 2.5.13]{HJ20} (see Theorem~\ref{thm: M(rho)}).
For our purposes, we shall utilize $\eM(\rho)$ instead of $\eN(\rho)$ throughout this paper.
Suppose $E_\rho$ is uniformizable (for which so is $\eM(\rho)$, see Theorem~\ref{thm: Uni-eqv}).
Let $\Lambda_\rho$ be the period lattice of $E_\rho$ and let $H_{\mathrm{dR}}(E_\rho,\bar{k})$ be the de Rham module of $E_\rho$ (see \eqref{E:Isom HdR and QtauM}, following \cite[\S 3.1]{BP02} and \cite[Proposition 4.1.3]{NP21}).
Let $[\cdot,\cdot]:H_{\mathrm{dR}}(E_\rho,\bar{k}) \times \Lambda_\rho \rightarrow \CC_\infty$ be the de Rham pairing introduced in \cite[(4.3.3) and (4.3.4)]{NP21} (see \eqref{eqn: dR-P t-module}).
In order to apply our results to abelian $t$-modules, we first naturally identify $H_{\mathrm{dR}}(E_\rho,\bar{k})$ (resp.\ $\Lambda_\rho$) with $H_{\mathrm{dR}}(\eM(\rho),\bar{k})$ (resp.\ $H_{\mathrm{Betti}}(\eM(\rho))$), and derive the following comparison between the associated de Rham pairings (cf.\ Proposition~\ref{prop: comp-dR-pairing}):
$$
\SelectTips{cm}{}
\xymatrixrowsep{0.4cm}
\xymatrixcolsep{0.05cm}
\xymatrix{
[\ ,\ ]: & H_{\mathrm{dR}}(E_\rho,\bar{k})\ar@{<->}[dd]^{\rotatebox{90}{\scalebox{1}[1]{$\sim$}}} & \times & \Lambda_\rho\ar@{<->}[dd]^{\rotatebox{90}{\scalebox{1}[1]{$\sim$}}}  \ar[rrr] & & & \CC_\infty \ar@{<->}[dd]^{\rotatebox{90}{\scalebox{1}[1]{$\sim$}}} & z \ar@{<->}[dd] \\
&&&& \\
\int : & H_{\mathrm{dR}}(\eM(\rho),\bar{k}) & \times & H_{\mathrm{Betti}}(\eM(\rho)) \ar[rrr] & & & \CC_\infty & -z\\
}
$$

We say that $E_\rho$ is a \emph{CM abelian $t$-module with generalized CM type $(\eK,\Xi)$} if the corresponding dual $t$-motive $\eM(\rho)$ has generalized CM type $(\eK,\Xi)$.
In this case, the above comparison of de Rham pairings leads us to the following (cf.\ Theorem~\ref{thm: CM t-mod}):

\begin{theorem}\label{thm: app CM t-mod}
Let $E_\rho$ be a CM abelian $t$-module with generalized CM type $(\eK,\Xi)$ over $\bar{k}$.
Let $\eK^+$ be the maximal totally real subfield of $\eK$.
Then
\begin{align*}
1+\frac{[\eK:\eK^+]-1}{[\eK:\eK^+]}\cdot [\eK:\FF_q(t)] &\geq
\trdeg_{\bar{k}} \bar{k}\bigl([\delta,\lambda] \bigm|  \delta \in H_{\mathrm{dR}}(E_\rho,\bar{k}),\ \lambda \in \Lambda_\rho \bigr) \\
&\geq \rank_\ZZ I_\Xi^0.
\end{align*}
In particular, if the generalized CM type $\Xi$ of $\eK$ is non-degenerate, then
\[
\trdeg_{\bar{k}} \bar{k}\bigl([\delta,\lambda] \bigm|  \delta \in H_{\mathrm{dR}}(E_\rho,\bar{k}),\ \lambda \in \Lambda_\rho \bigr)
= 1+\frac{[\eK:\eK^+]-1}{[\eK:\eK^+]}\cdot [\eK:\FF_q(t)].
\]
\end{theorem}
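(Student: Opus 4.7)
The plan is to transfer the problem from the abelian $t$-module $E_\rho$ to its Hartl--Juschka dual $t$-motive $\eM(\rho)$, at which point the statement becomes a direct application of the machinery assembled around Theorem~\ref{Main Theorem}. First I would invoke the comparison of de Rham pairings recorded in Proposition~\ref{prop: comp-dR-pairing} and in the diagram just preceding the theorem: the canonical isomorphisms $H_{\mathrm{dR}}(E_\rho,\bar{k}) \simeq H_{\mathrm{dR}}(\eM(\rho),\bar{k})$ and $\Lambda_\rho \simeq H_{\mathrm{Betti}}(\eM(\rho))$ intertwine the two pairings up to a sign, whence
\[
\bar{k}\bigl([\delta,\lambda] \bigm| \delta \in H_{\mathrm{dR}}(E_\rho,\bar{k}),\ \lambda \in \Lambda_\rho\bigr) = \bar{k}\Bigl(\int_\gamma \omega \Bigm| \gamma \in H_{\mathrm{Betti}}(\eM(\rho)),\ \omega \in H_{\mathrm{dR}}(\eM(\rho),\bar{k})\Bigr).
\]
Then by the period field identity \eqref{E:field equality} this common field equals $\bar{k}\bigl(p_\eK(\xi,\Xi) \bigm| \xi \in J_\eK\bigr)$, so the statement reduces to bounding the transcendence degree of the field generated by the period symbols from above and below.

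For the upper bound, the Legendre-type identity in Theorem~\ref{thm: PS}~(5) provides, for each $\xi^+\in J_{\eK^+}$, a multiplicative relation $\tilde{\pi}^{{\wt}(\Xi)} \sim \prod_{\pi_{\bX/\bX^+}(\xi)=\xi^+} p_\eK(\xi,\Xi)$; these force $\tilde{\pi}$ into $\bar{k}(p_\eK(\xi,\Xi)\mid \xi\in J_\eK)$ and impose $[\eK^+:\FF_q(t)]$ algebraic relations among the $[\eK:\FF_q(t)]+1$ generators $\{\tilde{\pi}\}\cup\{p_\eK(\xi,\Xi)\mid \xi\in J_\eK\}$, yielding
\[
\trdeg_{\bar{k}}\bar{k}\bigl(p_\eK(\xi,\Xi)\bigm| \xi\in J_\eK\bigr) \leq 1 + [\eK:\FF_q(t)] - [\eK^+:\FF_q(t)] = 1 + \frac{[\eK:\eK^+]-1}{[\eK:\eK^+]}\cdot [\eK:\FF_q(t)].
\]

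For the lower bound I would repeat steps~(2)--(4) of the strategy of Theorem~\ref{Main Theorem}, applied directly to the CM dual $t$-motive $\eM(\rho)$: the function field analogue of Grothendieck's period conjecture (Theorem~\ref{T:dim=trdeg}) identifies the transcendence degree with $\dim \Gamma_{\eM(\rho)}$, while the cocharacter produced by the Hodge--Pink filtration of $\eM(\rho)$, together with its $\Gal(k^{\sep}/\FF_q(t))$-conjugates, generates a subtorus $T_{\eM(\rho)} \subseteq \Gamma_{\eM(\rho),\CC_\infty}$. The main obstacle in this step is the verification that $\dim T_{\eM(\rho)} = \rank_\ZZ I_\Xi^0$, which rests on the identification \eqref{eqn: HPc-CMt} of the Hodge--Pink weights of $\eM(\rho)$ with the generalized CM type $\Xi$; granting this dimension formula, the chain $\trdeg \geq \dim T_{\eM(\rho)} = \rank_\ZZ I_\Xi^0$ is immediate. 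Finally, in the non-degenerate case the definition of non-degeneracy gives $\rank_\ZZ I_\Xi^0 = \rank_\ZZ I_\eK^0 = 1 + \frac{[\eK:\eK^+]-1}{[\eK:\eK^+]}\cdot [\eK:\FF_q(t)]$, so the two bounds coincide and the claimed equality is forced.
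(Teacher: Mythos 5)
Your argument is correct and follows the same route as the paper: transfer via the de~Rham pairing comparison (Proposition~\ref{prop: comp-dR-pairing}) to identify the quasi-period field of $E_\rho$ with $\bar{k}\bigl(p_\eK(\xi,\Xi)\mid\xi\in J_\eK\bigr)$, bound it above by the Legendre relations, and bound it below by $\rank_\ZZ I_\Xi^0$. The only stylistic difference is that where the paper simply cites Theorem~\ref{thm: lower-bound} for the lower bound, you re-derive it by walking through the Hodge--Pink cocharacter construction; and for the upper bound you correctly invoke Theorem~\ref{thm: PS}~(5) directly (giving the relation $\tilde{\pi}^{\wt(\Xi)}\sim\prod p_\eK(\xi,\Xi)$ over each fiber of $\pi_{\bX/\bX^+}$), which is in fact slightly cleaner than the paper's citation of Lemma~\ref{L:Algebraic relation}, since the latter is phrased for $p_\eK(\xi,\xi_0)$ with $\xi_0$ a single point rather than a generalized CM type.
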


Furthermore, suppose $\Xi$ is a CM type (i.e.\ ${\rm wt}(\Xi) = 1$) for $\eK$.
This is equivalent to saying that $E_\rho$ is a ``Hilbert-Blumenthal $O_{\eK^+}$-module''  with $\rank_A(\Lambda_\rho) = [\eK:\FF_q(t)]$, and the structure homomorphism $\rho$ can be extended to a homomorphism (still denoted by $\rho$) $O_\eK \rightarrow \End_{\text{\rm ab.\ $t$-mod}}(E_\rho)$ (cf. Section~\ref{sec: CM HB t-mod}).
Recall in \cite{Yu89} that the coordinates of any nonzero period vector of a Hilbert-Blumenthal $t$-module $E_\rho$ over $\bar{k}$ must be transcendental over $k$.
Here we extend Yu's result to the case when $E_\rho$ is CM with a non-degenerate CM type (see Theorem~\ref{thm: CM HB t-mod}):

\begin{theorem}\label{thm: app CM HB t-mod}
Let $E_\rho$ be a CM abelian $t$-module of dimension $d$ with non-degenerate CM type $(\eK,\Xi)$ over $\ok$.
Let $\eK^+$ be the maximal totally real subfield of $\eK$.
Fix an identification $\Lie(E_\rho) \cong \bar{k}^d$ where $d = [\eK^+:\FF_q(t)]$ and extend to $\Lie(E_\rho)(\CC_\infty)\cong \CC_\infty^d$.
Then every nonzero period vector $\lambda \in \Lambda_\rho \subseteq \Lie(E_\rho)(\CC_\infty)$ has coordinate vector $(\lambda_1,\ldots,\lambda_d) \in \CC^d$ with $\lambda_1,\ldots,\lambda_d$ which are algebraically independent over $\ok$.
\end{theorem}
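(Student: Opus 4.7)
The plan is to identify the $d$ coordinates of a nonzero period $\lambda\in\Lambda_\rho$ with the period symbols $\{p_\eK(\xi_i,\Xi)\mid\xi_i\in\Xi\}$, and then establish algebraic independence of these $d$ symbols via a transcendence-degree count that combines Theorem~\ref{thm: app CM t-mod} with the Legendre-type relations of Theorem~\ref{thm: PS}(5).

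First I would reduce the theorem to a statement purely about period symbols. Since algebraic independence of a coordinate vector over $\ok$ is invariant under $\GL_d(\ok)$-change of basis, I choose a basis of $\Lie(E_\rho)$ adapted to the $O_\eK$-action: the action factors through the $d$ embeddings $\nu_{\xi_i}$ with $\xi_i\in\Xi$, so $\Lie(E_\rho)$ splits into one-dimensional eigenspaces indexed by $\Xi$. The dual basis in $\Lie(E_\rho)^*$ lifts, via the surjection $H_{\mathrm{dR}}(\eM(\rho),\ok)\twoheadrightarrow\Lie(E_\rho)^*$, to scalar multiples of the eigenclasses $\omega_{\eM,\xi_i}$ supplied by Proposition~\ref{Prop: delta_M}. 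Applying the comparison of de Rham pairings of Proposition~\ref{prop: comp-dR-pairing} together with the identification $\Lambda_\rho\cong H_{\mathrm{Betti}}(\eM(\rho))$, the $i$-th coordinate of $\lambda$ equals $\int_\gamma\omega_{\eM,\xi_i}$ up to $\ok^\times$, which represents $\Pcal_\eK(\xi_i,\Xi)$ by definition. Because $\Lambda_\rho$ is a projective $O_\eK$-module of rank one, any two nonzero periods differ by a factor in $\eK^\times$, whose $\nu_{\xi_i}$-image lies in $\ok^\times$; hence the $i$-th coordinate of every nonzero period is $\sim p_\eK(\xi_i,\Xi)$, and it remains to show that $\{p_\eK(\xi_i,\Xi)\mid\xi_i\in\Xi\}$ is algebraically independent over $\ok$.

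For this transcendence step, set $F=\ok(p_\eK(\xi,\Xi)\mid\xi\in J_\eK)$ and $L=\ok(\tilde\pi,\,p_\eK(\xi_i,\Xi)\mid\xi_i\in\Xi)$. Theorem~\ref{thm: app CM t-mod} applied to the non-degenerate $\Xi$ gives $\trdeg_{\ok}F=1+([\eK:\eK^+]-1)d$, and Theorem~\ref{thm: PS}(5) places $\tilde\pi$ in $F$ via the Legendre relation, so $L\subseteq F$. For each $\xi_j^+\in J_{\eK^+}$, the Legendre identity rearranges to
\[
\prod_{\xi\in\pi_{\bX/\bX^+}^{-1}(\xi_j^+)\setminus\Xi}p_\eK(\xi,\Xi)\;\sim\;\frac{\tilde\pi}{p_\eK(\xi_j,\Xi)}\;\in\;L^\times,
\]
imposing one polynomial relation over $L$ on the $[\eK:\eK^+]-1$ non-$\Xi$ symbols in that fiber. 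Because the $d$ fibers involve pairwise disjoint non-$\Xi$ variables, these relations act independently, giving
\[
\trdeg_L F \;\leq\; d\bigl([\eK:\eK^+]-2\bigr).
\]
The tower additivity of transcendence degree then yields
\[
\trdeg_{\ok}L\;\geq\;\trdeg_{\ok}F-\trdeg_L F\;\geq\;1+([\eK:\eK^+]-1)d-d\bigl([\eK:\eK^+]-2\bigr)\;=\;d+1,
\]
which combined with the trivial bound $\trdeg_{\ok}L\leq d+1$ (since $L$ has $d+1$ generators over $\ok$) forces equality. Hence $\tilde\pi,\,p_\eK(\xi_1,\Xi),\dots,p_\eK(\xi_d,\Xi)$ are algebraically independent over $\ok$, and in particular so are the coordinates $\lambda_1,\dots,\lambda_d$.

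The decisive ingredient is Theorem~\ref{thm: app CM t-mod} and, beneath it, the main transcendence Theorem~\ref{Main Theorem}; the argument above converts that bound on the full period field into a sharper statement about the Lie coordinates through the combinatorics of the CM fibers, which is exactly where the non-degeneracy of $\Xi$ is indispensable. The most delicate step is the identification of the period coordinates with $p_\eK(\xi_i,\Xi)$, which relies on the comparison of de Rham pairings (Proposition~\ref{prop: comp-dR-pairing}) together with the $O_\eK$-eigenspace description of Proposition~\ref{Prop: delta_M}.
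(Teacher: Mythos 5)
Your proposal is correct in substance, and its first half follows the paper's own route: the paper likewise normalizes the identification $\Lie(E_\rho)\cong\ok^d$ to the eigenspace decomposition \eqref{eqn: Lie id}, takes the eigen-differentials $\omega_{\eM,\xi_i}$ of Proposition~\ref{Prop: delta_M}, and shows $\lambda_i\sim p_\eK(\xi_i,\Xi)$ by pairing with the cycle $\gamma_\lambda$ of Lemma~\ref{lem: L(rho)-H(rho)}. Where you differ is the last step: the paper simply invokes Remark~\ref{rem: Legendre} (all relations among the $p_\eK(\xi,\Xi)$ are generated by the Legendre relations, and $\xi_1,\dots,\xi_d$ form a transversal of the fibers over $J_{\eK^+}$), whereas you run an explicit transcendence-degree count: $\trdeg_{\ok}F=1+([\eK:\eK^+]-1)d$ from Theorem~\ref{thm: app CM t-mod}, the Legendre relation in each fiber bounds $\trdeg_L F$ by $d([\eK:\eK^+]-2)$, and additivity forces $\trdeg_{\ok}L=d+1$. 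This count is valid (transcendence degree is subadditive over the $d$ disjoint groups of non-chosen symbols, and each group of $[\eK:\eK^+]-1$ symbols with prescribed product over $L$ has $\trdeg_L\le[\eK:\eK^+]-2$), and it has the mild advantage of using only the numerical statement of Theorem~\ref{thm: app CM t-mod} rather than the structural "all relations are Legendre" assertion; the paper's appeal is shorter but relies on that stronger phrasing.

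Two small repairs. First, in the identification step the map you call a surjection goes the other way: dualizing $\eM/(t-\theta)\eM\twoheadrightarrow\eM/\sigma\eM$ gives an injection $\Lie(E_\rho)^{*}\cong\Hom_{\ok}(\eM/\sigma\eM,\ok)\hookrightarrow H_{\mathrm{dR}}(\eM(\rho),\ok)$, and Proposition~\ref{prop: comp-dR-pairing} alone does not tell you that these particular pairing values are the Lie coordinates of $\lambda$; you also need Proposition~\ref{P:PeriodsDiagComm} (which is exactly what the paper cites), asserting that the image of $\gamma_\lambda$ in $\CC_\infty\otimes_{\ok}\eM(\rho)/\sigma\eM(\rho)$ is $\lambda$ under \eqref{eqn: Lie id}. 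Second, the Legendre relation only makes $\tilde{\pi}$ algebraic over $F$ (so work with $F(\tilde{\pi})$), and your inequality $\trdeg_L F\le d([\eK:\eK^+]-2)$ silently assumes $[\eK:\eK^+]\ge 2$; this is the hypothesis $\eK\neq\eK^+$ that the paper includes in the body version, Theorem~\ref{thm: CM HB t-mod} (and indeed the statement fails for $\eK=\eK^+$, $d\ge 2$, since then every coordinate is $\sim\tilde{\pi}$), so you should record it.
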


\begin{Subsubsec}{Lang-Rohrlich conjecture over function fields}
As special values of various transcendental functions appear as CM periods, and hence period symbols, in certain circumstances we may apply our theorems to study transcendence problems and algebraic relations among special values.
For instance, let $A_+$ be the set of monic polynomials in $A$. Given $f \in A_+$ with $\deg f >0$,
suppose the CM field $\eK = \eK_f$, the \emph{$f$-th Carlitz cyclotomic function field over $\FF_q(t)$}.
Using the ``generalized soliton dual $t$-motives'' constructed in \cite{ABP}, we are able to show that $\bar{k}\bigl(p_{\eK_f}(\xi,\xi_0)\mid \xi \in J_{\eK_f}\bigr)$
is a finite extension over
\[
\bar{k}\bigl(\tilde{\pi},\Gamma(x)\mid x \in \frac{1}{f} A \setminus (\{0\} \cup -A_+)\bigr),
\]
where $\Gamma(x)$ is the \emph{geometric gamma function} defined by Thakur~\cite{Thakur91}:
\[
\Gamma(x)\assign x^{-1} \prod_{a \in A_+}(1+\frac{x}{a})^{-1}.
\]
Therefore Theorem~\ref{Main Theorem} implies
\begin{multline*}
\trdeg_{\bar{k}} \bar{k}\bigl(\tilde{\pi},\Gamma(x)\mid x \in \frac{1}{f} A\setminus(\{0\} \cup -A_+)\bigr) \\
= 1 + \frac{[\eK_f:\eK_f^+]-1}{[\eK_f:\eK_f^+]}\cdot [\eK_f:\FF_q(t)] \ = \ 1 + \frac{q-2}{q-1} \cdot \#(A/f)^\times,
\end{multline*}
which coincides with \cite[Corollary 1.2.]{ABP}. In other words, our main theorem provides an alternative approach to the Lang-Rohrlich conjecture for the geometric Gamma values.

\end{Subsubsec}

\begin{Subsubsec}{Remark}
We note that classical CM abelian varieties  correspond in our setting to  those Hilbert-Blumenthal $t$-modules arising from CM dual $t$-motives with CM type ($\eK,\Xi$), namely a generalized CM type with $\wt(\Xi)=1$. It seems that there are no appropriate objects in the classical theory that correspond to the CM $t$-modules associated to CM dual $t$-motives of generalized CM type $(\eK,\Xi)$ with $\wt(\Xi)>1$, and so we could expect that applying our results above  to  CM $t$-modules of generalized CM types gives rise to new kinds of results.

For instance, when the given CM field $\eK$ is contained in a constant field extension of a Carlitz cyclotomic function field, it is shown in current work~\cite{Wei22} that any quasi-period of the $t$-module associated to the CM dual $t$-motive with generalized CM type $(\eK,\Xi)$ is related to  \emph{special two-variable gamma values} (introduced by Goss in \cite{Goss88}, see also \cite{Thakur91}), which previously have not been investigated from a transcendence point of view in this generality, and whose algebraic relations are determined based on the results of this paper.  Moreover, in~\cite{Wei22} the recipe/conjecture on the \emph{Chowla-Selberg phenomenon} proposed by Thakur \cite{Thakur91} is completely proved, and the intrinsic relations between period symbols and two-variable gamma values are found by developing an analogue of Anderson's \emph{period distribution}.
\end{Subsubsec}

\subsection{Contents}
This paper is organized as follows.
We first set up basic notation in the beginning of Section~\ref{sec: pre}, and review the equivalence between abelian $t$-modules and dual $t$-motives in Section~\ref{sec: Abelian t-mod}--\ref{sec: M(rho)}.
After introducing the Betti module of a dual $t$-motive in Section~\ref{sec: U d t-mod}, we  review the theory of the function field analogue of Grothendieck's period conjecture established by the third author in \cite{Papanikolas}.

In Section~\ref{sec: CM setting}, we introduce the CM function fields and the CM types (when the CM field is geometric over $\FF_q(t)$), including the inflation and restriction maps in Section~\ref{sub:Res and Inf} and the ``reduction of CM types at infinity'' in Section~\ref{sec: R-pt-inf}.

In Section~\ref{sec: CM dual t-motives}, we utilize the ``shtuka'' functions associated to CM types (given in Section~\ref{sec: shtuka}) to construct CM dual $t$-motives in a geometric way (when the CM field is geometric over $\FF_q(t)$) in Section~\ref{sec: construction of CM} (cf.~\cite{Sinha97, BP02, ABP}).
We also conclude at the end of Section~\ref{sec: construction of CM} that two CM dual $t$-motives with the same (generalized) CM type must be isogeneous.
The uniformizablility of CM dual $t$-motives is proved
in Section~\ref{subsec: U-CM}.

After introducing the de Rham pairing of a dual $t$-motive and showing the needed properties in Section~\ref{sec: dR module}, we introduce an analogue of Shimura's period symbol in Section~\ref{sec: defn PS}.
The fundamental properties of the period symbols in Theorem~\ref{thm: PS} are proved in Section~\ref{sec: FP PS}.

We state our main theorem and prove Corollary~\ref{col: SC} in Section~\ref{sec: A-SC}.
Some preparation for proving the main theorem is given in Section~\ref{sec: Proof MT}.

In Section~\ref{sec: H-P theory}, we recall the needed results from the Hodge-Pink theory of the category introduced by Hartl and Juschka \cite{HJ20}.
After indicating the isomorphism between the $t$-motivic Galois groups coming from the category of pre-$t$-motives and the Hartl-Juschka category in Section~\ref{sec: t-motivic}, we recall the Hodge-Pink filtration and the Hodge-Pink cocharacter in Section~\ref{sec: HP-fil} and Section~\ref{sec: HP coc}, respectively.
In Section~\ref{Sub:The case of CM dual t-motives}, we connect the weight of the Hodge-Pink cocharacter of a given CM dual $t$-motive $\eM$ with its associated CM type, which leads to a lower bound of the dimension of the $t$-motivic Galois group in question.
Finally, we complete the proof of Theorem~\ref{Main Theorem}  at the end of Section~\ref{Sub:The case of CM dual t-motives}.

Section~\ref{sec: App} contains applications of this theorem to the periods of CM abelian $t$-modules.
We first make natural comparisons on the Lie spaces, the Betti modules, the de Rham modules, and the de Rham pairings between the given abelian $t$-modules
and the associated dual $t$-motives in Section~\ref{sec: Com Lie}--\ref{sec: comparison DR}. As further applications, we derive Theorem~\ref{thm: app CM t-mod} and Theorem~\ref{thm: app CM HB t-mod} in Section~\ref{sec: app CM t-mod} and Section~\ref{sec: CM HB t-mod}, respectively.

Finally, we work out the generalization of the definition of CM types and CM dual $t$-motives for those  CM fields which are not geometric over $\FF_{q}(t)$ in Appendix~\ref{secA: CM} and \ref{secB: CM motive}. Using the characterization in Proposition~\ref{prop: being CM} of those dual $t$-motives which are CM, we complete the proof of the uniformizability of CM dual $t$-motives at the end of Section~\ref{AppendixSubsec: Unif. of CM dual t-motives}.

\subsection*{Acknowledgements}
We are grateful to J. Yu for providing many helpful suggestions and comments during the preparation of this paper. We particularly thank National Center for Theoretical Sciences for financial support and hospitality over the past years.

\section{Preliminaries}\label{sec: pre}
Let $A\assign \FF_{q}[\theta]$ be the polynomial ring in the variable $\theta$ over a finite field of $q$ elements denoted by $\FF_{q}$, and let $k\assign \FF_{q}(\theta)$ be the fraction field of $A$. We fix a normalized absolute value $|\cdot|_{\infty}$ on $k$ associated to the place at infinity of $k$, and we let $k_{\infty}\assign \laurent{\FF_{q}}{1/\theta}$ be the completion of $k$ with respect to $|\cdot|_{\infty}$. Let $\CC_{\infty}$  be the completion of a fixed algebraic closure of $k_{\infty}$, completed with respect to the absolute value  that canonically extends the one on $k_{\infty}$. Let $\ok$ be a fixed algebraic closure of $k$ embedded inside $\CC_{\infty}$.

\subsection{\texorpdfstring{Abelian $t$-modules}{Abelian t-modules}}\label{sec: Abelian t-mod}
Let $\KK$ be an algebraically closed field with $k \subset \KK\subset \CC_{\infty}$, and let $t$ be a new variable independent from $\CC_{\infty}$. We recall the notion of an abelian $t$-module introduced by Anderson~\cite{Anderson86}, which plays the analogous role of a commutative algebraic group in the classical setting.
\begin{definition}\label{defn: t-module}
Let ${\GG_{a}}_{/\KK}$ be the additive group over $\KK$.
\begin{enumerate}
\item A \emph{$t$-module of dimension} $d>0$ over $\KK$ is a pair $(E,\rho)$ with the following properties.
\begin{itemize}
\item $E$ is a $d$-dimensional additive group ${\GG_{a}^{d}}_{/\KK}$ defined over $\KK$,
\item $\rho:\FF_{q}[t]\rightarrow \End_{\FF_{q}} ({\GG_{a}^{d}}_{/\KK})$ is an $\FF_{q}$-linear ring homomorphism so that $ \partial\rho_{t}-\theta I_{d}$ acts nilpotently on $\Lie ({\GG_{a}^{d}}_{/\KK})$, where $\partial \rho_{t}$ denotes the  differential of the homomorphism $\rho_{t}:{\GG_{a}^{d}}_{/\KK}\rightarrow {\GG_{a}^{d}}_{/\KK}$.
\end{itemize}
We denote by $E_{\rho}$ the underlying space of ${\GG_{a}^{d}}_{/\KK}$ equipped with the $\FF_{q}[t]$-module structure via $\rho$.
\item $E_{\rho}$ is called an \emph{abelian} (resp.\ \emph{$A$-finite}) \emph{$t$-module} if there exists a finite-dimensional $\KK$-vector subspace $V$ of $\Mscr(\rho) \assign \Hom_{\FF_{q}}(E_{\rho},\GG_{a})$ (resp.\ $\eN(\rho)\assign  \Hom_{\FF_q}(\GG_a, E_{\rho})$) of $\FF_{q}$-linear morphisms of algebraic groups over $\KK$ so that
\[
\Mscr(\rho)=\sum_{j=0}^{\infty}V\circ \rho_{t^{j}} \quad (\text{resp.} \quad \eN(\rho)=\sum_{j=0}^{\infty}\rho_{t^{j}} \circ V).
\]
\end{enumerate}
\end{definition}

Let $\KK[t,\tau]$ (resp.\ $\KK[t,\sigma]$) be the non-commutative polynomial ring over $\KK$ subject to the relations,
\[
tc = ct,\ t\tau = \tau t,\  \tau c=c^{q} \tau \quad (\textup{resp.}\  t \sigma = \sigma t,\ \sigma c = c^{1/q}\sigma), \quad \forall\, c\in \KK.
\]
Identifying $\tau$ with the Frobenius $q$-th power endomorphism $(x\mapsto x^{q}) \in \End_{\FF_q}(\GG_{a/\KK})$, we note that $\Mscr(\rho)$ (resp.\ $\eN(\rho)$)
has a left $\KK[t,\tau]$-module (resp.\ $\KK[t,\sigma]$-module) structure given as follows: for $c\in \KK$, $a\in \FF_{q}[t]$, $m \in \Mscr(\rho)$ (resp.\ $\en \in \eN(\rho)$),
\begin{enumerate}
\item $c \cdot m \assign  cm$ \quad (resp.\ $c\cdot \en \assign  \en c$).
\item $\tau \cdot m\assign \tau \circ m$ \quad (resp.\ $\sigma \cdot \en \assign  \en \circ \tau$).
\item $a\cdot m\assign m\circ \rho_{a}$ \quad (resp.\ $a \cdot \en \assign  \rho_a \circ \en$).
\end{enumerate}
We note that $E_{\rho}$ being an abelian (resp.\ $A$-finite) $t$-module is equivalent to $\Mscr(\rho)$ (resp.\ $\eN(\rho)$) being finitely generated over $\KK[t]$.

Morphisms of abelian (resp.\ $A$-finite) $t$-modules are defined as morphisms of the underlying algebraic groups that are also left $\FF_{q}[t]$-module homomorphisms. A surjective morphism of abelian (resp.\ $A$-finite) $t$-modules is called an \emph{isogeny} if it has finite kernel.

The basic examples of abelian and $A$-finite $t$-modules are Drinfeld $\FF_{q}[t]$-modules over~$\KK$. They are defined as $E_{\rho}\assign ({\GG_{a}}_{/\KK}, \rho)$, where $\rho_{t}$ is a morphism of ${\GG_{a}}_{/\KK}$ of the form
\[
\rho_{t}:X\mapsto \theta X+ a_{1}X^{q}+\cdots+a_{r}X^{q^{r}},
\]
where $r$ is a positive integer and $a_{1},\ldots,a_{r}\in \KK$ with $a_{r}\neq 0$. Here $r$ is called the \emph{rank} of the Drinfeld module $E_{\rho}$.

Given an abelian $t$-module $E_{\rho}$ over $\KK$, Anderson~\cite{Anderson86} showed that there is an associated exponential function
\[
\exp_{\rho}:\Lie(E_{\rho})(\CC_{\infty})\rightarrow
E_{\rho}(\CC_{\infty}),
\]
given by the unique $\FF_{q}$-linear power series satisfying the relations,
\begin{itemize}
\item $\exp_{\rho}( \partial \rho_{t}(\bz)) = \rho_{t}( \exp_{\rho}(\bz))$,
\item $\exp_{\rho}(\bz)\equiv \bz\  \textup{(mod deg $q$)}$.
\end{itemize}

\begin{definition}
An abelian $t$-module $E_{\rho}$ is called \emph{uniformizable} if its exponential map $\exp_{\rho}$ is surjective. In this case, $\Lambda_{\rho}\assign \Ker \exp_{\rho}$ is called the \emph{period    lattice} of $E_{\rho}$, and $E_{\rho}(\CC_{\infty})$ is isomorphic to $\CC_{\infty}^{\dim E_{\rho}}/\Lambda_{\rho}$ as an $\FF_{q}[t]$-module.
\end{definition}

\subsection{\texorpdfstring{Anderson dual $t$-motives}{Anderson dual t-motives}}\label{sec: dual t-mot}
For an integer $n$, we define \emph{$n$-fold twisting} on the Laurent series field $\laurent{\CC_{\infty}}{t}$ as
\[
\biggl( f\assign \sum a_{i}t^{i}\ \mapsto\ f^{(n)}\assign \sum a_{i}^{q^{n}}t^{i}
\biggr):\laurent{\CC_{\infty}}{t}\rightarrow \laurent{\CC_{\infty}}{t}.
\]
We note that $n$-fold twisting is an $\laurent{\FF_{q^{n}}}{t}$-linear automorphism of the field $\laurent{\CC_{\infty}}{t}$. We may regard $\KK[t,\sigma] $ as the non-commutative polynomial ring obtained by adjoining the variable $\sigma$ to $\KK[t]$ and multiplying by the rule
\[
\sigma f=f^{(-1)}\sigma \textnormal{ for }f\in \KK[t].
\]
\begin{definition}
An (Anderson) \emph{dual $t$-motive} over $\KK$ is a left $\KK[t,\sigma]$-module $\eM$ so that
\begin{itemize}
\item $\eM$ is a free left $\KK[t]$-module of finite rank,
\item $\eM$ is a free left $\KK[\sigma]$-module of finite rank,
\item $(t-\theta)^{N} \eM/\sigma \eM= (0)$ for a sufficiently large integer $N$.
\end{itemize}
\emph{Morphisms} of  dual $t$-motives over $\KK$ are defined to be left $\KK[t,\sigma]$-module homomorphisms.
\end{definition}

\begin{example}
The basic example of a dual $t$-motive is the Carlitz motive $\eC$ over $\KK$. The underlying space of $\eC$ is $\KK[t]$, on which the $\sigma$-action is given by
\[
\sigma f\assign (t-\theta) f^{(-1)}, \quad \textup{for $f\in \eC$.}
\]
\end{example}

Let $E_\rho = (\GG_{a/\KK}^d,\rho)$ be an $A$-finite $t$-module over $\KK$. Then $\eN(\rho)$ must be free of finite rank over $\KK[t]$. In other words, $\eN(\rho)$ is a dual $t$-motive. Moreover, given a morphism $f : E_{\rho_1} \rightarrow E_{\rho_2}$ between $A$-finite $t$-modules, there is an induced $\KK[t,\sigma]$-module homomorphism
\[
\Theta_f : \eN(\rho_1)\rightarrow \eN(\rho_2), \quad \Theta_f(\en_1) \assign  f\circ \en_1.
\]

\begin{theorem}[{Anderson, see~\cite[Prop.~2.5.8 and Thm.~2.5.11]{HJ20}}] \label{thm: Anderson Eq}
The functor
\[
(E_{\rho} \mapsto \eN(\rho),\quad f \mapsto \Theta_f)
\]
from the category of $A$-finite $t$-modules to the category of  dual $t$-motives is an equivalence. Moreover, ${\eN}(\rho)/(\sigma -1) {\eN}(\rho)$ and ${\eN(\rho)}/\sigma {\eN(\rho)}$ are naturally identified with $E_\rho(\KK)$ and  $\Lie({E_\rho})(\KK)$ respectively.
\end{theorem}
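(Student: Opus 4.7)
The plan is to follow Anderson's original approach, with the refinements recorded in Hartl--Juschka, by proving well-definedness, establishing the natural identifications on the quotient spaces first, and then using those identifications as the bridge between morphisms of $t$-modules and morphisms of their associated dual $t$-motives.

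First I would verify that $\eN(\rho)$ is a dual $t$-motive. The $A$-finiteness hypothesis is exactly that $\eN(\rho)$ is finitely generated over $\KK[t]$; freeness over $\KK[t]$ follows because $\eN(\rho)$ embeds into $\KK\{\tau\}^d$ (with $d = \dim E_\rho$) and hence has no $\KK[t]$-torsion. For the $\KK[\sigma]$-structure, the nilpotency condition that $\partial\rho_t - \theta I_d$ is nilpotent on $\Lie(E_\rho)$ translates, after writing an element of $\eN(\rho)$ as a $d$-tuple of twisted polynomials $(p_1(\tau),\ldots,p_d(\tau))$, into the statement that $(t-\theta)^N \eN(\rho)\subset \sigma\eN(\rho)$ for $N\gg 0$. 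Combining this with finite generation over $\KK[t]$ gives freeness of rank $d$ over $\KK[\sigma]$, as is shown by comparing Hilbert series or by a filtration argument using the nilpotent filtration of $\partial\rho_t-\theta$.

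Next I would establish the natural identifications. Under $\eN(\rho)\hookrightarrow \KK\{\tau\}^d$, the rule $\sigma\cdot \en = \en\circ\tau$ corresponds to right multiplication by $\tau$, so the quotient $\eN(\rho)/\sigma\eN(\rho)$ is identified with the space of constant terms, giving the canonical isomorphism with $\KK^d\cong \Lie(E_\rho)(\KK)$. For the $(\sigma-1)$-quotient, evaluation at $1\in \GG_a(\KK)$ defines $\en\mapsto \en(1)\in E_\rho(\KK)$; since $(\sigma\en)(1)=\en(1^q)=\en(1)$, this evaluation descends to a well-defined map $\eN(\rho)/(\sigma-1)\eN(\rho)\to E_\rho(\KK)$. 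A counting argument with ranks over $\KK[\sigma]$ and $\KK$, together with the surjectivity given by a Lang-style argument (any $\FF_q$-linear polynomial system over $\KK$ is solvable because $\KK$ is algebraically closed), shows this map is an isomorphism of $\FF_q[t]$-modules.

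For full faithfulness, any $\KK[t,\sigma]$-homomorphism $\Theta\colon \eN(\rho_1)\to\eN(\rho_2)$ induces $\FF_q[t]$-linear maps on both $E_{\rho_i}(\KK)$ and $\Lie(E_{\rho_i})(\KK)$ via the identifications just established. By specializing $\Theta$ to the generating set of $\eN(\rho_1)$ over $\KK[\sigma]$ and using the dictionary between twisted-polynomial coefficients and the components of $\en(1)$, one recovers a unique $\FF_q$-linear morphism $f\colon E_{\rho_1}\to E_{\rho_2}$ of algebraic groups commuting with $\rho_i(t)$, and $\Theta=\Theta_f$ by construction. For essential surjectivity, given a dual $t$-motive $\eM$ of $\KK[\sigma]$-rank $d$, one puts $E=\GG_a^d$ identified with $\eM/\sigma\eM$ through a choice of $\KK[\sigma]$-basis; the $t$-action on $\eM$ descends to a $\KK$-linear endomorphism of $\eM/\sigma\eM$ whose difference with multiplication by $\theta$ is nilpotent by the third defining axiom of a dual $t$-motive. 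Lifting this $t$-action along the presentation of $\eM$ as $\KK\{\tau\}^d$ gives an $\FF_q$-linear endomorphism $\rho_t$ of $\GG_a^d$ satisfying the abelian $t$-module axioms, and a direct check yields $\eN(\rho)\cong \eM$ as $\KK[t,\sigma]$-modules.

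The main obstacle I expect is the essential-surjectivity step: producing the algebraic-group morphism $\rho_t$ from the abstract $\KK[t,\sigma]$-module structure on $\eM$ requires a careful choice of presentation and a verification that the construction is independent of the chosen $\KK[\sigma]$-basis. This is precisely the content of Anderson's ``anti-equivalence dictionary'' between $t$-module data and twisted-polynomial data, and is where Hartl--Juschka \cite{HJ20} do the delicate bookkeeping to show that the two rank invariants (over $\KK[t]$ for $A$-finiteness and over $\KK[\sigma]$ for the dimension $d$) match on both sides.
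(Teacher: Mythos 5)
The paper proves nothing here --- it simply cites Hartl--Juschka \cite[Prop.~2.5.8, Thm.~2.5.11]{HJ20} --- and your outline reconstructs that reference's argument in the expected way, so the approaches match. One calibration worth making: for the identification $\eN(\rho)/(\sigma-1)\eN(\rho)\cong E_\rho(\KK)$ neither a Lang-style solvability argument nor a rank count is actually needed (and the latter is not even available, since $\sigma$ is $q$-power semilinear over $\KK$, so the quotient is only an $\FF_q[t]$-module, not a $\KK$-vector space): surjectivity of $\en\mapsto\en(1)$ is immediate because the constant morphism $c\mapsto(cx_1,\ldots,cx_d)$ already hits any point of $\KK^d$, and the kernel is identified with $(\sigma-1)\eN(\rho)$ by a purely algebraic coefficient-sum and telescoping computation in $\KK[\tau]^d$ (the element $p\tau-p$ always has vanishing coefficient sum, and conversely any such element arises via $a_i=-\sum_{j\le i}b_j$). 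Similarly, for essential surjectivity the cleaner and more standard route --- recorded in the paper's remark immediately following the theorem --- passes through $\eM/(\sigma-1)\eM\cong E(\KK)$, on which the $\FF_q[t]$-action directly yields $\rho^\eM$, rather than through $\eM/\sigma\eM\cong\Lie(E)(\KK)$ followed by the lifting step you describe; the latter works but adds an unnecessary extra argument once one notes that $\eN(\rho)\cong\KK[\sigma]^d$ is automatic.
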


\begin{remark}
(1) An inverse functor of this equivalence  from the category of  dual $t$-motives to the category of $A$-finite $t$-modules is given as follows.
Let $\eM$ be a dual $t$-motive of rank $d$ over $\KK[\sigma]$. We note that the action of  $\FF_{q}[t]$ on $\eM$ commutes with the operator $\sigma$ and, so the quotient
\[
\eM/(\sigma-1)\eM
\]
is a left $\FF_{q}[t]$-module.
As $\rank_{\KK[\sigma]} \eM = d$,  we can view  $\eM/(\sigma-1)\eM$ as the $\KK$-valued points of ${\GG_{a}^{d}}_{/\KK}$, and its $\FF_q[t]$-module structure induces an $\FF_q$-algebra homomorphism $\rho^\eM : \FF_q[t] \rightarrow \End_{\FF_q}(\GG_{a/\KK}^d)$. It is known that $E_\eM \assign  (\GG_{a/\KK}^d,\rho^\eM)$ is an $A$-finite $t$-module over $\KK$. Moreover, let $\Theta:\eM_{1}\rightarrow \eM_{2}$ be a morphism of  dual $t$-motives. the corresponding $\FF_{q}[t]$-module homomorphism
\[
\overline{\Theta}:\eM_{1}/(\sigma-1)\eM_{1}\rightarrow \eM_{2}/(\sigma-1)\eM_{2},
\]
induces a morphism of $A$-finite $t$-modules $\overline{\Theta}: E_{\eM_1} \rightarrow E_{\eM_2}$.

(2) We refer the readers to \cite{CP11}, \cite{CP12} for explicit examples of how to associate dual $t$-motives to Drinfeld modules. For general dual $t$-motives and $A$-finite $t$-modules, see also \cite[\S 1.5.4]{BPrapid}, \cite[\S 3.1]{NP21}.
\end{remark}

\begin{definition}
An injective morphism $\Theta:\eM_{1}\rightarrow \eM_{2}$ of  dual $t$-motives with cokernel finite-dimensional over $\KK$ is called an \emph{isogeny}. We say that the  dual $t$-motives $\eM_{1}$ and $\eM_{2}$ are \emph{isogenous} if there exists an isogeny $\Theta:\eM_{1}\rightarrow \eM_{2}$.
\end{definition}

Let $\Theta:\eM_{1}\rightarrow \eM_{2}$ be an isogeny of  dual $t$-motives. We note that the induced morphism $\overline{\Theta}$ of abelian $t$-modules is surjective with finite kernel (cf.~\cite[Lem.~4.1.4]{ABP}), whence an isogeny of abelian $t$-modules. We further note that the isogeny relation of dual $t$-motives is an equivalence relation (see~\cite[p.~266]{ABP}).

\subsection{\texorpdfstring{Hartl-Juschka dual $t$-motives associated with abelian $t$-modules}{Hartl-Juschka dual t-motives associated with abelian t-modules}} \label{sec: M(rho)}
Let $\Omega_{\KK[t]/\KK}$ $\assign  \KK[t]\,dt$ be the module of K\"{a}hler differentials of $\KK[t]$ over $\KK$. Given an abelian $t$-module $E_{\rho}\assign (\GG_{a/\KK}^d, \rho)$ over $\KK$, following Hartl and Juschka~\cite[Proposition 2.4.3]{HJ20}  we define
\begin{equation}\label{eqn: M(rho)}
\eM(\rho)\assign  \Hom_{\KK[t]}(\tau \Mscr(\rho), \Omega_{\KK[t]/\KK}),
\end{equation}
together with the following $\sigma$-action: for $\eum \in \eM(\rho)$,
\[
(\sigma \cdot \eum)(\tau \cdot m) \assign  \eum(\tau^2 \cdot m)^{(-1)}, \quad \forall \tau m \in \tau \Mscr(\rho).
\]
This gives a $\KK[t,\sigma]$-module structure on $\eM(\rho)$, and $\eM(\rho)$ is free of finite rank over $\KK[t]$.  See also~\cite[\S~4.4]{NP21}. Moreover, the following holds.

\begin{theorem}\label{thm: M(rho)}
Let $E_\rho = (\GG_{a/\KK}^d,\rho)$ be an abelian $t$-module over $\KK$. Then $\eM(\rho)$ is a dual $t$-motive over $\KK$ isomorphic to $\eN(\rho)$ as $\KK[t,\sigma]$-modules. Consequently, $E_\rho$ is $A$-finite, and $E_\rho \cong E_{\eM(\rho)}$.
\end{theorem}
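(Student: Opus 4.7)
The plan is to construct an explicit $\KK[t,\sigma]$-module isomorphism $\Phi : \eN(\rho) \iso \eM(\rho)$, and then to read off the remaining claims from Theorem~\ref{thm: Anderson Eq}. The only thing we are given is that $E_\rho$ is abelian, i.e., $\Mscr(\rho)$ is finitely generated over $\KK[t]$; the $A$-finiteness of $E_\rho$ (and hence the dual $t$-motive structure on $\eN(\rho)$) will be a \emph{consequence} of the construction, not an input.

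First, I would directly verify the three dual $t$-motive axioms for $\eM(\rho)$. Since $\Mscr(\rho)$ is a submodule of $\Hom_{\FF_q}(E_\rho,\GG_a)$ it is torsion-free, and being finitely generated over the PID $\KK[t]$ it is free of finite rank. The submodule $\tau\Mscr(\rho) \subset \Mscr(\rho)$ is therefore also free of finite rank, whence $\eM(\rho) = \Hom_{\KK[t]}(\tau\Mscr(\rho),\KK[t]\,dt)$ is free of finite rank over $\KK[t]$. Nilpotency of $\partial\rho_t - \theta$ on $\Lie(E_\rho)$ will translate into $(t-\theta)^N$ annihilating $\eM(\rho)/\sigma\eM(\rho)$ for large $N$, and freeness of finite rank over $\KK[\sigma]$ will be available once $\Phi$ is in place.

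To construct $\Phi$, I would use the natural $\KK[t]$-bilinear composition pairing
\[
\Mscr(\rho) \times \eN(\rho) \longrightarrow \End_{\FF_q}(\GG_a) = \KK\{\tau\}, \qquad (m,\en) \longmapsto m\circ\en,
\]
which is $\KK[t]$-bilinear because $(tm)\circ\en = m\circ(\rho_t\en) = m\circ(t\en)$. For $\en \in \eN(\rho)$ I would define $\Phi(\en)(\tau m) \in \KK[t]\,dt$ by expanding $\tau m \circ \en \in \KK\{\tau\}\tau$ in powers of $\tau$, then packaging the resulting coefficients into a polynomial in $t$ using the tautological $\KK[t]$-structure and multiplying by $dt$. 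Equivariance under $t$ is built into the bilinearity. Equivariance under $\sigma$ reduces to the defining identity $(\sigma\eum)(\tau m) = \eum(\tau^2 m)^{(-1)}$, which on the $\eN(\rho)$-side reflects the Frobenius twist implicit in replacing $\en$ by $\en\circ\tau$.

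Finally, for bijectivity, injectivity should follow from the fact that $\Mscr(\rho)$ separates points of $E_\rho$, so a nonzero $\en$ is detected by some $m$. A rank count---both sides being finitely generated over $\KK[t]$ of rank equal to $\dim_\KK\Lie(E_\rho) = d$---then forces surjectivity. Once $\Phi$ is an isomorphism, $\eN(\rho)$ inherits the dual $t$-motive structure of $\eM(\rho)$; in particular $\eN(\rho)$ is finitely generated over $\KK[t]$, which is precisely $A$-finiteness of $E_\rho$, and $E_\rho \cong E_{\eM(\rho)}$ follows by transport through Theorem~\ref{thm: Anderson Eq}. The most delicate step, and the one I expect to be the main obstacle, is pinning down the precise formula for $\Phi$ and checking $\sigma$-equivariance: the contravariant/covariant conversion between $\eM(\rho)$ and $\eN(\rho)$ forces an honest Frobenius twist, and getting the residue-at-$\theta$ flavor of the construction exactly right is where all the arithmetic bookkeeping lives.
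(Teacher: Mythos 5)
Your plan is to build an explicit $\KK[t,\sigma]$-isomorphism $\Phi:\eN(\rho)\to\eM(\rho)$ from the composition pairing, and that is essentially the content of Hartl--Juschka's Theorem~2.5.13, which the paper cites as a black box and does \emph{not} reprove. The paper instead starts from the injective $\KK[t,\sigma]$-morphism $\Theta:\eM(\rho)\hookrightarrow\eN(\rho)$ supplied by that theorem, observes that $\eN(\rho)$ is always free of rank $d$ over $\KK[\sigma]$ (this needs no $A$-finiteness), deduces that $\eM(\rho)$ is free of finite rank over $\KK[\sigma]$, computes $\rank_{\KK[\sigma]}\eM(\rho)=\dim_\KK\bigl(\Mscr(\rho)/\tau\Mscr(\rho)\bigr)=d=\rank_{\KK[\sigma]}\eN(\rho)$ using [HJ20, Remark~2.4.4(c)], concludes the cokernel of $\Theta$ is finite-dimensional over $\KK$ and hence $\eN(\rho)$ is finitely generated over $\KK[t]$ (so $E_\rho$ is $A$-finite), and only then invokes HJ20's Theorem~2.5.13 a second time to get that $\Theta$ is an isomorphism. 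So the paper's real work is the $\KK[\sigma]$-rank comparison to establish $A$-finiteness; the construction of the map and its bijectivity once $A$-finiteness is known are imported wholesale.

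The genuine gap in your proposal is the surjectivity step. Your stated rank over $\KK[t]$ is wrong: $\eM(\rho)$ is $\KK[t]$-free of rank $r=\rank_{\KK[t]}\Mscr(\rho)$, not of rank $d=\dim_\KK\Lie(E_\rho)$; the integer $d$ is the rank over $\KK[\sigma]$. But even after fixing this, the argument does not close: an injective $\KK[t]$-linear (or $\KK[t,\sigma]$-linear) map between free $\KK[t]$-modules of the \emph{same} finite rank need not be surjective, since the cokernel can be a nonzero torsion module that is finite-dimensional over $\KK$. Isogenies of dual $t$-motives are precisely such maps, so the setting actually guarantees the existence of many injective, non-surjective morphisms of the relevant ranks. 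Equality of ranks therefore buys you nothing toward bijectivity; what makes $\Theta$ (or your $\Phi$) an isomorphism is a specific feature of the natural pairing, and that is exactly the ``arithmetic bookkeeping'' you flagged as the main obstacle and did not carry out. To repair the proposal you would need either to reprove the isomorphism part of HJ20's Theorem~2.5.13, or to follow the paper and cite it — in which case the rank count over $\KK[\sigma]$ (not $\KK[t]$) is what delivers the finite cokernel and hence $A$-finiteness, and the citation delivers bijectivity.
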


\begin{proof}
From \cite[Theorem 2.5.13]{HJ20}, there exists an injective $\KK[t,\sigma]$-module homomorphism $\Theta: \eM(\rho)\hookrightarrow \eN(\rho)$.
As $\eN(\rho)$ is free of finite rank over $\KK[\sigma]$, so is $\eM(\rho)$.
Moreover, we have that
\begin{align*}
\rank_{\KK[\sigma]} \eM(\rho) = \dim_{\KK}\left(\frac{\eM(\rho)}{\sigma \eM(\rho)}\right) &= \dim_\KK\left(\frac{\Mscr(\rho)}{\tau \Mscr(\rho)}\right) \\
&=
\rank_{\KK[\tau]} \Mscr(\rho) = \dim E_\rho = \rank_{\KK[\sigma]} \eN(\rho),
\end{align*}
where the second equality follows from \cite[Remark 2.4.4 (c)]{HJ20}.
This implies that the cokernel of $\Theta$ must be of finite dimension over $\KK$.
Since $\eM(\rho)$ is free of finite rank over $\KK[t]$, we obtain that $\eN(\rho)$ is a finitely generated $\KK[t]$-module.
Therefore $E_\rho$ is $A$-finite, and by \cite[Theorem 2.5.13]{HJ20} again $\Theta: \eM(\rho)\rightarrow \eN(\rho)$ is an isomorphism.
Finally, the induced isomorphism $\overline{\Theta} : E_{\eM(\rho)} \cong E_{\eN(\rho)}$
and the identification $E_{\eN(\rho)} = E_\rho$ completes the proof.
\end{proof}

\begin{remark}\label{rem: M(rho)}
For each abelian $t$-module $E_\rho$, we call $\eM(\rho)$ the \emph{Hartl-Juschka dual $t$-motive associated with $E_\rho$}.
For our purpose, we utilize $\eM(\rho)$ here (instead of $\eN(\rho)$) in order to derive the natural compatibility between the de Rham pairings of $E_\rho$ and $\eM(\rho)$, see Section~\ref{sec: comparison DR}.
\end{remark}

\subsection{\texorpdfstring{Uniformizability of  dual $t$-motives}{Uniformizability of  dual t-motives}}\label{sec: U d t-mod}
Given a dual $t$-motive $\eM$ of rank $r$ over $\KK[t]$, we fix $\bm\in \Mat_{r\times 1}(\eM)$ so that the entries of $\bm$ comprise a $\KK[t]$-basis of $\eM$. Then the action of $\sigma$ on $\bm$ is presented as
\[
\sigma \bm=\Upphi \bm
\]
for some $\Upphi\in \Mat_{r}(\KK[t])$. Since $(t-\theta)^{N} \eM/\sigma \eM =(0)$, $\det \Phi$ is of the form $c(t-\theta)^{n}$ for some $c\in \KK^{\times}$ and an integer $n$. Let $\TT$ be the Tate algebra in the variable $t$ over $\CC_{\infty}$, which consists of all formal power series in $\power{\CC_{\infty}}{t}$ that are convergent on the closed unit disk.

Let $\sigma$ act on $\TT\otimes_{\KK[t]}\eM$ via $f\otimes m\mapsto f^{(-1)}\otimes\sigma m$. Then it is not hard to see that the natural map
\begin{equation}\label{E:Inj MtensorTsigma}
  \TT\otimes_{\FF_{q}[t]} \left(\TT\otimes_{\KK[t]} \eM \right)^{\sigma}  \rightarrow \TT\otimes_{\KK[t]} \eM
\end{equation}
is injective, where $(\TT\otimes_{\KK[t]} \eM)^{\sigma}$ denotes the part fixed by $\sigma$.

\begin{definition}
Let $\eM$ be a dual $t$-motive. $\eM$ is said to be \emph{uniformizable}  (or rigid analytically trivial) if the injective map \eqref{E:Inj MtensorTsigma} is also surjective.
\end{definition}

In fact, the equivalence in Theorem~\ref{thm: Anderson Eq} preserves uniformizability.

\begin{theorem}[{See~\cite[Prop.~2.4.17]{HJ20}}]\label{thm: Uni-eqv}
The category of unifomizable $A$-finite $t$-modules is equivalent to the category of uniformizable dual $t$-motives.
\end{theorem}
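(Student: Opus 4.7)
The plan is to upgrade the equivalence of categories already furnished by Theorem~\ref{thm: Anderson Eq} (which sends $E_\rho \mapsto \eN(\rho)$) so that it restricts to an equivalence between the two uniformizable subcategories. Since the functor itself and its inverse are defined with no reference to Tate algebras, the only content is to show that $E_\rho$ is uniformizable as a $t$-module if and only if $\eN(\rho)$ is uniformizable as a dual $t$-motive. The bridge between the two conditions is a natural identification
\[
\Lambda_\rho \;\cong\; \bigl(\TT \otimes_{\KK[t]} \eN(\rho)\bigr)^{\sigma}
\]
as $\FF_q[t]$-modules, which I would construct and use to compare ranks.

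First, I would construct the comparison map. Using Anderson's identification $\eN(\rho)/(\sigma-1)\eN(\rho) \cong E_\rho(\KK)$ from Theorem~\ref{thm: Anderson Eq}, and extending scalars from $\KK$ to $\CC_\infty$, one obtains a specialization map from $\sigma$-invariant Tate-valued sections of $\eN(\rho)$ to $E_\rho(\CC_\infty)$. Composing with the connecting map provided by $\exp_\rho$, I would show that each $\lambda \in \Lambda_\rho = \ker \exp_\rho$ gives rise to a $\sigma$-invariant element of $\TT\otimes_{\KK[t]}\eN(\rho)$: starting from a lift $\tilde{\lambda} \in \Lie(E_\rho)(\CC_\infty)$ of $0$, the Taylor expansion of $\exp_\rho$ around $\tilde{\lambda}$ provides, for each $\en\in\eN(\rho)$, a convergent series in $\sigma$ that represents the desired invariant. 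Conversely, given a $\sigma$-invariant element, reduction modulo $(t-\theta)$ using $(t-\theta)^N \eN(\rho) \subset \sigma \eN(\rho)$ produces an element of $\Lambda_\rho$. Checking $\FF_q[t]$-linearity and bijectivity of the two constructions is the main technical step.

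Given this identification, a rank count concludes the proof. On the one hand, $E_\rho$ is uniformizable precisely when $\Lambda_\rho$ is a discrete $\FF_q[t]$-lattice of rank $d = \dim E_\rho$ in $\Lie(E_\rho)(\CC_\infty)$, and for $A$-finite $E_\rho$ one has $d = \rank_{\KK[t]}\eN(\rho)$. On the other hand, the definition of uniformizability for $\eN(\rho)$ amounts, via the injectivity of \eqref{E:Inj MtensorTsigma} and a dimension count over $\TT$, to the equality
\[
\rank_{\FF_q[t]} \bigl(\TT\otimes_{\KK[t]}\eN(\rho)\bigr)^{\sigma} \;=\; \rank_{\KK[t]}\eN(\rho).
\]
Under the isomorphism $\Lambda_\rho \cong (\TT\otimes_{\KK[t]}\eN(\rho))^{\sigma}$ constructed above, both sides say the same thing. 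Combined with functoriality of the comparison in morphisms (which is automatic since everything is built from Anderson's functor), this gives the desired equivalence.

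The main obstacle is the construction and verification of the comparison $\Lambda_\rho \cong (\TT\otimes_{\KK[t]}\eN(\rho))^{\sigma}$. The issue is analytic: one must ensure that the $\sigma$-invariant series constructed from $\exp_\rho$ actually converge in the Tate algebra $\TT$, which requires exploiting the growth estimates on the coefficients of $\exp_\rho$ (all elements of $\Lambda_\rho$ give bounded Taylor data) together with the relation $(t-\theta)^N \eN(\rho) \subset \sigma \eN(\rho)$ to control $\sigma$-adic convergence. Once this analytic comparison is in place, everything else is a formal consequence of Theorem~\ref{thm: Anderson Eq}.
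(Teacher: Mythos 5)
Your overall strategy---transport uniformizability through Anderson's equivalence (Theorem~\ref{thm: Anderson Eq}) by identifying the period lattice $\Lambda_\rho$ with the Betti module of the associated dual $t$-motive---is the standard route, and it is essentially what the cited source does and what the paper itself carries out later (for the uniformizable case) in Lemma~\ref{lem: L(rho)-H(rho)} via Anderson generating functions; note the paper does not reprove this theorem but simply cites \cite[Prop.~2.4.17]{HJ20}. However, as written your argument has two genuine gaps. First, the concluding ``rank count'' is based on a false criterion: uniformizability of $E_\rho$ is \emph{not} equivalent to $\Lambda_\rho$ being a lattice of rank $d=\dim E_\rho$, and $\dim E_\rho$ is $\rank_{\KK[\sigma]}\eN(\rho)$, not $\rank_{\KK[t]}\eN(\rho)$ (a rank-$2$ Drinfeld module has $d=1$ but lattice rank $2$). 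The correct statement is that $E_\rho$ is uniformizable if and only if $\rank_{\FF_q[t]}\Lambda_\rho = r \assign \rank_{\KK[t]}\eN(\rho)$, and the hard direction of this---that a full lattice forces $\exp_\rho$ to be surjective---is precisely the nontrivial content of the theorem you are trying to prove (Anderson's Theorem~4 / \cite[Prop.~2.4.17]{HJ20}); it cannot be dismissed as a formality, so even after correcting the numerology the argument is circular or incomplete.

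Second, the unconditional isomorphism $\Lambda_\rho \cong \bigl(\TT\otimes_{\KK[t]}\eN(\rho)\bigr)^{\sigma}$ that your whole plan rests on is not established by your sketch. The known construction (and the one in the paper) sends $\lambda$ to the Anderson generating function $\Gcal_\lambda$, which naturally pairs with $\Mscr(\rho)$ and lands in $\TT\otimes\eM(\rho)$ rather than in $\TT\otimes\eN(\rho)$; more importantly, its bijectivity (Lemma~\ref{lem: L(rho)-H(rho)}) is proved \emph{under the hypothesis that $E_\rho$ is uniformizable}, the key input being the invertibility over $\TT$ of the matrix of Anderson generating functions. If you want to deduce uniformizability of one side from the other, you need the comparison without assuming uniformizability on either side (or at least separate arguments for each implication), and your ``Taylor expansion of $\exp_\rho$ around a lift of $0$'' device neither produces a manifestly $\sigma$-invariant element of $\TT\otimes_{\KK[t]}\eN(\rho)$ nor addresses surjectivity. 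Injectivity of \eqref{E:Inj MtensorTsigma} and the bound $\rank_{\FF_q[t]}H_{\mathrm{Betti}}\le r$ are fine, but the substantive analytic step that makes the two notions of uniformizability match is missing.
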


\begin{remark}\label{rem: HB(M)}
For a uniformizable dual $t$-motive $\eM$ over $\ok$, we define the \textit{Betti} module of $\eM$ by
\[
H_{\mathrm{Betti}}(\eM)\assign \bigl(\TT\otimes_{\KK[t]} \eM \bigr)^{\sigma},
\]
which is free of rank $r\assign \rank_{\ok[t]}\eM$ over $\FF_{q}[t]$. If we define  $\TT^\dagger\subset \TT$ to be the ring of rigid analytic functions on $\CC_\infty\setminus \{\theta^{q^i}\mid i \in \NN\}$, then the result of Hartl and Juschka~\cite[Prop.~2.3.30]{HJ20} shows that
\[
H_{\textup{Betti}}(\eM)=\left\{ x\in \TT^{\dagger}\otimes_{\ok[t]}\eM \mid  \sigma x=x \right\}.
\]
\end{remark}

\begin{remark}
We note that the map \eqref{E:Inj MtensorTsigma} is surjective if and only if there exists a matrix $\Uppsi\in \GL_{r}(\TT)$ satisfying the difference equation
\[
\Uppsi^{(-1)}=\Upphi\Uppsi.
\]
\end{remark}

\subsection{\texorpdfstring{Tannakian formalism for $t$-motives}{Tannakian formalism for t-motives}}\label{Seubset: Tannakian category of t-motives}
For later use in transcendence problems, in this subsection we fix $\KK$ to be $\ok$ and review the theory developed by the third author of the present paper~\cite{Papanikolas}.

We let $\ok(t)[\sigma,\sigma^{-1}]$ be the Laurent polynomial ring in $\sigma$ over the field $\ok(t)$ subject to the relations
\[
\sigma^{n}f=f^{(-n)}\sigma^{n}, \quad \forall n \in \ZZ.
\]
A left $\ok(t)[\sigma,\sigma^{-1}]$-module is called a \emph{pre-$t$-motive} if it is finite-dimensional over $\ok(t)$. Morphisms of pre-$t$-motives are left $\ok(t)[\sigma,\sigma^{-1}]$-module homomorphisms.

It is natural to pass from a dual $t$-motive to a pre-$t$-motive via tensor product. More explicitly, let $\eM$ be a dual $t$-motive and consider the tensor product $M\assign \ok(t)\otimes_{\ok[t]}\eM$. We extend the action of $\sigma$ on $\eM$ to this tensor product via $f\otimes m\mapsto f^{(-1)}\otimes \sigma m$. So $M$ is a finite-dimensional $\ok(t)$-vector space on which the action of $\sigma$ is invertible, and thereby one can regard it as a left $\ok(t)[\sigma,\sigma^{-1}]$-module. It follows that $M$ is a pre-$t$-motive.

Given a pre-$t$-motive of dimension $r$ over $\ok(t)$, we take $\bm\in \Mat_{r\times 1}(M)$ so that the entries of $\bm$ comprise a $\ok(t)$-basis of $M$.  Then the action of $\sigma$ on $M$ is
represented as
\[
\sigma \bm=\Upphi \bm
\]
for some $\Upphi\in \GL_{r}(\ok(t))$. Let $\LL$ be the fraction field of $\TT$. Then $M$ is called \emph{rigid analytically trivial} if there exists $\Psi\in \GL_{r}(\LL)$ satisfying
\[
\Uppsi^{(-1)}=\Upphi\Uppsi.
\]
Extending the action of $\sigma$ on $\LL\otimes_{\ok(t)}M$ via $f\otimes m\mapsto f^{(-1)}\otimes \sigma m$ and denoting by $H_{\mathrm{Betti}}(M)\assign (\LL\otimes_{\ok(t)}M)^{\sigma}$ the elements fixed by $\sigma$, one of the main results in \cite{Papanikolas} is as follows.

\begin{theorem}[{Papanikolas~\cite[Thm.~3.3.15]{Papanikolas}}]
The category $\mathcal{R}$ of rigid analytically trivial pre-$t$-motives forms a neutral Tannakian category over $\FF_{q}(t)$ with fiber functor
\[
M\mapsto H_{\textup{Betti}}(M).
\]
\end{theorem}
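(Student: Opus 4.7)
The plan is to verify the four axioms of a neutral Tannakian category over $\FF_q(t)$ for $\mathcal{R}$: that it is an $\FF_q(t)$-linear abelian rigid tensor category with $\End_{\mathcal{R}}(\one) = \FF_q(t)$, and that the putative fiber functor $\omega: M\mapsto H_{\mathrm{Betti}}(M)$ is an exact faithful $\FF_q(t)$-linear tensor functor into finite-dimensional $\FF_q(t)$-vector spaces. First I would handle the underlying $\ok(t)$-linear abelian structure: kernels, cokernels, and finite direct sums of morphisms of left $\ok(t)[\sigma,\sigma^{-1}]$-modules are again finite-dimensional over $\ok(t)$, and since the action of $\sigma$ extends $\ok(t)$-semilinearly on all such subquotients, what needs checking is that rigid analytic triviality is preserved. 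Given $\Phi_1,\Phi_2$ representing $\sigma$ on $M_1,M_2$ with trivializations $\Psi_1,\Psi_2 \in \GL(\LL)$ and a morphism represented by a matrix $B$ over $\ok(t)$ satisfying $B^{(-1)}\Phi_1 = \Phi_2 B$, one computes that $\Psi_2^{-1}B\Psi_1$ lies in $\Mat(\FF_q(t))$, so subobjects, quotients and direct sums of trivializable objects remain trivializable over $\LL$; this also sets up exactness of $\omega$.

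Second, I would put the tensor structure on $\mathcal{R}$: $M_1\otimes M_2 \assign M_1\otimes_{\ok(t)} M_2$ with $\sigma$ acting diagonally, unit $\one = \ok(t)$ with $\sigma \cdot f = f^{(-1)}$, and internal Homs $M_1^\vee \otimes M_2$ with the dual transpose-inverse $\sigma$-action. If $\Phi_i \Psi_i = \Psi_i^{(-1)}$ for $i=1,2$, then $(\Phi_1\otimes \Phi_2)(\Psi_1\otimes \Psi_2) = (\Psi_1\otimes \Psi_2)^{(-1)}$ and $(\Phi_1^{-T})(\Psi_1^{-T}) = (\Psi_1^{-T})^{(-1)}$, so tensors and duals stay in $\mathcal{R}$; the usual evaluation/coevaluation morphisms make the category rigid. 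The endomorphism ring of $\one$ consists of scalars $f\in \ok(t)$ with $f^{(-1)} = f$, and a direct computation (or invocation of $\LL^{\sigma} = \FF_q(t)$) gives $\End_{\mathcal{R}}(\one) = \FF_q(t)$.

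Third, for the fiber functor I would prove the crucial dimension identity $\dim_{\FF_q(t)} H_{\mathrm{Betti}}(M) = \dim_{\ok(t)} M$. Choosing an $\ok(t)$-basis $\bm$ with $\sigma\bm = \Upphi\bm$ and a trivialization $\Uppsi^{(-1)} = \Upphi\Uppsi$, the vector $\bu \assign \Uppsi^{-1}\bm \in \Mat_{r\times 1}(\LL\otimes_{\ok(t)}M)$ satisfies $\sigma\bu = \bu$ and its entries form an $\FF_q(t)$-basis of $H_{\mathrm{Betti}}(M)$, using the well-known fact that $\LL^\sigma = \FF_q(t)$ together with a Galois-descent argument showing that $\FF_q(t)$-linear independence of $\sigma$-invariants lifts to $\LL$-linear independence. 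Compatibility with tensor products and duals follows immediately from the matrix formulas above (trivializations multiply tensorially and invert), so $\omega$ is an $\FF_q(t)$-linear tensor functor; faithfulness comes from the vanishing statement that $H_{\mathrm{Betti}}(M) = 0$ forces $M = 0$.

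The main obstacle I expect is establishing the exactness of $\omega$, equivalently the fact that $\dim_{\FF_q(t)} H_{\mathrm{Betti}}(M)$ equals $\dim_{\ok(t)} M$ for every object of $\mathcal{R}$ and is additive on short exact sequences. Exactness on the right is automatic after tensoring with $\LL$, but exactness on the left requires the nontrivial input that $\LL$ is faithfully flat as an $\ok(t)$-module in the relevant sense and, more seriously, that $\sigma$-fixed points are well-behaved under base change. The key technical lemma to isolate is that for a surjection $M \twoheadrightarrow N$ in $\mathcal{R}$, every $\sigma$-fixed element of $\LL\otimes N$ lifts to a $\sigma$-fixed element of $\LL\otimes M$; this can be deduced from the simultaneous existence of trivializations $\Uppsi_M, \Uppsi_N \in \GL(\LL)$ and the aforementioned computation that the connecting matrix becomes $\FF_q(t)$-rational after conjugation, but it is the delicate analytic ingredient ensuring that rigid analytic triviality passes through short exact sequences compatibly with the fiber functor.
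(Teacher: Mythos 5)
The paper does not prove this theorem; it is cited verbatim from Papanikolas~\cite[Thm.~3.3.15]{Papanikolas}, so there is no in-paper argument to compare against. I will therefore evaluate your proposal on its own terms.

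Your overall architecture is a faithful reconstruction of Papanikolas's approach: the $\sigma$-semilinear matrix formalism, the computation that a morphism matrix $B$ between trivialized objects conjugates to $\Uppsi_2^{-1}B\Uppsi_1 \in \Mat(\FF_q(t))$, the dimension identity $\dim_{\FF_q(t)} H_{\mathrm{Betti}}(M) = \dim_{\ok(t)} M$ extracted from $\bu = \Uppsi^{-1}\bm$, the tensor/dual matrix formulas, and $\End(\one)=\FF_q(t)$ via $\ok(t)^\sigma = \FF_q(t)$ are all the right ingredients. However, there is a genuine logical gap in the first paragraph. From ``$\Uppsi_2^{-1}B\Uppsi_1 \in \Mat(\FF_q(t))$'' you conclude with ``so subobjects, quotients and direct sums of trivializable objects remain trivializable,'' but this does not follow. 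That calculation tells you the \emph{morphism} $f$ acts rationally on Betti realizations; it says nothing directly about whether the pre-$t$-motive $\ker f$ (or $\mathrm{coker}\,f$) admits its own rigid analytic trivialization. What is missing is the \emph{Artin independence inequality}: for any pre-$t$-motive $N$ (not assumed to lie in $\mathcal{R}$), one has $\dim_{\FF_q(t)}(\LL\otimes_{\ok(t)}N)^\sigma \le \dim_{\ok(t)}N$, proved by the standard minimal-length-relation trick using $\LL^\sigma = \FF_q(t)$. Combining this one-sided bound with the left-exactness of $(-)^\sigma$ and the known equality of dimensions for $M$ in a short exact sequence $0\to N \to M \to M/N \to 0$ with $M\in\mathcal{R}$ forces both inequalities for $N$ and $M/N$ to be equalities, and that is what puts the subobject and the quotient back in $\mathcal{R}$. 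Without isolating this inequality, the passage from ``morphisms are Betti-rational'' to ``$\mathcal{R}$ is closed under kernels and cokernels'' is a non-sequitur.

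Two smaller remarks. First, your final paragraph has left/right exactness backwards: the functor $N \mapsto (\LL\otimes N)^\sigma$ is automatically \emph{left} exact (it is a kernel and $\LL/\ok(t)$ is a flat field extension), so injectivity of $H_{\mathrm{Betti}}(M') \to H_{\mathrm{Betti}}(M)$ is free; the genuinely delicate direction is \emph{right} exactness, which is exactly the dimension-counting argument above, not a separate ``lifting'' lemma as you describe it. Second, the morphism relation you write, $B^{(-1)}\Upphi_1 = \Upphi_2 B$, needs care to be consistent with the paper's convention $\sigma\bm = \Upphi\bm$ and $\Uppsi^{(-1)} = \Upphi\Uppsi$; the resulting identity $C^{(-1)} = C$ for $C = \Uppsi_2^{-1}B\Uppsi_1$ does hold once signs and sides are fixed, but as written the two conventions do not obviously match, so it is worth re-deriving rather than asserting.
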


\begin{definition}\label{Def: Tannakian category of t-motives}\
\begin{enumerate}
\item We define the category $\mathcal{AR}^{I}$ of uniformizable dual $t$-motives up to isogeny as follows:
\begin{itemize}
\item Objects of $\mathcal{AR}^{I}$: uniformizable dual $t$-motives.
\item Morphisms of $\mathcal{AR}^{I}$: For uniformizable dual $t$-motives $\eM,\eN$,
\[
\Hom_{\mathcal{AR}^{I}}(\eM,\eN)\assign \FF_{q}(t)\otimes_{\FF_{q}[t]}\Hom_{\ok[t,\sigma]}\left( \eM,\eN  \right).
\]
\end{itemize}
\item We define the \emph{category $\mathcal{T}$ of $t$-motives} to be the strictly full Tannakian subcategory of $\mathcal{R}$ generated by the essential image of the functor
\[
\Bigl(\eM\mapsto \ok(t)\otimes_{\ok[t]}\eM \Bigr):\mathcal{AR}^{I}\rightarrow\mathcal{R}.
\]
\end{enumerate}
\end{definition}

Given a $t$-motive $M$, we let $\mathcal{T}_{M}$ be the strictly full Tannakian subcategory of $\mathcal{T}$ generated by $M$. Then by Tannakian duality there exists an affine algebraic group scheme $\Gamma_{M}$ defined over $\FF_{q}(t)$ so that $\mathcal{T}_{M}$ is equivalent to the $\FF_{q}(t)$-finite dimensional representations of the algebraic group $\Gamma_{M}$. We call $\Gamma_{M}$ the \emph{$t$-motivic Galois group} of $M$.

In the case that $M=\ok(t)\otimes_{\ok[t]}\eM$ for a uniformizable Anderson $t$-motive $\eM$, we let $\Upphi\in \Mat_{r}(\ok[t])\cap\GL_{r}(\ok(t))$ be the matrix representing the action of $\sigma$ on a $\ok[t]$-basis of $\eM$, and let $\Uppsi\in \GL_{r}(\TT)$ satisfy $\Uppsi^{(-1)}=\Upphi \Uppsi$. We note that by \cite[Prop.~3.1.3]{ABP} the entries of $\Uppsi$ are convergent on all of $\CC_{\infty}$. Let $\ok\left(\Uppsi(\theta)\right)$ be the field over $\ok$ generated by all the entries $\Uppsi_{ij}(\theta)$ of $\Uppsi(\theta)$. The following equality is a function field analogue of Grothendieck's periods conjecture for abelian varieties.

\begin{theorem}{\textnormal{(Papanikolas~\cite[Thm.~1.1.7]{Papanikolas})}}\label{T:dim=trdeg}
Given a uniformizable dual $t$-motive $\eM$, we define $M\assign \ok(t)\otimes_{\ok[t]}\eM$ and let $(\eM,\Upphi,\Uppsi)$ be given  as above. Then
\[
\dim \Gamma_{M}=\trdeg_{\ok}\ok(\Uppsi(\theta)).
\]
\end{theorem}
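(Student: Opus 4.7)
The plan is to realize both $\dim \Gamma_M$ and $\trdeg_{\ok} \ok(\Uppsi(\theta))$ as the dimension of a single algebraic variety---a torsor for $\Gamma_M$ built out of $\Uppsi$---and then to pass between the two via specialization at $t = \theta$.

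First, I would introduce the $\ok(t)$-scheme
\[
Z_\Uppsi \assign \Spec\bigl(\ok(t)[X_{ij}, 1/\det X]/J_\Uppsi\bigr),
\]
where $J_\Uppsi$ is the kernel of the $\ok(t)$-algebra homomorphism $X_{ij} \mapsto \Uppsi_{ij}$ into $\LL$. In parallel, define an $\FF_q(t)$-subgroup $\Gamma_\Uppsi \subseteq \GL_{r,\FF_q(t)}$ so that right translation on $Z_\Uppsi$ endows it with the structure of a $\Gamma_\Uppsi$-torsor over $\ok(t)$; the tautological point $\Uppsi \in Z_\Uppsi(\LL)$ trivializes this torsor over $\LL$, yielding $\dim Z_\Uppsi = \dim \Gamma_\Uppsi$.

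Second, I would identify $\Gamma_\Uppsi$ with the Tannakian group $\Gamma_M$ via the Tannakian dictionary: sub-objects of tensor constructions on $M$ correspond to $\sigma$-invariant $\ok(t)$-subspaces, which are cut out by linear relations among entries of matrices built from $\Uppsi$, and $\Gamma_\Uppsi$ is defined precisely to preserve all such relations. A direct comparison, using the Frobenius difference equation $\Uppsi^{(-1)} = \Upphi \Uppsi$ to control $\sigma$-equivariance, yields $\Gamma_\Uppsi \cong \Gamma_M$ as affine group schemes over $\FF_q(t)$, and hence $\dim Z_\Uppsi = \dim \Gamma_M$.

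The third step---the hardest---is the analytic specialization at $t = \theta$. Since the entries of $\Uppsi$ are entire on $\CC_\infty$ by~\cite[Prop.~3.1.3]{ABP}, specializing the ideal $J_\Uppsi$ at $t = \theta$ yields an ideal $J_\Uppsi|_{t=\theta} \subseteq \ok[X_{ij}, 1/\det X]$ cutting out a variety over $\ok$ that contains $\Uppsi(\theta)$, so the trivial direction $\trdeg_{\ok} \ok(\Uppsi(\theta)) \le \dim Z_\Uppsi|_{t=\theta}$ is clear. The reverse inequality---that every $\ok$-algebraic relation among the $\Uppsi_{ij}(\theta)$ is obtained by specializing some $\ok(t)$-relation in $J_\Uppsi$---is the content of an \emph{Anderson--Brownawell--Papanikolas type transcendence criterion}: given a putative extra relation $f(\Uppsi(\theta)) = 0$, one uses $\Uppsi^{(-1)} = \Upphi \Uppsi$ to propagate $f$ along the $\sigma$-orbit and lift it to a functional identity $\tilde f(\Uppsi(t)) \equiv 0$ that lies in $J_\Uppsi$ and recovers $f$ upon evaluation at $t = \theta$. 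This lift from a single-point $\ok$-relation to a functional $\ok(t)$-relation is the main obstacle and the arithmetic heart of the theorem. Granting it, the chain
\[
\dim \Gamma_M = \dim \Gamma_\Uppsi = \dim Z_\Uppsi = \dim Z_\Uppsi|_{t=\theta} = \trdeg_{\ok} \ok(\Uppsi(\theta))
\]
completes the proof, where the penultimate equality uses that $Z_\Uppsi$ is a torsor (hence flat) so that generic and special fibre dimensions agree.
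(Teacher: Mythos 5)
Your proposal tracks Papanikolas's actual proof quite closely: the coordinate ring $Z_\Uppsi$ built from $\Uppsi$, the Frobenius-twisted group $\Gamma_\Uppsi$ acting on the right so that $Z_\Uppsi$ becomes a $\Gamma_\Uppsi$-torsor, the Tannakian identification $\Gamma_\Uppsi \cong \Gamma_M$, and the ABP-type linear-independence criterion (linearized via Kronecker/symmetric powers to handle higher-degree relations) are exactly the ingredients of \cite[\S\S4--5]{Papanikolas}, and you correctly locate the ABP criterion as the arithmetic heart.

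The one place I would push back is the final chain. You write $\dim Z_\Uppsi = \dim Z_\Uppsi|_{t=\theta}$ "by flatness of the torsor," but there is no family here: $Z_\Uppsi$ is a scheme over the field $\ok(t)$, which has no closed point $t=\theta$; the torsor structure over $\ok(t)$ gives no control over an integral model over $\ok[t]_{(t-\theta)}$. Papanikolas avoids this detour entirely. One records $\dim Z_\Uppsi = \trdeg_{\ok(t)}\ok(t)(\Uppsi)$ by definition, and then compares the two transcendence degrees directly: $\trdeg_{\ok}\ok(\Uppsi(\theta)) \le \trdeg_{\ok(t)}\ok(t)(\Uppsi)$ is elementary (clear denominators in any $\ok(t)$-relation, divide out the common power of $t-\theta$, and specialize), while the reverse is precisely what the ABP criterion gives---any $\ok$-algebraic relation among $\Uppsi_{ij}(\theta)$ lifts to an $\ok(t)$-algebraic relation among the $\Uppsi_{ij}$ of the same degree. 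With that reformulation, your argument is the right one and matches the source.
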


\section{CM fields and CM types}
\label{sec: CM setting}
\subsection{CM fields}
A finite  extension $\eF$ over $\FF_{q}(t)$ is called \emph{totally real} if the infinite place $\infty$ of $\FF_{q}(t)$ splits completely in $\eF$. We note that $\eF$ is then necessarily separable over $\FF_{q}(t)$ and that every separable finite extension $\eK/\FF_q(t)$ contains a maximal totally real intermediate field. We define the following notion of CM fields, which differs slightly from the classical case.

\begin{definition}\label{Def: CM fields}
A finite separable extension $\eK$ over $\FF_q(t)$ is called a \emph{CM field} if every place $\infty^{+}$ of its maximal real subfield $\eK^{+}$ lying over $\infty$ in $\FF_q(t)$ is non-split in $\eK$, i.e.,  $\infty^{+}$ has a unique extension to $\eK$.
\end{definition}

\begin{remark}\label{rem: CM fields}
(1) Based on this definition, the field $\FF_{q}(t)$ is itself a CM field. This is partly for convenience so that we can incorporate the period of the Carlitz module in this context. Similarly, every totally real field over $\FF_q(t)$ is also a CM field in our setting.

(2) Let $\eK$ be an \emph{imaginary} field over $\FF_q(t)$, i.e.\ the place $\infty$ of $\FF_q(t)$ is non-split in $\eK$, and suppose $\eK/\FF_q(t)$ is separable. Then $\eK$ is a CM field over $\FF_q(t)$ with $\eK^+ = \FF_q(t)$. In particular, the constant field extension $\FF_{q^\ell}(t)$ is a CM field over $\FF_q(t)$ for every positive integer $\ell$.

(3) Other typical examples of CM fields are the \emph{Carlitz cyclotomic extensions} over $\FF_q(t)$.
Given a non-constant polynomial $f\in \FF_{q}[t]$, the $f$-th Carlitz cyclotomic extension is the field $\eK_{f}$ generated by the $f$-torsion points of the Carlitz $\FF_{q}[t]$-module over $\FF_{q}(t)$. It is known that  $\eK_{f}$ is finite abelian over $\FF_{q}(t)$ with Galois group isomorphic to $\left( \FF_{q}[t]/(f)\right)^{\times}$ under the Artin map (see~\cite[Thm.~2.3]{Hayes74} or~\cite[Thm.~12.8]{Rosen}). The maximal totally real subfield $\eK_{f}^{+}$ is the fixed field of $\FF_{q}^{\times}$, and each place of $\eK_{f}^+$ lying above $\infty$ is totally ramified in $\eK_{f}$ with ramification index $q-1$ (see~\cite[Thm.~3.2]{Hayes74} or~\cite[Thm.~12.14]{Rosen}).

(4) Unlike the classical case, the compositum of two CM fields over $\FF_q(t)$ may not be a CM field.
For instance, let $\eK_1 = \FF_2(\sqrt[3]{t})$ and $\eK_2 = \FF_2(\sqrt[3]{t+1})$. The infinite place $\infty$ of $\FF_2(t)$ is totally ramified in both $\eK_1$ and $\eK_2$, which says in particular that $\eK_1$ and $\eK_2$ are imaginary fields over $\FF_2(t)$. The compositum $\eK\assign  \eK_1\cdot \eK_2$ has degree $9$ over $\FF_2(t)$ and contains $\eK_3\assign  \FF_2(\sqrt[3]{(t+1)/t})$. As
\[
\power{\FF_2}{1/t}[x] \ \ni\   x^3-\frac{t+1}{t} \ \equiv \  x^3 -1 \quad   \bmod (1/t),
\]
it follows from Hensel's lemma that there exists exactly one root of $x^3-(t+1)/t$ in $\laurent{\FF_2}{1/t}$. Thus $\infty$  splits into two places in $\eK_3$ (with residue degree equal to $1$ and $2$, respectively), and both of these are totally ramified in $\eK$ with ramification index $3$. Hence there are exactly two places of $\eK$ lying above $\infty$, which implies that $\eK$ cannot be a CM field.

(5) Let $\eK$ be a CM field and $\eF$ be a totally real field over $\FF_q(t)$ (contained in an algebraic closure of $\eK$). We claim that the compositum $\eK\cdot \eF$ of $\eK$ and $\eF$ is also a CM field with the maximal totally real subfield $\eK^+\cdot \eF$.
To prove this, let $\tilde{\infty}^+$ be a place of $\eK^+\cdot \eF$ lying above $\infty$ and take
a place $\tilde{\infty}$ of $\eK\cdot \eF$ lying above $\tilde{\infty}^+$.
We also denote by
$\infty_K$ and $\infty^+$ the places of $\eK$ and $\eK^+$ lying below $\tilde{\infty}$, respectively.
Since $\eK^+\cdot \eF$ is totally real over $\FF_q(t)$, one has that $\infty^+$ splits completely in $\eK^+\cdot \eF$.
Thus we have the following (in-)equalities of the ramification indices and residue degrees:
\begin{equation}\label{eqn: ef-1}
\begin{cases}
    e(\tilde{\infty}^+/\infty^+) = f(\tilde{\infty}^+/\infty^+) = 1; \\
    e(\infty_K/\infty^+)\leq e(\tilde{\infty}/\infty^+) = e(\tilde{\infty}/\tilde{\infty}^+);\\
    f(\infty_K/\infty^+)\leq f(\tilde{\infty}/\infty^+) =  f(\tilde{\infty}/\tilde{\infty}^+).
\end{cases}
\end{equation}
On the other hand, as $\eK$ is a CM field, one has that
\begin{align}\label{eqn: ef-2}
e(\infty_K/\infty^+)f(\infty_K/\infty^+) = [\eK:\eK^+] & \geq [\eK\cdot \eF:\eK^+\cdot \eF] \\
& \geq e(\tilde{\infty}/\tilde{\infty}^+)f(\tilde{\infty}/\tilde{\infty}^+). \nonumber
\end{align}
By \eqref{eqn: ef-1} and \eqref{eqn: ef-2} we get
$$
[\eK:\eK^+] = e(\infty_K/\infty^+)f(\infty_K/\infty^+) = e(\tilde{\infty}/\tilde{\infty}^+)f(\tilde{\infty}/\tilde{\infty}^+) = [\eK\cdot \eF:\eK^+\cdot \eF].
$$
This says that $\tilde{\infty}$ is the only place of $\eK\cdot \eF$ lying above $\tilde{\infty}^+$, i.e.\ $\tilde{\infty}^+$ is non-split in $\eK\cdot \eF$.
Since $\tilde{\infty}^+$ is taken arbitrarily, the claim holds.
\end{remark}

\subsection{Notation on curves} \label{sec: curvenot}
Given a CM field $\eK$ over $\FF_{q}(t)$, we will assume henceforth for simplicity  that $\eK/\FF_{q}(t)$ is a geometric extension. As the case of non-geometric CM fields requires some additional care, we leave the details of the general case to
Appendix~\ref{secA: CM}.

Recall that $\eK^{+}$ denotes the maximal totally real subfield of $\eK$. Let $X$ (resp.\ $X^{+}$) be a smooth, projective, geometrically connected algebraic curve defined over $\FF_{q}$ whose function field is $\eK$ (resp.\ $\eK^{+}$).  The field embeddings
\[
\FF_{q}(t)\hookrightarrow \eK^{+}\hookrightarrow \eK,
\]
induce corresponding morphisms of algebraic curves over $\FF_{q}$,
\[
\xymatrix{
X \ar[r]^{\pi_{X/X^{+}}} & X^{+} \ar[r]^{\pi_{X^{+}/\PP^{1}}}  & \PP^{1}.
}
\]
For a chosen algebraically closed field $\KK$ with $k\subset \KK \subset \CC_{\infty}$, we denote by $\bX$ (resp.\ $\bX^{+}$) the base change of $X$ (resp.\ $X^{+}$) to $\KK$, and  $\pi_{\bX/\bX^{+}}$ and  $\pi_{\bX^{+}/\PP^{1}}$ the base change of the respective morphisms above to $\KK$. Define $\bK\assign \KK(\bX)$ (resp.~$\bK^{+}$) to be the function field of $\bX$ (resp.\ $\bX^{+}$) over $\KK$.
Let $O_{\eK}$ (resp.\ $O_{\eK^{+}}$) be the integral closure of $\FF_{q}[t]$ in $\eK$ (resp.\ $\eK^{+}$), and put $U\assign \Spec O_{\eK}$ (resp.\ $U^{+}\assign \Spec O_{\eK^{+}}$).
The base change of $U$ and $U^+$ to $\KK$ are denoted
\[
\bU \assign  \Spec (O_\bK) \quad \textup{and} \quad \bU^+ \assign  \Spec(O_{\bK^+}),
\]
where
\[
O_{\bK}\assign \KK\otimes_{\FF_{q}} O_{\eK} \cong \KK[t]\otimes_{\FF_q[t]} O_\eK
\quad \textup{and} \quad O_{\bK^{+}}\assign \KK\otimes_{\FF_{q}}O_{\eK^{+}} \cong \KK[t] \otimes_{\FF_q[t]} O_\eK.
\]
Using this framework one checks that
\[
\bK = \KK(t)\otimes_{\FF_q(t)} \eK \quad \textup{and} \quad \bK^+ = \KK(t) \otimes_{\FF_q(t)} \eK^+.
\]
Note that $O_{\bK}$ (resp.\ $O_{\bK^{+}}$) is the integral closure of $\KK[t]$ in $\bK$ (resp.\ $\bK^{+}$), and that
\[
\bU = \bX\setminus \pi_{\bX/\PP^1}^{-1}(\infty) \quad (\text{resp.} \quad \bU^{+}= \bX^{+}\setminus \pi_{\bX^+/\PP^1}^{-1}(\infty)).
\]

\subsection{CM types}\label{sec: CM types}
We continue in the setting above starting with a given geometric CM field $\eK$ over $\FF_{q}(t)$, and for now we set $\KK = \CC_{\infty}$ in \S\ref{sec: curvenot}. Let
\begin{equation}\label{E:JK}
J_\eK\assign  \pi_{\bX/\PP^{1}}^{-1}(\theta) = \{\xi \in \bX(\CC_{\infty})\mid \pi_{\bX/\PP^{1}}(\xi) = \theta \}
\end{equation}
be the set of points of $\bX$ lying above $\theta$. Denote by $\Emb(\eK,\CC_\infty)$ the set of all $\FF_q$-algebra embeddings from $\eK$ into $\CC_\infty$ which send $t$ to $\theta$. We may identify $J_\eK$ with $\Emb(\eK,\CC_\infty)$ as follows.  Each $\xi \in J_\eK$ corresponds to an $\FF_q$-morphism
\[
\Spec(\CC_\infty)\rightarrow X.
\]
As $\xi$ is not $\overline{\FF}_q$-valued (where $\overline{\FF}_q$ is the algebraic closure of $\FF_q$ in $\CC_\infty$), this morphism must factor through the generic point $\Spec(\eK)\hookrightarrow X$.
Thus we obtain an $\FF_q$-morphism $\Spec(\CC_\infty)\rightarrow \Spec(\eK)$, which corresponds to an $\FF_q$-algebra embedding
\[
\nu_\xi:\eK\hookrightarrow \CC_\infty, \quad \nu_\xi(t) = \theta,
\]
where the latter part follows from $\pi_{\bX/\PP^1}(\xi) = \theta$. Conversely, for $\nu \in \Emb(\eK,\CC_\infty)$, the induced $\FF_q$-morphism
\[
\Spec(\CC_\infty)\stackrel{\nu^*}{\longrightarrow}
\Spec(\eK)\hookrightarrow X
\]
gives a $\CC_\infty$-valued point $\xi_\nu$ of $\bX$ so that $\pi_{\bX/\PP^1}(\xi_\nu) = \theta$ (as $\nu(t) = \theta$). It is clear that
\[
\xi_{\nu_\xi} = \xi, \quad \forall \xi \in J_\eK, \quad \textup{and} \quad \nu_{\xi_\nu} = \nu, \quad \forall \nu \in \Emb(\eK,\CC_\infty).
\]

Let $I_\eK$ be the free abelian group generated by all elements of $J_\eK$. We may view $I_\eK$ as a subgroup of $\Div(\bX)$ consisting of all the divisors supported on  $\pi_{\bX/\PP^{1}}^{-1}(\theta)$.

\begin{definition}\label{defn: CM type}
Let $\eK$ be a geometric CM field over $\FF_q(t)$ with maximal totally real subfield $\eK^+$. Let $I_\eK^0$ be the subgroup of $I_\eK$ consisting of the divisors
\[
\Phi = \sum_{\xi \in J_\eK} m_\xi \xi, \quad m_\xi \in \ZZ,
\]
satisfying
\[
 \sum_{\xi\in \pi_{\bX/\bX^+}^{-1}(\xi_1^+)} m_\xi= \sum_{\xi'\in \pi_{\bX/\bX^+}^{-1}(\xi_2^+)} m_{\xi'},
\quad \forall \xi_1^+,\, \xi_2^+ \in J_{\eK^+}.
\]
For $\Phi \in I_{\eK}^0$, the \emph{weight} of $\Phi$ is then
\[
\wt(\Phi)\assign \sum_{\xi\in \pi_{\bX/\bX^+}^{-1}(\xi^+)} m_\xi \quad \text{ for any } \xi^+ \in J_{\eK^+}.
\]
We say that a \emph{generalized CM type} of $\eK$ is a nonzero effective divisor $\Xi$ in $I_\eK^0$, and that a \emph{CM type} of $\eK$ is a generalized CM type $\Xi$  for which $\text{wt}(\Xi) = 1$.
\end{definition}

\begin{remark}
Let $d = [\eK^+:\FF_q(t)]$. The correspondence between $J_{\eK^+}$ and $\Emb(\eK^+,\CC_\infty)$ implies that there are exactly $d$ points in $J_{\eK^+}$, say $\xi_1^+, \ldots, \xi_d^+$. Then a  CM type of $\eK$ must be of the form $\sum_{i=1}^{d} \xi_{i}\in I_\eK$, where $\xi_{i} \in \bX(\CC_\infty)$ is a point lying above $\xi_{i}^{+}$ for $1\leq i\leq d$.
\end{remark}

The following basic properties are straightforward from the definitions.

\begin{proposition}\label{prop: CM prop}
Let $\eK$ be a geometric CM field over $\FF_{q}(t)$.
\begin{enumerate}
\item Define the generalized CM type
\[
N_\eK \assign  \sum_{\xi \in J_\eK} \xi \in I_\eK^0.
\]
Then  $\wt(N_\eK) = [\eK:\eK^+]$, and for any $\Phi_0 \in I_\eK^0$, there exists $\mu \in \ZZ_{\geq 0}$ so that
\[
\mu N_\eK + \Phi_0
\]
becomes a generalized CM type of $\eK$.
\item Every generalized CM type $\Xi$ of $\eK$ of $\wt(\Xi) = n$ can be written as
\[
\Xi = \Xi_1+ \cdots + \Xi_n,
\]
where $\Xi_1,\ldots, \Xi_n$ are  CM types of $\eK$.
\end{enumerate}
Consequently, $I_\eK^0$ is generated by all the CM types of $\eK$.
\end{proposition}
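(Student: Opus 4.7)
The plan is to deduce all three claims directly from the definitions together with the bijection $J_\eK \leftrightarrow \Emb(\eK,\CC_\infty)$ set up in Section~\ref{sec: CM types}. First, to establish the weight formula in~(1), I would fix an arbitrary $\xi^+ \in J_{\eK^+}$ corresponding to $\nu^+ \in \Emb(\eK^+,\CC_\infty)$, so that $\nu^+(t)=\theta$. Since $\eK/\eK^+$ is finite separable and $\CC_\infty$ is algebraically closed, $\nu^+$ admits exactly $[\eK:\eK^+]$ extensions to $\FF_q$-embeddings $\nu:\eK\hookrightarrow \CC_\infty$, each of which automatically satisfies $\nu(t)=\theta$ because $t\in\eK^+$. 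Under the bijection $J_\eK \leftrightarrow \Emb(\eK,\CC_\infty)$ these extensions correspond exactly to the points of $\pi_{\bX/\bX^+}^{-1}(\xi^+)$, so $\#\pi_{\bX/\bX^+}^{-1}(\xi^+)=[\eK:\eK^+]$ independently of $\xi^+$. This simultaneously gives $N_\eK \in I_\eK^0$ and $\wt(N_\eK)=[\eK:\eK^+]$.

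For the second half of~(1) I would argue by a direct coefficient bound. Given $\Phi_0 = \sum_\xi m_\xi \xi \in I_\eK^0$, choose any integer
\[
\mu \geq \max\bigl(\{1\} \cup \{-m_\xi \mid \xi \in J_\eK\}\bigr).
\]
Then $\mu N_\eK + \Phi_0 = \sum_\xi (\mu+m_\xi)\xi$ is effective (every coefficient is non-negative) and nonzero (since $\mu \geq 1$ and $J_\eK$ is nonempty). As $I_\eK^0$ is a subgroup of $I_\eK$ containing both $N_\eK$ and $\Phi_0$, this sum again lies in $I_\eK^0$ and hence is a generalized CM type.

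For~(2) the argument is a combinatorial redistribution. Given $\Xi = \sum m_\xi \xi$ with $\wt(\Xi)=n$, the weight condition $\sum_{\xi \in \pi_{\bX/\bX^+}^{-1}(\xi^+)} m_\xi = n$ for each $\xi^+ \in J_{\eK^+}$ lets me enumerate the points above $\xi^+$ as a sequence $(\xi_{j,\xi^+})_{j=1}^n$ in which each point $\xi$ appears exactly $m_\xi$ times. Setting
\[
\Xi_j \assign \sum_{\xi^+ \in J_{\eK^+}} \xi_{j,\xi^+}, \qquad 1 \leq j \leq n,
\]
yields effective divisors each of which selects exactly one point above each $\xi^+$, so every $\Xi_j$ is a CM type and $\Xi = \Xi_1 + \cdots + \Xi_n$ by construction.

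The final ``Consequently'' clause then drops out: for any $\Phi \in I_\eK^0$, apply~(1) to obtain $\mu \in \ZZ_{\geq 0}$ with $\Phi + \mu N_\eK$ a generalized CM type, then apply~(2) to decompose both $\Phi + \mu N_\eK$ and $\mu N_\eK$ as sums of CM types; this presents $\Phi$ as an element of the subgroup generated by CM types. The only genuine input is the separability/algebraic-closure step used to count fibers in~(1), so I do not anticipate any serious obstacle in carrying out this proposition.
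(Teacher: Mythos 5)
Your proof is correct and is essentially the argument the paper intends, since the paper omits it entirely as ``straightforward from the definitions'': the fiber count $\#\pi_{\bX/\bX^+}^{-1}(\xi^+)=[\eK:\eK^+]$ via separability of $\eK/\eK^+$ and the bijection $J_\eK\leftrightarrow\Emb(\eK,\CC_\infty)$ compatible with restriction, effectivity by adding a large multiple of $N_\eK$, and the fiber-by-fiber redistribution of multiplicities into weight-one pieces for part (2) and the ``consequently'' clause. One small slip in (1): with $\mu$ chosen merely as $\max(\{1\}\cup\{-m_\xi\})$ the divisor $\mu N_\eK+\Phi_0$ can be zero (e.g.\ $\Phi_0=-N_\eK$ and $\mu=1$), hence not a generalized CM type; taking $\mu$ strictly larger than every $-m_\xi$ (and $\geq 1$) repairs this, and in the final step the case $\mu=0$ (or $\Phi=0$) should be handled separately, after which the generation of $I_\eK^0$ by CM types follows exactly as you say.
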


Suppose that $\eK$ and $\eK'$ are geometric CM fields over $\FF_{q}(t)$ and that $\varrho: \eK'\rightarrow \eK$ is an $\FF_{q}(t)$-embedding. Let $\pi_\varrho: X\rightarrow X'$ be the corresponding surjective $\FF_q$-morphism of algebraic curves over $\FF_{q}$. When it is clear from the context, we  will still denote the corresponding base changes to $\KK$ by $\varrho: \bK' \rightarrow \bK$ and $\pi_{\varrho}: \bX \rightarrow \bX'$. Then $\pi_\varrho$ induces a map $J_{\eK} \rightarrow J_{\eK'}$.
From the one-to-one correspondence
\[
\begin{array}{ccc}
J_{K} & \longleftrightarrow & \Emb\left(\eK,\CC_{\infty} \right),\\
 \xi     & \longleftrightarrow  & \nu_{\xi}
\end{array}
\] we obtain that
\[
\nu_{\pi_\varrho(\xi)} = \nu_\xi \circ \varrho.
\]
In particular, for any $\varrho\in \Aut_{\FF_{q}(t)}(\eK)$, the automorphism group of $\eK$ over $\FF_{q}(t)$, the corresponding map $\pi_\varrho$ gives a permutation on $J_\eK$. Defining
\[
\xi^\varrho\assign  \pi_\varrho(\xi)
\]
for $\xi\in J_{\eK}$, one has that
\[
(\xi)^{\varrho_1 \varrho_2} = (\xi^{\varrho_1})^{\varrho_2}, \quad \forall \xi \in J_\eK\ \textup{and}\ \varrho_1,\,\varrho_2 \in \Aut_{\FF_{q}(t)}(\eK).
\]
Extending additively, we have a right action of $\Aut_{\FF_q(t)}(\eK)$ on $I_\eK$, and the following lemma holds.

\begin{lemma}\label{lem: right-action}
For a generalized CM type $\Xi = \sum_{\xi \in J_\eK} m_\xi \xi$ of $\eK$ and $\varrho \in \Aut_{\FF_q(t)}(\eK)$,
\[
 \Xi^\varrho\assign  \sum_{\xi \in J_\eK} m_\xi \cdot  \xi^\varrho
\]
is also a generalized CM type of $\eK$. In other words, $I_\eK^0$ is stable under the right action of $\Aut_{\FF_q(t)}(\eK)$.
\end{lemma}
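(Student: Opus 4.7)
The plan is to reduce the claim to the simple observation that any $\FF_q(t)$-automorphism of $\eK$ must preserve the maximal totally real subfield $\eK^+$, and hence induces a compatible automorphism on $\bX^+$ that permutes the fibers of $\pi_{\bX/\bX^+}$.

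First, I would note that $\eK^+$ is characteristic inside $\eK$: it is intrinsically defined as the maximal subfield of $\eK$ in which $\infty$ splits completely. Therefore any $\varrho \in \Aut_{\FF_q(t)}(\eK)$ restricts to an automorphism $\varrho^+ \assign \varrho|_{\eK^+} \in \Aut_{\FF_q(t)}(\eK^+)$. Functoriality of the curve $\leftrightarrow$ function field correspondence then yields the commutative diagram
\[
\xymatrix{
\bX \ar[r]^{\pi_\varrho} \ar[d]_{\pi_{\bX/\bX^+}} & \bX \ar[d]^{\pi_{\bX/\bX^+}} \\
\bX^+ \ar[r]^{\pi_{\varrho^+}} & \bX^+.
}
\]
In particular, for any $\xi \in J_\eK$ with $\pi_{\bX/\bX^+}(\xi) = \xi^+$ one has $\pi_{\bX/\bX^+}(\xi^\varrho) = (\xi^+)^{\varrho^+}$, and $\pi_{\varrho^+}$ gives a bijection of $J_{\eK^+}$.

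Next, I would compute the fiberwise sum for $\Xi^\varrho$. Writing $\Xi^\varrho = \sum_{\eta \in J_\eK} m'_\eta \eta$, the coefficient of $\eta$ in $\Xi^\varrho$ is $m'_\eta = m_{\eta^{\varrho^{-1}}}$ (since $\pi_\varrho$ is a bijection of $J_\eK$). Hence for any $\xi_1^+ \in J_{\eK^+}$,
\[
\sum_{\eta \in \pi_{\bX/\bX^+}^{-1}(\xi_1^+)} m'_\eta
\;=\;
\sum_{\substack{\xi \in J_\eK \\ \pi_{\bX/\bX^+}(\xi^\varrho) = \xi_1^+}} m_\xi
\;=\;
\sum_{\xi \in \pi_{\bX/\bX^+}^{-1}\bigl((\xi_1^+)^{(\varrho^+)^{-1}}\bigr)} m_\xi,
\]
where the last equality uses the commutative diagram above. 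Since $\Xi \in I_\eK^0$, the right-hand sum equals $\wt(\Xi)$, independently of $\xi_1^+$.

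Thus $\Xi^\varrho$ satisfies the defining condition of $I_\eK^0$ with $\wt(\Xi^\varrho) = \wt(\Xi)$, and effectiveness is preserved because all $m_\xi \geq 0$. The only real point to verify is the characteristic property of $\eK^+$ and the resulting commutative square; neither step presents a serious obstacle, so the lemma follows.
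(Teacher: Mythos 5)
Your proposal is correct and is essentially the straightforward argument that the paper leaves implicit (the lemma is stated there without proof): you observe that $\varrho$ preserves the maximal totally real subfield $\eK^+$, deduce by functoriality the commuting square relating $\pi_\varrho$ and $\pi_{\varrho^+}$, and check that the fiberwise sums over $\pi_{\bX/\bX^+}^{-1}(\xi_1^+)$ are merely permuted, so the weight condition and effectiveness are preserved. The one step you treat briefly---that $\varrho(\eK^+) = \eK^+$---is indeed standard, since a compositum of totally real subfields is again totally real (a place splits completely in a compositum if and only if it does in each factor), so $\eK^+$ is the unique maximal totally real subfield and hence characteristic under $\Aut_{\FF_q(t)}(\eK)$.
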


\begin{remark}\label{rem: left-action}
Since $\eK$ is separable over $\FF_q(t)$, every point $\xi \in J_\eK$ is actually defined over $k^{\sep}$. This also implies that $\nu_\xi$ embeds $\eK$ into $k^{\sep}$. We then have a natural (left) action of $\Gal(k^{\sep}/k)$ on $J_\eK$ so that
\[
\nu_{\varsigma(\xi)} = \varsigma \circ \nu_\xi, \quad \forall \xi \in J_\eK \text{ and } \varsigma \in \Gal(k^{\sep}/k).
\]
This gives a $\Gal(k^{\rm sep}/k)$-module structure on $I_\eK$, and $I_\eK^0$ is clearly a submodule of $I_\eK$.
\end{remark}

\subsection{Restriction and inflation of CM types}\label{sub:Res and Inf}
Let $\eK_1$ and $\eK_2$ be two geometric CM fields over $\FF_{q}(t)$ with $\eK_1 \subset \eK_2$. Let $X_1$ (resp.\ $X_2$) be the projective, smooth, geometrically connected curve over $\FF_q$ associated to $\eK_1$ (resp.\ $\eK_2$), and let $\pi:X_2 \rightarrow X_1$ be the morphism over $\FF_q$ corresponding to the embedding $\eK_1\hookrightarrow \eK_2$. Define
\[
\Inf_{\eK_2/\eK_1}:I_{\eK_1}\longrightarrow I_{\eK_2} \quad \textup{and} \quad \Res_{\eK_2/\eK_1}: I_{\eK_2}\longrightarrow I_{\eK_1}
\]
by
\begin{align*}
\Inf_{\eK_2/\eK_1}(\Phi_1) &\assign  \pi^*(\Phi_1), \quad \forall \Phi_1 \in I_{\eK_1},\\
\Res_{\eK_2/\eK_1}(\Phi_2) &\assign  \pi_*(\Phi_2), \quad \forall \Phi_2 \in I_{\eK_2}.
\end{align*}
Then we have
\[
\Res_{\eK_2/\eK_1}\Bigl( \Inf_{\eK_2/\eK_1}(\Phi_1)\Bigr) = \pi_*\bigl(\pi^*(\Phi_1)\bigr) = \deg \pi \cdot \Phi_1 = [\eK_2:\eK_1]\cdot \Phi_1, \  \forall \Phi_1 \in I_{\eK_1},
\]
and one checks that
\[
\Inf_{\eK_2/\eK_1}\bigl(I_{\eK_1}^0\bigr) \subset I_{\eK_2}^0 \quad \textup{and} \quad
\Res_{\eK_2/\eK_1}\bigl(I_{\eK_2}^0\bigr) \subset I_{\eK_1}^0,
\]
which in particular implies the following basic result.

\begin{proposition}\label{prop: Inf-Res}
Let $\eK_1$ and $\eK_2$ be two geometric CM fields over $\FF_{q}(t)$ with $\eK_1 \subset \eK_2$.  For each generalized CM type $\Xi_1$ of $\eK_1$ \textup{(resp.\ $\Xi_2$ of $\eK_2$)}, the divisor $\Inf_{\eK_2/\eK_1}(\Xi_1)$ \textup{(resp.\ $\Res_{\eK_2/\eK_1}(\Xi_2)$)} is a generalized CM type of $\eK_2$ \textup{(resp.\ $\eK_1$)}.
\end{proposition}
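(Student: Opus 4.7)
The proposition amounts to two claims: inflation sends a generalized CM type of $\eK_1$ to one of $\eK_2$, and similarly for restriction. Since a generalized CM type is by definition a nonzero effective element of $I^0$, and the paragraph immediately preceding the proposition already commits to checking $\Inf_{\eK_2/\eK_1}(I_{\eK_1}^0) \subset I_{\eK_2}^0$ and $\Res_{\eK_2/\eK_1}(I_{\eK_2}^0) \subset I_{\eK_1}^0$, the work splits into (a) verifying these two inclusions, and (b) checking that nonzero effective divisors go to nonzero effective divisors. Part (b) is immediate from the standard fact that for a finite surjective morphism $\pi: Y \to Z$ of smooth projective curves, both $\pi^*$ and $\pi_*$ preserve effectiveness; $\pi^*$ is injective on divisors (since $\pi_* \pi^* = (\deg \pi) \cdot \mathrm{id}$), and $\pi_*$ of any nonzero effective divisor is clearly nonzero, so I would dispose of it in a line.

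For part (a), the central observation is that the fiber of $\pi_{\bX/\PP^1}$ over $\theta \in \PP^1(\CC_\infty)$ is reduced with exactly $[\eK:\FF_q(t)]$ distinct points, since $\theta$ is transcendental over $\FF_q$ and $\eK/\FF_q(t)$ is separable. In particular, for every $\xi_1 \in J_{\eK_1}$ we have $\pi^*(\xi_1) = \sum_{\xi_2 \in \pi^{-1}(\xi_1)} \xi_2$ with all multiplicities equal to one. Noting also that $\eK_1^+ \subset \eK_2^+$ (because $\eK_1^+$ is totally real inside $\eK_2$, hence contained in the maximal totally real subfield), one has the commutative diagram
\[
\begin{array}{ccc}
\bX_2 & \xrightarrow{\ \pi\ } & \bX_1 \\
{\scriptstyle \pi_{\bX_2/\bX_2^+}} \downarrow & & \downarrow {\scriptstyle \pi_{\bX_1/\bX_1^+}} \\
\bX_2^+ & \xrightarrow{\ \pi^+\ } & \bX_1^+
\end{array}
\]
with $\pi^+$ induced from $\eK_1^+ \hookrightarrow \eK_2^+$. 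The weight computations then reduce to counting fibers inside this square.

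For the restriction direction, given $\Phi_2 = \sum n_{\xi_2}\xi_2 \in I_{\eK_2}^0$ of weight $w'$, a direct reindexing via the square gives, for any $\xi_1^+ \in J_{\eK_1^+}$,
\[
\sum_{\xi_1 \in \pi_{\bX_1/\bX_1^+}^{-1}(\xi_1^+)} \sum_{\xi_2 \in \pi^{-1}(\xi_1)} n_{\xi_2} \;=\; \sum_{\xi_2^+ \in (\pi^+)^{-1}(\xi_1^+)} \ \sum_{\xi_2 \in \pi_{\bX_2/\bX_2^+}^{-1}(\xi_2^+)} n_{\xi_2} \;=\; [\eK_2^+:\eK_1^+] \cdot w',
\]
independent of $\xi_1^+$, so $\Res_{\eK_2/\eK_1}(\Phi_2) \in I_{\eK_1}^0$. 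For the inflation direction, the question is to count, for each $\xi_2^+ \in J_{\eK_2^+}$ and each $\xi_1 \in \pi_{\bX_1/\bX_1^+}^{-1}(\pi^+(\xi_2^+))$, the cardinality $N(\xi_1,\xi_2^+) \assign \#\{\xi_2 \in \pi_{\bX_2/\bX_2^+}^{-1}(\xi_2^+) : \pi(\xi_2) = \xi_1\}$. Using the identification $J_{\eK_i} \leftrightarrow \Emb(\eK_i,\CC_\infty)$ of Section~\ref{sec: CM types}, $N(\xi_1,\xi_2^+)$ equals the number of embeddings $\eK_2 \hookrightarrow \CC_\infty$ extending both $\nu_{\xi_1}$ and $\nu_{\xi_2^+}$. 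The key algebraic point is that $\eK_1 \cap \eK_2^+ = \eK_1^+$ (since the left side is totally real and inside $\eK_1$, hence inside $\eK_1^+$, while the reverse inclusion is trivial); thus $\nu_{\xi_1}$ and $\nu_{\xi_2^+}$, which already agree on $\eK_1^+$, together determine a unique embedding of the compositum $\eK_1 \cdot \eK_2^+$, which extends in exactly $[\eK_2 : \eK_1 \cdot \eK_2^+]$ ways to $\eK_2$. Hence $N(\xi_1,\xi_2^+) = [\eK_2:\eK_1\cdot \eK_2^+]$ is independent of $\xi_1$ and $\xi_2^+$, and summing yields $\wt(\Inf_{\eK_2/\eK_1}(\Phi_1)) = [\eK_2:\eK_1\cdot \eK_2^+] \cdot \wt(\Phi_1)$ at every $\xi_2^+$.

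The expected obstacle is solely on the inflation side: one must identify the compositum count, which requires the observation $\eK_1 \cap \eK_2^+ = \eK_1^+$; the restriction side is bookkeeping through the commutative square, and the effective/nonzero assertions are formal.
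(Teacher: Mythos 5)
Your restriction half, the commutative square, the reduced-fiber observation over $\theta$, and the effectivity/nonvanishing remarks are all fine (the paper itself leaves this proposition as a one-line ``one checks,'' so there is no competing argument to compare with). The genuine gap is in the inflation half, at the sentence asserting that $\nu_{\xi_1}$ and $\nu_{\xi_2^+}$, ``which already agree on $\eK_1\cap\eK_2^+=\eK_1^+$, together determine a unique embedding of the compositum $\eK_1\cdot\eK_2^+$.'' Agreement on the intersection does not in general produce a common extension to the compositum: take $L_1=\QQ(\sqrt[3]{2})$ and $L_2=\QQ(\omega\sqrt[3]{2})$ inside $\overline{\QQ}$, so $L_1\cap L_2=\QQ$; the embeddings $\sqrt[3]{2}\mapsto\sqrt[3]{2}$ and $\omega\sqrt[3]{2}\mapsto\sqrt[3]{2}$ agree on $\QQ$ but admit no common extension to $L_1L_2$, since one would have to send $\omega\mapsto 1$. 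What your count really needs is that $\eK_1$ and $\eK_2^+$ are linearly disjoint over $\eK_1^+$, i.e.\ that $\eK_1\otimes_{\eK_1^+}\eK_2^+$ is a field, equivalently $[\eK_1\eK_2^+:\eK_2^+]=[\eK_1:\eK_1^+]$; only then does \emph{every} compatible pair $(\nu_{\xi_1},\nu_{\xi_2^+})$ extend (necessarily uniquely) to $\eK_1\eK_2^+$, giving $N(\xi_1,\xi_2^+)=[\eK_2:\eK_1\eK_2^+]$ for all pairs. Without that input, $N$ could vanish for some pairs and exceed this value for others above the same $\xi_1^+$, and then the weight of $\Inf_{\eK_2/\eK_1}(\Phi_1)$ at $\xi_2^+$ need not be constant, which is exactly what you are trying to prove.

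This is precisely where the CM hypothesis must be used, and the missing ingredient is supplied by the degree computation of Remark~\ref{rem: CM fields}(5): for the CM field $\eK_1$ and the totally real field $\eK_2^+$ (which contains $\eK_1^+$, as you correctly note), comparing ramification and residue degrees at the unique places above each infinite place of $\eK_1^+$ gives $[\eK_1:\eK_1^+]=[\eK_1\cdot\eK_2^+:\eK_1^+\cdot\eK_2^+]=[\eK_1\eK_2^+:\eK_2^+]$, hence $[\eK_1\eK_2^+:\eK_1^+]=[\eK_1:\eK_1^+]\cdot[\eK_2^+:\eK_1^+]$ and the required linear disjointness follows (so in fact the intersection claim $\eK_1\cap\eK_2^+=\eK_1^+$ is subsumed by it). With this step inserted, your fiber count is correct, $\wt(\Inf_{\eK_2/\eK_1}(\Phi_1))=[\eK_2:\eK_1\eK_2^+]\cdot\wt(\Phi_1)$ at every $\xi_2^+$, and the rest of your write-up constitutes a complete proof.
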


\subsection{Reduction of points at infinity}\label{sec: R-pt-inf}
Let $\eK$ be a CM field which is geometric over $\FF_q(t)$, and recall the one-to-one correspondence between $J_\eK$ and $\Emb(\eK,\CC_\infty)$. Put
\[
O_{\CC_\infty}\assign  \{\alpha \in \CC_\infty\mid |\alpha|_\infty \leq 1\}.
\]
For each $\xi \in J_\eK$, the associated embedding $\nu_\xi \in \Emb(\eK,\CC_\infty)$ provides a valuation ring $\nu_\xi^{-1}(O_{\CC_\infty})$ of $\eK$, whose maximal ideal gives rise to a closed point of $X$, and hence a morphism
\[
\Spec \bigl(\nu_\xi^{-1}(O_{\CC_\infty})\bigr) \rightarrow X.
\]
The ring homomorphisms
\[
\nu_\xi^{-1}O_{\CC_\infty} \overset{\nu_{\xi}}{\hookrightarrow} O_{\CC_\infty} \twoheadrightarrow \overline{\FF}_{q}, \]
where the latter map is given by reduction, gives us an $\overline{\FF}_q$-valued point of $X$,
\[
\Spec(\overline{\FF}_q) \longrightarrow \Spec(O_{\CC_\infty}) \stackrel{\nu_\xi^*}{\longrightarrow}
\Spec\bigl(\nu_\xi^{-1}(O_{\CC_\infty})\bigr) \longrightarrow X,
\]
denoted by $\overline{\infty}_\xi$, so that $\pi_{\bX/\PP^1}(\overline{\infty}_\xi) = \infty$. To summarize we have the following maps
\[
\begin{tabular}{ccccc}
$J_\eK$ & $\stackrel{\sim}{\longleftrightarrow}$ & $\Emb(\eK,\CC_\infty)$ & $\xtwoheadrightarrow{}$ &  $\pi_{\bX/\PP^1}^{-1}(\infty)$ \\
$\xi$ & $\longleftrightarrow$ & $\nu_\xi$ & $\longmapsto$ & $\overline{\infty}_\xi$,
\end{tabular}
\]
where the surjectivity of  $\Emb(\eK,\CC_\infty) \twoheadrightarrow  \pi_{\bX/\PP^1}^{-1}(\infty)$ is implied by the following proposition.

\begin{proposition}\label{prop: reduction at infinity}
Let $\eK/\FF_{q}(t)$ be a geometric CM field with notation given as above. Then
\[
\pi_{\bX/\PP^{1}}^{-1}(\infty)= \bigl\{\overline{\infty}_\xi\mid \xi\in J_{\eK} \bigr\}.
\]
\end{proposition}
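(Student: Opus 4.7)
The plan is to factor the assignment $\xi \mapsto \overline{\infty}_\xi$ through the bijection $J_\eK \leftrightarrow \Emb_{\FF_q(t)}(\eK, \CC_\infty)$ from Section~\ref{sec: CM types} and a ``completion-then-reduce'' map onto $\pi_{\bX/\PP^1}^{-1}(\infty)$, and then verify both factors are surjective.

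First I would describe both sides in terms of local data at infinity. Since $X$ is a smooth projective curve over $\FF_q$, the set $\pi_{\bX/\PP^1}^{-1}(\infty)$ is in natural bijection with the pairs $(w, \iota_w)$, where $w$ runs over the places of $\eK$ above $\infty$ and $\iota_w \in \Emb_{\FF_q}(\kappa(w), \overline{\FF}_q)$. For each $\xi \in J_\eK$ the embedding $\nu_\xi$ satisfies $\nu_\xi(1/t) = 1/\theta$, so pulling back $|\cdot|_\infty$ along $\nu_\xi$ gives an absolute value on $\eK$ extending $|\cdot|_\infty$ on $\FF_q(t)$, hence a place $w_\xi$ of $\eK$ above $\infty$. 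After identifying the completion of $\FF_q(t)$ at $\infty$ with $k_\infty$ via $t \leftrightarrow \theta$, the embedding $\nu_\xi$ extends uniquely by continuity to a $k_\infty$-embedding $\hat\nu_\xi : \eK_{w_\xi} \hookrightarrow \CC_\infty$, and residue-field reduction produces an $\FF_q$-embedding $\bar\nu_\xi : \kappa(w_\xi) \hookrightarrow \overline{\FF}_q$. Unwinding the scheme-theoretic construction of $\overline{\infty}_\xi$ preceding the proposition, one checks that $\overline{\infty}_\xi$ corresponds precisely to the pair $(w_\xi, \bar\nu_\xi)$.

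For surjectivity I would analyze each factor in turn. The completion map $\Emb_{\FF_q(t)}(\eK, \CC_\infty) \to \bigsqcup_{w \mid \infty} \Emb_{k_\infty}(\eK_w, \CC_\infty)$ is in fact a bijection, by the standard correspondence between embeddings of a finite separable extension into a complete field and its completions. The residue map $\Emb_{k_\infty}(\eK_w, \CC_\infty) \twoheadrightarrow \Emb_{\FF_q}(\kappa(w), \overline{\FF}_q)$ is surjective for each $w$: given $\iota_w$, lift first over the maximal unramified subextension of $\eK_w/k_\infty$ using that $\overline{\FF}_q \subset O_{\CC_\infty}$ provides unique unramified lifts, then extend over the totally ramified residual part by choosing any root in $\CC_\infty$ of an Eisenstein polynomial for a uniformizer, which exists by algebraic closedness.

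The composition of a bijection with a surjection is surjective, giving the proposition. The only substantive input is the local lifting step, which is routine; the bulk of the argument is bookkeeping among the parameterizations. The main point requiring care is checking that the reduction $\bar\nu_\xi$ matches the $\overline{\infty}_\xi$ defined via the morphism $\Spec(\overline{\FF}_q) \to \Spec(O_{\CC_\infty}) \to \Spec(\nu_\xi^{-1} O_{\CC_\infty}) \to X$, which is a direct unwinding of definitions. Neither this verification nor the surjectivity uses the CM hypothesis, which is present only because $J_\eK$ was introduced in that setting.
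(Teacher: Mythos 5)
Your argument is correct, and it reaches the conclusion by the same overall route as the paper — identify $J_\eK$ with $\Emb(\eK,\CC_\infty)$ and show that the reduction map $\xi\mapsto\overline{\infty}_\xi$ hits every point of $\pi_{\bX/\PP^1}^{-1}(\infty)$ — but you spend your effort exactly where the paper is terse. The paper's proof fixes an arbitrary $\overline{\infty}$ above $\infty$, passes to the valuation ring $O_{\eK,\overline{\infty}}$, and simply \emph{asserts} the existence of an embedding $\nu\colon O_{\eK,\overline{\infty}}\hookrightarrow O_{\CC_\infty}$ inducing the given point (and implicitly with $\nu(t)=\theta$), then concludes via the bijection $J_\eK\leftrightarrow\Emb(\eK,\CC_\infty)$. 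Your proof supplies precisely that missing lifting step: you parametrize the infinite fibre by pairs $(w,\iota_w)$, decompose $\Emb(\eK,\CC_\infty)$ as $\coprod_{w\mid\infty}\Emb_{k_\infty}(\eK_w,\CC_\infty)$, and lift a residue embedding first over the maximal unramified subextension and then over a totally ramified Eisenstein extension, with $\nu(t)=\theta$ built in automatically by your normalization through $k_\infty$. What the paper's phrasing buys is brevity; what yours buys is an actual proof of the existence of the lift, at the cost of invoking the \'etale decomposition $\eK\otimes_{\FF_q(t)}k_\infty\cong\prod_{w\mid\infty}\eK_w$. One small correction to your closing remark: that decomposition (your bijectivity claim) does use the separability of $\eK/\FF_q(t)$, which is part of the paper's definition of a CM field, so the hypothesis is not entirely idle — though since only surjectivity onto the pairs $(w,\iota_w)$ is needed, the argument could be phrased so as to avoid even that input.
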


\begin{proof}
Given $\overline{\infty} \in \bX$ with $\pi_{\bX/\PP^1}(\overline{\infty}) = \infty$, we know that $\overline{\infty}$ must be $\overline{\FF}_q$-valued, and the valuation of $O_{\eK,\overline{\infty}}$ on $\eK$ associated to (the $\Gal(\overline{\FF}_q/\FF_q)$-orbit of) $\overline{\infty}$ is extended from the one on $\FF_q(t)$ associated to $\infty \in \PP^1$. Take an ($\FF_q$-algebra) embedding $\nu: O_{\eK,\overline{\infty}}\hookrightarrow O_{\CC_\infty}$ so that the induced morphism
\[
\Spec(\overline{\FF}_q) \longrightarrow \Spec(O_{\CC_\infty}) \overset{\nu^*}{\longrightarrow} \Spec\bigl(O_{\eK,\overline{\infty}}\bigr) \longrightarrow X
\]
corresponds to $\overline{\infty}$. Extending $\nu$ to an embedding $\eK\hookrightarrow \CC_\infty$ (still denoted by $\nu$), we obtain that $\nu \in \Emb(\eK,\CC_\infty)$. Thus the bijection between $J_\eK$ and $\Emb(\eK,\CC_\infty)$ implies that there exists $\xi \in J_\eK$ so that $\nu = \nu_\xi$. The construction of $\nu$ gives us that $\overline{\infty} = \overline{\infty}_\xi$, and the result holds.
\end{proof}

\begin{remark}\label{rem: Residue disc}
(1) Background for the correspondence between $\xi$ and $\overline{\infty}_\xi$ above is that the point $\xi$ lies in the residue disk around the infinite point $\overline{\infty}_\xi$ using Sinha's Green's function \cite[Section 2.3]{Sinha97}.

(2) When $\eK = \eK^+$ is totally real over $\FF_q(t)$, the infinite place $\infty$ of $\FF_q(t)$ splits completely in $\eK$. In this case, there are exactly $[\eK:\FF_q(t)]$ points of $X$ ($\FF_q$-valued) lying above $\infty$, and the corresponding valuations give all the embeddings from $\eK$ to $\laurent{\FF_q}{1/t}$. Together with the isomorphism $\laurent{\FF_q}{1/t} \cong k_\infty \subset \CC_\infty$ via the evaluation map sending $t$ to $\theta$, we obtain a one-to-one correspondence between $\Emb(\eK,\CC_\infty)$ and $\pi_{\bX/\PP^1}^{-1}(\infty)$, whence the above surjective map $J_\eK\twoheadrightarrow \pi_{\bX/\PP^1}^{-1}(\infty)$
is actually bijective when $\eK$ is totally real over $\FF_q(t)$.
For an arbitrary geometric CM field $\eK$ with maximal totally real subfield $\eK^+$, we then have the following commutative diagram:
\begin{equation}\label{eqn: Reduction}
\SelectTips{cm}{}
\xymatrix
{
J_\eK \ar@{<->}[r]^<<<<<\sim \ar@{->>}[d]_{\pi_{\bX/\bX^+}} &
\Emb(\eK,\CC_\infty) \ar@{->>}[r] \ar@{->>}[d]^{(\cdot)|_{\eK^+}} & \pi^{-1}_{\bX/\PP^1}(\infty) \ar@{->>}[d]^{\pi_{\bX/\bX^+}} \\
J_{\eK^+}\ar@{<->}[r]^<<<<\sim & \Emb(\eK^+,\CC_\infty)\ar@{<->}[r]^<<<<\sim & \pi_{\bX^+/\PP^1}^{-1}(\infty).
}
\end{equation}
\end{remark}

\begin{definition}\label{Def: IXi}
Given a generalized CM type $\Xi = \sum_{\xi \in J_{\eK}} m_\xi \xi$ of a geometric CM field $\eK$ over $\FF_q(t)$, we set
\[
\eI_\Xi \assign  \sum_{\xi \in J_{\eK}} m_\xi \overline{\infty}_\xi.
\]
\end{definition}

Note that $\eI_\Xi$ is an $\overline{\FF}_q$-rational divisor of $\bX$ supported on $\pi_{\bX/\PP^1}^{-1}(\infty)$, and that $\Xi - \eI_\Xi$ has degree zero.
In the next section, we shall apply Lang's isogeny theorem  to introduce the shtuka functions associated to $\Xi$ and to define CM dual $t$-motives via geometric constructions.

\begin{remark}
The above constructions and definitions for non-geometric CM fields are contained in  Appendix~\ref{secA: CM}.
\end{remark}

\section{\texorpdfstring{CM dual $t$-motives}{CM dual t-motives}}
\label{sec: CM dual t-motives}

Throughout this section, we fix $\eK$ to be a CM field that is geometric over $\FF_q(t)$ and $X$ to be the curve over $\FF_q$ associated with $\eK$. We further fix an algebraically closed subfield $\KK$ of $\CC_{\infty}$ containing $k$. Recall that the base change of $X$ to $\KK$ is denoted by $\bX$, and $ \KK(\bX)$ is the function field of  $\bX$ over $\KK$. For each integer $i$, we define the $i$-th Frobenius twist automorphism $(f\mapsto f^{(i)}):\KK(\bX)\rightarrow \KK(\bX)$, where $f^{(i)}$ is the function in $\KK(\bX)$ obtained by raising each coefficient of $f$ to the $q^{i}$-th power. For each closed point $P$ on $\bX$, we define $P^{(i)}$ to be the point on $\bX$ obtained by raising each coordinate of $P$ to the $q^{i}$-th power. This action extends linearly to the divisor group $\Div(\bX)$.

\subsection{Shtuka functions}\label{sec: shtuka}
In what follows, we will construct a dual $t$-motive from a generalized CM type along the lines of \cite{Sinha97, BP02, ABP}. We first need the following lemma that is a consequence of Lang's ``isogeny theorem.''

\begin{lemma}\label{L:Diff,W,h}
Let $k\subset \KK \subset \CC_{\infty}$ be an algebraically closed intermediate field.  Let $\eK$ be a  CM field, geometric over $\FF_{q}(t)$, and let $\Xi$ be a generalized CM type of $\eK$. Then there exists a divisor $W\in \Div(\bX)$ and a function $h\in \KK(\bX)$ so that
\begin{equation}\label{E:Shtuka}
\divv(h) = W^{(1)}-W+\Xi-\eI_\Xi.
\end{equation}
\end{lemma}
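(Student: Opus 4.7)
The plan is to realize the statement as a direct consequence of Lang's isogeny theorem applied to the Jacobian of $X$. The key preliminary is to observe that $\Xi - \eI_\Xi$ has degree zero: writing $\Xi = \sum_{\xi \in J_\eK} m_\xi \xi$, Definition~\ref{Def: IXi} gives $\eI_\Xi = \sum_{\xi \in J_\eK} m_\xi \overline{\infty}_\xi$, and since the points $\xi$ are $\bar k$-valued and the points $\overline{\infty}_\xi$ are $\overline{\FF}_q$-valued (both inside $\KK$), both divisors are supported on closed points of residue degree one over $\KK$ and each has degree $\sum_\xi m_\xi$. Consequently $[\Xi - \eI_\Xi]$ defines a class in $\Pic^0(\bX)$.

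Now let $J \assign \Jac(X)$, which is defined over $\FF_q$ since $X$ is, and let $F : J \to J$ be the $q$-power Frobenius endomorphism; on $J(\KK) = \Pic^0(\bX)$ it acts by $[D] \mapsto [D^{(1)}]$. Finding $W$ and $h$ as in the lemma is equivalent to producing a class $[W] \in J(\KK)$ with $(F - 1)[W] = [\eI_\Xi - \Xi]$, because then any representative divisor $W$ makes $W^{(1)} - W + \Xi - \eI_\Xi$ principal and yields the required $h \in \KK(\bX)^\times$ via linear equivalence.

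The existence of such $[W]$ is exactly what Lang's theorem supplies. Since $dF = 0$, the differential of $F - 1$ is $-\mathrm{id}$, so $F - 1$ is \'etale on $J$; Lang surjectivity on $\overline{\FF}_q$-points then upgrades to surjectivity of $F - 1$ as a morphism of $\overline{\FF}_q$-schemes, and hence to surjectivity on $\KK$-points for any algebraically closed field $\KK \supset \overline{\FF}_q$. Applying this surjectivity to the class $[\eI_\Xi - \Xi]$ furnishes the desired $[W]$ and completes the proof. I expect no significant obstacles beyond this invocation of Lang's theorem: the remaining content is the degree-zero observation and the standard translation between linear equivalence and principal divisors.
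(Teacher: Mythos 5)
Your proof is correct and follows the same route as the paper: observe that $\Xi - \eI_\Xi$ has degree zero, then apply Lang's isogeny theorem to the Jacobian of $\bX$ to solve $W^{(1)} - W \sim \eI_\Xi - \Xi$. The only difference is that you spell out why surjectivity of $F-1$ holds on $\KK$-points (via the \'etale/base-change argument), whereas the paper simply invokes the cited corollary of Lang directly.
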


\begin{proof}
As mentioned in Definition~\ref{Def: IXi},  the divisor $\Xi-\eI_\Xi$ is of degree zero and hence the class $[\Xi-\eI_\Xi]$ can be regarded as a point in the $\KK$-valued points of the Jacobian $J_{\bX}(\KK)$ of the curve $\bX$, which is identified with the group of degree zero divisors $\Div^{0}(\bX)$ modulo principal divisors. Lang's isogeny theorem~\cite[Cor., p.~557]{Lang56} asserts that the map
\[
\left([D]\rightarrow [D-D^{(1)}]\right):J_{\bX}(\KK)\rightarrow J_{\bX}(\KK)
\]
is surjective. It follows that there is a divisor $W$ satisfying
\[
W-W^{(1)}\sim \Xi-\eI_\Xi.
\]
In other words, there exists $h \in \KK(\bX)$ with the desired divisor.
\end{proof}

\begin{definition}
The function $h$ in \eqref{E:Shtuka} is called a \emph{shtuka function} associated to the divisor $W$ for the generalized CM type $(\eK,\Xi)$.
\end{definition}

\begin{remark}
For the definition of shtuka functions in the case of non-geometric CM fields over $\FF_q(t)$, see Lemma~\ref{lem: B-shtuka function}.
\end{remark}

\subsection{\texorpdfstring{Constructions of dual $t$-motives}{Construction of dual t-motives with CM types}}
\label{sec: construction of CM}
For a geometric CM field $\eK$ over $\FF_{q}(t)$, we fix a generalized CM type $\Xi$ of $\eK$ and let $(W,h)$ be given as in Lemma~\ref{L:Diff,W,h}. Recall that $O_\eK$ is the integral closure of $\FF_q[t]$ in $\eK$ and $O_\bK = \KK \otimes_{\FF_q} O_\eK$. We also put
\[
\bU\assign \Spec O_{\bK}=\bX\setminus  \pi_{\bX/\PP^{1}}^{-1}(\infty).
\]
Define
\begin{align}\label{E:M(W,h}
\eM = \eM_{(W,h)} & \assign \Gamma\left(\bU, O_{\bX}(-W^{(1)})  \right) \\ & \bigl(=\bigl\{ f\in \KK(\bX)\bigm| \bigl(\divv(f)-W^{(1)}\bigr)|_{\bU}\geq 0\bigr\}\bigr), \nonumber
\end{align}
which is a projective $O_\bK$-module of rank one, together with the $\sigma$-action,
\[
\sigma m \assign  h \cdot m^{(-1)}, \quad \forall m \in \eM.
\]
As $\KK[t]$ is contained in $O_\bK$, the above $\sigma$-action gives $\eM$ the structure of a left $\KK[t,\sigma]$-module.
Moreover, the basic approach of Sinha~\cite{Sinha97} (cf.~\cite{ABP}, \cite{BP02}) shows the following:

\begin{theorem}\label{thm: CM dual}
Let hypotheses and notation be as in Lemma~\ref{L:Diff,W,h}. The left $\KK[t,\sigma]$-module $\eM=\Gamma(\bU,\cO_{\bX}(-W^{(1)}))$ is a dual $t$-motive.
\end{theorem}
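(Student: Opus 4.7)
The plan is to verify the three defining properties of a dual $t$-motive in turn: (i) that $\eM$ is free of finite rank over $\KK[t]$, (ii) that $\eM$ is free of finite rank over $\KK[\sigma]$, and (iii) that $(t-\theta)^N \eM \subset \sigma\eM$ for some positive integer $N$.

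For (i), recall that $\eM$ is a rank-one projective $O_\bK$-module (a fractional ideal of the Dedekind domain $O_\bK$). Because $\bK/\KK(t)$ is a finite separable extension of degree $[\eK:\FF_q(t)]$ and $O_\bK$ is the integral closure of the PID $\KK[t]$ in $\bK$, the ring $O_\bK$ is free of rank $[\eK:\FF_q(t)]$ over $\KK[t]$. Consequently $\eM$ is torsion-free and finitely generated over $\KK[t]$, hence free over the PID $\KK[t]$ of rank $[\eK:\FF_q(t)]$.

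For (ii), first compute $\sigma\eM$ explicitly. Using the divisor equation $\divv(h) = W^{(1)} - W + \Xi - \eI_\Xi$ and the fact that $\eI_\Xi$ is supported on $\pi_{\bX/\PP^1}^{-1}(\infty) \subset \bX \setminus \bU$, a direct divisor-theoretic calculation shows
\[
\sigma\eM \;=\; h \cdot \eM^{(-1)} \;=\; \Gamma\bigl(\bU,\, \cO_\bX(-W^{(1)} - \Xi)\bigr).
\]
Since $\bU$ is affine, the short exact sheaf sequence on $\bU$
\[
0 \to \cO_\bX(-W^{(1)}-\Xi) \to \cO_\bX(-W^{(1)}) \to \mathcal{Q} \to 0,
\]
with $\mathcal{Q}$ the skyscraper quotient supported on $\Xi$, yields
\[
\eM/\sigma\eM \;\iso\; \Gamma(\bU,\mathcal{Q}) \;\cong\; \KK^{\deg\Xi},
\]
which is finite-dimensional. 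Next, lift a $\KK$-basis of $\eM/\sigma\eM$ to elements $m_1,\ldots,m_d \in \eM$ and show these form a $\KK[\sigma]$-basis. Linear independence: any hypothetical relation $\sum p_i(\sigma) m_i = 0$ with $p_i \in \KK[\sigma]$ reduces modulo $\sigma\eM$ to $\sum p_i(0) m_i \in \sigma\eM$, forcing every constant term $p_i(0)=0$; the twist rule $\sigma c = c^{1/q}\sigma$ then lets one factor $p_i = \sigma r_i$, and injectivity of $m \mapsto h m^{(-1)}$ on $\eM$ gives $\sum r_i(\sigma) m_i = 0$, so induction on the total degree of $(p_1,\ldots,p_d)$ collapses the relation. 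For generation, let $\eM' = \sum \KK[\sigma] m_i$; by construction $\eM = \eM' + \sigma\eM$, so iteratively any $x \in \eM$ decomposes as $y_1 + \sigma y_2 + \cdots + \sigma^{n-1} y_n + \sigma^n z_n$ with $y_j \in \eM'$ and $z_n \in \eM$. This terminates once the residual term lies in $\eM'$, and this in turn follows from $\sigma$-adic separation $\bigcap_n \sigma^n\eM = 0$: any nonzero element of the intersection would have to be divisible in $\bK$ by arbitrarily long twisted products $h \cdot h^{(-1)} \cdots h^{(-n+1)}$, which forces its order of pole at $\pi_{\bX/\PP^1}^{-1}(\infty)$ to grow without bound, contradicting the fixed growth behavior enforced by the single divisor $-W^{(1)}$ on the affine part.

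Finally, for (iii), the support of $\eM/\sigma\eM$ computed above is contained in $\Xi \subset \pi_{\bX/\PP^1}^{-1}(\theta)$. The ideal $(t-\theta)O_\bK$ cuts out precisely the scheme-theoretic fiber $\pi_{\bX/\PP^1}^{-1}(\theta)$, so $(t-\theta)$ acts nilpotently on the finite-dimensional $\KK[t]$-module $\eM/\sigma\eM$; taking $N = \max_{\xi} m_\xi$ gives $(t-\theta)^N \eM \subset \sigma\eM$. The main technical obstacle is the $\KK[\sigma]$-freeness in step (ii): linear independence is an essentially formal induction using the twist relation, but generation rests on the $\sigma$-adic separation argument, whose essential input is that $\eI_\Xi$ is concentrated at $\infty$ so that the twists of $h$ contribute ever-growing poles there — this geometric feature of the shtuka function is what propagates the finite-dimensionality of $\eM/\sigma\eM$ into actual freeness over $\KK[\sigma]$.
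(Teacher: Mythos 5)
Your treatment of parts (i) and (iii) agrees with the paper's, and your computation of $\sigma\eM = \Gamma(\bU, \cO_\bX(-W^{(1)}-\Xi))$ and the resulting $\dim_\KK \eM/\sigma\eM = \deg\Xi$ is correct. The difficulty is entirely in the generation half of (ii), and there your argument has a genuine gap.

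You deduce from $\eM = \eM' + \sigma\eM$ that $\eM = \eM' + \sigma^n\eM$ for every $n$, i.e.\ $\eM'$ is $\sigma$-adically dense, and then claim that $\sigma$-adic separation $\bigcap_n \sigma^n\eM = 0$ forces $\eM' = \eM$. That implication is false: a Hausdorff linearly topologized module can have a proper dense submodule. The standard counterexample is $\eM = \power{k}{T}$, $\eM' = k[T]$, with $T\eM$ playing the role of $\sigma\eM$: here $\eM = \eM' + T^n\eM$ for all $n$ and $\bigcap_n T^n\eM = 0$, yet $\eM' \ne \eM$. Unwinding your iteration, $x = a_n + \sigma^n z_n$ with $a_n \in \eM'$; the residual $z_n$ has no reason ever to land in $\eM'$, and the formal series $\sum_n \sigma^n(\cdots)$ you are implicitly building need not converge to an element of $\eM'$ unless $\eM$ were $\sigma$-adically \emph{complete} and $\eM'$ closed. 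Neither holds: $\eM/\sigma^n\eM$ has $\KK$-dimension $n\deg\Xi$, so the $\sigma$-adic completion $\varprojlim \eM/\sigma^n\eM$ is vastly larger than $\eM$. (Incidentally, your reason for $\bigcap_n\sigma^n\eM = 0$ is also slightly off: $\sigma^n\eM = \Gamma\bigl(\bU,\cO_\bX(-W^{(1)}-\sum_{j=0}^{n-1}\Xi^{(-j)})\bigr)$ constrains \emph{zeros on $\bU$}, not poles at infinity; an element of $\eM$ may have arbitrary poles over $\infty$. The intersection vanishes because a nonzero rational function cannot vanish to unbounded order on $\bU$.)

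What is genuinely needed — and what the paper does, following Sinha — is a global, Riemann--Roch-driven argument. One sets $j$ so that $\eI_\Xi^{(j)} = \eI_\Xi$, forms the Frobenius-invariant effective divisor $\bI = \eI_\Xi + \eI_\Xi^{(-1)} + \cdots + \eI_\Xi^{(-j+1)}$, checks (using that $\eK$ is CM and $\wt(\Xi)>0$) that $\Supp(\bI) = \pi_{\bX/\PP^1}^{-1}(\infty)$, and then exhausts $\eM$ by the finite-dimensional subspaces $\eM_n = \Gamma(\bX, \cO_\bX(-W^{(1)}+n\bI))$. Multiplication by $\sigma^j$ maps $\eM_n/\eM_{n-1}$ into a Riemann--Roch quotient, and for $n$ sufficiently large (degree large enough that $h^1$ vanishes) the natural inclusion into $\eM_{n+1}/\eM_n$ is an isomorphism, so $\sigma^j$ induces an isomorphism $\eM_n/\eM_{n-1} \to \eM_{n+1}/\eM_n$. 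A $\KK$-basis of one $\eM_{n_0}$ then generates $\eM$ over $\KK[\sigma^j]$, hence over $\KK[\sigma]$. This stabilization is the content that your $\sigma$-adic separation claim cannot replace; without the exhaustive filtration by finite-dimensional global sections, finite generation over $\KK[\sigma]$ is not a formal consequence of $\dim_\KK\eM/\sigma\eM < \infty$ and $\bigcap_n\sigma^n\eM = 0$.
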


\begin{proof}
(cf.~\cite[Thm.~3.2.3]{Sinha97} and \cite[Lemma~6.4.1]{ABP}) As $\eM$ is projective of rank one over $O_\bK$, it is free of finite rank over $\KK[t]$. Further, multiplication by $h$ induces a $\KK$-linear isomorphism of sheaves
\[
\mathcal{O}_{\bX}(-W)\simeq \mathcal{O}_{\bX}(-W^{(1)}-\Xi+\eI_\Xi),
\]
whence
\[
\sigma \eM= \Gamma\left(\bU,  \mathcal{O}_{\bX}(-W^{(1)}-\Xi+\eI_\Xi) \right)
 = \Gamma \left( \bU, \mathcal{O}_{\bX}(-W^{(1)}-\Xi) \right).
\]
It follows that if $\Xi\assign \sum_{\xi\in J_{\eK}}m_{\xi} \cdot \xi$, then
\[
(t-\theta)^{N}\eM \subset \sigma \eM
\]
for all $N\geq \max \{ m_{\xi}\}$.

It remains to show that $\eM$ is finitely generated over $\KK[\sigma]$. We let $j$ be the smallest positive integer so that $\eI_\Xi^{(j)}=\eI_\Xi$ and denote by $\mathbf{I}\assign \eI_\Xi+\eI_\Xi^{(-1)}+\cdots+\eI_\Xi^{(-j+1)}$, whence $\mathbf{I}^{(1)}=\mathbf{I}$. We claim that the support of $\mathbf{I}$ is equal to $\pi_{\bX/ \PP^{1}}^{-1}(\infty)$.

To see this, notice first that the support of $\mathbf{I}$ is contained in $\pi_{\bX/\PP^1}^{-1}(\infty)$. On the other hand, let $\eK^+$ be the maximal totally real subfield of $\eK$. Then by the definition of generalized CM type,
\[
\sum_{\xi \in \pi_{\bX/\bX^+}^{-1}(\xi^+)} m_\xi = \text{wt}(\Xi) >0, \quad \forall \xi^+ \in J_{\eK^+}.
\]
Hence the commutative diagram~\eqref{eqn: Reduction} ensures that
\[
\Supp(\bI)\cap \pi_{\bX/\bX^+}^{-1}(\infty^+) \supseteq \Supp(\eI_\Xi)\cap \pi_{\bX/\bX^+}^{-1}(\infty^+) \neq \emptyset, \quad \forall \infty^+ \in \pi_{\bX^+/\PP^1}^{-1}(\infty).
\]
Since $\eK$ is a CM field, one has that $\pi_{\bX/\bX^+}^{-1}(\infty^+)$ is actually an orbit under Frobenius twisting. As $\bI^{(1)} = \bI$, we obtain that
$\Supp(\bI)\supset \pi_{\bX/\bX^+}^{-1}(\infty^+)$ for each
$\infty^+ \in \pi_{\bX^+/\PP^1}^{-1}(\infty)$.
Therefore $\pi_{\bX/\PP^1}^{-1}(\infty)$ is contained in the support of $\bI$, and the claim follows.

We now write
\[
\eM=\bigcup_{n\in \mathbb{N}} \eM_{n}, \quad \text{ where } \quad
\eM_{n}\assign \Gamma\left(\bX,\mathcal{O}_{\bX}(-W^{(1)}+n {\bI})  \right).
\]
For each $n>1$, multiplication by $\sigma^j$ induces an isomorphism
\[
\sigma^j:\frac{\eM_{n}}{\eM_{n-1}}\rightarrow
\frac{\Gamma\left(\bX,\mathcal{O}_{\bX}(-W^{(1)}-\Xi-\Xi^{(-1)}-\dots-\Xi^{(-j+1)}+(n+1)
    \bI)
  \right)}{\Gamma\left(\bX,\mathcal{O}_{\bX}(-W^{(1)}-\Xi-\Xi^{(-1)}-\dots-\Xi^{(-j+1)}+n
    \bI) \right) }.
\]
By the Riemann-Roch theorem \cite[p.\ 80, Cor.~2]{Mumford}, for $n$ sufficiently large the natural injective map
\[
\pi_{n}: \frac{\Gamma\left(\bX,\mathcal{O}_{\bX}(-W^{(1)}-\Xi-\Xi^{(-1)}-\dots-\Xi^{(-j+1)}+(n+1)
    \bI)
  \right)}{\Gamma\left(\bX,\mathcal{O}_{\bX}(-W^{(1)}-\Xi-\Xi^{(-1)}-\dots-\Xi^{(-j+1)}+n
    \bI) \right) }
\rightarrow \frac{\eM_{n+1}}{\eM_{n}}
\]
is an isomorphism of $\KK$-vector spaces since the dimensions of both sides are equal to $\deg \bI$. So for a sufficiently large integer $n_{0}$ and for $n \geq n_{0}$, the composition gives an isomorphism
\[
\pi_{n}\circ
\sigma^j:\frac{\eM_{n}}{\eM_{n-1}}\stackrel{\sim}{\longrightarrow}
\frac{\eM_{n+1}}{\eM_{n}}.
\]
Since $\eM_{n_{0}}$ is of finite dimension over $\KK$, any $\KK$-basis of $\eM_{n_{0}}$ generates $\eM$ over $\KK[\sigma^j]$ and therefore over $\KK[\sigma]$. Therefore the proof is complete.
\end{proof}

The ranks of $\eM$ over $\KK[t]$ and $\KK[\sigma]$ are determined explicitly as follows.

\begin{proposition}\label{P:ranks}
The rank of $\eM$ as a dual $t$-motive is
\[
r(\eM)\assign \rank_{\KK[t]}\eM=[\eK:\FF_{q}(t)].
\]
The dimension of $\eM$ is
\[
d(\eM)\assign \rank_{\KK[\sigma]}\eM=\deg\Xi.
\]
\end{proposition}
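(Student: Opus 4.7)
The key observation is that both ranks can be computed in a clean way: the $\KK[t]$-rank by passing to the generic fiber over $\Spec \KK[t]$, and the $\KK[\sigma]$-rank by computing $\dim_\KK(\eM/\sigma\eM)$ via a short exact sequence of sheaves on $\bX$, making use of the affineness of $\bU$.

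\emph{First rank.} As observed in the construction, $\eM = \Gamma(\bU,\mathcal{O}_{\bX}(-W^{(1)}))$ is a projective $O_\bK$-module of rank one, because $\mathcal{O}_{\bX}(-W^{(1)})$ is an invertible sheaf and $\bU$ is affine. Since $\eK/\FF_q(t)$ is separable and $\FF_q[t]$ is a PID, $O_\eK$ is free of rank $[\eK:\FF_q(t)]$ over $\FF_q[t]$, hence $O_\bK = \KK[t]\otimes_{\FF_q[t]} O_\eK$ is free of the same rank over $\KK[t]$. Therefore $\eM$ is finitely generated and projective over $\KK[t]$, and its rank can be read off from the generic fiber:
\[
\KK(t)\otimes_{\KK[t]}\eM \;\cong\; \bK \otimes_{O_\bK} \eM,
\]
which is one-dimensional over $\bK$. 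Its $\KK(t)$-dimension is therefore $[\bK:\KK(t)] = [\eK:\FF_q(t)]$, and this is the desired $\KK[t]$-rank.

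\emph{Second rank.} By Theorem~\ref{thm: CM dual}, $\eM$ is a dual $t$-motive; in particular it is free over $\KK[\sigma]$, so
\[
d(\eM) \;=\; \dim_\KK \eM/\sigma\eM.
\]
The identity
\[
\sigma\eM \;=\; \Gamma\bigl(\bU,\mathcal{O}_{\bX}(-W^{(1)} - \Xi + \eI_\Xi)\bigr) \;=\; \Gamma\bigl(\bU,\mathcal{O}_{\bX}(-W^{(1)}-\Xi)\bigr),
\]
already established in the proof of Theorem~\ref{thm: CM dual} (using that $\eI_\Xi$ is supported outside $\bU$), together with the short exact sequence of sheaves on $\bX$
\[
0 \to \mathcal{O}_{\bX}(-W^{(1)}-\Xi) \to \mathcal{O}_{\bX}(-W^{(1)}) \to \mathcal{O}_\Xi \to 0,
\]
will give the answer. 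Taking global sections over the affine scheme $\bU$ kills the $H^1$ term, yielding
\[
\eM/\sigma\eM \;\cong\; \Gamma(\bU,\mathcal{O}_\Xi) \;=\; \Gamma(\bX,\mathcal{O}_\Xi),
\]
where the last equality follows because $\Xi$ is supported on $J_\eK \subset \bU$. Finally $\dim_\KK \Gamma(\bX,\mathcal{O}_\Xi) = \deg \Xi$ by the usual local computation for a skyscraper sheaf on a finite set of closed points, giving $d(\eM) = \deg \Xi$.

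\emph{Expected difficulties.} There are no serious obstacles; the main point to get right is the clean identification of $\sigma\eM$ on $\bU$ and the vanishing of $H^1$ on the affine open $\bU$, which together convert a potentially awkward quotient of infinite-dimensional spaces into the finite-dimensional $\Gamma(\bX,\mathcal{O}_\Xi)$. The argument also uses in a crucial way that we have already proved $\eM$ to be a bona fide dual $t$-motive, so that freeness over $\KK[\sigma]$ and $\KK[t]$ may be invoked without further verification.
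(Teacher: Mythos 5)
Your proposal is correct, and its first half (the $\KK[t]$-rank via projectivity of rank one over $O_\bK$ and the generic fiber) is the same as the paper's. The difference is in the dimension count for $\eM/\sigma\eM$. The paper isolates this as a separate statement (Lemma~\ref{L:degE}): for divisors $E,F$ with $E|_{\bU}$ effective, $\dim_\KK \Gamma(\bU,\mathcal{O}_{\bX}(F))/\Gamma(\bU,\mathcal{O}_{\bX}(F-E)) = \deg E|_{\bU}$, proved by reducing to the case $E = P$ a single point and using Riemann--Roch to exhibit a section of $\mathcal{O}_{\bX}(F+n\overline{\infty})$ with $\ord_P(f) = -\ord_P(F)$, so the quotient has dimension exactly one. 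You instead twist the sequence $0 \to \mathcal{O}_{\bX}(-\Xi) \to \mathcal{O}_{\bX} \to \mathcal{O}_\Xi \to 0$ by $\mathcal{O}_{\bX}(-W^{(1)})$ and use vanishing of $H^1$ of quasi-coherent sheaves on the affine $\bU$ to identify $\eM/\sigma\eM$ with the sections of a skyscraper of length $\deg\Xi$; together with the identification $\sigma\eM = \Gamma(\bU,\mathcal{O}_{\bX}(-W^{(1)}-\Xi))$ from the proof of Theorem~\ref{thm: CM dual} and freeness over $\KK[\sigma]$ (so that $\rank_{\KK[\sigma]}\eM = \dim_\KK \eM/\sigma\eM$), this is a complete argument. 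Two small remarks: the cokernel sheaf in your sequence is $\mathcal{O}_\Xi \otimes \mathcal{O}_{\bX}(-W^{(1)})$ rather than $\mathcal{O}_\Xi$ itself, a harmless abuse since the two skyscrapers have the same length at each point; and the paper's more general Lemma~\ref{L:degE} is not just a convenience here, it is reused later (e.g.\ in Proposition~\ref{L:IsogW1W2}, to see that the cokernel of the isogeny $\Theta$ is finite-dimensional), whereas your cohomological computation is tailored to this one statement.
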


\begin{proof}
As $\eM$ is projective of rank one over $O_\bK$, it must be free of rank $[\eK:\FF_q(t)]$ over $\KK[t]$.
The rank of $\eM$ over $\KK[\sigma]$ follows immediately from the standard result in the next lemma by taking $F=-W^{(1)}$ and $E=\Xi$.
\end{proof}

\begin{lemma}\label{L:degE}
Let $E$ and $F$ be fixed divisors on $\bX$ with the polar part of $E$ equal to the part of $E$ supported at infinity. Then
\[
\dim_{\KK}\Gamma\left( \bU,\cO_{\bX}(F)\right)/ \Gamma\left(\bU,\cO_{\bX}(F-E)  \right) =\deg E|_{\bU}.
\]
\end{lemma}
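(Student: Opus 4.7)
My plan is to argue via the short exact sequence of sheaves on $\bX$
\[
0\longrightarrow \cO_{\bX}(F-E)\longrightarrow \cO_{\bX}(F)\longrightarrow \cQ\longrightarrow 0,
\]
where $\cQ \assign \cO_{\bX}(F)/\cO_{\bX}(F-E)$. The hypothesis that the polar part of $E$ lies at infinity means that $E\big|_{\bU}$ is an effective divisor on $\bU$, so the inclusion of the first two sheaves above is genuinely an inclusion over $\bU$, and $\cQ|_{\bU}$ is a torsion (skyscraper) sheaf supported on the finite set $\Supp(E)\cap\bU$, with stalk length $m_{P}$ at each closed point $P\in\bU$, where $m_{P}$ denotes the multiplicity of $P$ in $E$.

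Next I would restrict this exact sequence to $\bU$ and take the associated long exact cohomology sequence. Since $\bU$ is affine and the sheaves involved are coherent (indeed, line bundles and a finite-length torsion sheaf), Serre vanishing on affine schemes yields $H^{1}\bigl(\bU,\cO_{\bX}(F-E)\bigr)=0$. Therefore the sequence of global sections
\[
0\longrightarrow \Gamma\bigl(\bU,\cO_{\bX}(F-E)\bigr)\longrightarrow \Gamma\bigl(\bU,\cO_{\bX}(F)\bigr)\longrightarrow \Gamma\bigl(\bU,\cQ\bigr)\longrightarrow 0
\]
is exact. In particular the quotient on the left-hand side of the claimed equality is canonically isomorphic to $\Gamma(\bU,\cQ)$.

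Finally, to evaluate $\dim_{\KK}\Gamma(\bU,\cQ)$ I would use that $\cQ|_{\bU}$ decomposes as a direct sum of its stalks (being supported on a finite set of closed points), giving
\[
\dim_{\KK}\Gamma(\bU,\cQ) \;=\; \sum_{P\in\Supp(E)\cap\bU} m_{P}\cdot [\kappa(P):\KK] \;=\; \deg\bigl(E|_{\bU}\bigr),
\]
where the last equality uses that $\KK$ is algebraically closed so every closed point of $\bU$ has residue field $\KK$. This yields the desired formula. I do not anticipate a serious obstacle here; the only point requiring any care is checking that the effective part of $E$ on $\bU$ ensures the sheaf quotient $\cQ|_{\bU}$ has the expected length, which is immediate from the local description of $\cO_{\bX}(F)$ and $\cO_{\bX}(F-E)$ at each point of $\Supp(E)\cap\bU$.
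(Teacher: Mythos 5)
Your proof is correct, but it takes a genuinely different route from the paper. You work with the short exact sequence of sheaves
\[
0\longrightarrow \cO_{\bX}(F-E)\big|_{\bU}\longrightarrow \cO_{\bX}(F)\big|_{\bU}\longrightarrow \cQ\big|_{\bU}\longrightarrow 0
\]
(which, as you note, is exact on $\bU$ precisely because $E|_{\bU}$ is effective; it need not extend to an exact sequence on all of $\bX$) and then kill $H^{1}$ of the kernel by Serre's vanishing theorem for quasi-coherent sheaves on the affine scheme $\bU=\Spec O_{\bK}$, after which the dimension count on the skyscraper quotient is immediate since $\KK$ is algebraically closed. The paper instead argues at the level of global sections only: it first reduces, by telescoping, to the case where $E$ is a single $\KK$-point $P\in\bU$, observes that the quotient is at most one-dimensional, and then produces a nonzero element of $\Gamma(\bU,\cO_{\bX}(F))\setminus\Gamma(\bU,\cO_{\bX}(F-P))$ by applying Riemann--Roch on the projective curve $\bX$ to the twisted sheaves $\cO_{\bX}(F+n\overline{\infty})$ for $n\gg 0$. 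Your approach is more conceptual and avoids the reduction to a single point; it also isolates the exact hypothesis you need ($\bU$ affine, residue fields equal to $\KK$). The paper's approach is more elementary, invoking only the classical Riemann--Roch inequality rather than sheaf cohomology and Serre vanishing, which keeps the lemma self-contained within the toolbox otherwise used in that section. Both arguments are sound, and each has its merits; yours is arguably cleaner for a reader comfortable with coherent cohomology.
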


\begin{proof}
We note that the condition on $E$ forces $E|_{\bU}$ to be effective. It suffices to prove the case when $E = P$ is a $\KK$-valued point on $\bU$. Clearly, we have
\[
\dim_{\KK} \Gamma(\bU,\cO_\bX(F))/\Gamma(\bU,\cO_\bX(F-P))\leq 1.
\]
On the other hand, fix $\overline{\infty} \in \pi_{\bX/\PP^1}^{-1}(\infty)$. The Riemann-Roch theorem implies that for $n$ large enough, we can find
\[
0 \neq f \in \Gamma(\bX,\cO_\bX(F+n\overline{\infty})) \setminus \Gamma(\bX,\cO_\bX(F-P+n\overline{\infty})),
\]
which means in particular that
$\ord_P(f) = -\ord_P(F)$.
Hence
\[
0 \neq f \in \Gamma(\bU,\cO_\bX(F)) \setminus \Gamma(\bU,\cO_\bX(F-P))
\]
and
$\dim_{\KK} \Gamma(\bU,\cO_\bX(F))/\Gamma(\bU,\cO_\bX(F-P))\geq 1$. This completes the proof.
\end{proof}

\begin{remark}
The construction of the dual $t$-motive $\eM_{(W,h)}$ for a generalized CM type $(\eK,\Xi)$ when $\eK$ is a non-geometric CM field over $\FF_q(t)$ is given in \eqref{e:Appendix M(W,h)}.
\end{remark}

\begin{definition}\label{defn: CM dual t-motives}
Let $\eK$ be a geometric CM field over $\FF_{q}(t)$ and $\Xi$ be a generalized  CM type of $\eK$.
A \textit{CM dual $t$-motive with generalized CM type $(\eK,\Xi)$} is a dual $t$-motive isomorphic to $\eM_{(W,h)}$ for some pair $(W,h)$, where $h$ is a shtuka function associated to $W$ for $(\eK ,\Xi)$.
\end{definition}

\begin{proposition}\label{prop: tensor CM type}
Let $\eK$ be a CM field over $\FF_q(t)$.
Given CM dual $t$-motives $\eM_{i}$ with  generalized CM types $(\eK,\Xi_{i})$ for $1=1,\ldots,n$,
the tensor product
\[
\eM_1\otimes_{O_\bK} \cdots \otimes_{O_\bK} \eM_n
\]
with $\sigma$-action defined by
\[
\sigma (m_1\otimes \cdots \otimes m_n) \assign  \sigma  m_1 \otimes \cdots \otimes \sigma  m_n, \quad \forall m_i \in \eM_i,\ i = 1,\ldots, n,
\]
is a CM dual $t$-motive with generalized CM type $(\eK,\Xi_1+\cdots +\Xi_n)$.
\end{proposition}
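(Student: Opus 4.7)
The natural candidate for the tensor product is $\eM_{(W,h)}$ with
\[
W := W_1 + \cdots + W_n, \qquad h := h_1 h_2 \cdots h_n,
\]
where we write each summand as $\eM_i = \eM_{(W_i, h_i)}$ with
\[
\divv(h_i) = W_i^{(1)} - W_i + \Xi_i - \eI_{\Xi_i}, \qquad 1 \le i \le n.
\]
The plan is to proceed in two steps: first verify that $(W,h)$ is a valid pair of shtuka data for the divisor $\Xi := \Xi_1 + \cdots + \Xi_n$, and then exhibit an explicit $\KK[t,\sigma]$-module isomorphism between the tensor product and $\eM_{(W,h)}$.

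For the first step, taking divisors of $h = \prod h_i$ and summing the given identities yields
\[
\divv(h) = W^{(1)} - W + \sum_{i=1}^n \Xi_i - \sum_{i=1}^n \eI_{\Xi_i}.
\]
By inspection of Definition~\ref{Def: IXi}, the assignment $\Xi \mapsto \eI_\Xi$ is $\ZZ$-linear in $\Xi$, so $\sum_i \eI_{\Xi_i} = \eI_\Xi$. Moreover, the weight condition in Definition~\ref{defn: CM type} is itself $\ZZ$-linear, and $\wt(\Xi) = \sum_i \wt(\Xi_i) > 0$; hence $\Xi$ is a generalized CM type of $\eK$. Applying Theorem~\ref{thm: CM dual} then shows that $\eM_{(W,h)}$ is a CM dual $t$-motive with generalized CM type $(\eK, \Xi)$.

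For the second step, I would use multiplication in $\KK(\bX)$ to construct the isomorphism. Since $\bU$ is affine and each $\eM_i = \Gamma(\bU, \cO_\bX(-W_i^{(1)}))$ is the module of global sections of the invertible sheaf $\cO_\bX(-W_i^{(1)})|_\bU$, tensor product of invertible sheaves corresponds to addition of divisors, so the multiplication map
\[
\mu : \eM_1 \otimes_{O_\bK} \cdots \otimes_{O_\bK} \eM_n \longrightarrow \Gamma\bigl(\bU, \cO_\bX(-W^{(1)})\bigr) = \eM_{(W,h)}, \quad m_1 \otimes \cdots \otimes m_n \longmapsto m_1 \cdots m_n,
\]
is an isomorphism of $O_\bK$-modules. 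To verify $\KK[t,\sigma]$-linearity it suffices to check $\sigma$-equivariance, which is the computation
\[
\mu\bigl(\sigma(m_1 \otimes \cdots \otimes m_n)\bigr) = \prod_{i=1}^n \bigl(h_i\, m_i^{(-1)}\bigr) = h \cdot (m_1 \cdots m_n)^{(-1)} = \sigma\bigl(\mu(m_1 \otimes \cdots \otimes m_n)\bigr).
\]
I do not anticipate a serious obstacle. The only mild point worth confirming is that the prescribed $\sigma$-action descends to the $O_\bK$-tensor product, which follows from the semilinearity $\sigma(rm) = r^{(-1)}\sigma m$ for $r \in O_\bK$ (a consequence of Frobenius twisting being a well-defined ring endomorphism of $O_\bK$ fixing $\FF_q[t]$). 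Together these two steps identify the tensor product with $\eM_{(W,h)}$ and establish the claim.
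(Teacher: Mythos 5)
Your argument matches the paper's proof: define $W=\sum W_i$, $h=\prod h_i$, verify $\divv(h)$ has the required shtuka form for $\Xi=\sum\Xi_i$, and realize the tensor product via the multiplication isomorphism into $\Gamma(\bU,\cO_\bX(-W^{(1)}))$, checking $\sigma$-equivariance by direct computation. The only cosmetic difference is that the paper reduces to $n=2$ first, while you treat general $n$ directly; your explicit remarks that $\Xi\mapsto\eI_\Xi$ is additive and that the $\sigma$-action descends through the $O_\bK$-tensor product are small clarifications the paper leaves implicit.
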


\begin{proof}
Here we prove the case when $\eK$ is geometric over $\FF_q(t)$, and refer the reader to Proposition~\ref{propB: tensor} for the non-geometric case.

Without loss of generality, we may assume that $n=2$ and
\[
\eM_i = \eM_{(W_i,h_i)} = \Gamma\bigl(\bU,\Ocal_\bX(-W_i^{(1)})\bigr),
\]
where
$h_i$ is a shtuka function associated to $W_i$ for $(\eK,\Xi_i)$ for $i=1,2$.
Put
\[
W \assign  W_1+W_2, \quad h \assign  h_1 \cdot h_2, \quad \text{ and } \quad \Xi \assign  \Xi_1 + \Xi_2.
\]
One has that
\[
\divv(h) = W^{(1)}-W+\Xi-\eI_\Xi \quad \in \Div(\bX).
\]
Let $\eM = \eM_{(W,h)} = \Gamma\bigl(\bU, \Ocal_\bX(-W^{(1)})\bigr)$.
The $\Ocal_\bX$-module isomorphism
\[
\Ocal_\bX(-W_1^{(1)}) \otimes_{\Ocal_\bX}  \Ocal_\bX(-W_2^{(1)}) \stackrel{\sim}{\longrightarrow} \Ocal_\bX(-W^{(1)})
\]
induces the following $O_\bK$-module isomorphism
\[
\begin{tabular}{rccc}
$\Theta:$ & $\eM_1\otimes_{O_\bK}  \eM_2$ & $\stackrel{\sim}{\longrightarrow}$ & $\eM$ \\
& $m_1\otimes m_2$ & $\longmapsto$ & $m_1\cdot m_2$.
\end{tabular}
\]
Moreover, for $m_i \in \eM_i$, $i=1,2$, we have
\[
\Theta\bigl(\sigma (m_1 \otimes m_2)\bigr)
= \Theta(h_1m_1^{(-1)} \otimes  h_2 m_2^{(-1)})
=
h (m_1\cdot m_2)^{(-1)} = \sigma \Theta\big (m_1\otimes  m_2).
\]
Therefore $\eM_1\otimes_{O_\bK} \eM_2$ is isomorphic to the CM dual $t$-motive $\eM$ with the generalized CM type $(\eK,\Xi)$ as desired.
\end{proof}

To end this subsection, we derive the following.

\begin{proposition}\label{L:IsogW1W2}
CM dual $t$-motives with the same generalized CM type are isogeneous.
\end{proposition}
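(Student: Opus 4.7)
The plan is to exhibit an explicit isogeny as multiplication by a carefully chosen $g \in \bK^\times$. Using Definition~\ref{defn: CM dual t-motives} and the fact that isogeny is an equivalence relation, I may assume $\eM_i = \eM_{(W_i,h_i)}$ for shtuka pairs $(W_i,h_i)$, viewed as $O_\bK$-submodules of $\bK$. Setting $V \assign W_2 - W_1$, subtraction of the two shtuka identities $\divv(h_i) = W_i^{(1)} - W_i + \Xi - \eI_\Xi$ gives $\divv(h_2/h_1) = V^{(1)} - V$, so the class $[V] \in J_\bX(\KK)$ is Frobenius-fixed and hence lies in the finite group $J_X(\FF_q)$. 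In order for $\Theta_g\colon m \mapsto gm$ to be $O_\bK$-linear and commute with $\sigma$, the identity $g h_1 = h_2 g^{(-1)}$ must hold, equivalently $\divv(g) - \divv(g)^{(-1)} = V^{(1)} - V$.

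The key move is to look for $\divv(g) = V^{(1)} + E$ with $E$ an $\FF_q$-rational (Frobenius-fixed) divisor on $X$, so that $E^{(-1)} = E$ and the divisor identity above holds automatically. To turn this into an actual isogeny $\eM_1 \hookrightarrow \eM_2$ I need: (a) $[E] = -[V]$ in $J_X(\FF_q)$, so that $V^{(1)} + E$ is principal on $\bX$; and (b) $E|_\bU \geq 0$, so that $g\eM_1 \subseteq \eM_2$ with cokernel supported on finitely many points of $\bU$. For (a), Lang's theorem for the abelian variety $J_X$ over $\FF_q$ (equivalently $\mathrm{Br}(\FF_q) = 0$ via Hochschild--Serre) yields surjectivity of $\Pic^0(X/\FF_q) \twoheadrightarrow J_X(\FF_q)$, producing an $\FF_q$-rational $E_0$ with $[E_0] = -[V]$. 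For (b), a standard Riemann--Roch argument on $X$ supplies $f \in \FF_q(X)^\times$ with $\divv(f) + E_0 \geq 0$ on $\bU$ (absorbing the negative part of $E_0|_\bU$ into poles at $\pi_{\bX/\PP^1}^{-1}(\infty)$), after which I replace $E_0$ by $E \assign E_0 + \divv(f)$.

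Picking any $g \in \bK^\times$ with $\divv(g) = V^{(1)} + E$ then yields $g/g^{(-1)} = c \cdot h_2/h_1$ for some $c \in \KK^\times$; since $\KK$ is algebraically closed, rescaling $g$ by $\lambda \in \KK^\times$ with $\lambda^{q-1} = c^{-q}$ forces $c = 1$. The resulting map $\Theta_g$ is then a nonzero, hence injective, $\KK[t,\sigma]$-linear morphism $\eM_1 \to \eM_2$ whose cokernel equals $\Gamma(\bU,\cO_\bX(-W_2^{(1)}))/\Gamma(\bU,\cO_\bX(-W_2^{(1)} - E))$. By Lemma~\ref{L:degE}, this cokernel has $\KK$-dimension $\deg(E|_\bU) < \infty$, so $\Theta_g$ is an isogeny. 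The main delicate step will be the cohomological input guaranteeing that every Frobenius-fixed class in $J_\bX(\KK)$ is represented by an $\FF_q$-rational divisor on $X$; all remaining steps are routine divisor arithmetic together with the final Frobenius rescaling.
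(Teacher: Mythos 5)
Your overall strategy is essentially the paper's: use Lang's theorem together with the triviality of $\mathrm{Br}(\FF_q)$ to replace the difference of the two shtuka divisors by an $\FF_q$-rational (hence Frobenius-fixed) divisor, correct it by an $\FF_q$-rational function via Riemann--Roch so that it becomes effective on $\bU$, and realize the isogeny as multiplication by a single function, with the cokernel controlled by Lemma~\ref{L:degE}. Your checks of $O_\bK$-linearity and $\sigma$-equivariance, the constant rescaling $\lambda^{q-1}=c^{-q}$, the injectivity, and the identification of the cokernel with $\Gamma(\bU,\cO_\bX(-W_2^{(1)}))/\Gamma(\bU,\cO_\bX(-W_2^{(1)}-E))$ are all correct.

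There is, however, one genuine gap: you assert that $[V]\in J_\bX(\KK)$ for $V=W_2-W_1$, which tacitly assumes $\deg W_2=\deg W_1$. Nothing forces this: the shtuka equation $\divv(h_i)=W_i^{(1)}-W_i+\Xi-\eI_\Xi$ only uses $\deg W_i^{(1)}=\deg W_i$, and replacing $W_i$ by $W_i+D$ for any Frobenius-fixed divisor $D$ (e.g.\ a full twisting orbit at infinity) yields another admissible pair with the same $h_i$ but a different degree, so the two given $W_i$ may well have unequal degrees. In that case $[V]$ is not a point of the Jacobian, the statements ``$[V]$ lies in $J_X(\FF_q)$'' and ``$[E_0]=-[V]$ in $J_X(\FF_q)$'' do not parse, and your requirement that $V^{(1)}+E$ be principal forces $\deg E=-\deg V\neq 0$, outside the degree-zero framework in which you invoke Lang. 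The repair is exactly the normalization the paper performs: choose an $\FF_q$-rational divisor $D_0$ of degree one on $X$ (\cite[Cor.~VII.~5.5]{Weil}) and apply the Lang/Brauer argument to $V-(\deg V)D_0$, absorbing the multiple of $D_0$ into your correction divisor $E$; with that adjustment the rest of your argument goes through verbatim. Note also that your construction, like the paper's main-text proof, only treats geometric CM fields; when $\FF_\eK\supsetneq\FF_q$ one needs the appendix variant carried out on $\bX_{(0)}$ with the $\ell$-fold Frobenius twist.
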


\begin{proof}
We prove the case of geometric CM fields over $\FF_q(t)$ here, and refer readers to Proposition~\ref{propB: isogeny} for the non-geometric case. Let $\eK$ be a geometric CM field over $\FF_q(t)$ and $X$ be the curve over $\FF_q$ associated with $\eK$.
Recall that $\bX$ is the base change of $X$ to $\KK$.
It is a theorem of Chow that the Jacobian $J_{\bX}$ of $\bX$ and the map of $\bX$ to $J_{\bX}$
are actually defined over $\FF_q$ (see \cite[Section~6]{Chow}).
By the
first half of that theorem, the $q$-power Frobenius acts on $J_{\bX}$, and $J_{\bX}(\FF_q)$ consists of the points on the Jacobian fixed by it.
Since the Brauer group of a finite field is trivial,
\cite[Remark~1.6]{Milne} shows that $J_{\bX}(\FF_q) \simeq
\text{Pic}^0_{\bX}(\FF_q)$.
By this and the second half of Chow's
theorem, points of $J_{\bX}(\FF_q)$ are precisely those represented
by the $\FF_q$-rational divisors of degree zero.

Now, let $\Xi$ be a generalized CM type of $\eK$.
Take $h_{1}$ and
$h_{2}$ to be shtuka functions associated to divisors
$W_{1}$ and $W_{2}$, respectively, for $(\eK,\Xi)$.
That is, for $i=1,2$,
  \[ W_{i}^{(1)}-W_{i}+\Xi-\eI_\Xi=\divv (h_{i}) . \]
Let $D_0$ be an $\FF_q$-rational divisor of degree one
on $\bX$ \cite[Cor.~VII.~5.5]{Weil}, and set $$W \assign  W_2-W_1 - (\deg W_2-\deg W_1)\cdot D_0.$$
Then $\deg W = 0$ and
$
W^{(1)} - W \thicksim 0
$.
Hence the divisor class $[W]$ corresponds to a point in $J_\bX(\FF_q)$.
The above discussion assures that $[W]$ is represented by an $\FF_q$-rational divisor of degree zero, which we
call $D_1$, so that there is a rational function $f_1$ on $\bX$ such that
$$W = D_1 + \divv(f_1).$$
Put $D_2 \assign  - D_1 -(\deg W_2 - \deg W_1)\cdot D_0$ and $Q \assign  \pi_{\bX/\PP^1}^*(\infty)$.
As $D_2$ and $Q$ are both $\FF_q$-rational and $\deg Q > 0$,
by the Riemann-Roch theorem there exists a sufficiently large integer $n$ and $f_2 \in \eK$ so that
$$
\divv(f_2) + D_2 + nQ \quad \text{ is effective.}
$$
Let $D_3 \assign  \divv(f_2) + D_2$
and $f \assign  f_1\cdot f_2$.
We obtain that
$D_3$ is an $\FF_q$-rational divisor of $\bX$ which is effective on $\bU$, and
\begin{align*}
W_2 - W_1 + D_3 &= W_2-W_1+ \divv(f_2)-D_1-(\deg W_2-\deg W_1)\cdot D_0 \\
&= W-D_1 + \divv(f_2) \ = \ \divv(f_1\cdot f_2) \ = \  \divv(f).
\end{align*}
In particular, we have that
$\divv(f^{(1)}/f)=\divv(h_{2}/h_{1})$.
Replacing $f$
by a suitable $\KK$-scalar multiple if necessary, we may further assume that $f^{(1)}/f = h_2/h_1$. Finally, define
\[
\Theta
:\eM_{1}\rightarrow \eM_{2} \quad m_{1}\longmapsto f^{(1)}m_{1}, \quad \forall m_1 \in \eM_1.
\]
As for each $m_1 \in \eM_1$,
\[
\divv(f^{(1)}m_{1}) - W_2^{(1)}
= (\divv(m_1) - W_1^{(1)}) + D_3
\]
is always effective on $\bU$,
$\Theta$ is a well-defined $O_\bK$-module homomorphism.
Moreover, the cokernel of $\Theta$ is finite dimensional over $\KK$ by Lemma~\ref{L:degE}, and
\[
\Theta(\sigma m_1)
= \Theta (h_1 m_1^{(-1)})
= f^{(1)}h_1 m_1^{(-1)}
= h_2 (f m_1^{(-1)})
= \sigma \Theta (m_1), \ \forall\, m_1 \in \eM_1.
\]
Therefore $\Theta: \eM_1\rightarrow \eM_2$ is actually an isogeny, and the assertion holds.
\end{proof}

\subsection{\texorpdfstring{Uniformization of CM dual $t$-motives}{Uniformization of CM dual t-motives}}\label{subsec: U-CM}

In this subsection, we shall outline the key ingredients of the argument of the uniformizability of CM dual $t$-motives, and refer the readers to Appendix~\ref{AppendixSubsec: Unif. of CM dual t-motives} for the complete details.
Without loss of generality,
we may choose $\KK = \CC_\infty$ throughout this subsection.

Let $\eK/\FF_{q}(t)$ be a CM field with maximal totally real subfield $\eK^+$.
We put $d = [\eK^+:\FF_q(t)]$ and let $\Xi = \xi_1+\cdots + \xi_d$ be a CM type of $\eK$.
Notice that
$\nu_i^+\assign  \nu_{\xi_i}\big|_{\eK^+}: \eK^+\rightarrow \CC_\infty$, $i = 1,\ldots, d$,
give all the embeddings from $\eK^+$ into $\CC_\infty$.
Recall that (see~\cite[Section 4.3]{Anderson86}) a \emph{Hilbert-Blumenthal $O_{\eK^+}$-module} over $\CC_\infty$ is a pure uniformizable abelian $t$-module $E$ of dimension $d$ equipped with a structure homomorphism $\rho_E : O_{\eK^+}\rightarrow \End_{\text{ab.}\ t-{\rm mod.}}(E)$ extending the tautological homomorphism $\FF_q[t] \rightarrow \End_{\textup{ab.\,$t$-mod.}}(E)$ such that the induced $\CC_\infty$-linear representation $\partial \rho_E : O_{\eK^+}\rightarrow \End_{\CC_\infty}(\Lie(E))$ is isomorphic to $\nu_1^+\oplus \cdots \oplus \nu_d^+$.
Set
\[
\nu_\Xi:\eK \rightarrow \CC_\infty^d, \quad \nu_\Xi(\alpha) \assign  (\nu_{\xi_1}(\alpha),\ldots, \nu_{\xi_d}(\alpha)), \quad \forall \alpha \in \eK.
\]
Let $O_\eK$ be the integral closure of $\FF_q[t]$ in $\eK$. Then $\nu_\Xi(O_\eK)$ becomes a discrete $A$-lattice in $\CC_\infty^d$.

\begin{proposition}
There exists a Hilbert-Blumenthal $O_{\eK^+}$-module $(\GG_a^d,\rho^\Xi)$ whose period lattice is $\nu_\Xi(O_\eK) \subset \CC_\infty^d = \Lie(\GG_a^d)$.
\end{proposition}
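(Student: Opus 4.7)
The plan is to invoke Anderson's correspondence between discrete, $O_{\eK^+}$-stable $A$-lattices in $\CC_\infty^d$ and Hilbert-Blumenthal $O_{\eK^+}$-modules (see~\cite[\S 4.3]{Anderson86}), where $O_{\eK^+}$ acts on $\CC_\infty^d$ via $\partial\rho^\Xi \assign \nu_1^+ \oplus \cdots \oplus \nu_d^+$. Hence it suffices to verify that $\Lambda \assign \nu_\Xi(O_\eK)$ is such a lattice; Anderson's construction then produces an exponential function whose kernel is $\Lambda$ and equips $\GG_a^d$ with the required structure homomorphism $\rho^\Xi : O_{\eK^+} \to \End_{\text{ab.\,$t$-mod.}}(\GG_a^d)$.

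First I would check $O_{\eK^+}$-stability. For $\beta \in O_{\eK^+}$ and $\alpha \in O_\eK$, the product $\beta\alpha$ lies in $O_\eK$, and because $\nu_i^+ = \nu_{\xi_i}|_{\eK^+}$ one has
\[
\nu_\Xi(\beta\alpha) = \bigl(\nu_1^+(\beta)\nu_{\xi_1}(\alpha),\, \ldots,\, \nu_d^+(\beta)\nu_{\xi_d}(\alpha)\bigr) = \partial\rho^\Xi(\beta)\bigl(\nu_\Xi(\alpha)\bigr),
\]
so $\Lambda$ is closed under the action of $O_{\eK^+}$. Moreover, $\nu_\Xi$ is $\FF_q[t]$-linear and injective, and $O_\eK$ has $A$-rank $[\eK:\FF_q(t)] = d[\eK:\eK^+]$, matching the expected rank of a period lattice of a $d$-dimensional Hilbert-Blumenthal module coming from an $O_{\eK^+}$-lattice of $O_{\eK^+}$-rank $[\eK:\eK^+]$.

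The main step is discreteness, and this is exactly where the CM hypothesis enters. Since $\eK^+$ is totally real, each embedding $\nu_i^+$ determines a unique infinite place $\infty_i^+$ of $\eK^+$ with $\eK^+_{\infty_i^+} \cong k_\infty$, and since $\eK/\eK^+$ is CM, $\infty_i^+$ admits a unique extension $\infty_i$ to $\eK$. Consequently, $\nu_{\xi_i}$ extends continuously to an embedding $\eK_{\infty_i} \hookrightarrow \CC_\infty$ which is an isometry for the place $\infty_i$. The $d$ places $\infty_1,\ldots,\infty_d$ exhaust all places of $\eK$ above $\infty$, so the standard discreteness of $O_\eK$ inside $\prod_{i=1}^d \eK_{\infty_i}$ (viewed as a finite-dimensional $k_\infty$-vector space) transfers through the isometric diagonal embedding $(\nu_{\xi_1},\ldots,\nu_{\xi_d})$ to discreteness of $\Lambda$ in $\CC_\infty^d$.

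Combining these three properties---$O_{\eK^+}$-stability, discreteness, and the correct $A$-rank---with Anderson's construction produces the desired Hilbert-Blumenthal $O_{\eK^+}$-module $(\GG_a^d, \rho^\Xi)$ with $\partial\rho^\Xi \cong \nu_1^+ \oplus \cdots \oplus \nu_d^+$ and $\Lambda_{\rho^\Xi} = \nu_\Xi(O_\eK)$. Purity and uniformizability are built into Anderson's output. The only delicate point is the discreteness verification, which relies crucially on the CM assumption that each $\infty_i^+$ is non-split in $\eK$; without this, the map $\eK \to \CC_\infty^d$ would lose information about some of the archimedean places and discreteness would fail.
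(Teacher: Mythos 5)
Your proposal is correct and follows the same route as the paper, which simply cites Anderson's Theorem~7~(II) from~\cite{Anderson86}. You go further by explicitly verifying the hypotheses of that theorem---$O_{\eK^+}$-stability, discreteness, and rank---and in particular you correctly isolate the role of the CM condition, namely that each $\infty_i^+$ being non-split in $\eK$ guarantees that $\nu_{\xi_1},\ldots,\nu_{\xi_d}$ account for all infinite places of $\eK$, which is exactly what makes the image of $O_\eK$ discrete in $\CC_\infty^d$.
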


\begin{proof}
This follows directly from \cite[Theorem~7~(II)]{Anderson86}.
\end{proof}

Moreover, by \cite[Theorem~7~(I)]{Anderson86} we can actually extend $\rho^\Xi$ to an $\FF_q$-algebra homomorphism from $O_\eK$ into $\End_{\FF_q}({\GG_{a}^d}_{/ \CC_\infty})$ so that
$\partial \rho^\Xi_a = \nu_{\xi_1}(a)\oplus \cdots \oplus \nu_{\xi_d}(a)$ for every $a \in O_\eK$. Recall that in \eqref{eqn: M(rho)}, $\eM(\rho^\Xi)$ is the Hartl-Juschka dual $t$-motive associated to $\rho^{\Xi}$.

\begin{theorem}\label{thm: Mrho}
For each CM type $\Xi$ of a given CM field $\eK$ over $\FF_q(t)$,
$\eM(\rho^\Xi)$ is a CM dual $t$-motive with CM type $(\eK, \Xi)$.
\end{theorem}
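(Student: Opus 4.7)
The plan is to realize $\eM(\rho^\Xi)$ explicitly as $\eM_{(W,h)}$ for suitable shtuka data $(W,h)$ associated with $(\eK,\Xi)$. First I would promote $\eM(\rho^\Xi)$ to an invertible $O_\bK$-module. By Theorems~\ref{thm: Anderson Eq} and~\ref{thm: M(rho)}, the extended homomorphism $\rho^\Xi\colon O_\eK\to \End_{\text{\rm ab.\ $t$-mod}}(E_{\rho^\Xi})$ functorially yields a ring map $O_\eK\to \End_{\CC_\infty[t,\sigma]}(\eM(\rho^\Xi))$ commuting with $\sigma$ and with the $\CC_\infty[t]$-action, so $\eM(\rho^\Xi)$ becomes an $O_\bK[\sigma]$-module. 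Since $E_{\rho^\Xi}$ is uniformizable with period lattice $\nu_\Xi(O_\eK)$ of $A$-rank $[\eK:\FF_q(t)]$, one obtains $\rank_{\CC_\infty[t]}\eM(\rho^\Xi) = [\eK:\FF_q(t)] = \rank_{\CC_\infty[t]}O_\bK$, so $\eM(\rho^\Xi)$ has $O_\bK$-rank one. It is $O_\bK$-torsion-free (the norm map $O_\bK\to \CC_\infty[t]$ reduces this to the torsion-freeness of $\eM(\rho^\Xi)$ over $\CC_\infty[t]$), and as $O_\bK=\Gamma(\bU,\cO_\bX)$ is a Dedekind domain, $\eM(\rho^\Xi)$ is invertible. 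Hence $\eM(\rho^\Xi)\cong \Gamma(\bU,\Lcal)$ for some line bundle $\Lcal$ on $\bU$.

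Next, I would extend $\Lcal$ to a line bundle on $\bX$, written as $\cO_\bX(-W^{(1)})$ for a divisor $W$; only $W|_\bU$ is constrained at this stage, while the values on $\pi_{\bX/\PP^1}^{-1}(\infty)$ remain to be chosen. Since $\sigma(am)=a^{(-1)}\sigma(m)$ for $a\in O_\bK$ and the elements of $O_\eK\subset O_\bK$ are $\FF_q$-rational (hence fixed by Frobenius), $\sigma$ is $\CC_\infty$-Frobenius-semilinear but $O_\eK$-linear, and so defines a morphism of sheaves $\cO_\bX(-W)|_\bU\to \cO_\bX(-W^{(1)})|_\bU$; over $\bK$ this is multiplication by a unique $h\in \bK^\times$. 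The Anderson identification $\eM(\rho^\Xi)/\sigma\eM(\rho^\Xi)\cong \Lie(E_{\rho^\Xi})$, combined with the decomposition $\partial\rho^\Xi_a = \bigoplus_{i=1}^d \nu_{\xi_i}(a)$ coming from the defining property of $\rho^\Xi$ for $\Xi=\xi_1+\cdots+\xi_d$, shows that the cokernel of multiplication by $h$ is a skyscraper of total length $d$ supported precisely at the points $\xi_1,\ldots,\xi_d\in J_\eK$ of $\Xi$. This yields $\divv(h)|_\bU = (W^{(1)}-W)|_\bU + \Xi$.

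The hard part, and the main obstacle, is to arrange the extension of $W$ at $\pi_{\bX/\PP^1}^{-1}(\infty)$ so that the full shtuka equation $\divv(h) = W^{(1)}-W+\Xi-\eI_\Xi$ holds. Here I would use that $E_{\rho^\Xi}$ is pure of weight one as a Hilbert-Blumenthal $O_{\eK^+}$-module: the rigid-analytic structure of $\eM(\rho^\Xi)$ near each infinite point is governed by the embeddings $\nu_{\xi_i}|_{\eK^+}$, whose reductions at infinity in the sense of Section~\ref{sec: R-pt-inf} (cf.\ Proposition~\ref{prop: reduction at infinity} and Definition~\ref{Def: IXi}) assemble to exactly the divisor $\eI_\Xi$. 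Lang's theorem applied to degree-zero divisors supported on $\pi_{\bX/\PP^1}^{-1}(\infty)$, as in the proof of Lemma~\ref{L:Diff,W,h}, absorbs the residual degree-zero ambiguity between $W^{(1)}-W$ and the target divisor at infinity, producing a $W$ for which $(W,h)$ is shtuka data for $(\eK,\Xi)$. By Definition~\ref{defn: CM dual t-motives}, this yields the required isomorphism $\eM(\rho^\Xi)\cong \eM_{(W,h)}$, exhibiting $\eM(\rho^\Xi)$ as a CM dual $t$-motive with CM type $(\eK,\Xi)$.
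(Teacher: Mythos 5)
Your first two steps are essentially the paper's: projectivity of rank one over $O_\bK$ from the rank count plus the embedding $O_\eK\hookrightarrow\End_{\CC_\infty[t,\sigma]}(\eM(\rho^\Xi))$ (Lemma~\ref{lemB: projectivity}), and the identity $\sigma\eM(\rho^\Xi)=\Ifk_\Xi\,\eM(\rho^\Xi)$ read off from $\eM(\rho^\Xi)/\sigma\eM(\rho^\Xi)\cong\Lie(E_{\rho^\Xi})$ and the diagonal form of $\partial\rho^\Xi$ (Lemma~\ref{lem: CM type of M(rho)}, inclusion~\eqref{Ifk Mscr}). The genuine gap is in your third step, which is exactly the hard point the paper isolates in Lemma~\ref{lemB: Infinity part}. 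After fixing a trivialization $\eM(\rho^\Xi)=\Jfk^{(1)}\bm$ with $\sigma\bm=h\bm$, the divisor of $h$ on $\bU$ is pinned by the module, and one must find a divisor $W_\infty$ \emph{supported on} $\pi_{\bX/\PP^1}^{-1}(\infty)$ satisfying the \emph{exact} identity $W_\infty^{(1)}-W_\infty=\eI_\Xi-\Upsilon$, where $\Upsilon$ is the part of $\divv(h)-(W^{(1)}-W)-\Xi$ at infinity. Lang's theorem, as used in Lemma~\ref{L:Diff,W,h}, is the wrong tool here: it solves $W^{(1)}-W\sim D$ only up to linear equivalence and gives no control on the support of the solution. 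An extra principal divisor $\divv(f)$ cannot be tolerated, since absorbing it would change the divisor of $h$ (equivalently the fractional ideal) on $\bU$, destroying the isomorphism you need for Definition~\ref{defn: CM dual t-motives}; being merely isogenous to some $\eM_{(W,h)}$ (Proposition~\ref{L:IsogW1W2}) is not enough at this stage, because uniformizability has not yet been established for $\eM(\rho^\Xi)$ by other means.

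Moreover, such a $W_\infty$ supported at infinity can exist only if $\eI_\Xi-\Upsilon$ has degree zero on each Frobenius orbit inside $\pi_{\bX/\PP^1}^{-1}(\infty)$, i.e.\ on each fiber $\pi_{\bX/\bX^+}^{-1}(\infty^+)$; these fibers are single orbits precisely because $\eK$ is CM. That fiberwise degree constraint is what the paper extracts from purity: the lattice equation $t^uL=\sigma^wL$ yields the identity~\eqref{eqn: purity}, forcing the multiplicities of $\Upsilon$ over each $\infty^+$ to sum to $\wt(\Xi)$, matching $\eI_\Xi$. Your sentence that ``the rigid-analytic structure near each infinite point is governed by the embeddings, whose reductions assemble to $\eI_\Xi$'' gestures at this but proves nothing quantitative; and once the degree constraint is in hand, no Lang-type surjectivity is needed at all — the equation is solved by the explicit telescoping sum of Frobenius twists over each orbit, as in the end of the proof of Lemma~\ref{lemB: Infinity part}. (A secondary point: the paper proves the theorem via the characterization in Proposition~\ref{prop: being CM}, which also covers non-geometric CM fields, where the relevant twist is $\sigma^\ell$ on the component $\bX_{(0)}$ and the divisors $\Xi^\#$, $\eI_\Xi^\#$ intervene; your write-up implicitly assumes $\FF_\eK=\FF_q$.)
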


\begin{remark}\label{Rmk:Compactiblity of CM type}
Note that in~\cite[Sec.~2.3]{BP02}, $\eK$ is called the CM field of the $t$-module $\rho^{\Xi}$ and $\rho^{\Xi}$ has CM type $\nu_{\Xi}$. Comparing with Theorem~\ref{thm: Mrho}, this matches with the present notion of the CM type of $\eM(\rho^\Xi)$.
\end{remark}

\subsubsection*{Outline of the proof of Theorem~\ref{thm: Mrho}}
${}$
\begin{itemize}
\item (Step 1.) As
\[
\rank_{\CC_\infty[t]} \eM(\rho^\Xi) = \rank_{\FF_q[t]} \bigl(\nu_\Xi(O_\eK)\bigr) = [\eK : \FF_q(t)]
\]
and the $\FF_q$-algebra homomorphism $\rho^\Xi: O_\eK \rightarrow \End_{\FF_q}(\GG_{a/\CC_\infty}^d)$ induces an $\FF_q[t]$-algebra embedding $O_\eK \hookrightarrow \End_{\CC_\infty[t,\sigma]}(\eM(\rho^\Xi))$, we have that
$\eM(\rho^\Xi)$ is projective of rank one over $O_\bK \assign  \CC_\infty \otimes_{\FF_q} O_\eK$ (see~Lemma~\ref{lemB: projectivity}).
\item (Step 2.) For each $\xi \in J_\eK$, let $\Pfk_{\xi}$ be the maximal ideal of $O_\bK$ corresponding to $\xi$, and
put $\Ifk_\Xi \assign  \prod_{i=1}^d\Pfk_{\xi_i}$.
Then we show in Lemma~\ref{lem: CM type of M(rho)} that
\[
\sigma \cdot \eM(\rho^\Xi) = \Ifk_\Xi \cdot \eM(\rho^\Xi).
\]
\item (Step 3.) Finally, since $\Xi$ is a CM type of $\eK$ and $\eM(\rho^\Xi)$ is pure (cf.\ Proposition~\ref{propC: PU-Mrho}), the proof rests on the following characterization assuring that $\eM(\rho^\Xi)$ is a CM dual $t$-motive with CM type $(\eK,\Xi)$.
\end{itemize}

We capture this situation in the following slightly more general result.

\begin{proposition}\label{prop: being CM-0}
{\rm (See~Proposition~\ref{prop: being CM})}
Let $\eK$ be a CM field over $\FF_q(t)$,
and $O_\eK$ be the integral closure of $\FF_q[t]$ in $\eK$.
Given a dual $t$-motive $\eM$, suppose the following conditions hold:
\begin{itemize}
    \item[(1)] $\rank_{\KK[t]}\eM = [\eK:\FF_q(t)]$;
    \item[(2)] there is an $\FF_q[t]$-algebra embedding $O_\eK \hookrightarrow \End_{\CC_\infty[t,\sigma]} \eM$, which induces an $O_\bK$-module structure on $\eM$;
    \item[(3)] there exists a (generalized) CM type $\Xi_\eM = \sum_{\xi \in J_\eK} m_\xi \xi$ of $\eK$ so that $\sigma \eM = \Ifk_{\Xi_\eM} \eM$ where $\Ifk_{\Xi_\eM} = \prod_{\xi \in J_\eK} \Pfk_\xi^{m_\xi}$;
    \item[(4)] $\eM$ is pure.
\end{itemize}
Then $\eM$ is a CM dual $t$-motive with (generalized) CM type $(\eK,\Xi_\eM)$.
\end{proposition}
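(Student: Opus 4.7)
The plan is to realize $\eM$ as $\Gamma(\bU,\cO_\bX(-W^{(1)}))$ for a suitable divisor $W$ on $\bX$, extract the $\sigma$-action as multiplication by a rational function, and then use purity to pin down its divisor at infinity. First, conditions~(1) and~(2) together show that $\eM$ is a projective $O_\bK$-module of rank one: the $O_\bK$-action of~(2) makes $\eM$ a finitely generated module over the Dedekind ring $O_\bK$, the $\CC_\infty[t]$-freeness forces it to be torsion-free, and the rank equality $\rank_{\CC_\infty[t]}\eM = [\eK:\FF_q(t)] = \rank_{\CC_\infty[t]}O_\bK$ from~(1) pins the $O_\bK$-rank to~$1$. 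Thus $\eM$ corresponds to a line bundle on the smooth affine curve $\bU = \Spec O_\bK$; extending this line bundle arbitrarily across $\pi_{\bX/\PP^1}^{-1}(\infty)$ yields an $O_\bK$-isomorphism $\eM \cong \Gamma(\bU,\cO_\bX(-W^{(1)}))$, and the divisor $W$ is determined up to addition of divisors supported at infinity.

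Under this identification the Frobenius-semilinear endomorphism $\sigma:\eM\to\eM$ translates into multiplication by a rational function $h\in\CC_\infty(\bX)^\times$, unique up to $\CC_\infty^\times$. Condition~(3), $\sigma\eM = \Ifk_{\Xi_\eM}\eM$, then yields
\[
\divv(h)|_\bU = (W^{(1)}-W+\Xi_\eM)|_\bU,
\]
so globally $\divv(h) = W^{(1)}-W+\Xi_\eM+\Delta$ for some divisor $\Delta$ supported at infinity. The target is $\Delta = -\eI_{\Xi_\eM}$, because then $h$ is a shtuka function in the sense of Lemma~\ref{L:Diff,W,h} and $\eM\cong\eM_{(W,h)}$, exhibiting $\eM$ as a CM dual $t$-motive of generalized CM type $(\eK,\Xi_\eM)$. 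Replacing $W$ by $W+E$ with $E$ supported at infinity modifies $\Delta$ into $\Delta-(E^{(1)}-E)$ while preserving both $\eM$ and $h$, so the construction reduces to solving $E^{(1)}-E = \Delta+\eI_{\Xi_\eM}$ with $E$ supported at infinity.

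By Proposition~\ref{prop: reduction at infinity} combined with the CM hypothesis, each fiber $\pi_{\bX/\bX^+}^{-1}(\overline{\infty}^+)$ is a single Frobenius orbit inside $\pi_{\bX/\PP^1}^{-1}(\infty)$, and a direct calculation on each orbit shows that the image of $E\mapsto E^{(1)}-E$ consists precisely of divisors whose coefficient-sum vanishes on every orbit. By Definition~\ref{Def: IXi} and the weight identity in Definition~\ref{defn: CM type}, the coefficient sum of $\eI_{\Xi_\eM}$ on such an orbit equals $\wt(\Xi_\eM)$, independent of the orbit. Solvability of the system therefore amounts to the orbit-uniformity
\[
\sum_{P\in\pi_{\bX/\bX^+}^{-1}(\overline{\infty}^+)}\ord_P(-\Delta) = \wt(\Xi_\eM),
\quad \forall\,\overline{\infty}^+\in\pi_{\bX^+/\PP^1}^{-1}(\infty).
\]

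The main obstacle is deriving this orbit-uniformity from the purity hypothesis~(4). Purity of $\eM$ furnishes, at each place of $\bK$ above $\infty$, a Frobenius-stable $\laurent{\CC_\infty}{1/t}$-lattice with a prescribed uniform slope, and this slope controls the pole order of $h$ at the points of each Frobenius orbit and forces their orbit-totals to agree with $\wt(\Xi_\eM)$. Translating the abstract purity condition of Hartl and Juschka into this divisorial statement about $h$ at infinity is the technical heart of the argument; once it is in hand, the required $E$ exists and the resulting pair $(W,h)$ realizes $\eM$ as the CM dual $t$-motive $\eM_{(W,h)}$ of generalized CM type $(\eK,\Xi_\eM)$, completing the proof.
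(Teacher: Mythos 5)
Your outline follows the same route as the paper (projectivity of rank one from (1)+(2) as in Lemma~\ref{lemB: projectivity}, extraction of a function $h$ with $\divv(h)=W^{(1)}-W+\Xi_\eM+\Delta$ from (3), and then an adjustment of $W$ by a divisor $E$ supported at infinity solving $E^{(1)}-E=\Delta+\eI_{\Xi_\eM}$, using that each fiber of $\pi_{\bX/\bX^+}$ over a point above $\infty$ is a single Frobenius orbit). However, there is a genuine gap at exactly the point you yourself flag: you never derive the orbit-uniformity of $-\Delta$ from purity, you only assert that the ``uniform slope \dots forces their orbit-totals to agree with $\wt(\Xi_\eM)$.'' That assertion is the entire content of the paper's technical Lemma~\ref{lemB: Infinity part}, and without it the required divisor $E$ need not exist, so the proposal as written does not prove the proposition. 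Concretely, the paper proceeds as follows: purity gives a full-rank $\CC_\infty[\![1/t]\!]$-lattice $L$ in $\laurent{\CC_\infty}{1/t}\otimes_{\CC_\infty[t]}\eM$ with $t^uL=\sigma^wL$; one first replaces $L$ by $\Ocal_{\eK,\infty}\cdot L$ to make it a module over the completed integral closure at infinity (this uses that the relevant power of $\sigma$ commutes with that multiplication — purity does \emph{not} directly hand you a Frobenius-stable lattice ``at each place of $\bK$ above $\infty$,'' as you claim); then, writing the completed rank-one module as $\Lcal_0\cdot\bm$ and using that the appropriate power of $\sigma$ acts by a product of Frobenius twists of $h$, one obtains a divisor identity of the shape $u\cdot\pi_{\bX/\PP^1}^*(\infty)=\sum_i(\eI_{h})^{(-i)}$; comparing this with $\deg\eI_h=\deg\Xi_\eM$ (from $\deg\divv(h)=0$) and using that Frobenius twisting permutes each fiber over $\infty^+$ yields precisely the fiberwise sums $\wt(\Xi_\eM)$ that your argument needs. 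Supplying this computation is the heart of the proof, not a routine translation.

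A secondary point: the proposition covers arbitrary CM fields, including those with constant field $\FF_{q^\ell}\supsetneq\FF_q$, while your construction is phrased entirely in the geometric picture (a connected $\bX$, the equation $E^{(1)}-E=\cdots$, and Lemma~\ref{L:Diff,W,h}). In the non-geometric case $\bX$ splits into $\ell$ components permuted by twisting, the shtuka datum lives on $\bX_{(0)}$ with the $\ell$-fold twist $W^{(\ell)}$ and the modified divisors $\Xi_\eM^{\#}$, $\eI_{\Xi_\eM}^{\#}$, and one must work with $\sigma^\ell$ throughout (as in Proposition~\ref{prop: being CM} and Lemma~\ref{lemB: Infinity part}); your argument would need to be rewritten in that framework to cover the general statement.
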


\begin{remark}
We briefly sketch the proof of the above characterization property when $\eK$ is a geometric CM field over $\FF_q(t)$. Conditions (1) and (2) imply that
$\eM$ is a projective $O_{\mathbf{K}}$-module of rank one (see~Lemma~\ref{lemB: projectivity}).
We may write $\eM=\Jfk^{(1)}\cdot \bm$ for some fractional ideal $\Jfk^{(1)}$ of $O_{\mathbf{K}}$ and $\bm\in \eM$.
From condition (3), we derive that
\[
O_{\mathbf{K}}\cdot \sigma \bm=\Jfk^{(1)}\cdot \Jfk^{-1}\cdot \Ifk_{\Xi_\eM} \cdot \bm,
\]
and hence there exists a nonzero function $h\in\mathfrak{J}^{(1)}\cdot \mathfrak{J}^{-1}\cdot \Ifk_{\Xi} \subset \mathbf{K}$ for which
$\sigma \cdot \bm = h \bm$.
Therefore
\begin{equation}\label{E: (h)}
 h \cdot O_\bK = \Jfk^{(1)}\cdot \Jfk^{-1}\cdot \Ifk_{\Xi_\eM}.
\end{equation}
Let $W_{\Jfk}\assign \sum_{x\in \mathbf{U}} \ord_{\Pfk_{x}}(\Jfk)\cdot x\in \Div(\bX)$. The equation~\eqref{E: (h)} gives rise to
\[ \divv(h)=W_{\Jfk}^{(1)}-W_{\Jfk}+\Xi_\eM-\Upsilon, \]
where $\rm{Supp}(\Upsilon) \subset \bX \setminus \bU$.
One technical step is to use the purity of $\eM(\rho^{\Xi})$ to find a divisor $W_{\infty}\in \Div(\bX)$ with $\rm{Supp}(W_{\infty})\subset \bX\setminus \bU$ so that
$W_{\infty}^{(1)}-W_{\infty}=I_{\Xi_\eM}-\Upsilon $ (cf.\ Lemma~\ref{lem: CM type of M(rho)}).
Put $W\assign W_{\Jfk}+W_{\infty}$.
Then we obtain
\[ \divv(h) =W^{(1)}-W+\Xi-I_{\Xi_\eM} .\]
By definition,
\[
M_{(W,h)}\assign \Gamma\left(\bU, \mathcal{O}_{\bX}(-W^{(1)}) \right)\assign \left\{f\in \bK \mid \left(\divv(f)-W^{(1)}\right)|_{\bU} \geq 0\right\}=\mathfrak{J}^{(1)},
\]
and so the map $\Theta\assign \left(x\mapsto x\cdot \bm \right):M_{(W,h)}=\Jfk^{(1)}\rightarrow \eM =\Jfk^{(1)}\cdot \bm$ is an isomorphism of left $\KK[t,\sigma]$-modules, thus completing the proof.
\hfill $\Box$
\end{remark}

Since $(\GG_a^d,\rho^\Xi)$ is pure and uniformizable, so is $\eM(\rho^\Xi)$
(cf.\ Proposition~\ref{propC: PU-Mrho}).  We are now able to show the following.

\begin{theorem}\label{thm: Uniform}
Every CM dual $t$-motive is pure and uniformizable.
\end{theorem}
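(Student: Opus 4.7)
The plan is to bootstrap from the weight-one case already handled by Theorem~\ref{thm: Mrho}, using the tensor-product structure (Proposition~\ref{prop: tensor CM type}) and the isogeny invariance of CM dual $t$-motives with a fixed generalized CM type (Proposition~\ref{L:IsogW1W2}). Let $\eM$ be a CM dual $t$-motive with generalized CM type $(\eK,\Xi)$ and set $n \assign \wt(\Xi)$. By Proposition~\ref{prop: CM prop}(2), I first decompose $\Xi = \Xi_1 + \cdots + \Xi_n$ as a sum of honest CM types $\Xi_i$ of $\eK$ (each of weight one). For each $i$, Theorem~\ref{thm: Mrho} produces the CM dual $t$-motive $\eM(\rho^{\Xi_i})$ with CM type $(\eK,\Xi_i)$, realized as the Hartl--Juschka dual $t$-motive of the Hilbert--Blumenthal $O_{\eK^+}$-module $\rho^{\Xi_i}$; since $\rho^{\Xi_i}$ is pure and uniformizable by Anderson's construction, Theorem~\ref{thm: Uni-eqv} transfers these two properties to $\eM(\rho^{\Xi_i})$.

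Next, I form
\[
\eN \assign \eM(\rho^{\Xi_1}) \otimes_{O_\bK} \cdots \otimes_{O_\bK} \eM(\rho^{\Xi_n}),
\]
which by Proposition~\ref{prop: tensor CM type} (and by its non-geometric counterpart Proposition~\ref{propB: tensor} in the remaining case) is a CM dual $t$-motive with the original generalized CM type $(\eK,\Xi)$. The decisive step is to verify that $\eN$ inherits purity and uniformizability from its factors. Each factor $\eM(\rho^{\Xi_i})$ is of rank one over $O_\bK$ with $\sigma$-action given by a shtuka function $h_i$ on a chosen generator $\bm_i$, so on the element $\bm \assign \bm_1 \otimes \cdots \otimes \bm_n$ of $\eN$ the action becomes $\sigma \bm = (h_1 \cdots h_n)\bm$. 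If $\Uppsi_i \in \TT$ rigidly trivializes $\eM(\rho^{\Xi_i})$ relative to $\bm_i$, that is $\Uppsi_i^{(-1)} = h_i \Uppsi_i$, then the product $\Uppsi \assign \Uppsi_1 \cdots \Uppsi_n$ satisfies $\Uppsi^{(-1)} = (h_1 \cdots h_n)\Uppsi$ and trivializes $\eN$; the Newton-polygon slopes likewise add under this tensor product, yielding purity of $\eN$.

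Finally, Proposition~\ref{L:IsogW1W2} (respectively Proposition~\ref{propB: isogeny} when $\eK$ is non-geometric) produces an isogeny between $\eM$ and $\eN$ over $\KK$. Both purity, which is an invariant of the Newton polygon, and uniformizability, which is controlled by the Betti realization and is preserved since the cokernel of an isogeny is finite-dimensional over $\KK$ (and so the Betti modules become identified after tensoring with $\FF_q(t)$), descend across this isogeny. Hence $\eM$ is pure and uniformizable. The chief obstacle is that for an arbitrary CM field $\eK$ one has no direct access to explicit rigid analytic trivializations in the style of~\cite{Sinha97, ABP}; the plan circumvents this by routing all analytic content through Anderson's correspondence between discrete $A$-lattices and Hilbert--Blumenthal $t$-modules in the weight-one case, after which the tensor-and-isogeny reduction is purely formal.
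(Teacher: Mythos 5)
Your overall skeleton is the paper's: decompose $\Xi = \Xi_1+\cdots+\Xi_n$ into CM types via Proposition~\ref{prop: CM prop}~(2), use Theorem~\ref{thm: Mrho} (i.e.\ Anderson's lattice correspondence) to get pure uniformizable motives $\eM(\rho^{\Xi_i})$, tensor, and finish with the isogeny invariance from Proposition~\ref{L:IsogW1W2}. The genuine gap is in your justification that the tensor product $\eN = \eM(\rho^{\Xi_1})\otimes_{O_\bK}\cdots\otimes_{O_\bK}\eM(\rho^{\Xi_n})$ is uniformizable and pure. You write that each factor is trivialized by a scalar $\Uppsi_i \in \TT$ with $\Uppsi_i^{(-1)} = h_i\Uppsi_i$ and that the product $\Uppsi_1\cdots\Uppsi_n$ then trivializes $\eN$. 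But uniformizability of $\eM(\rho^{\Xi_i})$ as a dual $t$-motive provides a matrix trivialization $\Uppsi_i \in \GL_r(\TT)$, $r=[\eK:\FF_q(t)]$, with respect to a $\KK[t]$-basis, not a scalar one: $h_i$ lies in $\KK(\bX)$, not in $\KK(t)$, so the equation $\Uppsi_i^{(-1)} = h_i\Uppsi_i$ does not even make sense inside $\TT$; interpreted on the curve, a solution would be exactly the kind of infinite product of twisted shtuka functions (Sinha/ABP style) whose convergence for arbitrary CM fields is the very problem this theorem is designed to avoid, as noted in the introduction. Likewise your purity claim ("Newton-polygon slopes add") is asserted directly for the $O_\bK$-tensor product without argument.

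The paper repairs precisely this step by a two-stage reduction: first form the tensor product over $\CC_\infty[t]$, where purity and uniformizability are preserved by \cite[Proposition 2.4.10 (e)]{HJ20} (this is where the Kronecker product of the matrix trivializations lives), and then observe that $\eN$ is a surjective image of $\eM(\rho^{\Xi_1})\otimes_{\CC_\infty[t]}\cdots\otimes_{\CC_\infty[t]}\eM(\rho^{\Xi_n})$ under the canonical quotient map, so \cite[Proposition 2.4.10 (c)]{HJ20} transfers both properties to $\eN$; the same citation handles the final isogeny step. If you insert this quotient argument (or some equivalent justification of the descent from the $\CC_\infty[t]$-tensor to the $O_\bK$-tensor), your proof becomes the paper's proof; as written, the key analytic step rests on a trivialization that you have no right to assume exists.
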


\begin{proof}
Let $\eK$ be a CM field and $\Xi$ be a generalized CM type of $\eK$.
Suppose ${\rm wt}(\Xi) = n$.
By Proposition~\ref{prop: CM prop} (2), we may write $\Xi = \Xi_1+\cdots + \Xi_n$, where $\Xi_1,\ldots, \Xi_n$ are CM types of $\eK$. For $1\leq i \leq n$, Theorem~\ref{thm: Mrho} implies that  $\eM_i\assign  \eM(\rho^{\Xi_i})$ is a CM dual $t$-motive with CM type $(\eK,\Xi_i)$.
The purity and uniformizability of $\eM(\rho^{\Xi_1}),\ldots, \eM(\rho^{\Xi_n})$ imply that (cf.\ \cite[Propsition 2.4.10 (e)]{HJ20})
\[
\eM_1 \otimes_{\CC_\infty[t]} \cdots \otimes_{\CC_\infty[t]} \eM_n
\]
is pure and uniformizable.

As $\eM_0 \assign  \eM_1 \otimes_{O_\bK} \cdots \otimes_{O_\bK} \eM_n$ is a quotient of $
\eM_1 \otimes_{\CC_\infty[t]} \cdots \otimes_{\CC_\infty[t]} \eM_n$ under the canonical homomorphism
\[
\eM_1 \otimes_{\CC_\infty[t]} \cdots \otimes_{\CC_\infty[t]} \eM_n \twoheadrightarrow \eM_1 \otimes_{O_\bK} \cdots \otimes_{O_\bK} \eM_n,
\]
it follows from~\cite[Proposition 2.4.10 (c)]{HJ20} that $\eM_0$ is also pure and uniformizable.
By Proposition~\ref{prop: tensor CM type}, $\eM_0$ is a CM dual $t$-motive with generalized CM type $(\eK,\Xi)$.
Since every CM dual $t$-motive $\eM$ with generalized CM type $(\eK,\Xi)$ is isogeneous to $\eM_0$ by Proposition~\ref{L:IsogW1W2},
we obtain that $\eM$ must be pure and uniformizable by \cite[Proposition 2.4.10 (c)]{HJ20} again.
\end{proof}

\section{Analogue of Shimura's period symbols}

In this section, we fix $\KK = \bar{k}$.

\subsection{de Rham module and pairing}\label{sec: dR module}

Let $\eM$ be a dual $t$-motive over $\bar{k}$.
For our purposes here the \emph{de Rham module of $\eM$ over $\bar{k}$} is the following (cf.\ \cite[Section 4.5]{HJ20}):
\[
H_{\mathrm{dR}}(\eM,\bar{k})\assign
\Hom_{\bar{k}}(\eM/(t-\theta)\eM, \bar{k}).
\]
Elements of $H_{\mathrm{dR}}(\eM,\bar{k})$ are called \emph{differentials of $\eM$ over $\bar{k}$}.
When it is clear from the context, we also identify $H_{\mathrm{dR}}(\eM,\bar{k})$ with the space of $\bar{k}$-linear functionals on $\eM$ which factor through $\eM/(t-\theta) \eM$.
Given $\omega \in H_{\mathrm{dR}}(\eM,\bar{k})$,
we naturally extend $\omega$ to a $\CC_{\infty}$-linear functional on $\MM^\dagger = \TT^\dagger \otimes_{\bar{k}[t]}\eM$ by setting
\[
\omega(f\otimes m)\assign  f(\theta)\cdot \omega(m), \quad \forall f\otimes m \in \MM^\dagger.
\]
It follows that $\omega((t-\theta) \MM^\dagger) = 0$, i.e.,\ $\omega$ induces a linear functional $\omega : \MM^\dagger/(t-\theta)\MM^\dagger \rightarrow \CC_\infty$.
Suppose now that $\eM$ is uniformizable, and recall from Remark~\ref{rem: HB(M)} that $H_{\mathrm{Betti}}(\eM)\subset \MM^\dagger$ is an $\FF_{q}[t]$-submodule.  The \textit{de Rham pairing} of $\eM$ is defined as follows:
\begin{equation}\label{E:deRham pairing}
H_{\mathrm{dR}}(\eM,\bar{k}) \times H_{\mathrm{Betti}}(\eM)
\longrightarrow \CC_\infty, \quad (\omega,\gamma) \longmapsto \int_{\gamma} \omega \assign  \omega(\gamma),
\end{equation}
and every period integral $\int_{\gamma} \omega$ is called a \textit{period of $\eM$}.

\begin{lemma}\label{lem: dR}
Let $\eM$ be a uniformizable dual $t$-motive over $\bar{k}$.
The de Rham pairing of $\eM$ in~\eqref{E:deRham pairing} is non-degenerate.
Consequently,
put $\eM_{\CC_\infty}\assign  \CC_\infty\otimes_{\bar{k}} \eM$ and let the de Rham module of $\eM$ over $\CC_\infty$ be
\[
H_{\mathrm{dR}}(\eM,\CC_\infty) \assign  \Hom_{\CC_\infty}(\eM_{\CC_\infty}/(t-\theta)\eM_{\CC_\infty}, \CC_\infty) \cong \CC_{\infty}\otimes_{\ok}H_{\mathrm{dR}}(\eM,\bar{k}).
\]
Then the de Rham pairing of $\eM$ induces  the following comparison isomorphism
\[
\Hom_{\CC_\infty}\bigl(\CC_{\infty}\otimes_{\FF_{q}[t]} H_{\mathrm{Betti}}(\eM), \CC_\infty\bigr) \stackrel{\sim}{\longrightarrow} H_{\mathrm{dR}}(\eM,\CC_\infty).
\]
\end{lemma}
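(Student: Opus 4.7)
The plan is to use uniformizability to identify the $\CC_{\infty}$-bilinear extension of the de Rham pairing with the canonical evaluation pairing between a finite-dimensional $\CC_{\infty}$-vector space and its $\CC_{\infty}$-dual, and then to descend. First I would establish the book-keeping: set $r\assign\rank_{\ok[t]}\eM$. Freeness of $\eM$ over $\ok[t]$ gives $\dim_{\ok}\eM/(t-\theta)\eM=r$, hence $\dim_{\ok}H_{\mathrm{dR}}(\eM,\ok)=r$, while by Remark~\ref{rem: HB(M)} the module $H_{\mathrm{Betti}}(\eM)$ is free of rank $r$ over $\FF_q[t]$. Writing $V\assign\eM_{\CC_{\infty}}/(t-\theta)\eM_{\CC_{\infty}}$, we have $\dim_{\CC_{\infty}}V=r$ and $H_{\mathrm{dR}}(\eM,\CC_{\infty})=\Hom_{\CC_{\infty}}(V,\CC_{\infty})$. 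The crucial step is to produce a natural $\CC_{\infty}$-linear isomorphism
\[
\alpha\colon\ \CC_{\infty}\otimes_{\FF_q[t]}H_{\mathrm{Betti}}(\eM)\ \stackrel{\sim}{\longrightarrow}\ V,
\]
(where $\FF_q[t]$ acts on $\CC_{\infty}$ via $t\mapsto\theta$), sending $c\otimes\gamma$ to $c$ times the class of $\gamma$ in $V$ under the composite $H_{\mathrm{Betti}}(\eM)\hookrightarrow\TT^{\dagger}\otimes_{\ok[t]}\eM\twoheadrightarrow V$.

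To verify that $\alpha$ is an isomorphism, I would invoke the rigid analytic trivialization: fixing an $\ok[t]$-basis of $\eM$ and an $\FF_q[t]$-basis of $H_{\mathrm{Betti}}(\eM)$, the change-of-basis matrix $\Uppsi$ satisfies $\Uppsi^{(-1)}=\Upphi\Uppsi$, where $\Upphi$ represents $\sigma$, and by~\cite[Prop.~3.1.3]{ABP} the entries of $\Uppsi$ extend to entire functions on $\CC_{\infty}$; evaluating at $t=\theta$ yields an invertible matrix in $\GL_r(\CC_{\infty})$ that represents $\alpha$, so $\alpha$ is an isomorphism of $r$-dimensional $\CC_{\infty}$-vector spaces. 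Once $\alpha$ is in hand, unwinding the definition preceding \eqref{E:deRham pairing} shows that each $\omega\in H_{\mathrm{dR}}(\eM,\ok)$ extends to a $\CC_{\infty}$-linear functional $\widetilde{\omega}\in\Hom_{\CC_{\infty}}(V,\CC_{\infty})$ with $\int_{\gamma}\omega=\widetilde{\omega}(\alpha(1\otimes\gamma))$ for every $\gamma\in H_{\mathrm{Betti}}(\eM)$. Therefore the $\CC_{\infty}$-bilinear extension of the de Rham pairing coincides with the canonical evaluation pairing $\Hom_{\CC_{\infty}}(V,\CC_{\infty})\times V\to\CC_{\infty}$, which is perfect; reading this through $\alpha$ gives the asserted comparison isomorphism.

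Non-degeneracy of the original pairing then follows by descent. If $\omega\in H_{\mathrm{dR}}(\eM,\ok)$ annihilates every $\gamma\in H_{\mathrm{Betti}}(\eM)$, the functional $\widetilde{\omega}$ vanishes on the image of $H_{\mathrm{Betti}}(\eM)$ in $V$, which spans $V$ over $\CC_{\infty}$, so $\widetilde{\omega}=0$ and hence $\omega=0$. Conversely, if $\gamma\in H_{\mathrm{Betti}}(\eM)$ pairs trivially against every $\omega$, then $\alpha(1\otimes\gamma)\in V$ is killed by every element of $H_{\mathrm{dR}}(\eM,\CC_{\infty})$, so $\alpha(1\otimes\gamma)=0$; $\FF_q[t]$-freeness of $H_{\mathrm{Betti}}(\eM)$ makes the natural map $H_{\mathrm{Betti}}(\eM)\to\CC_{\infty}\otimes_{\FF_q[t]}H_{\mathrm{Betti}}(\eM)$ injective, forcing $\gamma=0$. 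The only non-formal point is the construction of $\alpha$, i.e.\ reconciling the uniformizability of $\eM$ over $\TT$ with reduction modulo $(t-\theta)$ by invoking the regularity of $\Uppsi$ at $\theta$; the remainder is linear algebra.
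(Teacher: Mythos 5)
Your proof is correct and is essentially the paper's argument: both rest on the single non-formal fact that, by uniformizability, an $\FF_q[t]$-basis of $H_{\mathrm{Betti}}(\eM)$ specializes at $t=\theta$ to a $\CC_\infty$-basis of $\eM_{\CC_\infty}/(t-\theta)\eM_{\CC_\infty}$ (your isomorphism $\alpha$), after which non-degeneracy and the comparison isomorphism are linear algebra together with the transcendence of $\theta$ over $\FF_q$. One caveat: invertibility of $\Uppsi(\theta)$ does not follow from entirety of the entries (\cite[Prop.~3.1.3]{ABP}) alone; it is precisely this specialization statement, which the paper obtains from $H_{\mathrm{Betti}}(\eM)\subset \TT^{\dagger}\otimes_{\ok[t]}\eM$ and the $\TT^{\dagger}$-trivialization (Remark~\ref{rem: HB(M)}, \cite[Prop.~2.3.30]{HJ20}), so justify it by that citation or by the usual twisting argument with the functional equation rather than treating it as immediate.
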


\begin{proof}
We first embed $H_{\mathrm{dR}}(\eM,\bar{k})$ into
\[
\CC_\infty\otimes_{\bar{k}}H_{\mathrm{dR}}(\eM,\bar{k}) \cong H_{\mathrm{dR}}(\eM,\CC_\infty) \cong \Hom_{\CC_\infty}(\MM^\dagger/(t-\theta)\MM^\dagger,\CC_\infty).
\]
Let $\{\gamma_1,\ldots, \gamma_r\}$ be an $\FF_q[t]$-basis of $H_{\mathrm{Betti}}(\eM)$, and let $v_i$ be the image of $\gamma_i$ in the quotient module $\MM^\dagger/(t-\theta)\MM^\dagger$ for $i=1,\ldots, r$.
Then $\{v_1,\ldots, v_r\}$ forms a $\CC_\infty$-basis of $\MM^\dagger/(t-\theta)\MM^\dagger$ (as $\eM$ is uniformizable).

Given $\omega \in H_{\mathrm{dR}}(\eM,\bar{k})$ with $\int_\gamma \omega = 0$ for every $\gamma \in H_{\mathrm{Betti}}(\eM)$, by extending $\omega$ to a $\CC_\infty$-linear functional on $\MM^\dagger/(t-\theta)\MM^\dagger$ we have that
\[
\omega(v_i) = \int_{\gamma_i} \omega = 0, \quad \text{ for $i = 1,\ldots, r$.}
\]
It follows that $\omega$ is the zero map on $\MM^\dagger/(t-\theta)\MM^\dagger$ and so $\omega = 0$.

On the other hand, given $\gamma \in H_{\mathrm{Betti}}(\eM)$, we may choose our basis $\{\gamma_1,\ldots, \gamma_r\}$ of $H_{\mathrm{Betti}}(\eM)$ suitably so that
\[
\gamma = f(t) \cdot \gamma_1, \quad \textup{for some $f(t) \in \FF_q[t]$.}
\]
Now suppose
\[
0 = \int_\gamma \omega \quad  (= f(\theta) \cdot \omega(v_1)), \quad \forall\, \omega \in H_{\rm {dR}}(\eM,\bar{k}) .
\]
As $v_1$ is nonzero in $\MM^\dagger/(t-\theta)\MM^\dagger$ and elements in $H_{\mathrm{dR}}(\eM,\bar{k})$ span the entire $\CC_\infty$-vector space $\Hom_{\CC_\infty}(\MM^\dagger/(t-\theta)\MM^\dagger,\CC_\infty)$,
we must have that $f(\theta) = 0$, which implies $f(t) = 0$. Therefore $\gamma = 0$ as desired. In conclusion, the de Rham pairing of $\eM$ is non-degenerate.
\end{proof}

\begin{Subsubsec}{Push-forward and pull-back} \label{sec: Pf-Pb}
Let $f:\eM_1\rightarrow \eM_2$ be a morphism between dual $t$-motives. Extending $f$ to a homomorphism (still denoted by $f$)  $\MM_1^\dagger \rightarrow \MM_2^\dagger$
induces the $\FF_q(t)$-linear \emph{push-forward} homomorphism on $H_{\mathrm{Betti}}(\eM_1)$:
\[
f_* :H_{\mathrm{Betti}}(\eM_1) \rightarrow H_{\mathrm{Betti}}(\eM_2), \quad
f_* \gamma \assign  f(\gamma), \quad \forall\, \gamma \in H_{\mathrm{Betti}}(\eM_1).
\]
On the other hand $f$ also induces the $\bar{k}$-linear \emph{pull-back} homomorphism on $H_{\mathrm{dR}}(\eM_2)$:
\[
f^*: H_{\mathrm{dR}}(\eM_2,\ok) \longrightarrow H_{\mathrm{dR}}(\eM_1,\ok), \quad \omega \longmapsto f^* \omega \assign  \omega \circ f.
\]
The following lemma is straightforward.
\end{Subsubsec}

\begin{lemma}\label{lem: adjoint}
Let $f:\eM_1\rightarrow \eM_2$ be a morphism of dual $t$-motives over $\ok$. Given $\gamma_1 \in H_{\mathrm{Betti}}(\eM_1)$ and $\omega_2 \in H_{\mathrm{dR}}(\eM_2,\bar{k})$, we have
\[
\int_{f_* \gamma_1} \omega_2 = \int_{\gamma_1} f^* \omega_2.
\]
\end{lemma}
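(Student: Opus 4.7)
The plan is to verify the identity directly from the definitions by extending $f$ to a $\TT^\dagger$-linear map $f^\dagger: \MM_1^\dagger \to \MM_2^\dagger$ between the rigid-analytic completions and then observing that both sides evaluate to the common value $\omega_2\bigl(f^\dagger(\gamma_1)\bigr)$. The argument is entirely formal; the only point worth checking is that the recipe for extending a differential to $\MM^\dagger$ described in Section~\ref{sec: dR module} commutes with pulling back along $f$.

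First I would set $f^\dagger(g \otimes m) \assign g \otimes f(m)$, where $\MM_i^\dagger = \TT^\dagger \otimes_{\bar{k}[t]} \eM_i$. Since $f$ is a $\bar{k}[t,\sigma]$-module homomorphism, $f^\dagger$ commutes with $\sigma$, and therefore it sends $H_{\mathrm{Betti}}(\eM_1)$ into $H_{\mathrm{Betti}}(\eM_2)$; unwinding the definition of the push-forward gives $f_*\gamma_1 = f^\dagger(\gamma_1)$. Being $\bar{k}[t]$-linear, $f^\dagger$ also sends $(t-\theta)\MM_1^\dagger$ into $(t-\theta)\MM_2^\dagger$, and hence descends to a $\CC_\infty$-linear map on quotients whose restriction to $\eM_1/(t-\theta)\eM_1$ is precisely the map $\bar{f}$ induced by $f$.

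Next I would check the compatibility that $\omega_2 \circ f^\dagger$, regarded via the $\CC_\infty$-linear extension of $\omega_2$ to $\MM_2^\dagger$, coincides with the $\CC_\infty$-linear extension of $f^*\omega_2$ to $\MM_1^\dagger$. On a pure tensor $g \otimes m_1 \in \MM_1^\dagger$, the extension of $f^*\omega_2 = \omega_2 \circ \bar{f}$ outputs $g(\theta)\,\omega_2\bigl(\bar{f}(m_1)\bigr)$, while on the other hand
\[
(\omega_2 \circ f^\dagger)(g \otimes m_1) = \omega_2\bigl(g \otimes f(m_1)\bigr) = g(\theta)\,\omega_2\bigl(\bar{f}(m_1)\bigr)
\]
by the defining recipe of the extension of $\omega_2$. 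Combining these observations produces the chain
\[
\int_{f_*\gamma_1} \omega_2 \;=\; \omega_2\bigl(f^\dagger(\gamma_1)\bigr) \;=\; (f^*\omega_2)(\gamma_1) \;=\; \int_{\gamma_1} f^*\omega_2,
\]
which is the desired identity. There is essentially no obstacle here: the result is a standard adjointness statement, and the only substantive point is the compatibility of the two extension procedures verified in this paragraph.
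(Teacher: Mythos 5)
Your proof is correct and is precisely the direct verification from the definitions that the paper has in mind when it labels the lemma as "straightforward" and omits the proof. The one point that genuinely needs a check — that the extension of $f^*\omega_2$ to $\MM_1^\dagger$ agrees with $\omega_2 \circ f^\dagger$ where $\omega_2$ has been extended to $\MM_2^\dagger$ — you verify cleanly on pure tensors, so nothing is missing.
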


\subsection{Definition of period symbols}\label{sec: defn PS}
To define period symbols in our setting, we first need the following proposition.

\begin{proposition}\label{Prop: delta_M}
Let $\eM$ be a CM dual $t$-motive over $\bar{k}$ with generalized CM type $(\eK,\Xi)$.
For each $\xi \in J_\eK$, there exists a nonzero differential $\omega_{\eM,\xi} \in H_{\mathrm{dR}}(\eM,\bar{k})$, unique up to a $\bar{k}^{\times}$-multiple, satisfying
\[
\omega_{\eM,\xi} (\alpha \cdot m) = \nu_\xi(\alpha) \cdot \omega_{\eM,\xi}(m), \quad \text{$\forall\, m \in \eM$ and $\alpha \in O_\eK$.}
\]
\end{proposition}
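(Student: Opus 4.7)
The plan is to extract the $O_\eK$-eigendecomposition of $H_{\mathrm{dR}}(\eM,\bar{k})$ directly from the reduction of $\eM$ modulo $t-\theta$.

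Since $\eM$ has generalized CM type $(\eK,\Xi)$, I may assume $\eM = \eM_{(W,h)}$, which is projective of rank one over $O_\bK = \bar{k}[t]\otimes_{\FF_q[t]} O_\eK$ by Theorem~\ref{thm: CM dual}. The image of $1\otimes O_\eK$ inside $O_\bK$ is fixed by Frobenius twisting and therefore commutes with $\sigma$, yielding the canonical embedding $O_\eK \hookrightarrow \End_{\bar{k}[t,\sigma]}(\eM)$ referred to in the statement. Reducing modulo $(t-\theta)$, the $\bar{k}$-vector space $\eM/(t-\theta)\eM$ becomes a module over
\[
R \;\assign\; O_\bK/(t-\theta)O_\bK \;\cong\; \bar{k}\otimes_{\FF_q[t]} O_\eK,
\]
where $\bar{k}$ is regarded as an $\FF_q[t]$-algebra via $t\mapsto \theta$ and $1\otimes O_\eK$ induces the residual $O_\eK$-action.

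The key step is to identify $R$ with $\prod_{\xi\in J_\eK}\bar{k}_\xi$, where each $\alpha\in O_\eK$ acts on the $\xi$-factor by multiplication by $\nu_\xi(\alpha)$. Since $\eK/\FF_q(t)$ is separable, the ramification locus of $\pi_{X/\PP^1}:X\to \PP^1$ is a finite union of closed points defined over $\overline{\FF}_q$. But $\theta$ is transcendental over $\FF_q$, so no element of $J_\eK = \pi_{\bX/\PP^1}^{-1}(\theta)$ is $\overline{\FF}_q$-valued; hence $\pi_{\bX/\PP^1}$ is \'etale at $\theta$, which gives the desired product decomposition of $R$. As a rank-one projective $R$-module, $\eM/(t-\theta)\eM$ then splits as $\bigoplus_{\xi\in J_\eK} L_\xi$ with each $L_\xi$ one-dimensional over $\bar{k}$ and $\alpha\in O_\eK$ acting on $L_\xi$ as $\nu_\xi(\alpha)$. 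Dualizing over $\bar{k}$ yields
\[
H_{\mathrm{dR}}(\eM,\bar{k}) \;=\; \bigoplus_{\xi\in J_\eK} L_\xi^{*},
\]
with $\alpha$ acting on $L_\xi^{*}$ (by pull-back) again as the scalar $\nu_\xi(\alpha)$. Any nonzero $\omega_{\eM,\xi}\in L_\xi^{*}$ is then the required eigendifferential, and uniqueness up to $\bar{k}^\times$-scalars is immediate from $\dim_{\bar{k}} L_\xi^{*}=1$.

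The only non-formal ingredient is the \'etaleness of $\pi_{\bX/\PP^1}$ at $\theta$; without it the fiber could fail to split into lines and the eigenspaces might collapse. For the non-geometric case (treated in Appendix~\ref{secB: CM motive}) one first works over the constant field $\FF_\eK\supsetneq \FF_q$ and then base changes, but since $\theta$ remains transcendental over $\overline{\FF}_q\supset \FF_\eK$ the same unramifiedness conclusion holds and the rest of the argument is formally identical.
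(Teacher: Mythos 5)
Your proof is correct and follows essentially the same route as the paper's: reduce $\eM$ mod $(t-\theta)$, observe it is rank-one free over $O_\bK/(t-\theta)O_\bK$, use the complete splitting $(t-\theta)O_\bK = \prod_{\xi\in J_\eK}\Pfk_\xi$ to decompose into lines, and dualize. The paper takes the splitting for granted (it is stated explicitly only later, in Section~\ref{Sub:The case of CM dual t-motives}), whereas you supply the geometric justification via \'etaleness of $\pi_{\bX/\PP^1}$ over $\theta$, which follows from separability of $\eK/\FF_q(t)$ together with the transcendence of $\theta$ over $\FF_q$ -- a helpful addition, but not a different method.
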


We call $\omega_{\eM,\xi}$ the \emph{differential of $\eM$ associated with $\xi$}.

\begin{proof}
For each $\xi \in J_\eK$, let $\Pfk_\xi$ be the maximal ideal of $O_\bK$ associated to $\xi$,
i.e.\ $\Pfk_\xi$ is generated by $\alpha - \nu_\xi(\alpha)$ for all $\alpha \in O_\eK$. We consider the following isomorphisms
\[
\eM/(t-\theta)\eM\cong \frac{O_{\bK}}{(t-\theta)O_{\bK}}\otimes_{O_{\bK}} \eM \cong \frac{O_{\bK}}{\prod_{\xi\in J_{K}}\Pfk_{\xi}} \otimes_{O_{\bK}}\eM \cong \bigoplus_{\xi\in J_{K}} \left( \frac{O_{\bK}}{\Pfk_{\xi}} \otimes_{O_{\bK}}\eM\right),
\]
and note in particular that the composite map above sends $\alpha\cdot \bar{m}$ to $(\nu_{\xi}(\alpha) \otimes m)_{\xi\in J_{\eK}}$ for $\alpha\in O_{\eK}$ and $m\in \eM$.

Note further that for $\omega \in H_{\mathrm{dR}}(\eM,\bar{k})$ the condition
\[
\omega (\alpha \cdot m) = \nu_\xi(\alpha) \cdot \omega(m), \quad \textup{$\forall\, m \in \eM$ and $\alpha \in O_\eK$,}
\]
is equivalent to the identity
\[
\omega\bigl((\alpha-\nu_\xi (\alpha)) \cdot m\bigr) = 0, \quad \textup{$\forall\, m \in \eM$ and $\alpha \in O_\eK$,}
\]
which is equivalent to saying that $\omega$ factors through $\frac{O_{\bK}}{\Pfk_{\xi}} \otimes_{O_{\bK}}\eM \cong \eM/\Pfk_\xi \eM$.  As $\eM$ is a projective $O_{\bK}$-module of rank one, $\eM/(t-\theta)\eM$ is a free $O_{\bK}/(t-\theta)O_{\bK}$-module of rank one. Since
\[
 \frac{O_\bK}{\Pfk_\xi} \cong \bar{k}, \quad \forall\, \xi \in J_\eK,
\]
we see that
$\Hom_{\bar{k}}(\eM/\Pfk_\xi \eM, \bar{k})$ is one dimensional over $\bar{k}$, and hence the desired  result follows.
\end{proof}

Recall in Remark~\ref{rem: ps} (1) that for $x$, $y \in \CC_{\infty}^{\times}$, we let $x \sim y$ denote that $x/y \in \bar{k}^\times$.
Let $\eM$ be a CM dual $t$-motive with the generalized CM type $(\eK,\Xi)$ over $\bar{k}$ and  fix any $\xi \in J_\eK$.
As $H_{\mathrm{Betti}}(\eM)$ is a projective $O_\eK$-module of rank one, we have that
\[
\int_{\gamma_1} \omega_{\eM,\xi} \sim \int_{\gamma_2} \omega_{\eM,\xi} \quad \text{ for nonzero cycles $\gamma_1,\gamma_2 \in H_{\mathrm{Betti}}(\eM)$}.
\]
Moreover, let $\eM'$ be another CM dual $t$-motive over $\bar{k}$ with the same generalized CM type $(\eK,\Xi)$. By Lemma~\ref{lem: adjoint}, the isogeny between $\eM$ and $\eM'$ over $\bar{k}$ leads to
\[
\int_{\gamma}\omega_{\eM,\xi} \sim \int_{\gamma'} \omega_{\eM',\xi},  \quad \forall \gamma \in H_{\mathrm{Betti}}(\eM) \text{ and } \gamma' \in H_{\mathrm{Betti}}(\eM').
\]
It follows that up to algebraic multiples, the period integral $\int_\gamma \omega_{\eM,\xi}$ depends only on the generalized CM type $(\eK,\Xi)$ and the chosen $\xi \in J_\ek$.
This leads to the following definition.

\begin{definition} \label{defn: period quantity}
Let $\eK$ be a CM field over $\FF_q(t)$ and $\Xi$ be a generalized CM type of~$\eK$. Given $\xi \in J_\eK$, take a CM dual $t$-motive $\eM$ with generalized CM type $(\eK,\Xi)$ over $\bar{k}$ and the differential $\omega_{\eM,\xi}$ of $\eM$ associated with $\xi$ (given in Proposition~\ref{Prop: delta_M}).
We define the following \emph{period symbol}:
\[
\Pcal_\eK(\xi,\Xi)\assign \left(\int_{\gamma} \omega_{\eM,\xi}\right)\cdot \bar{k}^\times \in \CC_\infty^\times/\bar{k}^\times.
\]
As mentioned in Remark~\ref{rem: ps} (1), we use the notation $p_\eK(\xi,\Xi)$ for an arbitrary representative of the coset $\Pcal_\eK(\xi,\Xi)$, and call it a period symbol as well if there is no risk of confusion.
\end{definition}

Recall the fundamental period $\tilde{\pi}$ of the Carlitz $\FF_{q}[t]$-module $\mathbf{C}$. When $\eK = \FF_q(t)$, we note that every generalized CM type must be of the form $\Xi= n\xi_\theta$ for some positive integer $n$, where $\xi_\theta$ is the point $t=\theta$ on $\PP^1_{/\bar{k}}$. In the following proposition, we calculate the period symbol in this case.

\begin{proposition}\label{prop: carlitz-ps}
Let $n$ be a positive integer and let $\Xi\assign n\xi_{\theta}$. Then we have that
\[
p_{\FF_q(t)}(\xi_\theta,\Xi) \sim \tilde{\pi}^{n}.
\]
\end{proposition}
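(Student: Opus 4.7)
The strategy is to realize the Carlitz motive itself as a CM dual $t$-motive with the simplest possible CM type, then leverage the tensor construction of Proposition~\ref{prop: tensor CM type} and the classical evaluation $\Omega(\theta)\sim \tilde{\pi}^{-1}$ of the Anderson--Thakur function at $t=\theta$.

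First I would check that $\eC$ is itself a CM dual $t$-motive of generalized CM type $(\FF_q(t), \xi_\theta)$. Here $\eK = \FF_q(t)$ is its own maximal totally real subfield, $X = \PP^1$, and $J_{\FF_q(t)} = \{\xi_\theta\}$ with reduction $\overline{\infty}_{\xi_\theta} = \infty \in \PP^1(\overline{\FF}_q)$, so that $\eI_{\xi_\theta} = [\infty]$. Taking $W = 0$ and $h = t - \theta$, one has
\[
\divv(h) \;=\; [\xi_\theta] - [\infty] \;=\; W^{(1)} - W + \xi_\theta - \eI_{\xi_\theta},
\]
so $h$ is a shtuka function for $(\FF_q(t),\xi_\theta)$ associated to $W = 0$, and $\eM_{(0,h)} = \Gamma(\bU, \Ocal_{\PP^1}) = \bar k[t]$ endowed with $\sigma f = (t - \theta)f^{(-1)}$ is exactly the Carlitz motive $\eC$. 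Applying Proposition~\ref{prop: tensor CM type} $n-1$ times then shows that the $n$-fold tensor power $\eC^{\otimes n}$ (taken over $O_\bK = \bar k[t]$) is a CM dual $t$-motive with generalized CM type $(\FF_q(t), n\xi_\theta)$; concretely this is $\bar k[t]$ with $\sigma f = (t - \theta)^n f^{(-1)}$.

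Next I would identify the Betti generator and the associated differential. Fix the Anderson--Thakur element $\Omega \in \TT^\dagger$ characterized by $\Omega^{(-1)} = (t - \theta)\Omega$. Setting $x_n \assign \Omega^{-n} \otimes 1 \in \TT^\dagger \otimes_{\bar k[t]} \eC^{\otimes n}$, a direct computation using the definition of the $\sigma$-action on the tensor product yields $\sigma x_n = x_n$, so $x_n$ spans $H_{\mathrm{Betti}}(\eC^{\otimes n})$ as an $\FF_q[t]$-module (it is nonzero, and the rank-one-ness of the Betti module forces it to be a generator up to $\FF_q[t]^\times$). The quotient $\eC^{\otimes n}/(t - \theta)\eC^{\otimes n} \cong \bar k$ is one-dimensional, so by Proposition~\ref{Prop: delta_M} the differential $\omega_{\eC^{\otimes n},\xi_\theta}$ is, up to $\bar k^\times$, the canonical projection sending $1 \mapsto 1$.

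Finally I would compute the period by evaluating at $t=\theta$. By construction of the de Rham pairing,
\[
\int_{x_n} \omega_{\eC^{\otimes n},\xi_\theta} \;=\; \Omega(\theta)^{-n}\cdot \omega_{\eC^{\otimes n},\xi_\theta}(1) \;=\; \Omega(\theta)^{-n}.
\]
Invoking the well-known identity $\Omega(\theta) = -1/\tilde{\pi}$ (equivalently $\Omega(\theta)\sim \tilde{\pi}^{-1}$; see e.g.\ \cite{ABP, Papanikolas}), this gives $\int_{x_n} \omega_{\eC^{\otimes n},\xi_\theta} \sim \tilde{\pi}^n$, which is the claim. The argument has no real obstacle: the only non-formal input is the evaluation $\Omega(\theta)\sim \tilde{\pi}^{-1}$, which is standard. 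Verifying that $x_n$ is genuinely a generator (rather than merely a nonzero invariant element) and that the chosen $\omega_{\eC^{\otimes n},\xi_\theta}$ is non-degenerate at $x_n$ is the only sanity check needed, and both follow immediately from the rank-one projectivity of $\eC^{\otimes n}$ over $O_\bK$.
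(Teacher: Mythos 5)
Your proof is correct and follows essentially the same route as the paper's: both identify the CM dual $t$-motive for $(\FF_q(t), n\xi_\theta)$ as $\eC^{\otimes n} = \ok[t]$ with $\sigma f = (t-\theta)^n f^{(-1)}$, use $\Omega^{-n}$ as the Betti generator, and evaluate at $t=\theta$ via $\Omega(\theta)\sim\tilde\pi^{-1}$. The only cosmetic difference is that you build $\eC^{\otimes n}$ by applying Proposition~\ref{prop: tensor CM type} to $n$ copies of $\eC$ (each obtained from $W=0$, $h=t-\theta$), whereas the paper takes $W=0$, $h=(t-\theta)^n$ directly — the result is the same object.
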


\begin{proof}
To calculate the period symbol $p_{\FF_q(t)}(\xi_\theta,\Xi)$, we note that in this case  $\eI_{\Xi}=n\cdot \infty$, and we can take $h\assign (t-\theta)^{n}$ and $W=0$ in Theorem~\ref{thm: CM dual}. So the associated dual $t$-motive is given by $\eC^{\otimes n}=\Gamma\left(\bU, \mathcal{O}_{\bX} \right)=\ok[t]$, on which the $\sigma$-action is given by
\[
\sigma \cdot x=(t-\theta)^{n}x^{(-1)}, \quad x\in \eC^{\otimes n}.
\]
We note that $\eC^{\otimes n}$ is the dual $t$-motive associated to the $n$-th tensor power ${\mathbf{C}}^{\otimes n}$ of the Carlitz module (see~\cite[p.~428]{CPY19}).

Define
\[
\Omega(t)\assign (-\theta)^{\frac{-q}{q-1}} \prod_{i=1}^{\infty} \biggl(
1-\frac{t}{\theta^{q^{i}}} \biggr)\in \power{\CC_{\infty}}{t},
\]
where $(-\theta)^{\frac{1}{q-1}}$ is a suitable choice of $(q-1)$-st root of $-\theta$ so that $\frac{1}{\Omega(\theta)}=\tilde{\pi}$. We note that $\Omega^{n}$ satisfies the functional equation $(\Omega^{n})^{(-1)}=(t-\theta)^{n}\Omega^{n}$, and so $1/\Omega^{n}$ constitutes an $\FF_{q}[t]$-basis of the Betti-module $H_{\mathrm{Betti}}(\eC^{\otimes n})$. It follows that
\[
p_{\FF_q(t)}(\xi_\theta,\Xi) \sim \int_{\Omega^{-n}}\omega_{\eC^{\otimes n},\xi_{\theta}}\sim\Omega^{-n}(\theta)=\tilde{\pi}^{n}.
\qedhere
\]
\end{proof}

\subsection{Fundamental properties of period symbols}\label{sec: FP PS}

In the following lemma we show that our period symbols satisfy the \emph{additive} property.

\begin{lemma}\label{lem: tensor diff}
Let $\eK/\FF_{q}(t)$ be a CM field and $\Xi_1,\ldots, \Xi_n$ be generalized CM types of $\eK$. Let $\Xi= \Xi_1+\cdots +\Xi_n$. For each $\xi \in J_\eK$, we have that
\[
\prod_{i=1}^n\Pcal_\eK(\xi,\Xi_i) = \Pcal_{\eK}(\xi, \Xi) \quad \in \CC_\infty^\times/\bar{k}^\times.
\]
\end{lemma}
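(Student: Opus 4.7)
The approach is to exploit Proposition~\ref{prop: tensor CM type}: if each $\eM_i$ is a CM dual $t$-motive with generalized CM type $(\eK,\Xi_i)$, then $\eM \assign \eM_1 \otimes_{O_\bK} \cdots \otimes_{O_\bK} \eM_n$ with diagonal $\sigma$-action is a CM dual $t$-motive with generalized CM type $(\eK,\Xi)$. The plan is to realize both the distinguished differential $\omega_{\eM,\xi}$ and a nonzero Betti cycle $\gamma \in H_{\mathrm{Betti}}(\eM)$ as tensor products of the corresponding data on the $\eM_i$, so that the period integral factors as a product of $\int_{\gamma_i} \omega_{\eM_i,\xi}$.

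First I would define a $\bar{k}$-linear map $\omega: \eM \to \bar{k}$ on simple tensors by
\[
\omega(m_1 \otimes \cdots \otimes m_n) \assign \prod_{i=1}^n \omega_{\eM_i,\xi}(m_i),
\]
and verify that it descends to the $O_\bK$-tensor product. The point is that for $\alpha \in O_\eK$, the identity $\omega_{\eM_i,\xi}(\alpha m_i) = \nu_\xi(\alpha)\,\omega_{\eM_i,\xi}(m_i)$ from Proposition~\ref{Prop: delta_M} means that shifting $\alpha$ between the $i$th and $j$th factors multiplies the product by the common scalar $\nu_\xi(\alpha)$, so the map is $O_\bK$-balanced. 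Choosing $m_i \in \eM_i$ with $\omega_{\eM_i,\xi}(m_i) \neq 0$ shows $\omega \neq 0$, and by construction $\omega(\alpha m) = \nu_\xi(\alpha)\omega(m)$. The uniqueness clause of Proposition~\ref{Prop: delta_M} then gives $\omega = c \cdot \omega_{\eM,\xi}$ for some $c \in \bar{k}^\times$.

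Next, for nonzero $\gamma_i \in H_{\mathrm{Betti}}(\eM_i) \subset \TT^\dagger \otimes_{\bar{k}[t]} \eM_i$, I would use the natural multilinear ``multiplication'' map
\[
\bigl(\TT^\dagger \otimes_{\bar{k}[t]} \eM_1\bigr) \times \cdots \times \bigl(\TT^\dagger \otimes_{\bar{k}[t]} \eM_n\bigr) \longrightarrow \TT^\dagger \otimes_{\bar{k}[t]} \eM,
\]
sending $(f_1 \otimes m_1,\ldots, f_n \otimes m_n)$ to $(f_1 \cdots f_n) \otimes (m_1 \otimes \cdots \otimes m_n)$, which is well defined after accounting for the $O_\bK$-balancing on the target. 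The diagonal action of $\sigma$ then ensures that $\gamma \assign \gamma_1 \cdots \gamma_n$ is $\sigma$-invariant, so $\gamma \in H_{\mathrm{Betti}}(\eM)$. Extending $\omega$ $\CC_\infty$-linearly to $\TT^\dagger \otimes_{\bar{k}[t]} \eM$ via $\omega(f \otimes m) = f(\theta) \omega(m)$ (as in Section~\ref{sec: dR module}), multiplicativity on simple tensors gives
\[
\int_\gamma \omega_{\eM,\xi} \;=\; c^{-1} \omega(\gamma) \;=\; c^{-1} \prod_{i=1}^n \omega_{\eM_i,\xi}(\gamma_i) \;=\; c^{-1} \prod_{i=1}^n \int_{\gamma_i} \omega_{\eM_i,\xi},
\]
and since $c \in \bar{k}^\times$ the identity $\Pcal_\eK(\xi,\Xi) = \prod_{i=1}^n \Pcal_\eK(\xi,\Xi_i)$ follows immediately in $\CC_\infty^\times/\bar{k}^\times$.

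The main obstacle I anticipate is the bookkeeping surrounding the tensor construction of $\gamma$: one must check carefully that the multiplication map into $\TT^\dagger \otimes_{\bar{k}[t]} \eM$ is well defined (compatible with the $O_\bK$-balancing that defines $\eM$), that it is $\sigma$-equivariant with respect to the diagonal $\sigma$-actions, and that the resulting $\gamma$ is nonzero — the last of these being automatic from the nonvanishing of the integral $\omega(\gamma) = \prod_i \int_{\gamma_i} \omega_{\eM_i,\xi}$, which is itself nonzero because each $\Pcal_\eK(\xi,\Xi_i)$ lies in $\CC_\infty^\times/\bar{k}^\times$.
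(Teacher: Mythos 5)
Your proposal is correct and follows essentially the same route as the paper: form $\eM_1\otimes_{O_\bK}\cdots\otimes_{O_\bK}\eM_n$ via Proposition~\ref{prop: tensor CM type}, take the product differential $\omega_{\eM_1,\xi}\otimes\cdots\otimes\omega_{\eM_n,\xi}$, pair it with a product of nonzero cycles, and multiply the periods. The only (harmless) variation is that you build the $\sigma$-invariant cycle $\gamma_1\cdots\gamma_n$ directly inside $\TT^\dagger\otimes_{\bar{k}[t]}\eM$ and deduce its nonvanishing from the nonvanishing of the period, whereas the paper instead invokes the identification $H_{\mathrm{Betti}}(\eM)=H_{\mathrm{Betti}}(\eM_1)\otimes_{O_\eK}\cdots\otimes_{O_\eK}H_{\mathrm{Betti}}(\eM_n)$.
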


\begin{proof}
Let $\eM_1,\ldots, \eM_n$ be CM dual $t$-motives over $\bar{k}$ with the generalized CM types $(\eK,\Xi_1),\ldots, (\eK,\Xi_n)$, respectively.
By Proposition~\ref{prop: tensor CM type},
\[
\eM\assign  \eM_1 \otimes_{O_{\bK}} \cdots \otimes_{O_{\bK}} \eM_n
\]
is a CM dual $t$-motive with generalized CM type $(\eK,\Xi)$.   Moreover, for $1\leq i \leq n$, let $\omega_{\eM_i,\xi}$ be the differential of $\eM_i$ associated with $\xi$. We then set
\[
\omega_{\eM,\xi} = \omega_{\eM_1,\xi} \otimes \cdots \otimes \omega_{\eM_n,\xi}\assign m_{1}\otimes \cdots\otimes m_{n}\mapsto \omega_{\eM_1,\xi}(m_{1})\cdot \cdots \cdot \omega_{\eM_n,\xi}(m_{n}),
\]
which is a well-defined (from the universal property of tensor products) differential of $\eM$ associated with $\xi$. Notice that
\[
H_{\mathrm{Betti}}(\eM) = H_{\mathrm{Betti}}(\eM_1) \otimes_{O_\eK} \cdots \otimes_{O_\eK} H_{\mathrm{Betti}}(\eM_n).
\]
Therefore taking nonzero cycles $\gamma_1,\ldots, \gamma_n$ in $H_{\mathrm{Betti}}(\eM_1),\ldots, H_{\mathrm{Betti}}(\eM_n)$, respectively, and choosing
$\gamma = \gamma_1\otimes \cdots \otimes \gamma_n \in H_{\mathrm{Betti}}(\eM)$,
we obtain that
\[
\prod_{i=1}^n p_\eK(\xi,\Xi_i) \sim \prod_{i=1}^n \int_{\gamma_i} \omega_{\eM_i,\xi}
= \int_{\gamma} \omega_{\eM,\xi} \sim p_\eK(\xi,\Xi).
\qedhere
\]
\end{proof}

By applying Lemma~\ref{lem: tensor diff}, we now extend the period symbols to a unique bi-additive pairing.

\begin{theorem}\label{thm: Period Symbol}
For each CM field $\eK/\FF_{q}(t)$, there exists a unique bi-additive pairing $$\Pcal_\eK: I_\eK \times I_\eK^0 \rightarrow \CC_\infty^\times/\bar{k}^\times$$ satisfying:
\begin{itemize}
    \item[(1)] $\Pcal_\eK(\xi,\Xi)$ is the period symbol in Definition~\ref{defn: period quantity} for each point $\xi \in J_\eK$ and each generalized CM type $\Xi$ of $\eK$.
    \item[(2)] Let $\eK'/\FF_{q}(t)$ be a CM field containing $\eK$. For $\Phi' \in I_{\eK'}$ and $\Phi^0 \in I_\eK^0$,
    \[
    \Pcal_{\eK}\bigl(\mathrm{Res}_{\eK'/\eK}(\Phi'),\Phi^0\bigr) = \Pcal_{\eK'}\bigl(\Phi',\mathrm{Inf}_{\eK'/\eK}(\Phi^0)\bigr).
    \]
    \item[(3)] Let $\eK'/\FF_{q}(t)$ be a CM field containing $\eK$, and let $\eK'^{+}$ be the maximal totally real subfield of $\eK'$.
    For $\Phi \in I_\eK$ and $\Phi^{\prime,0} \in I_{\eK'}^0$,  we have
    \[
    \Pcal_{\eK'}\bigl(\mathrm{Inf}_{\eK'/\eK}(\Phi),\Phi^{\prime,0} \bigr) = \Pcal_{\eK}\bigl(\Phi,\mathrm{Res}_{\eK'/\eK}(\Phi^{\prime,0})\bigr).
    \]
    \item[(4)] Let $\eK'/\FF_{q}(t)$ be a CM field together with an isomorphism $\varrho: \eK'\rightarrow \eK$ over $\FF_q(t)$, and let $\pi_\varrho : X \rightarrow X'$ be the isomorphism of $\FF_q$-schemes corresponding to $\varrho$.
    Then
    \[
    \Pcal_{\eK'}\bigl((\pi_\varrho)_*(\Phi),(\pi_\varrho)_*(\Phi^0)\bigr) = \Pcal_\eK(\Phi,\Phi^0), \quad \forall\, (\Phi,\Phi^0) \in I_\eK\times I_\eK^0.
    \]
    \item[(5)] Let $\eK^+$ be the maximal totally real subfield of $\eK$.
    Given $\xi^+ \in J_{\eK^+}$ and $\Phi^0 \in I_\eK^0$, one has
    \[
    \tilde{\pi}^{\text{\rm wt}(\Phi^0)} \sim \prod_{\substack{\xi \in J_\eK \\ \pi_{\bX/\bX^+}(\xi) = \xi^+}}
    p_\eK(\xi,\Phi^0).
    \]
\end{itemize}
\end{theorem}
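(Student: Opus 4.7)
The plan is to construct $\Pcal_\eK$ by bi-additive extension from the atomic symbols of Definition~\ref{defn: period quantity}, and then verify (2)--(5) in turn. Uniqueness is immediate since $I_\eK$ is free on $J_\eK$ and, by Proposition~\ref{prop: CM prop}(1), every $\Phi^0 \in I_\eK^0$ admits a decomposition $\Phi^0 = \Xi_+ - \Xi_-$ into two generalized CM types (add a large multiple of $N_\eK$). For existence I would first set $\Pcal_\eK(\xi,\Xi_+-\Xi_-) \assign \Pcal_\eK(\xi,\Xi_+)\cdot\Pcal_\eK(\xi,\Xi_-)^{-1}$; well-definedness reduces to the identity $\Pcal_\eK(\xi,\Xi_1+\Xi_2) = \Pcal_\eK(\xi,\Xi_1)\Pcal_\eK(\xi,\Xi_2)$ for generalized CM types $\Xi_1,\Xi_2$, which is exactly Lemma~\ref{lem: tensor diff}, and the first argument is then extended $\ZZ$-linearly.

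For (2), the key move is extension of scalars: if $\eM = \eM_{(W,h)}$ realizes $(\eK,\Xi)$, then $\eM' \assign \eM\otimes_{O_\bK}O_{\bK'}\cong \eM_{(\pi^*W,\pi^*h)}$ realizes $(\eK',\Inf(\Xi))$, where $\pi = \pi_{\bX'/\bX}$; this uses that $\pi^*$ commutes with Frobenius twisting and, via the commutative diagram~\eqref{eqn: Reduction}, with reduction at infinity ($\pi^*(\eI_\Xi) = \eI_{\pi^*\Xi}$). The inclusion $\iota:\eM\hookrightarrow \eM'$ is a morphism of dual $t$-motives whose pullback $\iota^*\omega_{\eM',\xi'}$ satisfies the defining property of $\omega_{\eM,\pi(\xi')}$ (because $\nu_{\xi'}|_\eK = \nu_{\pi(\xi')}$) and is nonzero, and whose push-forward on cycles preserves nonvanishing; Lemma~\ref{lem: adjoint} then gives (2) in the atomic case, and bi-additivity handles the rest. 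For (3), the analogous construction uses norms of line bundles: from $\eM_{(W',h')}$ over $\eK'$, the pair $\bigl(\pi_*W',\,\Nr_{\bK'/\bK}(h')\bigr)$ is a shtuka datum over $\eK$ for the (possibly virtual) class $\pi_*\Xi' = \Res(\Xi')$, since $\divv(\Nr(h')) = \pi_*\divv(h')$ and $\pi_*(\eI_{\Xi'}) = \eI_{\pi_*\Xi'}$. After making the class effective via Proposition~\ref{prop: CM prop}(1), one identifies $\omega_{\eM,\xi}$ on the resulting CM dual $t$-motive with the exterior product $\bigwedge_{\xi'\in \pi^{-1}(\xi)} \omega_{\eM',\xi'}$ under the $O_\eK$-isotypic decomposition of $\eM'\otimes_{O_\bK}\bar{k}$, and likewise for Betti cycles, yielding the desired product formula. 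Property (4) is a direct transport of structure along the isomorphism~$\varrho$.

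Property (5) then follows from (2), (3), and Proposition~\ref{prop: carlitz-ps}. Since $\sum_{\xi\in \pi_{\bX/\bX^+}^{-1}(\xi^+)}\xi = \Inf_{\eK/\eK^+}(\xi^+)$, property (3) yields
\[
\prod_{\xi\in \pi_{\bX/\bX^+}^{-1}(\xi^+)} p_\eK(\xi,\Phi^0) = \Pcal_{\eK^+}\bigl(\xi^+,\Res_{\eK/\eK^+}(\Phi^0)\bigr);
\]
the weight condition defining $I_\eK^0$ forces $\Res_{\eK/\eK^+}(\Phi^0) = \wt(\Phi^0)\cdot N_{\eK^+}$, and since $N_{\eK^+} = \Inf_{\eK^+/\FF_q(t)}(\xi_\theta)$, applying (2) to the tower $\FF_q(t)\subset \eK^+$ collapses this to $\Pcal_{\FF_q(t)}(\xi_\theta,\xi_\theta)^{\wt(\Phi^0)} \sim \tilde\pi^{\wt(\Phi^0)}$ by Proposition~\ref{prop: carlitz-ps}. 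The principal obstacle will be (3): unlike the clean extension-of-scalars picture for (2), verifying that the norm/isotypic identification correctly transports a nonzero generator of $H_{\mathrm{Betti}}(\eM')$ to a nonzero element of $H_{\mathrm{Betti}}$ of the downstream CM dual $t$-motive up to $\bar{k}^\times$ requires unraveling the rank-$[\eK':\eK]$ $O_\bK$-module structure of $\eM'$ via the embeddings $\nu_\xi:\eK\hookrightarrow \bar{k}$. A fallback strategy is to first establish (3) when $\eK'/\eK$ is Galois, by combining (4) with averaging over $\Gal(\eK'/\eK)$, and then descend from a Galois closure, invoking Remark~\ref{rem: CM fields}(5) to preserve CM-ness along the way.
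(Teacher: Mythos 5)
Your overall strategy tracks the paper's closely: bi-additive extension via Lemma~\ref{lem: tensor diff} for (1), extension of scalars for (2), transport of structure along $\varrho$ for (4), and the tower $\FF_q(t)\subset\eK^+\subset\eK$ together with Proposition~\ref{prop: carlitz-ps} for (5) (you correctly route the last step through property (2), not (4)).

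The one genuine divergence is (3). The paper avoids norms of line bundles and instead takes $\eM := \bigwedge^n_{O_\bK}\eM'$ directly, with $n = [\eK':\eK]$, and then verifies CM-ness via Proposition~\ref{prop: being CM-0} (checking $\rank_{\bar{k}[t]}\eM = [\eK:\FF_q(t)]$, $\sigma\eM = \Ifk_{\Res(\Xi')}\eM$, and purity via the surjection $\bigotimes^n_{\bar{k}[t]}\eM'\twoheadrightarrow\bigwedge^n_{O_\bK}\eM'$). The exterior-power construction buys you exactly the identification you flag as the ``principal obstacle'' essentially for free: on $\eM/\Pfk_\xi\eM\cong\bigwedge^n_{\bar k}(\eM'/\Pfk_\xi\eM')$ the differential $\omega_{\eM,\xi}$ is literally $\omega_{\eM',\xi_1'}\wedge\cdots\wedge\omega_{\eM',\xi_n'}$, and choosing $\gamma_1',\ldots,\gamma_n'$ with $\gamma := \gamma_1'\wedge\cdots\wedge\gamma_n'\neq 0$ in $\bigwedge^n_{O_\eK}H_{\mathrm{Betti}}(\eM')\subset H_{\mathrm{Betti}}(\eM)$ gives $\int_\gamma\omega_{\eM,\xi} = \det\bigl(\int_{\gamma_i'}\omega_{\eM',\xi_j'}\bigr) \sim \prod_j p_{\eK'}(\xi_j',\Xi')$. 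Your norm-based shtuka datum $(\pi_*W',\Nr_{\bK'/\bK}(h'))$ does produce \emph{some} CM dual $t$-motive with the right generalized CM type (hence isogenous to the exterior power by Proposition~\ref{L:IsogW1W2}), so the two constructions agree a posteriori; but you still have to produce the wedge-product identifications, so nothing is saved, and the norm route adds a step. Also, there is no effectivity issue to repair: $\Res_{\eK'/\eK}(\Xi') = \pi_*\Xi'$ is automatically an effective divisor (Proposition~\ref{prop: Inf-Res}), so invoking Proposition~\ref{prop: CM prop}(1) there is unnecessary.

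Two cautions. First, your fallback for (3) via a Galois closure does not work: Remark~\ref{rem: CM fields}(4) gives an explicit example where the compositum of two CM fields is not CM, so the Galois closure of a CM field over $\FF_q(t)$ need not be CM, and Remark~\ref{rem: CM fields}(5) only rescues composition with a \emph{totally real} field, which a Galois closure generally is not. Fortunately the direct exterior-power argument needs no Galois hypothesis. Second, for (2) your identification $\eM\otimes_{O_\bK}O_{\bK'}\cong\eM_{(\pi^*W,\pi^*h)}$ implicitly relies on $\pi^*(\eI_\Xi) = \eI_{\pi^*\Xi}$, which at the level of divisors is a statement about ramification indices and residue-disk counting at infinity; the paper sidesteps this by working with the ideal-theoretic description ($\sigma\eM' = \Ifk'_{\Inf(\Xi)}\eM'$) plus Proposition~\ref{prop: being CM-0}, which is the more robust route.
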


Our strategy matches with Deligne's table in \cite[p.~29]{De80} for the classical case. We include the detailed proofs of each property individually as follows.

\begin{proof}[Proof of Theorem~\ref{thm: Period Symbol} (1)]
As $I_\eK$ is the free abelian group generated by $\xi \in J_\eK$ and $I_\eK^0$ is the subgroup of $I_\eK$ generated by all the generalized CM types of $\eK$ (see Proposition~\ref{prop: CM prop}), the pairing $\Pcal_\eK$ is uniquely determined by property $(1)$. It suffices to prove that it is well-defined.

Given generalized CM types $\Xi_1,\ldots, \Xi_n$ of $\eK$, suppose
\[
m_1\Xi_1+\cdots +m_n \Xi_n = 0, \quad m_1,\ldots, m_n \in \ZZ.
\]
Without loss of generality, assume that $m_1,\ldots, m_\ell >0$ and $m_{\ell+1},\ldots, m_n <0$.
Then
\[
\Xi\assign  m_1\Xi_1+\cdots + m_\ell \Xi_\ell  = (-m_{\ell+1})\Xi_{\ell+1}+\cdots + (-m_n)\Xi_n
\]
is also a generalized CM type of $\eK$. For $\xi \in I_\eK$, by Lemma~\ref{lem: tensor diff} we have
\[
\prod_{i=1}^\ell \Pcal_\eK(\xi,\Xi_i)^{m_i} = \Pcal_\eK(\xi,\Xi) = \prod_{j=\ell+1}^n \Pcal_\eK(\xi,\Xi_j)^{-m_j},
\]
which means that
\[
\prod_{i=1}^n \Pcal_\eK(\xi,\Xi_i)^{m_i} = 1 \quad \in \CC_\infty^\times/\bar{k}^\times.
\]
Thus $\Pcal_\eK$ is well-defined.
\end{proof}

\begin{proof}[Proof of Theorem~\ref{thm: Period Symbol} (2)]
According to the additive properties of $\mathrm{Inf}_{\eK'/\eK}$ and $\mathrm{Res}_{\eK'/\eK}$, we may assume that $\Phi' = \xi' \in J_{\eK'}$ and $\Phi^0 = \Xi \in I_\eK^0$ is a generalized CM type of $\eK$.
Let $\eM$ be a CM dual $t$-motive with the generalized CM type $(\eK,\Xi)$.
Put
\[
\eM' \assign O_{\eK'} \otimes_{O_\eK} \eM \quad (\cong O_{\bK'}\otimes_{O_\bK} \eM),
\]
whose $\bar{k}[t,\sigma]$-module structure is induced from $\eM$.
As $\eM$ is projective of rank one over $O_{\bK}$, we see that $\eM'$ is projective of rank one over $O_{\bK'}$.
Moreover, we get that
\[
\sigma \eM' = O_{\eK'}\otimes_{O_\eK} \sigma \eM
= O_{\bK'} \otimes_{O_\bK} \Ifk_\Xi \eM
= \Ifk'_{\Xi'} \eM',
\]
where $\Xi' = \Inf_{\eK'/\eK}(\Xi)$ and $\Ifk'_{\Xi'}$ is the ideal of $O_{\bK'}$ corresponding to $\Xi'$.
Finally, the natural surjective morphism
\[
\eM^{\oplus [\eK':\FF_q(t)]} \cong O_{\eK'} \otimes_{\FF_q[t]} \eM \twoheadrightarrow \eM',
\]
ensures the purity of $\eM'$ by \cite[Proposition 2.4.10 (c)]{HJ20}.
Therefore Proposition~\ref{prop: being CM-0} implies that $\eM'$ is a CM dual $t$-motive with generalized CM type $(\eK',\Inf_{\eK'/\eK}(\Xi))$,
and we have a natural injective morphism $f: \eM \rightarrow \eM'$ of dual $t$-motives which is $O_\bK$-linear. In particular, the push-forward $f_*: H_{\mathrm{Betti}}(\eM) \rightarrow H_{\mathrm{Betti}}(\eM')$ is also injective.

On the other hand, for a differential $\omega_{\eM',\xi'} \in H_{\mathrm{dR}}(\eM',\bar{k})$ associated with $\xi'$, we note that its pull-back $f^*\omega_{\eM',\xi'} \in H_{\mathrm{dR}}(\eM,\bar{k})$ is a differential associated with $\xi\assign  \pi_{\bX'/\bX}(\xi')= \text{Res}_{\eK'/\eK}(\xi')$.
Hence for a nonzero $\gamma \in H_{\mathrm{Betti}}(\eM)$ we have
\[
p_{\eK'}(\xi',\text{Inf}_{\eK'/\eK}(\Xi))
\sim \int_{f_* \gamma} \omega_{
\eM',\xi'} \sim \int_{\gamma} f^*\omega_{M',\xi'}
\sim
p_\eK(\text{Res}_{\eK'/\eK}(\xi'),\Xi).
\]
Therefore property (2) holds.
\end{proof}

\begin{proof}[Proof of Theorem~\ref{thm: Period Symbol} (3)]
We may assume that $\Phi = \xi \in J_\eK$ and $\Phi^{\prime,0} = \Xi'  \in I_{\eK'}^0$ is a generalized CM type of $\eK'$.
Put $n = [\eK':\eK]$.
Let $\eM'$ be a CM dual $t$-motive with generalized CM type $(\eK',\Xi')$ over $\bar{k}$.
Take
\[
\eM \assign \bigwedge^{n}_{O_\bK} \eM', \quad \text{the $n$-th exterior product of $\eM'$ over $O_\bK$,}
\]
which is equipped with a $\sigma$-action defined by
\[
\sigma \cdot (m_1\wedge \cdots \wedge m_{n}) \assign
(\sigma m_1) \wedge  \cdots \wedge (\sigma m_{n}).
\]
Put $\Xi \assign \Res_{\eK'/\eK}(\Xi')$.
Let $\Ifk'_{\Xi'}$ (resp.\ $\Ifk_\Xi$) be the ideal of $O_{\bK'}$ (resp.\ $O_\bK$) corresponding to $\Xi'$ (resp.\ $\Xi$).
Then $\eM$ is a projective $O_\bK$-module of rank one, and
\[
\sigma \eM = \bigwedge^{n}_{O_\bK} \sigma \eM' = \bigwedge^{n}_{O_\bK} \Ifk_{\Xi'}' \eM' = \Ifk_{\Xi} \bigwedge^{n}_{O_\bK} \eM' = \Ifk_{\Xi} \eM.
\]
Moreover, the surjectivity of the morphisms
\[
\bigotimes_{\ok[t]}^{n} \eM' \twoheadrightarrow \bigotimes_{O_{\bK}}^{n} \eM' \twoheadrightarrow  \bigwedge^{n}_{O_\bK} \eM' = \eM
\]
ensures the purity of $\eM$
by \cite[Proposition 2.4.10 (c)]{HJ20}.
Therefore Proposition~\ref{prop: being CM-0} implies that $\eM'$ is a CM dual $t$-motive with generalized CM type $(\eK,\Res_{\eK'/\eK}(\Xi'))$.

On the other hand,
write $\Inf_{\eK'/\eK}(\xi) = \xi_1'+\cdots + \xi_n'$.
Let $\Pfk'_{\xi_i'}$ be the maximal ideal of $O_{\bK'}$ corresponding to $\xi_i'$ for $i=1,...,n$, and $\Pfk_{\xi}$ be the maximal ideal of $O_{\bK}$ corresponding to $\xi$.
One has $\Pfk_{\xi} \cdot O_{\bK'} = \prod_{i=1}^n \Pfk'_{\xi_i'}$,
and the quotient map $\eM' \twoheadrightarrow \eM'/\Pfk_\xi \eM'$ induces
\[
\frac{\eM}{\Pfk_\xi \eM} \cong \bigwedge^n_{O_{\bK}} \frac{\eM'}{\Pfk_{\xi}\eM'} \cong \bigwedge^n_{\ok} \frac{\eM'}{\Pfk_{\xi}\eM'}.
\]
For $1\leq i \leq n$, let $\omega_{\eM',\xi_i'} \in H_{\rm dR}(\eM',\ok)$ be the differential of $\eM'$ associated with $\xi_i'$.
Then $\omega_{\eM',\xi_1'},...,\omega_{\eM',\xi_n'}$ form a $\bar{k}$-basis of $\Hom_{\ok}(\eM'/\Pfk_\xi \eM',\bar{k})$, which implies that
\[
\omega_{\eM,\xi} \assign \omega_{\eM',\xi_1'}\wedge \cdots \wedge \omega_{\eM',\xi_n'}: \bigwedge^n_{\ok} \frac{\eM'}{\Pfk_{\xi}\eM'} \stackrel{\sim}{\longrightarrow} \ok
\]
can be viewed as a differential of $\eM$ associated with $\xi$.

Finally,
put $\OO_\bK^\dagger \assign \TT^\dagger \otimes_{\ok[t]} O_\bK$.
Observe that
\[
\TT^\dagger \otimes_{\bar{k}[t]} \eM = \TT^\dagger \otimes_{\bar{k}[t]} \bigwedge^n_{O_{\bK}} \eM'
\cong \bigwedge^n_{\OO_\bK^\dagger} (\TT^\dagger \otimes_{\ok[t]} \eM'),
\]
whence we may regard $\bigwedge\limits^n_{O_\eK}H_{\rm Betti}(\eM')$ as an $O_\eK$-submodule of $H_{\rm Betti}(\eM)$.
As $H_{\rm Betti}(\eM')$ is a projective $O_{\eK'}$-module of rank one (i.e.\ a projective $O_\eK$-module of rank $n$),
we can find $\gamma_1',...,\gamma_n' \in H_{\rm Betti}(\eM')$ so that
\[
0 \neq \gamma \assign \gamma_1' \wedge \cdots \wedge \gamma_n' \in \bigwedge^n_{O_\eK}H_{\rm Betti}(\eM') \subseteq H_{\rm Betti}(\eM).
\]
Since $\int_{\gamma_i'} \omega_{\eM',\xi_j'} \sim p_{\eK'}(\xi_{j}', \Xi')$ for all $1\leq i,j \leq n$, it follows that
\begin{align*}
p_{\eK}(\xi,\Res_{\eK'/\eK}(\Xi'))
\sim \int_{\gamma}\omega_{\eM,\xi}
& = \det\big(\int_{\gamma_i'} \omega_{\eM',\xi_j'}\big)_{1\leq i,j\leq n} \\
& \sim \prod_{j=1}^n p_{\eK'}(\xi_j',\Xi')
\sim p_{\eK'}(\Inf_{\eK'/\eK}(\xi),\Xi'),
\end{align*}
as desired.
\end{proof}

\begin{proof}[Proof of Theorem~\ref{thm: Period Symbol} (4)]
Taking~(1) above and Proposition~\ref{prop: CM prop} into account, we may assume that $\Phi = \xi \in J_\eK$ and $\Phi^0 = \Xi$ is a CM type of $\eK$.
Note that $\varrho$ induces a $\bar{k}(t)$-isomorphism (still denoted by $\varrho$) from $\bK'\assign \bar{k}(t)\otimes_{\FF_q(t)} \eK'$ to $\bK\assign \bar{k}(t)\otimes_{\FF_q(t)}\eK$.
Let $\eM= \eM_{(W,h)}$ be a CM dual $t$-motive with the generalized CM type $(\eK,\Xi)$ over $\bar{k}$.
Let $\pi_\varrho : \bX \stackrel{\sim}{\rightarrow} \bX'$ and ${\pi_{\varrho}}_{*}:\Div (\bX)\stackrel{\sim}{\rightarrow} \Div (\bX')$ be the isomorphisms induced by $\varrho$.
Then
\[
\varrho^{-1}\Bigl(\eM_{(W,h)}\Bigr) = \eM_{({\pi_\varrho}_*W,\varrho^{-1}(h))} \rassign \eM'
\]
is a CM dual $t$-motive with generalized CM type $(\eK',{\pi_\varrho}_{*}\Xi)$ over $\bar{k}$.
In other words, $f\assign \varrho|_{\eM'}: \eM' \rightarrow \eM$ is an isomorphism of dual $t$-motives.
Moreover, let $\omega_{\eM,\xi}$ be the differential in $H_{\mathrm{dR}}(\eM,\bar{k})$ associated with $\xi$.
Then $f^* \omega_{\eM,\xi}$ is the differential in $H_{\mathrm{dR}}(\eM',\bar{k})$ associated with $\xi'\assign  \pi_{\varrho}(\xi)$.
For a nonzero cycle $\gamma \in H_{\mathrm{Betti}}(\eM')$, we obtain that
\[
p_{\eK'}\bigl(\pi_\varrho(\xi), {\pi_\varrho}_*\Xi\bigr) \sim \int_{\gamma} f^* \omega_{\eM,\xi} \sim \int_{f_* \gamma} \omega_{\eM,\xi}
\sim p_\eK(\xi,\Xi),
\]
whence proving (4).
\end{proof}

\begin{proof}[Proof of Theorem~\ref{thm: Period Symbol} (5)]
Since
\begin{align*}
\sum_{\substack{\xi \in J_\eK \\ \pi_{\bX/\bX^+}(\xi) = \xi^+}} \xi = \Inf_{\eK/\eK^+}(\xi^+)
\quad \text{ and } \quad
\Res_{\eK/\eK^+}(\Phi^0) &= \mathrm{wt}(\Phi^0) \sum_{\xi_0^+ \in J_{\eK^+}} \xi_0^+ \\
& = \mathrm{wt}(\Phi^0) \Inf_{\eK^+/\FF_q(t)}(\xi_\theta),
\end{align*}
we get
\begin{align*}
\prod_{\substack{\xi \in J_\eK,~\pi_{\bX/\bX^+}(\xi) = \xi^+}}
p_\eK(\xi,\Phi^0)
&\sim p_\eK\big(\Inf_{\eK/\eK^+}(\xi^+),\Phi^0\big) \\
&\sim p_{\eK^+}\big(\xi^+,\Res_{\eK/\eK^+}(\Phi^0)\big) \hspace{2.6cm} \text{(by (3))}\\
&\sim p_{\eK^+}\big(\xi^+,\Inf_{\eK^+/\FF_q(t)}(\xi_\theta)\big)^{\mathrm{wt}(\Phi^0)} \\
&\sim p_{\FF_q(t)}\big(\Res_{\eK^+/\FF_q(t)}(\xi^+),\xi_\theta\big)^{\mathrm{wt}(\Phi^0)} \hspace{1cm}\text{(by (4))}\\
&\sim \tilde{\pi}^{\mathrm{wt}(\Phi^0)} \hspace{2.7cm} \text{(by Proposition~\ref{prop: carlitz-ps}).}
\end{align*}
\end{proof}

\begin{Subsubsec}{Extension of period symbols}\label{sec: EPS}
We extend $\Pcal_\eK$ to a pairing on $I_\eK\times I_\eK$ as follows:
for $\xi_1,\xi_2 \in J_\eK$, we let
\[
\Phi^0_2\assign  [\eK:\eK^+]\cdot \xi_2 - \sum_{\substack{\xi \in J_\eK \\ \pi_{\bX/\bX^+}(\xi) =\pi_{\bX/\bX^+}(\xi_2)}}\xi \in I_\eK^0,
\]
and define
\begin{equation}\label{E:Def of P}
\Pcal_\eK(\xi_1,\xi_2)\assign  \tilde{\pi}^{1/[\eK:\FF_q(t)]} \cdot \Pcal_\eK(\xi_1,\Phi_2^0)^{1/[\eK:\eK^+]}  \quad \in \CC_\infty^\times/\bar{k}^\times,
\end{equation}
Extending $\Pcal_\eK$ bi-additively to $I_\eK\times I_\eK$, we obtain an analogue of Shimura's period symbols for each CM field $\eK$ satisfying the properties (1)--(5) in Theorem~\ref{thm: Period Symbol}.
For every $\Phi_1,\Phi_2 \in I_\eK$, we also keep using the notation $p_\eK(\Phi_1,\Phi_2)$ for an arbitrary representative of the coset $\Pcal_\eK(\Phi_1,\Phi_2)$.
\end{Subsubsec}

\section{Algebraic relations among period symbols}

In this section, we determine all the algebraic relations among the period symbols defined in~\eqref{E:Def of P}, and as such prove our analogue of Shimura's conjecture in this setting.

\subsection{Analogue of Shimura's conjecture}\label{sec: A-SC}
Let $\eK$ be a CM field over $\FF_q(t)$ and $\eK^+$ be the maximal totally real subfield of $\eK$. We start with the following ``Legendre relation.''

\begin{lemma}\label{L:Algebraic relation}
Fix any $\xi_{0}\in J_{\eK}$. For each $\xi^+ \in J_{\eK^+}$, we have
\begin{equation}\label{eqn: Algebraic Relation}
\prod_{\substack{\xi \in J_\eK \\ \pi_{\bX/\bX^+}(\xi) = \xi^+}}
p_\eK (\xi,\xi_0) \sim \tilde{\pi}^{1/[\eK^+:\FF_q(t)]}.
\end{equation}
\end{lemma}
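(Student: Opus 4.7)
The plan is to unfold the definition of $p_\eK(\xi,\xi_0)$ given in \eqref{E:Def of P}, which expresses the period symbol with a point as its second argument in terms of $\tilde{\pi}$ and a period symbol against an element of $I_\eK^0$, and then apply Theorem~\ref{thm: Period Symbol}~(5) to the resulting product. There is essentially no hard step — this is a direct ``Legendre-type'' consequence of the extension formula together with property~(5).

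First I would set
\[
\Phi_0 \assign [\eK:\eK^+]\cdot \xi_0 \ -\ \sum_{\substack{\xi' \in J_\eK \\ \pi_{\bX/\bX^+}(\xi') = \pi_{\bX/\bX^+}(\xi_0)}} \xi' \ \in\ I_\eK^0,
\]
so that by \eqref{E:Def of P},
\[
p_\eK(\xi,\xi_0) \ \sim\ \tilde{\pi}^{1/[\eK:\FF_q(t)]} \cdot p_\eK(\xi,\Phi_0)^{1/[\eK:\eK^+]}
\]
for every $\xi \in J_\eK$. Since the fiber $\pi_{\bX/\bX^+}^{-1}(\xi^+)$ contains exactly $[\eK:\eK^+]$ points, taking the product over this fiber yields
\[
\prod_{\substack{\xi \in J_\eK \\ \pi_{\bX/\bX^+}(\xi) = \xi^+}} p_\eK(\xi,\xi_0) \ \sim\ \tilde{\pi}^{[\eK:\eK^+]/[\eK:\FF_q(t)]} \cdot \Bigl(\prod_{\substack{\xi \in J_\eK \\ \pi_{\bX/\bX^+}(\xi) = \xi^+}} p_\eK(\xi,\Phi_0)\Bigr)^{1/[\eK:\eK^+]},
\]
and the exponent of $\tilde{\pi}$ in the first factor simplifies to $1/[\eK^+:\FF_q(t)]$.

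It remains to show that the inner product equals $1$ in $\CC_\infty^\times/\ok^\times$. By Theorem~\ref{thm: Period Symbol}~(5),
\[
\prod_{\substack{\xi \in J_\eK \\ \pi_{\bX/\bX^+}(\xi) = \xi^+}} p_\eK(\xi,\Phi_0) \ \sim\ \tilde{\pi}^{\text{wt}(\Phi_0)},
\]
so I only need to check $\text{wt}(\Phi_0) = 0$. Using the defining formula for the weight, for any $\xi_1^+ \in J_{\eK^+}$ one computes the sum of coefficients of $\Phi_0$ over $\pi_{\bX/\bX^+}^{-1}(\xi_1^+)$: if $\xi_1^+ \neq \pi_{\bX/\bX^+}(\xi_0)$ the sum is trivially zero, while if $\xi_1^+ = \pi_{\bX/\bX^+}(\xi_0)$ the sum equals $[\eK:\eK^+] - [\eK:\eK^+] = 0$. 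Hence $\text{wt}(\Phi_0) = 0$, the inner product is $\sim 1$, and combining with the previous display gives
\[
\prod_{\substack{\xi \in J_\eK \\ \pi_{\bX/\bX^+}(\xi) = \xi^+}} p_\eK(\xi,\xi_0) \ \sim\ \tilde{\pi}^{1/[\eK^+:\FF_q(t)]},
\]
which is the Legendre relation. The only subtlety worth double-checking is that $\Phi_0$ genuinely lies in $I_\eK^0$ — this is immediate from the very definition of $I_\eK^0$, as all its fiber sums equal zero — so that the invocation of Theorem~\ref{thm: Period Symbol}~(5) is legitimate.
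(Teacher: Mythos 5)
Your proposal is correct and follows the same route as the paper: the paper's own proof simply cites the definition~\eqref{E:Def of P} together with Theorem~\ref{thm: Period Symbol}~(5), and your argument is exactly that computation carried out in detail, with the fiber count $[\eK:\eK^+]$ and the observation $\wt(\Phi_0)=0$ (valid since Theorem~\ref{thm: Period Symbol}~(5) applies to any element of $I_\eK^0$, not just generalized CM types) correctly justified. No gaps.
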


\begin{proof}
This relation follows directly from the definition in \eqref{E:Def of P} and Theorem~\ref{thm: Period Symbol}~(5).
\end{proof}

The main theorem of this paper is the following analogue of Shimura's conjecture (the proof will be given in Section~\ref{sec: Proof MT}).

\begin{theorem}\label{T:Shimura's conjecture}
Let $\eK$ be a CM field over $\FF_q(t)$ and $\eK^+$ its maximal totally real subfield. For each $\xi_0 \in J_\eK$, all algebraic relations among the period symbols $p_\eK(\xi,\xi_0)$ for $\xi \in J_\eK$ are generated by~\eqref{eqn: Algebraic Relation}. Equivalently, we have
\[
\trdeg_{\bar{k}}\, \bar{k}(p_\eK(\xi,\xi_0)\mid \xi \in J_\eK) = 1 + \frac{([\eK:\eK^+]-1)}{[\eK:\eK^+]} \cdot [\eK:\FF_q(t)].
\]
\end{theorem}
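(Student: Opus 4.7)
The plan is to sandwich $\trdeg_{\bar{k}} \bar{k}(p_\eK(\xi,\xi_0)\mid \xi \in J_\eK)$ between matching upper and lower bounds. The upper bound is immediate from the Legendre relation of Lemma~\ref{L:Algebraic relation}: for each of the $[\eK^+:\FF_q(t)]$ points $\xi^+ \in J_{\eK^+}$, the product $\prod_{\pi_{\bX/\bX^+}(\xi) = \xi^+} p_\eK(\xi,\xi_0)$ equals $\tilde{\pi}^{1/[\eK^+:\FF_q(t)]}$ modulo $\bar{k}^\times$. Comparing any two such product formulas yields an algebraic relation among the $p_\eK(\xi,\xi_0)$ alone, producing $[\eK^+:\FF_q(t)] - 1$ independent such relations and therefore
\[
\trdeg_{\bar{k}} \bar{k}(p_\eK(\xi,\xi_0)\mid \xi \in J_\eK) \le [\eK:\FF_q(t)] - ([\eK^+:\FF_q(t)] - 1) = 1 + \tfrac{[\eK:\eK^+]-1}{[\eK:\eK^+]}\cdot [\eK:\FF_q(t)],
\]
which agrees with the asserted formula.

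For the matching lower bound, I would first invoke the non-degeneracy lemma announced in the introduction (Lemma~\ref{lem: non-degenerate}) to fix a generalized CM type $\Xi_0$ of $\eK$ such that $\bar{k}(p_\eK(\xi,\xi_0)\mid \xi \in J_\eK)$ is algebraic over $\bar{k}(p_\eK(\xi,\Xi_0)\mid \xi \in J_\eK)$ and such that the $\Gal(k^{\rm sep}/k)$-submodule $I_{\Xi_0}^0 \subset I_\eK^0$ generated by $\Xi_0$ under the action of Remark~\ref{rem: left-action} satisfies $\rank_\ZZ I_{\Xi_0}^0 = \rank_\ZZ I_\eK^0$. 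Taking a CM dual $t$-motive $\eM$ over $\bar{k}$ with generalized CM type $(\eK,\Xi_0)$, which exists and is uniquely determined up to isogeny by Theorem~\ref{thm: CM prop}, I would identify $\bar{k}(p_\eK(\xi,\Xi_0)\mid \xi \in J_\eK)$ with the field generated by the periods of $\eM$ and then apply the function field analogue of Grothendieck's period conjecture (Theorem~\ref{T:dim=trdeg}) to rewrite the target transcendence degree as $\dim \Gamma_\eM$.

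The remaining task is to show $\dim \Gamma_\eM \ge \rank_\ZZ I_\eK^0$. The approach is to exploit the Hodge--Pink theory of Hartl--Juschka: the Hodge--Pink filtration of $\eM$ produces a cocharacter $\chi_\eM$ of $\Gamma_{\eM, k^{\rm sep}}$, and because $\Gamma_\eM$ is defined over $\FF_q(t)$, the images of all $\Gal(k^{\rm sep}/\FF_q(t))$-conjugates of $\chi_\eM$ sweep out a subtorus $T_\eM \subset \Gamma_{\eM,\CC_\infty}$. The Hodge--Pink weights of the tensor constructions of $\eM$ can be read off directly from $\Xi_0$ by the formula anticipated in \eqref{eqn: HPc-CMt}, and this dictionary is what should identify $\dim T_\eM$ with $\rank_\ZZ I_{\Xi_0}^0$. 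Combining this identification with the non-degeneracy of $\Xi_0$ yields $\dim \Gamma_\eM \ge \dim T_\eM = \rank_\ZZ I_\eK^0 = 1 + \tfrac{[\eK:\eK^+]-1}{[\eK:\eK^+]}\cdot [\eK:\FF_q(t)]$, matching the upper bound and completing the proof.

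The hard part will be the precise comparison, inside the cocharacter lattice of $\Gamma_\eM$, between the $\Gal(k^{\rm sep}/\FF_q(t))$-conjugates of the Hodge--Pink cocharacter and the Galois translates of $\Xi_0$ in $I_\eK^0$: one must verify that the Tannakian construction of $T_\eM$ is equivariantly compatible with the geometric Galois action on $J_\eK$, so that weight computations faithfully transport to the combinatorics of $I_{\Xi_0}^0$. A related, and arguably the most delicate, preliminary obstacle is the non-degeneracy statement itself, i.e.\ showing that there exists a generalized CM type $\Xi_0$ whose Galois orbit generates $I_\eK^0 \otimes_\ZZ \QQ$ while still capturing the field generated by the $p_\eK(\xi,\xi_0)$; this is a purely combinatorial question about the $\Gal(k^{\rm sep}/k)$-action on $J_\eK$ subject to the weight constraint defining $I_\eK^0$, and I would anticipate proving it by a direct averaging argument over Galois orbits of points in $J_\eK \setminus \pi_{\bX/\bX^+}^{-1}(J_{\eK^+})$.
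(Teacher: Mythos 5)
Your proposal is correct and follows essentially the same route as the paper: the Legendre relations give the upper bound, and the lower bound is obtained exactly as in the paper by passing to the non-degenerate generalized CM type $\Xi_0$ of Lemma~\ref{lem: non-degenerate}, identifying the period field with that of a CM dual $t$-motive via Proposition~\ref{P:fields equality}, applying Theorem~\ref{T:dim=trdeg}, and bounding $\dim \Gamma_\eM$ from below by the torus generated by the Galois conjugates of the Hodge--Pink cocharacter, whose dimension equals $\rank_\ZZ I_{\Xi_0}^0$ via \eqref{eqn: HPc-CMt}. The two ``hard parts'' you flag are precisely the content of Lemma~\ref{lem: non-degenerate} (proved by an explicit Galois-averaging computation with $\Xi_0 = [\eK:\eK^+]\xi_0 + \sum_{\pi_{\bX/\bX^+}(\xi)\neq \pi_{\bX/\bX^+}(\xi_0)}\xi$) and of Section~\ref{Sub:The case of CM dual t-motives}, so no new ideas beyond the paper's are needed.
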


As a consequence, Theorem~\ref{T:Shimura's conjecture} leads to the following result that matches with the spirit of Conjectures~\ref{C:Shimura} and~\ref{C:Yoshida}.

\begin{corollary}\label{thm: Shimura}
Let $\eK$ be a CM field over $\FF_q(t)$ and $\eK^+$ be the maximal totally real subfield of $\eK$.
\begin{itemize}
    \item[(a)] Suppose that $\eK\neq \eK^+$, and
    let $\Xi = \xi_1+\cdots+\xi_r$ be a  CM type of $\eK$.
    Given $\xi_0 \in J_\eK$, the period symbols
    $p_\eK(\xi_1,\xi_0), \ldots,p_\eK(\xi_r,\xi_0)$
are algebraically independent over $\ok$.
    \item[(b)] Suppose that $\eK$ is Galois over $\FF_{q}(t)$. Then
\[
\trdeg_{\bar{k}} \bar{k}(\tilde{\pi}, p_\eK(\xi_0,\xi)\mid \xi \in J_\eK) = 1+ \frac{([\eK:\eK^+]-1)}{[\eK:\eK^+]}\cdot [\eK:\FF_{q}(t)].
\]
\end{itemize}
\end{corollary}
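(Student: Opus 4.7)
The plan is to derive both parts of Corollary~\ref{thm: Shimura} from the Main Theorem (Theorem~\ref{T:Shimura's conjecture}) together with the functorial properties of period symbols established in Theorem~\ref{thm: Period Symbol} and the Legendre relation of Lemma~\ref{L:Algebraic relation}. Throughout, write $n \assign [\eK:\eK^+]$, $r \assign [\eK^+:\FF_q(t)]$, and $L \assign \bar{k}\bigl(p_\eK(\xi,\xi_0) \bigm| \xi \in J_\eK\bigr)$, so $|J_\eK| = nr$ and Theorem~\ref{T:Shimura's conjecture} asserts $\trdeg_{\bar{k}} L = 1+(n-1)r$.

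For part (a), since $\eK \neq \eK^+$ we have $n \geq 2$. I would first extend the CM type $\Xi = \xi_1 + \cdots + \xi_r$ to a complete labelling $\xi_i^{(1)} = \xi_i, \xi_i^{(2)},\ldots,\xi_i^{(n)}$ of the fiber $\pi_{\bX/\bX^+}^{-1}(\xi_i^+)$ for each $i$. Setting $u \assign \tilde\pi^{1/r}$ and $z_{ij} \assign p_\eK(\xi_i^{(j)},\xi_0)$ for $j \geq 2$, the Legendre relation gives $p_\eK(\xi_i, \xi_0) \sim u \prod_{j=2}^n z_{ij}^{-1}$, so that $L$ is generated over $\bar{k}$ by the $1+(n-1)r$ elements $u$ and $\{z_{ij}\}$. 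As this matches $\trdeg_{\bar{k}} L$, these elements are algebraically independent over $\bar{k}$ and form a transcendence basis of $L$. Any polynomial relation $P(Y_1,\ldots,Y_r) \in \bar{k}[Y_1,\ldots,Y_r]$ satisfied by the $p_\eK(\xi_i,\xi_0)$ then becomes, after substitution, a Laurent-polynomial identity among $u$ and the $z_{ij}$: each monomial $Y_1^{a_1}\cdots Y_r^{a_r}$ is sent to a nonzero $\bar{k}^\times$-multiple of $u^{a_1+\cdots+a_r} \prod_{i}\prod_{j\geq 2} z_{ij}^{-a_i}$. Because $n\geq 2$, the exponents at $z_{i,2}$ recover $(a_1,\ldots,a_r)$ uniquely, so distinct monomials in $P$ produce distinct Laurent monomials in the algebraically independent $u, z_{ij}$, forcing $P \equiv 0$.

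For part (b), when $\eK$ is Galois over $\FF_q(t)$, the group $G \assign \Gal(\eK/\FF_q(t))$ acts simply transitively on $J_\eK$, and by property (4) of Theorem~\ref{thm: Period Symbol} this action preserves each $p_\eK(\cdot,\cdot)$ up to $\bar{k}^\times$. For any $\eta \in J_\eK$ I would choose $\varrho \in G$ with $(\pi_\varrho)_*(\eta) = \xi_0$; property (4) then yields $p_\eK(\eta,\xi_0) \sim p_\eK\bigl(\xi_0,(\pi_\varrho)_*(\xi_0)\bigr)$, whence $\bar{k}\bigl(p_\eK(\eta,\xi_0) \bigm| \eta \in J_\eK\bigr) \subseteq L$; the reverse inclusion is symmetric. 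Applying the Main Theorem to $L$ and observing that $\tilde\pi$ (indeed $\tilde\pi^{1/r}$) already lies in $L$ by Lemma~\ref{L:Algebraic relation}, so that adjoining $\tilde\pi$ does not change the transcendence degree, gives the value claimed in (b).

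The only delicate step will be the distinct-monomials check in part (a); it is essentially combinatorial bookkeeping that makes essential use of $n\geq 2$, and constitutes the (modest) main obstacle of the argument.
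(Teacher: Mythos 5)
Your proposal is correct and follows essentially the same route as the paper: part (b) is the paper's argument verbatim (transitivity of $\Gal(\eK/\FF_q(t))$ on $J_\eK$ plus property (4) of Theorem~\ref{thm: Period Symbol}, together with the observation that $\tilde{\pi}$ already lies in the field via the Legendre relation), and part (a) is the paper's appeal to Theorem~\ref{T:Shimura's conjecture} with the implicit step made explicit. Your monomial-exponent check---rewriting each $p_\eK(\xi_i,\xi_0)$ via the Legendre relation in terms of the transcendence basis consisting of $\tilde{\pi}^{1/[\eK^+:\FF_q(t)]}$ and the symbols attached to the remaining points of each fiber, which requires $\eK\neq\eK^+$---is precisely the content the paper compresses into ``follows directly,'' and it is carried out correctly.
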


\begin{proof}
(a) follows directly from Theorem~\ref{T:Shimura's conjecture}.
To prove (b),
the assumption says in particular that every $\xi$ must be of the form $\xi_0^\varrho$ for a unique $\varrho \in \Gal(\eK/\FF_q(t))$.
By Theorem~\ref{thm: Period Symbol} (4) one has that
\[
p_\eK(\xi,\xi_0) \sim
p_\eK(\xi_0^\varrho,\xi_0) \sim
p_\eK(\xi_0,\xi_0^{\varrho^{-1}}),
\]
whence
\begin{align*}
\bar{k}(p_\eK(\xi,\xi_0)\mid \xi \in J_\eK)
&= \bar{k}\bigl(p_\eK(\xi_0^\varrho,\xi_0)\mid \varrho \in \Gal(\eK/\FF_q(t))\bigr) \\
&=
\bar{k}\bigl(p_\eK(\xi_0,\xi_0^\varrho)\mid \varrho \in \Gal(\eK/\FF_q(t))\bigr) \\
&=
\bar{k}(p_\eK(\xi_0,\xi)\mid \xi \in J_\eK).
\end{align*}
Therefore  the result follows by Theorem~\ref{T:Shimura's conjecture}.
\end{proof}

\subsection{Proof of the main theorem}\label{sec: Proof MT}

To derive Theorem~\ref{T:Shimura's conjecture}, we remark that the Legendre relation in Lemma~\ref{L:Algebraic relation} assures the one-sided inequality:
\begin{align*}
\trdeg_{\bar{k}}\, \bar{k}(\Pcal_\eK(\xi,\xi_0)\mid\xi \in J_\eK)&\leq
1+\left([\eK:\FF_{q}(t)]- [\eK^{+}:\FF_{q}(t)]\right) \\
&= 1 + \frac{([\eK:\eK^+]-1)}{[\eK:\eK^+]} \cdot [\eK:\FF_q(t)].
\end{align*}
For the opposite inequality,
we introduce the notion of ``non-degeneracy'' of a generalized CM type as follows.

\begin{definition}
Let $\eK/\FF_{q}(t)$ be a CM field, and let $\Xi$ be a generalized CM type of $\eK$. We denote by $I_{\Xi}^0$ the subgroup of $I_\eK^0$ generated by $\varsigma(\Xi)$ for all $\varsigma \in \Gal(k^{\rm sep}/k)$. We say that $\Xi$ is \emph{non-degenerate} if
\[
\QQ\otimes_\ZZ I_{\Xi}^0 = \QQ\otimes_\ZZ I_\eK^0.
\]
\end{definition}

\begin{remark}\label{rem: rank}
For a non-degenerate generalized CM type $\Xi$, one has that
\[
\rank_\ZZ(I_\Xi^0) = \rank_\ZZ(I_\eK^0) = 1 + \frac{([\eK:\eK^+]-1)}{[\eK:\eK^+]} \cdot [\eK:\FF_q(t)].
\]
\end{remark}

Given $\xi_0 \in J_\eK$, we put
\begin{equation}\label{E:Xi_0}
\Xi_0 \assign [\eK:\eK^+]\xi_0 + \sum_{\substack{\xi\in J_\eK \\ \pi_{\bX/\bX^+}(\xi) \neq \pi_{\bX/\bX^+}(\xi_0)}} \xi \quad  \in I_\eK^0,
\end{equation}
and derive the following lemma.

\begin{lemma}\label{lem: non-degenerate}
The generalized CM type $\Xi_{0}$ of $\eK$ defined in~\eqref{E:Xi_0} is non-degenerate,
and we have
\[
\bar{k}\bigl(p_\eK(\xi,\Xi_0)\mid \xi \in J_\eK\bigr) \subseteq \bar{k}\bigl(p_\eK(\xi,\xi_0)\mid \xi \in J_\eK\bigr).
\]
\end{lemma}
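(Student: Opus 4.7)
The plan splits into two independent pieces: verifying non-degeneracy of $\Xi_0$, and establishing the stated field inclusion.

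The crucial observation for non-degeneracy is that the structure map $\FF_q(t) \to k$, $t \mapsto \theta$, is an \emph{isomorphism} of abstract fields. Consequently
\[
\eK \otimes_{\FF_q(t)} k \ \cong \ \eK
\]
as $k$-algebras (via $\alpha \otimes f(\theta) \mapsto \alpha \cdot f(t)$), so the left-hand side is itself a field; equivalently, $\Gal(k^{\sep}/k)$ acts transitively on $J_\eK$, and the Galois orbit of any $\xi_0$ exhausts $J_\eK$. Set $w(\xi) \assign [\eK:\eK^+]\xi - \pi_{\bX/\bX^+}^{-1}(\pi_{\bX/\bX^+}(\xi))$ and $\bv_{\xi_1} \assign w(\xi_1) + N_\eK$, so that $\Xi_0 = \bv_{\xi_0}$; by the transitivity just noted, $\bv_{\xi_1} \in I_{\Xi_0}^0$ for every $\xi_1 \in J_\eK$. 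For each $\xi^+ \in J_{\eK^+}$ and each pair $\xi_1, \xi_1' \in \pi_{\bX/\bX^+}^{-1}(\xi^+)$,
\[
\bv_{\xi_1} - \bv_{\xi_1'} \ = \ [\eK:\eK^+]\,(\xi_1 - \xi_1')
\]
lies in the fiber-sum-zero subspace $V_{\xi^+} \subseteq \QQ I_\eK^0$ supported on $\pi_{\bX/\bX^+}^{-1}(\xi^+)$; letting the pair range over each fiber gives $\bigoplus_{\xi^+} V_{\xi^+} \subseteq \QQ I_{\Xi_0}^0$. Since $\bv_{\xi_0}$ carries the nonzero weight $[\eK:\eK^+]$ and $w(\xi_0) \in V_{\xi_0^+}$ has already been produced, $N_\eK = \bv_{\xi_0} - w(\xi_0)$ also falls inside $\QQ I_{\Xi_0}^0$. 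Combined with the direct-sum decomposition $\QQ I_\eK^0 = \QQ N_\eK \oplus \bigoplus_{\xi^+} V_{\xi^+}$, this yields $\QQ I_{\Xi_0}^0 = \QQ I_\eK^0$.

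For the field inclusion, rewrite $\Xi_0 = [\eK:\eK^+]\xi_0 + N_\eK - \pi_{\bX/\bX^+}^{-1}(\xi_0^+)$ with $\xi_0^+ \assign \pi_{\bX/\bX^+}(\xi_0)$, and invoke bi-additivity of the extended pairing from Section~\ref{sec: EPS} to obtain
\[
p_\eK(\xi, \Xi_0) \ \sim \ p_\eK(\xi, \xi_0)^{[\eK:\eK^+]} \cdot p_\eK(\xi, N_\eK) \cdot p_\eK\!\bigl(\xi,\, \pi_{\bX/\bX^+}^{-1}(\xi_0^+)\bigr)^{-1}.
\]
Because $\theta$ is transcendental over $\FF_q$, it lies outside the branch loci of both $\pi_{\bX/\PP^1}$ and $\pi_{\bX/\bX^+}$, so $N_\eK = \Inf_{\eK/\FF_q(t)}(\xi_\theta)$ and $\pi_{\bX/\bX^+}^{-1}(\xi_0^+) = \Inf_{\eK/\eK^+}(\xi_0^+)$. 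Theorem~\ref{thm: Period Symbol}(2) then converts the last two factors into $p_{\FF_q(t)}(\xi_\theta, \xi_\theta)$ and $p_{\eK^+}(\pi_{\bX/\bX^+}(\xi), \xi_0^+)$; the extended definition~\eqref{E:Def of P}, specialized to the totally real ground fields $\FF_q(t)$ and $\eK^+$ (where the auxiliary $\Phi_2^0$ is zero), collapses these to $\tpi$ and $\tpi^{1/[\eK^+:\FF_q(t)]}$ respectively. Assembling,
\[
p_\eK(\xi, \Xi_0) \ \sim \ p_\eK(\xi, \xi_0)^{[\eK:\eK^+]} \cdot \tpi^{1 - 1/[\eK^+:\FF_q(t)]},
\]
and the Legendre relation of Lemma~\ref{L:Algebraic relation} already places $\tpi^{1/[\eK^+:\FF_q(t)]}$, hence every integer power of it, inside $\ok(p_\eK(\xi, \xi_0) \mid \xi \in J_\eK)$, which finishes the inclusion.

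The main technical obstacle is pinpointing the Galois-transitivity input: once one recognizes that base change via $t \mapsto \theta$ is a field isomorphism rather than a genuine extension, the Galois orbit of any chosen $\xi_0$ is forced to fill $J_\eK$, and the remaining spanning argument inside each $\pi_{\bX/\bX^+}$-fiber is a few lines. The field containment is then essentially mechanical, amounting to an unwinding of the extended pairing, the inflation-restriction formulas of Theorem~\ref{thm: Period Symbol}, and the Legendre identity.
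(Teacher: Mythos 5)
Your argument is correct and rests on the same two pillars as the paper's proof: the Galois transitivity of $\Gal(k^{\sep}/k)$ on $J_\eK$ for non-degeneracy, and the explicit formula $p_\eK(\xi,\Xi_0)\sim \tpi^{1-1/[\eK^+:\FF_q(t)]}\cdot p_\eK(\xi,\xi_0)^{[\eK:\eK^+]}$ together with the Legendre relation for the field inclusion. The organization differs modestly in two places. For non-degeneracy, the paper shows $N_\eK$ and every CM type of $\eK$ lie in $\QQ I_{\Xi_0}^0$ by computing explicit Galois sums $\sum_\varsigma\varsigma(\Xi_0)$, then invokes that $I_\eK^0$ is generated by CM types; you instead take \emph{differences} $\varsigma_{\xi_1}(\Xi_0)-\varsigma_{\xi_1'}(\Xi_0)=[\eK:\eK^+](\xi_1-\xi_1')$ to fill each fiber-sum-zero piece $V_{\xi^+}$, peel off $N_\eK=\Xi_0-w(\xi_0)$, and conclude via the decomposition $\QQ I_\eK^0 = \QQ N_\eK\oplus\bigoplus_{\xi^+}V_{\xi^+}$, which sidesteps the appeal to Proposition~\ref{prop: CM prop}. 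For the inclusion, the paper reads the key formula straight off the defining relation \eqref{E:Def of P} applied to $\Phi_0=\Xi_0-N_\eK$, whereas you re-derive the same thing through inflation-restriction (Theorem~\ref{thm: Period Symbol}(2)) applied to $N_\eK=\Inf_{\eK/\FF_q(t)}(\xi_\theta)$ and $\Inf_{\eK/\eK^+}(\xi_0^+)$; note that your use of property (2) with a $\Phi^0$ that lies in $I_{\eK^+}$ but not $I_{\eK^+}^0$ relies on the paper's assertion at the end of Section~\ref{sec: EPS} that the bi-additively extended pairing continues to satisfy properties (1)--(5), which is fine but worth flagging. Both routes are valid; yours trades one appeal to the definition \eqref{E:Def of P} for the functorial machinery, and your linear-algebra bookkeeping in the non-degeneracy step is arguably a touch cleaner.
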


\begin{proof}
Let
\[
\Phi_0\assign [\eK:\eK^+]\xi_0 - \sum_{\substack{\xi \in J_\eK \\ \pi_{\bX/\bX^+}(\xi) = \pi_{\bX/\bX^+}(\xi_0)}}\xi = \Xi_0 - N_\eK \quad \in I_\eK^0,
\]
where $N_\eK = \sum_{\xi \in J_\eK} \xi$.
Then from the definition of $p_\eK(\xi,\xi_0)$ one obtains
\[
p_\eK(\xi,\Xi_0) \sim
\tilde{\pi} \cdot p_\eK(\xi,\Phi_0) \sim
\tilde{\pi}^{1-1/[\eK^+:\FF_q(t)]} \cdot p_\eK(\xi,\xi_0)^{[\eK:\eK^+]}.
\]
Together with the Legendre relation in \eqref{eqn: Algebraic Relation}, we have that
\[
\bar{k}\bigl(p_\eK(\xi, \Xi_0)\mid \xi \in J_\eK\bigr) \subseteq \bar{k}\bigl(p_\eK(\xi,\xi_0)\mid \xi \in J_\eK\bigr).
\]

To show the non-degeneracy of $\Xi_0$, let $G_k\assign  \Gal(k^{\rm sep}/k)$.
Set
\[
K_0 \assign  \nu_{\xi_0}(\eK) \subset k^{\mathrm{sep}}
\quad \text{ and } \quad
H_0\assign  \Gal(k^{\mathrm{sep}}/K_0) \subset G_k.
\]
For each $\xi \in J_\eK$,
there exists a unique coset $\varsigma_\xi H_0 \in G_k$ such that $\nu_\xi = \varsigma_\xi \circ \nu_{\xi_0}$, which says that $\varsigma_\xi( \xi_0) = \xi$.
In particular, given $\xi^+ \in J_{\eK^+}$, every $\varsigma \in G_k$ sends
$\pi_{\bX/\bX^+}^{-1}(\xi^+)$ bijectively to
$\pi_{\bX/\bX^+}^{-1}(\varsigma (\xi^+))$.
Observe that
\[
\sum_{\varsigma \in G_k/H_0}\varsigma (\Xi_0) = [\eK:\FF_q(t)] \cdot N_\eK,
\]
i.e.,
\[
N_\eK = \frac{1}{[\eK:\FF_q(t)]} \otimes \sum_{\varsigma \in G_k/H_0}\varsigma (\Xi_0)  \quad \in \QQ \otimes_\ZZ I_\Xi^0.
\]
Moreover, for each CM type $\Xi$ of $\eK$,
write $\Xi = \xi_1+\cdots +\xi_d$, where $d = [\eK^+:\FF_q(t)]$. We let $\xi_i^+\assign  \pi_{\bX/\bX^+}(\xi_i)$ for $i = 1,\dots, d$,  and take
 $\varsigma_i \in G_k$ so that $\varsigma_i (\xi_0) = \xi_i$.
Then
\[
\varsigma_i(\Xi_0) = [\eK:\eK^+]\xi_i + \sum_{\substack{\xi \in J_\eK \\ \pi_{\bX/\bX^+}(\xi) \neq \xi_i^+}} \xi,
\]
which implies that
\begin{align*}
\sum_{i=1}^d \varsigma_i(\Xi_0)
&= \sum_{i=1}^d\left(
[\eK:\eK^+]\xi_i + \sum_{\substack{\xi \in J_\eK \\ \pi_{\bX/\bX^+}(\xi) \neq \xi_i^+}} \xi
\right) \\
&= [\eK:\eK^+] \Xi + \bigl([\eK^+:\FF_q(t)]-1\bigr) N_\eK.
\end{align*}
Hence $\Xi \in \QQ \otimes_\ZZ I_{\Xi_0}^0$.
Finally, since $I_\eK^0$ is generated by all CM types of $\eK$,
we obtain the desired identity
\[
\QQ \otimes_\ZZ I_{\Xi_0}^0
= \QQ \otimes_\ZZ I_\eK^0.
\qedhere
\]
\end{proof}

Finally, by Lemma~\ref{lem: non-degenerate} and Remark~\ref{rem: rank}, the proof of Theorem~\ref{T:Shimura's conjecture} is accomplished by the following key result.

\begin{theorem}\label{thm: lower-bound}
Let $\eK$ be a CM field over $\FF_q(t)$ and $\eK^+$ be the maximal totally real subfield of $\eK$.
For each generalized CM type $\Xi$ of $\eK$, we have
\[
\trdeg_{\bar{k}}\,\bar{k}(p_\eK(\xi,\Xi)\mid \xi \in J_\eK) \geq \rank_\ZZ I_\Xi^0.
\]
\end{theorem}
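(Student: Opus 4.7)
The plan is to follow the roadmap outlined in the introduction, realizing the transcendence degree as the dimension of the $t$-motivic Galois group, and then producing a subtorus of the right dimension via the Hodge–Pink cocharacter and its Galois conjugates.

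First I would fix a CM dual $t$-motive $\eM$ with generalized CM type $(\eK,\Xi)$ defined over $\bar{k}$, which exists by the geometric construction of Section~\ref{sec: CM dual t-motives} together with Theorem~\ref{thm: Uniform}. Using Proposition~\ref{Prop: delta_M}, pick a $\bar{k}$-basis $\{\omega_{\eM,\xi}\}_{\xi\in J_\eK}$ of $H_{\mathrm{dR}}(\eM,\bar{k})$ consisting of eigendifferentials, and a nonzero cycle $\gamma\in H_{\mathrm{Betti}}(\eM)$; the non-degeneracy of the de Rham pairing (Lemma~\ref{lem: dR}) together with rank considerations then shows
\[
\bar{k}\Bigl(\int_{\gamma'}\omega \Bigm| \gamma'\in H_{\mathrm{Betti}}(\eM),\ \omega\in H_{\mathrm{dR}}(\eM,\bar{k})\Bigr) = \bar{k}\bigl(p_\eK(\xi,\Xi) \bigm| \xi\in J_\eK\bigr).
\]
Applying the function field analogue of Grothendieck's period conjecture (Theorem~\ref{T:dim=trdeg}) to the $t$-motive $M = \bar{k}(t)\otimes_{\bar{k}[t]} \eM$, this transcendence degree equals $\dim \Gamma_\eM$, where $\Gamma_\eM$ is the $t$-motivic Galois group attached to $\eM$. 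Thus it suffices to prove the geometric inequality $\dim \Gamma_\eM \geq \rank_\ZZ I_\Xi^0$.

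Next I would invoke the Hodge–Pink theory to exhibit a torus inside $\Gamma_{\eM,\CC_\infty}$. Because $\eM$ is CM, the endomorphism algebra $\eK$ acts on $\eM$ and makes $\Gamma_\eM$ commutative (this is the point highlighted after Proposition~\ref{prop: HPW-CM} in the introduction). The Hodge–Pink filtration of $\eM$ determines a cocharacter $\chi_\eM : \Gm \to \Gamma_{\eM,k^{\mathrm{sep}}}$ (the cocharacter $\chi_\eM$ of the introduction, construction~(3) in Section~\ref{sec:introAnalogue}); its weights on the fiber functor (indexed by $\xi\in J_\eK$) are exactly the multiplicities $m_\xi$ appearing in $\Xi$, thanks to the identification between Hodge–Pink weights and the CM type (the content of the planned identity \eqref{eqn: HPc-CMt}). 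Because $\Gamma_\eM$ is defined over $\FF_q(t)$, the absolute Galois group $\Gal(k^{\mathrm{sep}}/k)$ acts on the conjugates of $\chi_\eM$, and since $\Gamma_\eM$ is commutative the images of all these Galois conjugates generate an honest subtorus $T_\eM \subseteq \Gamma_{\eM,\CC_\infty}$.

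The final step is to compute $\dim T_\eM$. The cocharacter lattice of $T_\eM$ is, by construction, the $\ZZ$-span (inside the cocharacter lattice of a maximal torus of $\Gamma_{\eM,\CC_\infty}$) of the Galois orbit of $\chi_\eM$. Identifying cocharacter weights with elements of $I_\eK$ via the eigen-decomposition of the fiber functor under the $O_\eK$-action, the orbit of $\chi_\eM$ under $\Gal(k^{\mathrm{sep}}/k)$ corresponds precisely to the orbit of $\Xi$ under the Galois action on $I_\eK^0$ described in Remark~\ref{rem: left-action}. Consequently $\dim T_\eM = \rank_\ZZ I_\Xi^0$, and combining everything gives
\[
\trdeg_{\bar{k}}\,\bar{k}(p_\eK(\xi,\Xi)\mid \xi\in J_\eK) = \dim \Gamma_\eM \geq \dim T_\eM = \rank_\ZZ I_\Xi^0.
\]
The main obstacle is the rigorous identification in the third paragraph: matching the weights of the Hodge–Pink cocharacter $\chi_\eM$ with the multiplicities $m_\xi$ of $\Xi$, and then checking that Galois-translation of $\chi_\eM$ on the $\Gamma_\eM$-side corresponds to the natural $\Gal(k^{\mathrm{sep}}/k)$-action on $I_\eK^0$. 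This requires unwinding the comparison between the pre-$t$-motive and Hartl–Juschka categories (Section~\ref{sec: t-motivic}) and pinning down the Hodge–Pink cocharacter in terms of the explicit shtuka-function description of $\eM_{(W,h)}$; once this dictionary is set up, commutativity of $\Gamma_\eM$ ensures that the product torus $T_\eM$ has the cocharacter lattice computed above, and the inequality drops out.
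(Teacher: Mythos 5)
Your proposal is correct and follows essentially the same route as the paper: identify the field of period symbols with the period field of a CM dual $t$-motive (Proposition~\ref{P:fields equality}), apply Theorem~\ref{T:dim=trdeg} to get $\dim\Gamma_\eM$, and then bound this below by the dimension of the subtorus generated by the Galois conjugates of the Hodge--Pink cocharacter, whose cocharacter lattice is identified with $I_\Xi^0$ via \eqref{eqn: HPc-CMt} and Proposition~\ref{prop: HP-wt and CM type}. The ``dictionary'' you flag as the main obstacle is exactly what the paper supplies in Section~\ref{Sub:The case of CM dual t-motives}, using the embedding of $\Gamma_{\eM,\CC_\infty}$ into the Weil restriction torus $T_{\eK,\CC_\infty}$ and the eigenspace decomposition of $H_{\eM,\CC_\infty}$ under $O_\eK$.
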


\begin{remark}\label{rem: Legendre}
Let $\eK$ be a CM field over $\FF_q(t)$ and $\Xi$ be a generalized CM type of $\eK$.
Suppose that $\Xi$ is non-degenerate.
Then Lemma~\ref{L:Algebraic relation} and Theorem~\ref{thm: lower-bound} ensure that
\[
\trdeg_{\bar{k}}\, \bar{k}\bigl(p_\eK(\xi,\Xi) \bigm| \xi \in J_\eK\bigr) = 1 + \frac{([\eK:\eK^+]-1)}{[\eK:\eK^+]} \cdot [\eK:\FF_q(t)],
\]
and that all algebraic relations among period symbols come from the  Legendre relation,
\[
\prod_{\substack{\xi \in J_\eK \\ \pi_{\bX/\bX^+}(\xi) = \xi^+}} p_\eK(\xi,\Xi) \sim \tilde{\pi}^{\text{\rm wt}(\Xi)}, \quad \forall\,\xi^+ \in J_{\eK^+}.
\]
\end{remark}

In order to get the lower bound in Theorem~\ref{thm: lower-bound}, we need the following explicit connection between period symbols and the periods of CM dual  $t$-motives:

\begin{proposition}\label{P:fields equality}
Let $\eM$ be a dual $t$-motive with generalized CM type $(\eK,\Xi)$ over $\bar{k}$ and put $r\assign[\eK:\FF_q(t)]$.
Let $\Upphi\in \Mat_{r}(\ok[t])$ represent multiplication by $\sigma$ on a $\ok[t]$-basis $\left\{m_{1},\ldots,m_{r} \right\}$ of $\eM$, and $\Uppsi\in \Mat_{r}(\TT^{\dagger})\cap \GL_{r}(\TT)$ be a rigid analytic trivialization for $\Upphi$. Then we have
\[
\ok\bigl(p_{\eK}(\xi,\Xi)|\xi\in J_{\eK} \bigr)=\ok\bigl(\Uppsi^{-1}(\theta) \bigr) = \bar{k}\bigl(\Uppsi(\theta)\bigr).
\]
\end{proposition}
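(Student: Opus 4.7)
The plan is to break the chain of equalities into two independent halves: first show that $\bar{k}(p_\eK(\xi,\Xi) \mid \xi \in J_\eK)$ equals the period field $\bar{k}\left(\int_\gamma \omega \mid \gamma \in H_{\mathrm{Betti}}(\eM),\ \omega \in H_{\mathrm{dR}}(\eM,\bar{k})\right)$, and then identify this period field with $\bar{k}(\Uppsi^{-1}(\theta))$ and observe $\bar{k}(\Uppsi^{-1}(\theta)) = \bar{k}(\Uppsi(\theta))$.

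For the first half, I would exploit the CM structure. Proposition~\ref{Prop: delta_M} provides a distinguished $\bar{k}$-basis $\{\omega_{\eM,\xi}\}_{\xi \in J_\eK}$ of $H_{\mathrm{dR}}(\eM,\bar{k})$, since the decomposition $\eM/(t-\theta)\eM \cong \bigoplus_{\xi\in J_\eK}\eM/\Pfk_\xi\eM$ forces $\dim_{\bar{k}} H_{\mathrm{dR}}(\eM,\bar{k}) = |J_\eK| = r$. On the Betti side, $H_{\mathrm{Betti}}(\eM)$ is a torsion-free $O_\eK$-module whose rank over $\FF_q(t)$ is $r$, so it is projective of rank one over the Dedekind domain $O_\eK$; hence, fixing any nonzero $\gamma \in H_{\mathrm{Betti}}(\eM)$, every cycle is of the form $\alpha\gamma$ for some $\alpha$ in the fraction field $\eK$ (up to clearing denominators). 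Combining the endomorphism interpretation of multiplication by $\alpha$, the defining property $\alpha^*\omega_{\eM,\xi} = \nu_\xi(\alpha)\omega_{\eM,\xi}$, and Lemma~\ref{lem: adjoint} yields $\int_{\alpha\gamma}\omega_{\eM,\xi} = \nu_\xi(\alpha)\int_\gamma\omega_{\eM,\xi}$ with $\nu_\xi(\alpha) \in \bar{k}$. Therefore the period field is already generated over $\bar{k}$ by the integrals $\int_\gamma \omega_{\eM,\xi}$ for $\xi \in J_\eK$, which are by definition representatives of the period symbols $p_\eK(\xi,\Xi)$, giving the first equality.

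For the second half, the plan is to compute the period matrix in the bases $\{m_1,\ldots,m_r\}$ and $\{\omega_j\}_{j=1}^r$, where $\omega_j \in H_{\mathrm{dR}}(\eM,\bar{k})$ is dual to $\bar{m}_j \in \eM/(t-\theta)\eM$. For any $\gamma = \sum_i c_i m_i \in \TT^\dagger \otimes_{\bar{k}[t]} \eM$ one has $\int_\gamma \omega_j = c_j(\theta)$. The condition that $\gamma$ be $\sigma$-invariant becomes a matrix difference equation dual to $\Uppsi^{(-1)} = \Upphi\Uppsi$; a short direct check shows that $U \assign (\Uppsi^{-1})^T$ satisfies the relevant equation, so the columns of $U$ yield cycles $\gamma_k = \sum_i (\Uppsi^{-1})_{ki} m_i$ that form an $\FF_q[t]$-basis of $H_{\mathrm{Betti}}(\eM)$, and the associated period matrix is $\bigl(\int_{\gamma_k}\omega_j\bigr)_{k,j} = \Uppsi^{-1}(\theta)$. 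This gives $\bar{k}\bigl(\int_\gamma \omega \mid \gamma,\omega\bigr) = \bar{k}(\Uppsi^{-1}(\theta))$.

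Finally, the equality $\bar{k}(\Uppsi^{-1}(\theta)) = \bar{k}(\Uppsi(\theta))$ reduces to showing $\Uppsi(\theta) \in \GL_r(\CC_\infty)$; this is equivalent to the non-degeneracy of the comparison isomorphism from Lemma~\ref{lem: dR} together with the uniformizability guaranteed by Theorem~\ref{thm: Uniform}. The step I expect to require the most care is matching the matrix conventions: depending on whether $\sigma$ acts on coefficients from the left or the right, and on which side the $\Upphi$-twist lives, the fundamental matrix recording the $\sigma$-invariants is $\Uppsi$, $\Uppsi^T$, or one of their inverses, so the identification of the period matrix as $\Uppsi^{-1}(\theta)$ is sensitive to this choice. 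Once the book-keeping is fixed everything else is formal.
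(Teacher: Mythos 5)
Your proposal is correct and follows essentially the same route as the paper's proof: you use the basis $\{\omega_{\eM,\xi}\}_{\xi\in J_\eK}$ of $H_{\mathrm{dR}}(\eM,\bar{k})$ coming from the decomposition $\eM/(t-\theta)\eM\cong\bigoplus_\xi\eM/\Pfk_\xi\eM$, the rank-one $O_\eK$-module structure of $H_{\mathrm{Betti}}(\eM)$ together with Lemma~\ref{lem: adjoint} to reduce to a single cycle, and the fact that the entries of $\Uppsi^{-1}\mathbf{m}$ form an $\FF_q[t]$-basis of $H_{\mathrm{Betti}}(\eM)$ to read off the period matrix as $\Uppsi^{-1}(\theta)$ against the dual basis $\{\omega_j\}$. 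Your extra remark that $\bar{k}(\Uppsi^{-1}(\theta))=\bar{k}(\Uppsi(\theta))$ rests on $\Uppsi(\theta)\in\GL_r(\CC_\infty)$ (which holds since $\det\Uppsi$ is, up to a scalar, a power of $\Omega$ and $\Omega(\theta)=1/\tilde{\pi}\neq 0$) is a point the paper leaves implicit.
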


\begin{proof}
For each $\xi \in J_\eK$, let $\Pfk_\xi$ be the maximal ideal of $O_\bK$ associated to $\xi$.
We then have the isomorphism
\[
\eM/(t-\theta)\eM\cong
\bigoplus_{\xi\in J_{\eK}} \eM/\Pfk_{\xi}\eM.
\]
As $\eM$ is a projective $O_{\bK}$-module of rank one, $\eM/\Pfk_\xi \eM$ is one dimensional over $\bar{k}$. Since the nontrivial differential $\omega_{\eM,\xi} \in H_{\mathrm{dR}}(\eM,\bar{k})$  given in Proposition~\ref{Prop: delta_M} factors through the quotient $\eM/\Pfk_{\xi}\eM$,
 the differentials
\[
\left\{\omega_{\eM,\xi}\mid \xi\in J_{\eK} \right\}
\]
form a $\ok$-basis of $H_{\mathrm{dR}}(\eM,\bar{k})$. On the other hand, denote by $\bar{m}_{i}$ the image of $m_{i}$ in $\eM/(t-\theta)\eM$ for $i=1,\ldots,r$. For each $1\leq j \leq r$,  we define the $\ok$-linear functional $\omega_j:\eM/(t-\theta)\eM \rightarrow \ok$ satisfying $\omega_j(\bar{m}_{i}) = 1$ if $i = j$, and $0$ otherwise.
It follows that
\[
\left\{\omega_1,\ldots,\omega_r \right\}
\]
forms another $\ok$-basis of $H_{\mathrm{dR}}(\eM,\bar{k})$.

Take an arbitrary nonzero $\gamma_0 \in H_{\mathrm{Betti}}(\eM)$. We have
\begin{align}\label{E:field equality}
 \ok\bigl(p_{\eK}(\xi,\Xi)\mid \xi\in J_{\eK}\bigr)
&= \ok\left(\int_{\gamma_0} \omega_{\eM,\xi} \ \bigg|\ \xi\in J_{\eK}\right) \\
&= \ok\bigl(\omega_i(\gamma)  \mid 1\leq i\leq r,\ \gamma \in H_{\mathrm{Betti}}(\eM)\bigr). \notag
\end{align}
Note that the last equality holds as $H_{\mathrm{Betti}}(\eM)$ is a projective $O_\eK$-module of rank one.

Put ${\mathbf{m}}\assign (m_{1},\ldots,m_{r})\in \Mat_{r\times 1}(\eM)$.
Since the entries of $\Psi^{-1}\mathbf{m}$ form an $\FF_{q}[t]$-basis of the Betti module $H_{\mathrm{Betti}}(\eM)$, the value of  $\omega_j$ specialized at the $i$-th component of $\Psi^{-1}\mathbf{m}$ is given by
\[
\omega_{\bar{m}_{j}}\left( \sum_{j=1}^{r}(\Psi^{-1})_{ij} m_{j}  \right) =(\Psi^{-1})_{ij}(\theta), \]
whence
\[
\ok\bigl(\omega_i(\gamma) \mid 1\leq i\leq r,\ \gamma\in H_{\rm{Betti}}(\eM)\bigr)=\ok(\Psi^{-1}(\theta))
\]
and the desired identity follows from~\eqref{E:field equality}.
\end{proof}

Based on Proposition~\ref{P:fields equality} and Theorem~\ref{T:dim=trdeg}, showing Theorem~\ref{thm: lower-bound} is then translated into finding a lower bound of the dimension of the corresponding $t$-motivic Galois group in question.
We shall use Hodge-Pink theory in the next section and prove Theorem~\ref{thm: lower-bound} at the end of Section~\ref{Sub:The case of CM dual t-motives}.

\section{\texorpdfstring{Hodge-Pink theory of dual $t$-motives}{Hodge-Pink theory on CM dual t-motives}}\label{sec: H-P theory}

In order to apply Hodge-Pink theory, we first review the properties of the categories of Hartl and Juschka that we need.
Then we describe explicitly the Hodge-Pink filtration arising from a CM dual $t$-motive with generalized CM type $(\eK,\Xi)$, which enables us to describe the Hodge-Pink cocharacters in question.
This characterization plays a crucial role in giving the precise connection between the lower bound in Theorem~\ref{thm: lower-bound} on the one hand and the rank of the Galois module generated by $\Xi$ in $I_{\eK}^{0}$ on the other.

\subsection{Hartl-Juschka Categories}
We recall the needed properties of the Hartl-Juschka Category $\Hcal\Jcal$ introduced in \cite[Definition 2.4.1]{HJ20} as follows:
an object of $\Hcal\Jcal$ is a free $\bar{k}[t]$-module $\eM$ of finite rank together with a $\sigma$-action on $M \assign  \bar{k}(t)\otimes_{\bar{k}[t]} \eM$ satisfying that
\begin{enumerate}
    \item $M$ is a pre-$t$-motive over $\bar{k}$;
    \item there exists a sufficiently large integer $n$ so that
\[
    (t-\theta)^n \eM \subseteq \sigma \eM \subseteq (t-\theta)^{-n} \eM.
\]
\end{enumerate}
A morphism $f : \eM \rightarrow \eM'$ in $\Hcal\Jcal$ is a $\bar{k}[t]$-module homomorphism so that the induced $\bar{k}(t)$-linear homomorphism $f: M = \bar{k}(t)\otimes_{\bar{k}[t]} \eM  \rightarrow M' = \bar{k}(t)\otimes_{\bar{k}[t]} \eM'$ satisfies
\[
f (\sigma m) = \sigma f(m), \quad \forall\, m \in M.
\]
Given two objects $\eM$ and $\eM'$ of $\Hcal\Jcal$, the set $\Hom(\eM,\eM')$ of morphisms from $\eM$ to $\eM'$ has a natural $\FF_q[t]$-module structure, and elements in
\[
\Hom^{0}(\eM,\eM')\assign  \FF_q(t)\otimes_{\FF_q[t]} \Hom(\eM,\eM')
\]
are called \textit{quasi-morphisms from $\eM$ to $\eM'$}.
The \textit{Hartl-Juschka Category $\Hcal\Jcal^I$ up to isogeny} has the same objects in $\Hcal\Jcal$ and the morphisms in $\Hcal\Jcal^I$ are given by quasi-morphisms.
In particular, the functor $\eM \mapsto  \bar{k}(t)\otimes_{\bar{k}[t]} \eM$ from $\Hcal\Jcal^I$ to the category  of pre-$t$-motives is faithful.

\begin{remark}
An object $\eM\in \Hcal\Jcal$ is called \emph{effective} if $\sigma \eM \subseteq \eM$.
We note that a dual $t$-motive is an effective object $\eM$ of $\Hcal\Jcal$, which is also  finitely generated over $\bar{k}[\sigma]$.
Thus the category of dual $t$-motives (resp.\ up to isogeny) is a strictly full subcategory of $\Hcal\Jcal$ (resp.\ $\Hcal\Jcal^I$).
\end{remark}

We let $\TT[\sigma]$ be the non-commutative algebra generated by $\sigma$ over $\TT$ subject to the relation
\[
\sigma f=f^{(-1)} \sigma, \quad \forall\, f\in \TT.
\]
Given an object $\eM \in \Hcal\Jcal^I$, we set
$\MM \assign  \TT \otimes_{\bar{k}[t]}\eM$, on which $\sigma$ acts diagonally, and so $\MM$ becomes a left $\TT[\sigma]$-module (as $t-\theta$ is invertible in $\TT$).
Put
\[
H_{\mathrm{Betti}}(\eM)\assign  \{ m \in \MM \mid \sigma m = m\}.
\]
We call $\eM$ \emph{uniformizable} (or \emph{rigid analytically trivial}) if the natural homomorphism
\[
\TT \otimes_{\FF_q[t]} H_{\mathrm{Betti}}(\eM) \longrightarrow \MM
\]
is an isomorphism (as $\TT$-modules). It is known that (see \cite[Proposition 2.4.27]{HJ20}) if $\eM$ is uniformizable, then $H_{\mathrm{Betti}}(\eM)$ is contained in $\MM^\dagger = \TT^\dagger \otimes_{\bar{k}[t]}\eM$, and the natural homomorphism
\[
\TT^\dagger \otimes_{\FF_q[t]} H_{\mathrm{Betti}}(\eM) \longrightarrow \MM^\dagger
\]
is an isomorphism (as $\TT^\dagger$-modules).

\begin{Subsubsec}{$t$-motivic Galois groups}\label{sec: t-motivic}
The full subcategory of $\Hcal\Jcal^I$ consisting of all uniformizable objects is denoted by $\Hcal\Jcal^I_U$.
By \cite[Remark 2.4.15]{HJ20}, the functor $\eM \mapsto \bar{k}(t)\otimes_{\bar{k}[t]} \eM$ from $\Hcal\Jcal^I_U$ to the category $\cR$ of uniformizable pre-$t$-motives is fully faithful.
Now we let $\mathbf{Vect}_{/\FF_q(t)}$ be the category of finite dimensional vector spaces over $\FF_q(t)$ and note that by \cite[Theorem 2.4.23]{HJ20} $\Hcal\Jcal^I_U$ is a neutral Tannakian category over $\FF_q(t)$ with fiber functor $\bomega: \Hcal\Jcal^I_U \rightarrow \mathbf{Vect}_{/\FF_q(t)}$ defined by:
\[
\eM \longmapsto H_\eM \assign  \FF_q(t)\otimes_{\FF_q[t]} H_{\mathrm{Betti}}(\eM).
\]

Given a uniformizable dual $t$-motive $\eM$, let $M \assign  \bar{k}(t)\otimes_{\bar{k}[t]} \eM$.
Viewing $\eM$ (resp.\ $M$) as an object in $\Hcal\Jcal^I_U$ (resp.\ $\cR$),
let $\Tcal_\eM$ (resp.\ $\cT_M$) be the strictly full Tannakian subcategory of $\Hcal\Jcal^I_U$ (resp.\ $\cR$) generated by $\eM$ (resp.\ $M$).
By Tannakian duality, we have an affine algebraic group scheme $\Gamma_\eM$ defined over $\FF_q(t)$ so that $\Tcal_\eM$ is equivalent to the category $\mathbf{Rep}(\Gamma_\eM)$ of the $\FF_q(t)$-finite dimensional representations of the algebraic group $\Gamma_\eM$.
We call $\Gamma_\eM$ the \emph{$t$-motivic Galois group of $\eM$}.
The fully faithful functor from $\Hcal\Jcal^I_U$ to $\cR$ gives an equivalence between $\Tcal_\eM$ and $\cT_M$, whence $\Gamma_\eM$ is isomorphic to $\Gamma_M$  defined in Section~\ref{Seubset: Tannakian category of t-motives}.
\end{Subsubsec}

\begin{remark}\label{rem: base change}
We recall the functorial property of the base change $\Gamma_{\eM,\CC_{\infty}}\assign \CC_{\infty}\times_{\FF_{q}(t)} \Gamma_{\eM}$ (via the embedding $\FF_{q}(t)\hookrightarrow \CC_{\infty}$ by sending $t$ to $\theta$) as follows.
Let $\bomega_\eM$ be the restriction of the fiber functor $\bomega$ to $\Tcal_\eM$.
For any object $\Xcal \in\Tcal_\eM $ we let $A_{\Xcal}\subset \Hom(\bomega_{\eM} (\Xcal),\bomega_{\eM} (\Xcal))$ be defined as in~\cite[p.~149]{DMOS} and $\underline{A}_{\Xcal}\in \Tcal_\eM$ be defined as in~\cite[Remark~3.3]{DMOS},
from which $\underline{A}_{\Xcal}$ is a ring in $\Tcal_\eM$ such that $\bomega_\eM(\underline{A}_{\Xcal})=A_{\Xcal}$ as $\FF_{q}(t)$-algebras.
Define
\[
\underline{B}\assign \varinjlim_{\Xcal} \underline{A}_{\Xcal}^{\vee}
\quad \text{ and } \quad
B\assign \varinjlim_\Xcal A_{\Xcal}^{\vee}.
\]
Then by~\cite[Remark~3.3]{DMOS} we have that
\[
\bomega_\eM(\underline{B})=B,\quad \underline{\End}^{\otimes}(\bomega_\eM)=\Spec(\bomega_\eM(\underline{B}))=\Gamma_{\eM},
\]
and for any fiber functor $\eta: \Tcal_{\eM}\rightarrow {\mathbf{Proj}_{R}}$ where $R$ is an $\FF_{q}(t)$-algebra and ${\mathbf{Proj}_{R}}$ is the category of finitely generated projective $R$-modules,
the following equality holds:
\[
\underline{\End}^{\otimes}(\eta)=\Spec(\eta(\underline{B})).
\]

In particular, let $R = \CC_\infty$ (i.e., $\mathbf{Proj}_R = \mathbf{Vect}_{/\CC_\infty}$) and take $\eta \assign \omega_{\eM,\CC_\infty}$, the base change of $\bomega_\eM$ from $\FF_q(t)$ to $\CC_\infty$, i.e.,
\[
\bomega_{\eM,\CC_\infty}(\Xcal)\assign \CC_\infty \otimes_{\FF_q(t)}\bomega_\eM(\Xcal).
\]
Then $\bomega_{\eM,\CC_\infty}$ is still a tensor functor, and we have that
\begin{align*}
\underline{\mathrm{Aut}}^{\otimes }(\bomega_{\eM,\CC_\infty})=\underline{\End}^{\otimes}(\bomega_{\eM,\CC_\infty})& =\Spec(\bomega_{\eM,\CC_\infty}(\underline{B})) \\
& =\Spec(\CC_{\infty}\otimes_{\FF_{q}(t)} \bomega_\eM(\underline{B}))= \Gamma_{\eM,\CC_{\infty}},
\end{align*}
where the first identity comes from~\cite[Prop.~1.13]{DMOS}.
\end{remark}

\subsection{Hodge-Pink filtration}
\label{sec: HP-fil}

Let $\eM$ be an object in $\Hcal\Jcal^I_U$.
Recall that $H_{\mathrm{Betti}}(\eM)$ actually lies in $\MM^\dagger$, and the natural map
\[
\TT^\dagger \otimes_{\FF_q[t]}
H_{\mathrm{Betti}}(\eM) \longrightarrow \MM^\dagger
\]
is an isomorphism as $\TT^\dagger$-modules.
Put
\[
\eM_{\CC_\infty}\assign  \CC_\infty[t] \otimes_{\bar{k}[t]}\eM
\quad \text{ and } \quad
H_{\eM,\CC_\infty} \assign \bomega_{\eM,\CC_\infty}(\eM) = \CC_\infty \otimes_{\FF_q(t)} H_\eM.
\]
The above isomorphism induces the chain of isomorphisms
\begin{multline}\label{eqn: H-iso}
H_{\eM,\CC_\infty} \cong \CC_\infty \otimes_{\FF_q[t]} H_{\mathrm{Betti}}(\eM) \cong \frac{\TT^\dagger}{(t-\theta) \TT^\dagger} \otimes_{\FF_q[t]} H_{\mathrm{Betti}}(\eM) \\
\cong \frac{\TT^\dagger}{(t-\theta) \TT^\dagger} \otimes_{\bar{k}[t]} \eM
\cong  \frac{\eM_{\CC_\infty}}{(t-\theta)\eM_{\CC_\infty}}.
\end{multline}

\begin{definition}\label{defn: HPF}
(cf.\ \cite[Section 2.4.5]{HJ20}) For each object $\eM$ in the category $\Hcal\Jcal^I_U$,
the \emph{Hodge-Pink filtration of $\eM$} is $F^\bullet H_{\eM,\CC_\infty} \assign (F^i H_{\eM,\CC_\infty})_{i\in\ZZ}$, where $F^i H_{\eM,\CC_\infty}$ is the image of $\eM_{\CC_\infty} \cap (t-\theta)^i (\sigma \eM_{\CC_\infty})$ in $H_{\eM,\CC_\infty}$, through the identification \eqref{eqn: H-iso} for each $i \in \ZZ$. That is,
\[
H_{\eM,\CC_\infty} \supseteq F^i H_{\eM,\CC_\infty} \cong  \frac{\bigl(\eM \cap (t-\theta)^i (\sigma \eM_{\CC_\infty})\bigr) + (t-\theta) \eM_{\CC_\infty}}{(t-\theta)\eM_{\CC_\infty}}.
\]
\end{definition}

\begin{remark}
Given an object $\eM$ of $\Hcal\Jcal^I_U$, there exists a sufficiently large integer $n$ such that
\begin{equation}\label{eqn: Fil}
(t-\theta)^n \eM \subseteq \sigma \eM \subseteq (t-\theta)^{-n} \eM.
\end{equation}
Let $r$ be the rank of $\eM$ over $\bar{k}[t]$.
There exists a unique (non-ordered) $r$-tuple of integers $(w_1,\ldots, w_r)$ so that
\[
\frac{(t-\theta)^{-n} \eM_{\CC_\infty}}{\sigma \eM_{\CC_\infty}} \cong \bigoplus_{i=1}^r \frac{\CC_\infty[t]}{(t-\theta)^{n-w_i} \CC_\infty[t]}.
\]
We call $(w_1,\dots, w_r)$ the \emph{Hodge-Pink weights of~$\eM$}.
In other words, $w_1,\dots, w_r$ are the ``jumps'' of the Hodge-Pink filtration of $\eM$.
In particular, $\eM$ is effective if and only if $w_1,\dots, w_r$ are all non-positive.
\end{remark}

We fix an object $\eM\in \Hcal\Jcal^I_U$ and let $\{m_1,\ldots, m_r\}$ be a basis of the free $\CC_\infty[t]$-module $\eM_{\CC_\infty}$ so that
\[
\sigma\eM_{\CC_\infty} = \bigoplus_{i=1}^r(t-\theta)^{-w_i}\CC_\infty[t] \cdot m_i \quad \subset \CC_\infty(t)\otimes_{\CC_\infty[t]} \eM_{\CC_\infty}.
\]
Identifying $H_{\eM,\CC_\infty}$ with $\eM_{\CC_\infty}/(t-\theta)\eM_{\CC_\infty}$ via \eqref{eqn: H-iso}, let $v_1,\ldots, v_r \in H_{\eM,\CC_\infty}$ be the images of $m_1,\dots, m_r$ through the identification \eqref{eqn: H-iso}.
It is straightforward to see that
\[
F^{i_0} H_{\eM,\CC_\infty}
= \bigoplus_{\substack{1\leq i\leq r \\ w_{i}\geq i_0}}
\CC_\infty v_{i} \quad \forall\,i_0 \in \ZZ.
\]
In particular, we have the associated graded spaces
\begin{equation}\label{eqn: Grading}
\mathrm{Gr}^{i_0} (H_{\eM,\CC_\infty})
\assign \bigoplus_{\substack{1\leq i \leq r \\ w_{i} = i_0}} \CC_\infty v_{i} \cong
\frac{F^{i_0}H_{\eM,\CC_\infty}}{F^{i_0+1}H_{\eM,\CC_\infty}}, \quad \forall\,i_0 \in \ZZ,
\end{equation}
and
\begin{equation}\label{E:HM-Gr}
H_{\eM,\CC_\infty} = \bigoplus_{i_0 \in \ZZ} {\rm Gr}^{i_0}(H_{\eM,\CC_\infty}).
\end{equation}

\begin{lemma}\label{lem: Grading}
Given two objects $\eM$ and $\eM'$ in $\Hcal\Jcal^I_U$, we
let $\eM_0 \assign  \eM \otimes_{\bar{k}[t]} \eM'\in \Hcal\Jcal^I_U$, on which $\sigma$ acts diagonally. The natural isomorphism $H_{\eM_0,\CC_\infty} \cong H_{\eM,\CC_\infty} \otimes_{\CC_\infty} H_{\eM',\CC_\infty}$
induces
\[
\Gr^{i_0}(H_{\eM_0,\CC_\infty}) \cong \bigoplus_{j_0+j_0' = i_0} \mathrm{Gr}^{j_0} (H_{\eM,\CC_\infty})\otimes_{\CC_\infty} \mathrm{Gr}^{j_0'} (H_{\eM',\CC_\infty}), \quad \forall i_0 \in \ZZ.
\]
\end{lemma}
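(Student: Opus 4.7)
The plan is to reduce the statement to a direct computation in terms of an adapted basis, exploiting the fact that the Hodge-Pink filtration admits a splitting through the graded pieces (see \eqref{E:HM-Gr}) and that the diagonal $\sigma$-action is compatible with tensor products of $\CC_\infty[t]$-lattices.

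First, I would choose bases $\{m_1,\ldots,m_r\}$ of $\eM_{\CC_\infty}$ and $\{m'_1,\ldots,m'_{r'}\}$ of $\eM'_{\CC_\infty}$ as in the discussion preceding the lemma, so that
\[
\sigma \eM_{\CC_\infty} = \bigoplus_{i=1}^r (t-\theta)^{-w_i}\CC_\infty[t] \cdot m_i, \quad \sigma \eM'_{\CC_\infty} = \bigoplus_{j=1}^{r'} (t-\theta)^{-w'_j}\CC_\infty[t] \cdot m'_j,
\]
where $(w_i)$ and $(w'_j)$ are the Hodge-Pink weights of $\eM$ and $\eM'$. Then $\{m_i\otimes m'_j\}_{i,j}$ is a $\CC_\infty[t]$-basis of $\eM_{0,\CC_\infty} = \eM_{\CC_\infty}\otimes_{\CC_\infty[t]}\eM'_{\CC_\infty}$.

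The key step is to verify the identity
\[
\sigma \eM_{0,\CC_\infty} = \bigoplus_{i,j}(t-\theta)^{-w_i-w'_j}\CC_\infty[t] \cdot (m_i \otimes m'_j)
\]
inside $\CC_\infty(t)\otimes_{\CC_\infty[t]} \eM_{0,\CC_\infty}$. Since the $\sigma$-action on $\eM_0$ is diagonal, the $\CC_\infty[t]$-submodule $\sigma \eM_{0,\CC_\infty}$ equals the image of $\sigma\eM_{\CC_\infty}\otimes_{\CC_\infty[t]}\sigma\eM'_{\CC_\infty}$, which is precisely the tensor product of the two lattices and thus has the claimed shape. This shows simultaneously that the Hodge-Pink weights of $\eM_0$ are exactly the multiset $\{w_i+w'_j\}_{i,j}$ and identifies a basis adapted to the Hodge-Pink filtration of $\eM_0$.

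Let $v_i\in H_{\eM,\CC_\infty}$, $v'_j\in H_{\eM',\CC_\infty}$ be the images of $m_i$, $m'_j$ under the isomorphism \eqref{eqn: H-iso}. Under the natural isomorphism $H_{\eM_0,\CC_\infty}\cong H_{\eM,\CC_\infty}\otimes_{\CC_\infty} H_{\eM',\CC_\infty}$, the element $m_i\otimes m'_j$ corresponds to $v_i\otimes v'_j$. From the explicit form of the filtration (see \eqref{eqn: Grading}), we have $v_i\in \Gr^{w_i}(H_{\eM,\CC_\infty})$, $v'_j\in \Gr^{w'_j}(H_{\eM',\CC_\infty})$, and $v_i\otimes v'_j\in \Gr^{w_i+w'_j}(H_{\eM_0,\CC_\infty})$. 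Collecting those pairs $(i,j)$ with $w_i+w'_j = i_0$ yields the desired direct sum decomposition
\[
\Gr^{i_0}(H_{\eM_0,\CC_\infty})=\bigoplus_{j_0+j'_0=i_0}\Gr^{j_0}(H_{\eM,\CC_\infty})\otimes_{\CC_\infty}\Gr^{j'_0}(H_{\eM',\CC_\infty}).
\]
The main subtlety is the first identity above for $\sigma\eM_{0,\CC_\infty}$; once that is established, everything else is bookkeeping with Smith normal forms and the splitting \eqref{E:HM-Gr}.
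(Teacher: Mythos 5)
Your argument is correct and follows essentially the same route as the paper's proof: choose bases adapted to the lattices $\sigma\eM_{\CC_\infty}$ and $\sigma\eM'_{\CC_\infty}$, observe that $\sigma\eM_{0,\CC_\infty}$ is the tensor product of these lattices so the Hodge-Pink weights of $\eM_0$ are the sums $w_i+w'_j$, and then read off the graded decomposition from the images $v_i\otimes v'_j$ under \eqref{eqn: H-iso} and \eqref{eqn: Grading}. No gaps; this matches the paper's computation.
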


\begin{proof}
Let $(w_1,\ldots, w_r)$ (resp.\ $(w_1',\ldots, w_r')$) be the Hodge-Pink weights of $\eM$ (resp.\ $\eM'$).
Take $\{m_1,\ldots, m_r\}$ (resp.\ $\{m_1',\ldots, m_r'\}$) to be a basis of the free $\CC_\infty[t]$-module $\eM_{\CC_\infty}$ (resp.\ $\eM_{\CC_\infty}'$) so that
$$
\sigma\eM_{\CC_\infty} = \bigoplus_{j=1}^r(t-\theta)^{-w_j}\CC_\infty[t] \cdot m_j \quad \text{ and } \quad
\sigma\eM_{\CC_\infty}' = \bigoplus_{j'=1}^r(t-\theta)^{-w_{j'}'}\CC_\infty[t] \cdot m_{j'}'.
$$
Then $\{m_j\otimes m'_{j'}:1\leq j,j' \leq r\}$ is a basis of $\eM_{0,\CC_\infty}$ so that
$$
\sigma \eM_{0,\CC_\infty} =  \sigma\eM_{\CC_\infty}\otimes_{\CC_\infty[t]} \sigma\eM_{\CC_\infty}'= \bigoplus_{1\leq j,j'\leq r} (t-\theta)^{-(w_j+w_{j'}')}\CC_\infty[t] \cdot m_j\otimes m'_{j'}.
$$
This means that $w_j+w_{j'}'$, for $1\leq j,j' \leq r$, are the Hodge-Pink weights of $\eM_0$.

For $1\leq j,j' \leq r$, let $v_j$, $v'_{j'}$, and $v_{j,j'}$ be the image of $m_j$, $m'_{j'}$, and $m_j\otimes m'_{j'}$ in $H_{\eM,\CC_\infty}$, $H_{\eM',\CC_\infty}$, and $H_{\eM_0,\CC_\infty}$, respectively (through the identification \eqref{eqn: H-iso}).
The canonical isomorphism $H_{\eM,\CC_\infty}\otimes_{\CC_\infty} H_{\eM',\CC_\infty} \cong H_{\eM_0,\CC_\infty}$
sending $v_j\otimes v'_{j'}$ to $v_{j,j'}$ and the isomorphism in \eqref{eqn: Grading} implies that for each $i_0 \in \ZZ$,
\begin{align*}
{\Gr}^{i_0}(H_{\eM_0,\CC_\infty})
&\cong \bigoplus_{\substack{1\leq j,j'\leq r \\ w_{j}+w_{j'}' = i_0}} \CC_\infty v_{j,j'} \\
&=
\bigoplus_{j_0+j_0'=i_0}\Biggl(
\bigoplus_{\substack{1\leq j,j'\leq r \\ w_{j} = j_0,\ w_{j'}'=j_0'}}\CC_\infty  v_{j,j'} \Biggr) \\
&\cong
\bigoplus_{j_0+j_0'=i_0}\Biggl[\biggl(
\bigoplus_{\substack{1\leq j\leq r \\ w_{j} = j_0}}\CC_\infty v_{j}
\biggr) \otimes_{\CC_\infty}
\biggl(
\bigoplus_{\substack{1\leq j'\leq r \\ w_{j'}' = j_0'}}\CC_\infty v'_{j'}
\biggr)\Biggr] \\
&\cong
\bigoplus_{j_0+j_0' = i_0} {\Gr}^{j_0} (H_{\eM,\CC_\infty})\otimes_{\CC_\infty} {\Gr}^{j_0'} (H_{\eM',\CC_\infty}).
\end{align*}
\end{proof}

In the next subsection we shall use the Hodge-Pink filtration of the given object $\eM$ in $\Hcal\Jcal^I_U$ to construct a special family of cocharacters of the $t$-motivic Galois group $\Gamma_\eM$.

\subsection{Hodge-Pink cocharacters}
\label{sec: HP coc}

Given an object $\eM$ in $\Hcal\Jcal^I_U$,
let $\cT_{M}$ be the neutral Tannikian subcategory of $\cT$ over $\FF_{q}(t)$ generated by the pre-$t$-motive $M = \bar{k}(t)\otimes_{\bar{k}[t]}\eM$ (see Definition~\ref{Def: Tannakian category of t-motives} (2)).
Recall in \ref{sec: t-motivic} that the Tannakian subcategory $\Tcal_\eM$ of $\Hcal\Jcal^I_U$ generated by $\eM$ is equivalent to $\cT_{M}$, and so we have the algebraic group isomorphism (over $\FF_q(t)$)
\[
\Gamma_\eM \cong \Gamma_M.
\]
Note that we have a faithful representation $$\Gamma_M \hookrightarrow \GL(H_{\eM}),$$
which is functorial in $M$ (see~\cite[Thm.~4.5.3]{Papanikolas}). This implies that
\begin{equation}\label{eqn: CI}
\Gamma_{M}\hookrightarrow {\Cent}_{\GL(H_{\eM})}\left( \End_{\cT}(M)\right)\quad  \textup{(cf.~\cite[p.~1448, (6)]{CPY10})}
\end{equation}
as we have the natural embedding $\End_{\cT}(M)\hookrightarrow \End(H_{\eM})$.

We let $\GG_{m/\CC_\infty}$ be the multiplicative group over $\CC_{\infty}$, and we denote by $\mathbf{Rep}(\GG_{m/\CC_\infty})$ the Tannakian category of finite dimensional algebraic representations of $\GG_{m/\CC_\infty}$ over $\CC_{\infty}$. Note that by~\cite[p.~144, (2.30)]{DMOS}, $\mathbf{Rep}(\GG_{m/\CC_\infty})$ is equivalent to the category ${\mathbf{Grad.Vect}}_{/\CC_{\infty}}$ of $\ZZ$-graded vector spaces of finite dimensions over $\CC_{\infty}$.
The fiber functor, which is also the forgetful functor, is denoted by
\[
\bomega^{\GG_{m/\CC_\infty}}:\mathbf{Rep}(\GG_{m/\CC_\infty})\rightarrow {\mathbf{Vect}}_{/\CC_{\infty}}.
\]

For any object $\eM\in \Hcal\Jcal^I_U$, we have the following functor (induced by the Hodge-Pink filtrations),
\[
\Fcal:\Tcal_\eM \rightarrow {\mathbf{Grad.Vect}}_{/\CC_{\infty}}, \quad \Fcal(\eM') \assign {\oplus_{i\in \ZZ}\rm{Gr}}^{i}H_{M',\CC_{\infty}}.
\]
By Lemma~\ref{lem: Grading}, one checks that $\Fcal$ is a tensor functor.
On the other hand, recall from Remark~\ref{rem: base change} that $\bomega_{\eM,\CC_\infty}$ denotes the base change of the fiber functor $\bomega_\eM$ from $\FF_q(t)$ to $\CC_\infty$, i.e.
\[
\bomega_{\eM,\CC_\infty}: \Tcal_\eM \rightarrow \mathbf{Vect}_{/\CC_\infty}, \quad \bomega_{\eM,\CC_\infty}(\eM') = \CC_\infty \otimes_{\FF_q(t)} H_{\eM'}.
\]
Let $\bomega^{\rm GV} : \mathbf{Grad.Vect}_{/\CC_\infty} \rightarrow \mathbf{Vect}_{/\CC_\infty}$ be the forgetful functor.
By~\eqref{E:HM-Gr} we then have the following commutative diagram
\[
\SelectTips{cm}{}
\xymatrix{
({\mathbf{Rep}}(\Gamma_{\eM}) & \hspace{-0.9cm})\hspace{0.7cm}  \Tcal_\eM \ar[d]_{\bomega_{\eM,\CC_\infty}} \ar[r]^<<<<<<{\Fcal} \ar@{}[l]|*=0[@]{\approx}
&{\mathbf{Grad.Vect}}_{/\CC_{\infty}} \hspace{0.3cm} ( \ar[ld]^{\bomega^{\rm GV}} \ar@{}[r]|*=0[@]{\approx}
& \hspace{-0.3cm}\mathbf{Rep}(\GG_{m/\CC_\infty})) \\
& {\mathbf{Vect}}_{/\CC_{\infty}} &
},
\]
By the arguments of~\cite[p.~130, Cor.~2.9]{DMOS} we obtain a cocharacter of $\Gamma_{\eM,\CC_{\infty}}$ (induced from $\Fcal$),
\[
\chi_{\eM}^\vee:\GG_{m/\CC_{\infty}}={\rm{Aut}}^{\otimes}(\bomega^{\rm GV})\rightarrow {\rm{Aut}}^{\otimes}(\bomega_{\eM,\CC_{\infty}})=\Gamma_{\eM,\CC_{\infty}}\hookrightarrow \GL(H_{\eM,\CC_{\infty}}).
\]
Note that the image of $\chi_{\eM}^{\vee}$ is uniquely determined by the action of $\GG_{m/\CC_{\infty}}$ on \[H_{\eM,\CC_\infty} = \bigoplus_{i \in \ZZ} {\rm Gr}^{i}(H_{\eM,\CC_\infty}),\] i.e., for each $i\in \ZZ$ and $x\in \GG_{m/\CC_{\infty}}(\CC_{\infty})$ we have
\begin{equation}\label{E:chiMvee}
\chi_{\eM}^\vee(x) v = x^i \cdot v, \quad \forall v \in \text{Gr}^i(H_{\eM,\CC_\infty}).
\end{equation}

Since $\GL(H_{\eM,\CC_\infty})$ (resp.~$\Gamma_{\eM,\CC_{\infty}}$) is the base change $\CC_{\infty}\times_{\FF_{q}(t)} \GL(H_{\eM})$ (resp.~$\CC_{\infty}\times_{\FF_{q}(t)}\Gamma_{\eM}$), we have a natural action of $\Aut_{k}(\CC_{\infty})$ on $\GL(H_{\eM,\CC_\infty})$, which leaves $\Gamma_{\eM,\CC_{\infty}}$ invariant. Thus any $\Aut_k(\CC_\infty)$-conjugate of $\chi_\eM^\vee$ is still a cocharacter of $\Gamma_{\eM,\CC_\infty}$.

\begin{definition}\label{defn: HPC}
(cf.\ \cite[Definition 7.10]{Pink}) Given an object $\eM$ in $\Hcal\Jcal^I_U$, a \emph{Hodge-Pink cocharacter} of $\Gamma_{\eM,\CC_{\infty}}$ is a cocharacter which lies in the $\Gamma_{\eM,\CC_\infty} \rtimes \Aut_k(\CC_\infty)$-conjugate class of $\chi_\eM^\vee$.
\end{definition}

We shall describe the images of the Hodge-Pink cocharacters of $\Gamma_{\eM,\CC_\infty}$ in $\GL(H_{\eM,\CC_\infty})$ via \eqref{E:chiMvee} when $\eM$ is a CM dual $t$-motive, and provide a lower bound for the dimension of $\Gamma_{\eM}$ in the next subsection.

\subsection{\texorpdfstring{The case of CM dual $t$-motives}{The case of CM dual t-motives}}\label{Sub:The case of CM dual t-motives}

Let $\eM$ be a CM dual $t$-motive with generalized CM type $(\eK,\Xi)$, where $\eK$ is a CM field with $[\eK:\FF_q(t)] = r$.
For $\xi \in J_\eK$, let $\Pfk_\xi$ be the maximal ideal of $O_\bK$ associated to $\xi$, i.e.\ $\Pfk_\xi$ is generated by $\alpha - \nu_\xi(\alpha)$ for all $\alpha \in O_\eK$.
Write $\Xi = \sum_{\xi \in J_\eK} m_\xi \xi$ and put $\Ifk_\Xi \assign  \prod_{\xi \in J_\eK} \Pfk_\xi^{m_\xi}$.
As $\eM$ is projective of rank one over $O_\bK$ and $\sigma \eM = \Ifk_\Xi \eM$ (see the equality~\eqref{eqn: sigma-M iso}), we have that
\[
\frac{\eM}{\sigma \eM} \cong \frac{O_\bK}{\Ifk_\Xi} \cong \prod_{\xi \in J_\eK}\frac{O_\bK}{\Pfk_\xi^{m_\xi}}.
\]
Since $(t-\theta)O_\bK = \prod_{\xi \in J_\eK} \Pfk_\xi$, i.e.,\ the prime ideal $(t-\theta)\bar{k}[t]$ of $\bar{k}[t]$ splits completely in $O_\bK$,
for each $\xi \in J_\eK$ one has that
\[
\bar{k}[t] \cap \Pfk_{\xi}^{m_\xi} = (t-\theta)^{m_\xi}\cdot \bar{k}[t] \quad \text{ and } \quad \dim_{\bar{k}}(\frac{O_\bK}{\Pfk_\xi^{m_\xi}}) = m_\xi.
\]
Thus the natural inclusion $\bar{k}[t]\hookrightarrow O_{\bK}$
induces the following isomorphism (as $\bar{k}[t]$-modules)
\[
\frac{O_\bK}{\Pfk_\xi^{m_\xi}} \cong \frac{\bar{k}[t]}{(t-\theta)^{m_\xi}\bar{k}[t]} \quad \textup{ for every $\xi \in J_\eK$.}
\]
To conclude, we obtain the following description of Hodge-Pink weights of $\eM$.

\begin{proposition}\label{prop: HPW-CM}
Let $\eK/\FF_{q}(t)$ be a CM field and $\eM$ be a CM dual $t$-motive with generalized CM type $(\eK, \Xi)$. Write $\Xi = \sum_{\xi \in J_\eK} m_\xi \xi \in I_\eK^0$.
Then
\[
\frac{\eM_{\CC_\infty}}{\sigma \eM_{\CC_\infty}} \cong \prod_{\xi \in J_\eK} \frac{\CC_\infty[t]}{(t-\theta)^{m_\xi}\CC_\infty[t]}.
\]
Consequently, the Hodge-Pink weights of $\eM$ are
$(-m_\xi)_{\xi \in J_\eK}$.
\end{proposition}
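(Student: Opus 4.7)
The plan is to build the description of $\eM_{\CC_\infty}/\sigma\eM_{\CC_\infty}$ in three essentially algebraic steps, each reducing the problem to a structural fact about the Dedekind domain $O_\bK$, and then read off the Hodge-Pink weights from the normal form of $\sigma\eM_{\CC_\infty}\subset \eM_{\CC_\infty}$. The inputs from earlier in the paper are exactly the two properties that define being a CM dual $t$-motive with generalized CM type $(\eK,\Xi)$ (cf.\ Proposition~\ref{prop: being CM-0} and the discussion following it): namely that $\eM$ is projective of rank one over $O_\bK$ and that $\sigma\eM = \Ifk_\Xi\cdot\eM$, where $\Ifk_\Xi = \prod_{\xi\in J_\eK}\Pfk_\xi^{m_\xi}$.

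First I would deduce from projectivity of rank one the $O_\bK$-module identification
\[
\eM/\sigma\eM \;=\; \eM/\Ifk_\Xi\eM \;\cong\; O_\bK/\Ifk_\Xi,
\]
obtained by tensoring the exact sequence $0\to\Ifk_\Xi\to O_\bK\to O_\bK/\Ifk_\Xi\to 0$ with the flat module $\eM$. Since the $\Pfk_\xi$ for $\xi\in J_\eK$ are pairwise coprime maximal ideals of the Dedekind domain $O_\bK$, the Chinese Remainder Theorem then gives the decomposition
\[
O_\bK/\Ifk_\Xi \;\cong\; \prod_{\xi\in J_\eK} O_\bK/\Pfk_\xi^{m_\xi}.
\]

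The key algebraic observation, which is the step I expect to require the most attention, is that each local factor $O_\bK/\Pfk_\xi^{m_\xi}$ is isomorphic to $\bar{k}[t]/(t-\theta)^{m_\xi}$ as a $\bar{k}[t]$-module. This is where we use the geometry of $\bX$: because $\theta$ is transcendental over $\FF_q$, the fiber $\pi_{\bX/\PP^1}^{-1}(\theta) = J_\eK$ consists of $\bar{k}$-rational points of the smooth curve $\bX$, and the morphism $\pi_{\bX/\PP^1}$ is unramified at each such $\xi$. Hence the natural map $\bar{k}[t]\hookrightarrow O_\bK$ has image generating $O_\bK$ at $\Pfk_\xi$ modulo the maximal ideal, each residue field $O_\bK/\Pfk_\xi$ equals $\bar{k}$, and $\Pfk_\xi\cap\bar{k}[t] = (t-\theta)\bar{k}[t]$ with ramification index one. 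The induced map $\bar{k}[t]/(t-\theta)^{m_\xi}\to O_\bK/\Pfk_\xi^{m_\xi}$ is therefore a surjection between $\bar{k}$-algebras of the same dimension $m_\xi$, so is an isomorphism.

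Combining the previous two paragraphs and base-changing $\bar{k}\leadsto\CC_\infty$ yields
\[
\eM_{\CC_\infty}/\sigma\eM_{\CC_\infty} \;\cong\; \prod_{\xi\in J_\eK} \CC_\infty[t]/(t-\theta)^{m_\xi}\CC_\infty[t],
\]
which is the first assertion. For the Hodge-Pink weights, I would pull back generators of each local factor to elements $m_\xi\in\eM_{\CC_\infty}$; standard elementary-divisor theory over the principal ideal domain $\CC_\infty[t]$ (applied after inverting $(t-\theta)$, or equivalently working in $\CC_\infty(t)\otimes_{\CC_\infty[t]}\eM_{\CC_\infty}$) then shows that $\{m_\xi\}_{\xi\in J_\eK}$ is a $\CC_\infty[t]$-basis of $\eM_{\CC_\infty}$ with
\[
\sigma\eM_{\CC_\infty} \;=\; \bigoplus_{\xi\in J_\eK}(t-\theta)^{m_\xi}\CC_\infty[t]\cdot m_\xi.
\]
Matching this against the defining normal form $\sigma\eM_{\CC_\infty}=\bigoplus_i(t-\theta)^{-w_i}\CC_\infty[t]\cdot m_i$ from Section~\ref{sec: HP-fil} identifies the Hodge-Pink weights as $w_\xi = -m_\xi$, completing the proof.
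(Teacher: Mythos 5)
Your proposal is correct and follows essentially the same route as the paper's argument: projectivity of rank one over $O_\bK$ together with $\sigma\eM = \Ifk_\Xi\eM$ gives $\eM/\sigma\eM \cong O_\bK/\Ifk_\Xi$, the Chinese Remainder Theorem decomposes this along the $\Pfk_\xi$, and the complete splitting of $(t-\theta)$ in $O_\bK$ identifies each factor with $\bar{k}[t]/(t-\theta)^{m_\xi}$, after which the weights are read off from the elementary divisors. Your elaboration of the geometric reason for the splitting (unramifiedness at the transcendental point $\theta$, with separability of $\eK/\FF_q(t)$ implicitly used) and the explicit Smith-normal-form step are slightly more expansive than the paper's terse presentation, but they are the same underlying facts.
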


Let $\eM$ be a CM dual $t$-motive with generalized CM type $(\eK,\Xi)$ over $\bar{k}$,
and let $M = \bar{k}(t)\otimes_{\bar{k}[t]} \eM$ be the pre-$t$-motive associated to $\eM$. Since $O_{\eK}\subset \End_{\ok[t,\sigma]}(\eM)$,
we can embed $\eK$ into the endomorphism ring $\End_{\cT}(M)$.
From \eqref{eqn: CI}, the closed immersion $\Gamma_\eM\hookrightarrow \GL_{\FF_q(t)}(H_\eM)$ actually factors through $T_\eK \assign  {\rm Res}_{\eK/\FF_q(t)} \GG_m$, the Weil restriction of $\GG_m$ from $\eK$ to $\FF_q(t)$.
Thus the base change $\Gamma_{\eM,\CC_\infty}$ can be viewed as a subgroup of
\[
T_{\eK,\CC_\infty} \assign  \CC_\infty \underset{\FF_q(t)}{\times} {\rm Res}_{\eK/\FF_q(t)} \GG_{m}
\cong \prod_{\xi \in J_\eK} \CC_\infty \underset{\nu_\xi, \eK}{\times} \GG_{m/\eK}.
\]
Moreover, for each $\xi \in J_\eK$, let $H_\xi$ be the unique one-dimensional subspace of $H_{\eM,\CC_\infty}$ satisfying that
\[
\alpha \cdot v = \nu_\xi(\alpha) \cdot v, \quad \forall\, \alpha \in O_\eK,\  v \in H_\xi.
\]
The decomposition
\[
H_{\eM,\CC_\infty} = \bigoplus_{\xi \in J_\eK} H_\xi
\]
gives the embedding
\[
\prod_{\xi \in J_\eK} \CC_\infty \underset{\nu_\xi, \eK}{\times} \GG_{m/\eK} \hookrightarrow \GL_{\CC_\infty}(H_{\eM,\CC_\infty}), \quad (x_\xi)_{\xi \in J_\eK} \longmapsto \begin{pmatrix} \ddots & & \\ & x_\xi & \\ & & \ddots \end{pmatrix}.
\]

Write $\Xi = \sum_\xi m_\xi \xi \in I_\eK^0$. Then for $i \in \ZZ$, we have
\[
{\Gr}^i(H_{\eM,\CC_\infty}) = \bigoplus_{\substack{\xi \in J_\eK \\ -m_\xi = i}} H_\xi \quad \textup{(see~\eqref{eqn: Grading}).}
\]
Therefore by~\eqref{E:chiMvee} the cocharacter
\[
\chi_\eM^\vee: \GG_{m/\CC_\infty} \longrightarrow \Gamma_{\eM,\CC_\infty} \subset
T_{\eK}
\cong \prod_{\xi \in J_\eK} \CC_\infty \underset{\nu_\xi, \eK}{\times} \GG_{m/\eK}
\]
is realized as
\[
\chi_\eM^\vee(x) = (x^{-m_\xi})_{\xi \in J_\eK} \quad \in \prod_{\xi \in J_\eK} \CC_\infty \underset{\nu_\xi, \eK}{\times} \GG_{m/\eK}.
\]

On the other hand, for each cocharacter $\chi^\vee:  \GG_{m/\CC_\infty} \rightarrow T_{\eK,\CC_\infty}$,
there exists a unique set of integers $\{w_\xi: \xi \in J_\eK\}$ (i.e., the coweights of $\chi^\vee$) so that
\[
\chi^\vee(x) = (x^{w_\xi})_{ \xi \in J_\eK} \quad  \in \prod_{\xi \in J_\eK} \CC_\infty \underset{\nu_\xi, \eK}{\times} \GG_{m/\eK}.
\]
We remark that the map
\[
\chi^\vee \longmapsto
\Phi_{\chi^\vee} \assign  \sum_{\xi \in J_\eK}(-w_\xi) \xi \in I_\eK
\]
gives an isomorphism between the group of cocharacters of $T_{\eK,\CC_\infty}$ and $I_\eK$, which is equivariant under the action of $\Aut_k(\CC_\infty)$, i.e.\
\[
\Phi_{({}^\varsigma \chi^\vee)} = \varsigma (\Phi_{\chi^\vee}),
\quad
\forall\,\chi^\vee: \GG_{m/\CC_\infty} \rightarrow T_{\eK,\CC_\infty} \ \text{ and }\ \varsigma \in \Aut_k(\CC_\infty).
\]
In particular, we have
\begin{equation}\label{eqn: HPc-CMt}
\Phi_{\chi^\vee_\eM} = \Xi \in I_\eK^0.
\end{equation}
As $\Gamma_\eM$ is commutative, we obtain the following correspondence.

\begin{proposition}\label{prop: HP-wt and CM type}
Under the above identification between the group of cocharacters of $T_{\eK,\CC_\infty}$ and $I_\eK$, the set of the Hodge-Pink cocharacters of $\Gamma_\eM(\CC_{\infty})$ corresponds bijectively to
\[
\{\varsigma (\Xi) :  \varsigma \in \Aut_k(\CC_\infty)\}.
\]
\end{proposition}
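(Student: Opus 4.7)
The plan is to reduce the action defining Hodge-Pink cocharacters to the $\Aut_k(\CC_\infty)$-orbit of $\chi_\eM^\vee$ alone, and then invoke the equivariant bijection between cocharacters of $T_{\eK,\CC_\infty}$ and divisors in $I_\eK$ that was set up in the discussion preceding the proposition.

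First, I would unwind Definition~\ref{defn: HPC}. The semidirect product $\Gamma_{\eM,\CC_\infty} \rtimes \Aut_k(\CC_\infty)$ acts on cocharacters by
\[
(g, \varsigma) \cdot \chi^\vee(x) \;=\; g \cdot {}^\varsigma \chi^\vee(x) \cdot g^{-1}.
\]
Because $\eM$ is CM, the group $\Gamma_{\eM,\CC_\infty}$ is contained in the torus $T_{\eK,\CC_\infty}$ (as recalled just above the proposition) and is in particular commutative, so inner conjugation by $g \in \Gamma_{\eM,\CC_\infty}$ acts trivially. Consequently, the Hodge-Pink cocharacters are precisely the twists ${}^\varsigma \chi_\eM^\vee$ for $\varsigma \in \Aut_k(\CC_\infty)$. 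I should check that each such twist still factors through $\Gamma_{\eM,\CC_\infty}$; this is immediate because $\Gamma_\eM$ is defined over $\FF_q(t)$ and the structure map $\FF_q(t) \hookrightarrow \CC_\infty$ lands in $k$, which is fixed by $\Aut_k(\CC_\infty)$.

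Second, I would apply the $\Aut_k(\CC_\infty)$-equivariance of the bijection $\chi^\vee \leftrightarrow \Phi_{\chi^\vee}$ stated before the proposition, combined with the already established identity $\Phi_{\chi_\eM^\vee} = \Xi$ from \eqref{eqn: HPc-CMt}, to obtain
\[
\Phi_{({}^\varsigma \chi_\eM^\vee)} \;=\; \varsigma\bigl(\Phi_{\chi_\eM^\vee}\bigr) \;=\; \varsigma(\Xi).
\]
Since $\chi^\vee \mapsto \Phi_{\chi^\vee}$ is a bijection from the entire cocharacter group of $T_{\eK,\CC_\infty}$ onto $I_\eK$, its restriction to the Hodge-Pink cocharacters must be a bijection onto $\{\varsigma(\Xi) \mid \varsigma \in \Aut_k(\CC_\infty)\}$, which gives the claim.

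There is no serious obstacle in this proof: the statement is a formal consequence of (i) the commutativity of $\Gamma_\eM$, (ii) the Galois-equivariance of the cocharacter--divisor dictionary, and (iii) the explicit identification of $\Phi_{\chi_\eM^\vee}$ with $\Xi$. The only delicate point is the commutativity in (i), which ultimately rests on the embedding $\eK \hookrightarrow \End_{\cT}(M)$ and the containment $\Gamma_\eM \subseteq T_\eK$ derived from \eqref{eqn: CI}; but this has already been used just before the statement of the proposition, so no new input is required.
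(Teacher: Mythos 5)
Your proposal is correct and follows precisely the route the paper takes (the paper states the proposition without a displayed proof, as a direct consequence of the preceding discussion): commutativity of $\Gamma_{\eM,\CC_\infty}\subseteq T_{\eK,\CC_\infty}$ kills the inner-conjugation part of Definition~\ref{defn: HPC}, the $\Aut_k(\CC_\infty)$-equivariance of $\chi^\vee\mapsto\Phi_{\chi^\vee}$ together with $\Phi_{\chi_\eM^\vee}=\Xi$ identifies the orbit, and injectivity of the cocharacter--divisor dictionary gives the bijection. Your additional remark that ${}^\varsigma\chi_\eM^\vee$ still lands in $\Gamma_{\eM,\CC_\infty}$ (because $\Gamma_\eM$ is defined over $\FF_q(t)$ and the structure embedding $t\mapsto\theta$ has image in $k$) is exactly the observation the paper makes just before Definition~\ref{defn: HPC}.
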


\begin{remark}\label{rem: TM}
(1) Since every point $\xi \in J_\eK$ given in~\eqref{E:JK} is defined over $k^{\text{sep}}$ (as $\eK/\FF_q(t)$ is separable),
one has
\[
\{\varsigma (\Xi) :  \varsigma \in \Aut_k(\CC_\infty)\} =
\{\varsigma (\Xi) :  \varsigma \in \Gal(k^{\text{sep}}/k)\}.
\]
(2) Let
$I^0_\Xi$ be the subgroup of $I_\eK^0$ generated by $\varsigma (\Xi)$ for all $\varsigma \in \Gal(k^{\text{sep}}/k)$,
and let
$T_\eM$ be the subgroup of $\Gamma_\eM$ generated by
the image of all the Hodge-Pink cocharacters of $\Gamma_\eM$.
Then
\[
\dim \Gamma_\eM \geq \dim T_\eM = \rank_\ZZ I^0_\Xi.
\]
\end{remark}

\begin{proof}[Proof of Theorem~\ref{thm: lower-bound}]
Given a non-degenerate CM type $\Xi$ of $\eK$, let $\eM$ be a CM dual $t$-motive with generalized CM type $(\eK,\Xi)$.
From Proposition~\ref{P:fields equality} and Theorem~\ref{T:dim=trdeg}, we obtain that
\[
\trdeg_{\bar{k}} \bar{k}\bigl(p_\eK(\xi,\Xi)\mid \xi \in J_\eK\bigr)
= \dim \Gamma_\eM \geq \rank_\ZZ I_\Xi^0
\]
as desired.
\end{proof}

\section{\texorpdfstring{Application to periods of abelian $t$-modules}{Application to periods of abelian t-modules}}
\label{sec: App}

Let $E_\rho$ be an abelian $t$-module over $\bar{k}$ and $\eM(\rho)$ be the Hartl-Juschka dual $t$-motive associated with $E_\rho$ (see\ Remark~\ref{rem: M(rho)}).
Based on Remark~\ref{Rmk:Compactiblity of CM type}, we say that $E_\rho$ is \emph{CM with generalized CM type $(\eK,\Xi)$} if $\eM(\rho)$ is a CM dual $t$-motive with generalized CM type $(\eK,\Xi)$.
The aim of this section is to apply Theorem~\ref{thm: lower-bound} to prove  the algebraic independence of the coordinates of a nonzero period vector of $E_{\rho}$ when $E_{\rho}$ has a non-degenerate CM type.
To do so, we first establish the needed comparisons for the Lie spaces, Betti modules and de Rham modules in terms of $t$-motives in the following subsections.

\subsection{Comparisons between Lie spaces}\label{sec: Com Lie}

Let $E_\rho$ be an abelian $t$-module over $\ok$, and let $\Mscr(\rho)$ be the $\ok[t,\tau]$-module defined in Definition~\ref{defn: t-module} (2).
There is a canonical $\bar{k}$-vector space isomorphism  (see~\cite[Lemma 1.3.4]{Anderson86})
\begin{equation}\label{eqn: Lie1}
\Lie(E_\rho) \cong \Hom_{\bar{k}}(\Mscr(\rho)/\tau \Mscr(\rho), \bar{k}).
\end{equation}
On the other hand, as $(t-\theta)^N\eM(\rho) \subset \sigma \eM(\rho)$ for a sufficiently large integer $N$, we may identify $\eM(\rho) \assign \Hom_{\bar{k}[t]}(\tau \Mscr(\rho),\bar{k}[t]dt)$ with
\[
\biggl\{ \eum \in \Hom_{\ok[t]}\Bigl(\Mscr(\rho), \bar{k}[t][\frac{1}{t-\theta}] dt\Bigr) \biggm| \eum(\tau \Mscr(\rho)) \subset \ok[t]dt\biggr\},
\]
and $\sigma \eM(\rho)$ corresponds to  $\Hom_{\ok[t]}(\Mscr(\rho),\ok[t]dt)$.
Thus we obtain a $\ok$-bilinear map
\[
\Mscr(\rho) \times \eM(\rho) \longrightarrow \ok[t][\frac{1}{t-\theta}] dt \xrightarrow{\Res_{t=\theta}} \bar{k},
\]
where $(m,\eum)\mapsto \Res_{t=\theta}\eum(m)$ for every $m \in \Mscr(\rho)$ and $\eum \in \eM(\rho)$.
This induces a perfect pairing
\begin{equation}\label{eqn: perfect pairing}
\frac{\Mscr(\rho)}{\tau \Mscr(\rho)} \times \frac{\eM(\rho)}{\sigma \eM(\rho)} \longrightarrow \bar{k}.
\end{equation}

\begin{lemma}\label{lem: comp Lie}
The above perfect pairing and \eqref{eqn: Lie1} lead to an isomorphism of $\bar{k}$-vector spaces,
\[
\Lie(E_\rho) \cong \eM(\rho)/\sigma \eM(\rho).
\]
\end{lemma}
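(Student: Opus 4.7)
The plan is to observe that the lemma reduces to a formal chaining of isomorphisms. The perfect pairing \eqref{eqn: perfect pairing} yields a canonical $\bar{k}$-linear isomorphism
\[
\Psi: \eM(\rho)/\sigma\eM(\rho) \stackrel{\sim}{\longrightarrow} \Hom_{\bar{k}}\bigl(\Mscr(\rho)/\tau\Mscr(\rho), \bar{k}\bigr), \quad \bar{\eum} \longmapsto \bigl(\bar{m}\mapsto \Res_{t=\theta}\eum(m)\bigr),
\]
and composing $\Psi$ with the inverse of the Anderson isomorphism \eqref{eqn: Lie1} gives exactly $\Lie(E_\rho)\cong \eM(\rho)/\sigma\eM(\rho)$. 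So the entire content lies in justifying that the residue pairing is well-defined on the quotients and non-degenerate.

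First I would check well-definedness. Under the identification recalled in the paragraph preceding the lemma, $\sigma\eM(\rho)$ corresponds to $\Hom_{\bar{k}[t]}(\Mscr(\rho), \bar{k}[t]\,dt)$; hence for any $\eum'\in \eM(\rho)$ and $m\in\Mscr(\rho)$, the form $(\sigma\eum')(m)\in \bar{k}[t]\,dt$ is regular at $t=\theta$ and has zero residue. Symmetrically, the defining condition $\eum(\tau\Mscr(\rho))\subset\bar{k}[t]\,dt$ gives $\eum(\tau m')\in \bar{k}[t]\,dt$ for any $m'\in\Mscr(\rho)$, again with zero residue. Hence the pairing descends cleanly to the product of quotients.

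Next I would verify perfectness by a dimension and injectivity argument. Both quotients are finite-dimensional over $\bar{k}$ of the same dimension $d=\dim E_\rho$: on the $\Mscr(\rho)$-side this is implicit in \eqref{eqn: Lie1}, and on the $\eM(\rho)$-side it is the identity $\dim_{\bar{k}}\eM(\rho)/\sigma\eM(\rho) = \rank_{\bar{k}[\sigma]}\eM(\rho) = \dim E_\rho$ already used in the proof of Theorem~\ref{thm: M(rho)} (via \cite[Remark~2.4.4(c)]{HJ20}). It therefore suffices to show $\Psi$ is injective. The cleanest route is a local argument at the prime $(t-\theta)$: since $(t-\theta)^N\eM(\rho)\subset \sigma\eM(\rho)$ for some $N$, both quotients are supported at $(t-\theta)$, so the entire pairing factors through the Artinian localization at $(t-\theta)$. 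There the statement becomes the standard duality between $\eM(\rho)/\sigma\eM(\rho)$ and $\Mscr(\rho)/\tau\Mscr(\rho)$ via the residue pairing on $\bar{k}[t][1/(t-\theta)]\,dt/\bar{k}[t]\,dt$, which is classically perfect.

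The main obstacle I anticipate is making this local duality computation precise: one must track that $\eM(\rho)$ is projective (hence locally free) over $\bar{k}[t]$ of the same rank as $\Hom_{\bar{k}[t]}(\tau\Mscr(\rho),\bar{k}[t]\,dt)$, and that the truncation modulo $(t-\theta)^N$ dualizes compatibly with the residue functional. This is essentially linear-algebraic bookkeeping in the local ring $\bar{k}[[t-\theta]]$; once it is carried out, $\Psi$ is an injection between $\bar{k}$-vector spaces of equal finite dimension, so it is an isomorphism, and composing with \eqref{eqn: Lie1} gives the desired $\Lie(E_\rho)\cong \eM(\rho)/\sigma\eM(\rho)$.
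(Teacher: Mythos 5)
Your proof is correct and takes the same route the paper implicitly relies on. The lemma itself is, as you observe, the trivial composition of the pairing isomorphism $\Psi$ with the inverse of \eqref{eqn: Lie1}; the real content is the perfectness of \eqref{eqn: perfect pairing}, which the paper simply asserts (leaning on \cite[Remark~2.4.4~(c)]{HJ20} for the matching dimensions). Your well-definedness check and dimension count are exactly right, and the local residue-duality argument at $(t-\theta)$ is the standard way to finish: since $\eM(\rho)$ sits inside $\Hom_{\bar{k}[t]}(\Mscr(\rho), \bar{k}[t][\tfrac{1}{t-\theta}]dt)$ with $\sigma\eM(\rho) = \Hom_{\bar{k}[t]}(\Mscr(\rho), \bar{k}[t]dt)$, any $\eum \notin \sigma\eM(\rho)$ has a pole of some maximal order $k \geq 1$ on some $m$, and then $\eum((t-\theta)^{k-1}m)$ has a simple pole with nonzero residue, so $\Psi$ is injective, and equal dimensions give bijectivity. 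Equivalently one can phrase it as $\eM(\rho)/\sigma\eM(\rho) \cong \Ext^1_{\bar{k}[t]}(\Mscr(\rho)/\tau\Mscr(\rho), \Omega_{\bar{k}[t]/\bar{k}}) \cong \Hom_{\bar{k}}(\Mscr(\rho)/\tau\Mscr(\rho),\bar{k})$, with the last isomorphism being local duality; this is the ``bookkeeping'' you anticipate, and it goes through exactly as you sketch.
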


\begin{remark}
Viewing $\Lie(E_\rho)$ as a $\bar{k}[t] \ (=\!\!\bar{k}\otimes_{\FF_q}\FF_q[t])$-module via $\text{id}_{\bar{k}} \otimes \partial \rho$, the above isomorphism is actually $\bar{k}[t]$-linear.
\end{remark}

\subsection{Comparisons between period lattices and Betti modules}\label{sec: Lambda-Betti}
Suppose $E_\rho$ is a uniformizable abelian $t$-module over $\bar{k}$, i.e.\
the corresponding exponential map $\exp_\rho : \Lie(E_\rho)(\CC_\infty) \rightarrow E_\rho(\CC_\infty)$ is surjective.
Let $\Lambda_\rho$ be the period lattice of $E_\rho$, i.e.
\[
\Lambda_\rho \assign  \{\lambda \in \Lie(E)(\CC_\infty)\mid \exp_\rho(\lambda) = 0\},
\]
which is a discrete $\FF_q[t]$-submodule with
\[
\rank_{\FF_q[t]}(\Lambda_\rho) = \rank_{\bar{k}[t]} \eM(\rho).
\]
For each $\lambda \in \Lambda_\rho$, let $\Gcal_\lambda: \Mscr(\rho)\rightarrow \TT $ be the following \emph{Anderson generating function}
\begin{equation}\label{eqn: AGF}
\langle m \mid \Gcal_\lambda\rangle \assign  \sum_{n=0}^\infty m\Bigl(\exp_\rho(\partial \rho_t^{-n-1}\lambda)\Bigr) t^n, \quad \forall\,m \in \Mscr(\rho).
\end{equation}
Following~\cite{NP21}, we take
\[
\gamma_\lambda\assign  -\Gcal_\lambda\big|_{\tau \Mscr(\rho)} dt\in \Hom_{\bar{k}[t]}(\tau \Mscr(\rho), \TT\, dt) \cong \TT \otimes_{\bar{k}[t]} \eM(\rho).
\]
It is straightforward to check that $\sigma \cdot \gamma_\lambda = \gamma_\lambda$, which means that
$\gamma_\lambda \in H_{\mathrm{Betti}}(\eM(\rho))$.

\begin{lemma}\label{lem: L(rho)-H(rho)}
The map $(\lambda\longmapsto \gamma_\lambda)$ gives an $\FF_{q}[t]$-module isomorphism from $\Lambda_\rho$ onto $H_{\mathrm{Betti}}(\eM(\rho))$.
\end{lemma}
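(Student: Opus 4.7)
The plan is to verify four items in turn: (i) the map $\lambda\mapsto\gamma_\lambda$ actually lands in $H_{\mathrm{Betti}}(\eM(\rho))$; (ii) it is $\FF_q[t]$-linear, where $\FF_q[t]$ acts on $\Lambda_\rho$ through $\partial\rho$ and on $H_{\mathrm{Betti}}(\eM(\rho))$ through multiplication by $t$; (iii) it is injective; and (iv) it is surjective. Items (i) and (ii) are essentially computations: using the defining functional equation $\exp_\rho(\partial\rho_t z)=\rho_t\exp_\rho(z)$, one expands the Anderson generating function and verifies that $\langle \tau m\mid\Gcal_{\partial\rho_t\lambda}\rangle = t\langle \tau m\mid\Gcal_\lambda\rangle$ modulo an exact $\bar{k}[t]$-linear term on $\Mscr(\rho)$ that vanishes after restriction to $\tau\Mscr(\rho)$. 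The $\sigma$-invariance asserted just before the lemma amounts to the identity $\langle\tau^2 m\mid\Gcal_\lambda\rangle^{(-1)} = \langle\tau m\mid\Gcal_\lambda\rangle$, which follows by noting that the $n$-th coefficient of $\Gcal_\lambda(\tau m)$ is the $q$-th power of the $n$-th coefficient of $\Gcal_\lambda(m)$, together with the relation $\sigma\tau=t\cdot\mathrm{id}$ on $\Mscr(\rho)$.

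For injectivity, suppose $\gamma_\lambda=0$, i.e., $\Gcal_\lambda(\tau m)=0$ for every $m\in\Mscr(\rho)$. Since the $n$-th coefficient of $\Gcal_\lambda(\tau m)$ equals $m(\exp_\rho(\partial\rho_t^{-n-1}\lambda))^q$, we conclude $m(\exp_\rho(\partial\rho_t^{-n-1}\lambda))=0$ for all $m\in\Mscr(\rho)$ and all $n\ge 0$. Because $\Mscr(\rho)=\Hom_{\FF_q}(E_\rho,\Ga)$ separates the $\CC_\infty$-points of $E_\rho$, this forces $\exp_\rho(\partial\rho_t^{-n-1}\lambda)=0$, hence $\partial\rho_t^{-n-1}\lambda\in\Lambda_\rho$ for all $n\ge 0$. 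Since $\partial\rho_t-\theta\cdot I_d$ is nilpotent on $\Lie(E_\rho)$ and $|\theta|_\infty>1$, the sequence $\partial\rho_t^{-n-1}\lambda$ converges to $0$ in $\Lie(E_\rho)(\CC_\infty)$; the discreteness of $\Lambda_\rho$ then gives $\lambda=0$.

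For surjectivity, the quickest route is to exploit the rank equality $\rank_{\FF_q[t]}\Lambda_\rho = \rank_{\bar{k}[t]}\eM(\rho) = \rank_{\FF_q[t]}H_{\mathrm{Betti}}(\eM(\rho))$, the last being the uniformizability hypothesis. By (iii) the map embeds $\Lambda_\rho$ as a full-rank $\FF_q[t]$-submodule of $H_{\mathrm{Betti}}(\eM(\rho))$; the cokernel is therefore a finitely generated torsion $\FF_q[t]$-module. To rule out any torsion, I would argue that the image is saturated: if $a(t)\gamma = \gamma_\lambda$ for some $\gamma\in H_{\mathrm{Betti}}(\eM(\rho))$ and some $a\in\FF_q[t]$, one evaluates the defining residue-series at the zeros of $a$ and uses the divisibility of $\Lambda_\rho$ by $\partial\rho_{a}$ inside $\Lie(E_\rho)(\CC_\infty)$, which holds because $\exp_\rho$ is surjective (the uniformizability of $E_\rho$). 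Alternatively, since Theorem~\ref{thm: M(rho)} provides an isomorphism $\eM(\rho)\cong\eN(\rho)$ of dual $t$-motives and Anderson's original construction identifies $\Lambda_\rho$ with $H_{\mathrm{Betti}}(\eN(\rho))$ via essentially the same Anderson-generating-function recipe, the surjectivity can be transported through this identification.

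I expect the main obstacle to be the surjectivity step: verifying that no element of $H_{\mathrm{Betti}}(\eM(\rho))$ outside $\{\gamma_\lambda\}$ exists requires using the full strength of uniformizability of $E_\rho$ and careful handling of how the Anderson generating function encodes the exponential. The linearity and injectivity steps are formal given the machinery already developed in Section~\ref{sec: M(rho)} and Section~\ref{sec: Com Lie}, but the surjectivity calls for an explicit inversion formula (or the reduction to $\eN(\rho)$ via Theorem~\ref{thm: M(rho)}) that matches $\gamma\in H_{\mathrm{Betti}}(\eM(\rho))$ with an explicit period $\lambda\in\Lambda_\rho$.
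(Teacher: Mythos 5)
Your items (i)--(iii) are sound: the $\bar{k}[t]$-linearity of $\Gcal_\lambda\big|_{\tau\Mscr(\rho)}$ (which is exactly where $\exp_\rho(\lambda)=0$ is used), the $\FF_q[t]$-equivariance, and the injectivity argument via $\partial\rho_t^{-n-1}\lambda\to 0$ and discreteness of $\Lambda_\rho$ all work (though your appeal to a relation ``$\sigma\tau=t\cdot\mathrm{id}$ on $\Mscr(\rho)$'' is neither needed nor meaningful there, since $\Mscr(\rho)$ carries a $\tau$-action, not a $\sigma$-action; the coefficientwise twisting computation alone gives $\sigma\gamma_\lambda=\gamma_\lambda$). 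The genuine gap is in the surjectivity step, exactly the one you flag. Your saturation argument relies on ``divisibility of $\Lambda_\rho$ by $\partial\rho_a$ inside $\Lie(E_\rho)(\CC_\infty)$, which holds because $\exp_\rho$ is surjective,'' but divisibility inside $\Lie(E_\rho)(\CC_\infty)$ is vacuous: $\partial\rho_a$ is invertible there for any nonzero $a$ (its sole eigenvalue is $a(\theta)\neq 0$), and this has nothing to do with surjectivity of $\exp_\rho$. What you actually need is that $\mu:=\partial\rho_a^{-1}\lambda$ lies in $\Lambda_\rho$, and that is false in general: $\exp_\rho(\mu)$ is merely an $a$-torsion point of $E_\rho$ (already for the Carlitz module, $\tilde{\pi}/\theta\notin A\tilde{\pi}$ while $\exp_C(\tilde{\pi}/\theta)$ is a nonzero $t$-torsion element). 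Worse, for such $\mu$ the recipe does not even produce an element of $\TT\otimes_{\bar{k}[t]}\eM(\rho)$, because $\Gcal_z\big|_{\tau\Mscr(\rho)}$ is $\bar{k}[t]$-linear only when $\exp_\rho(z)=0$; so one cannot compare $\gamma$ with a putative $\gamma_\mu$ and conclude saturation of the image this way. Ruling out torsion in the cokernel therefore requires a genuinely different input, not supplied by your sketch.

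For comparison, the paper settles the whole lemma in one stroke by quoting the fact (\cite{NP21}, Prop.\ 4.3.10; see also \cite{GP19}, Prop.\ 6.2.4 and \cite{Papanikolas}, Prop.\ 3.3.9) that for an $\FF_q[t]$-basis $\{\lambda_1,\dots,\lambda_r\}$ of $\Lambda_\rho$ and a $\bar{k}[t]$-basis $\{\tau m_1,\dots,\tau m_r\}$ of $\tau\Mscr(\rho)$ the matrix $\bigl(\langle\tau m_i\mid\Gcal_{\lambda_j}\rangle\bigr)_{i,j}$ lies in $\GL_r(\TT)$. This says that $\gamma_{\lambda_1},\dots,\gamma_{\lambda_r}$ form a $\TT$-basis of $\TT\otimes_{\bar{k}[t]}\eM(\rho)$ consisting of $\sigma$-fixed elements, hence an $\FF_q[t]$-basis of $H_{\mathrm{Betti}}(\eM(\rho))$ (two $\sigma$-fixed $\TT$-bases differ by a matrix in $\GL_r(\TT)^\sigma=\GL_r(\FF_q[t])$), so bijectivity follows at once. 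Your fallback of transporting Anderson's uniformization through the isomorphism $\eM(\rho)\cong\eN(\rho)$ of Theorem~\ref{thm: M(rho)} is in the same spirit, but as written it is only a pointer: one still has to verify that the two Betti realizations are matched by the Anderson-generating-function recipe, which is precisely what the cited $\GL_r(\TT)$ statement packages. Either cite that result directly, as the paper does, or supply the invertibility computation; without one of these, the surjectivity remains unproved.
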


\begin{proof}
Note that $\rank_{\FF_q[t]}\Lambda_\rho = \rank_{\bar{k}[t]} \eM(\rho) = \rank_{\FF_q[t]} H_{\mathrm{Betti}}(\eM(\rho))$. Let $\{\lambda_1,\ldots, \lambda_r\}$ be an $\FF_q[t]$-base of $\Lambda_\rho$ and $\{\tau m_1, \ldots,  \tau m_r\}$ be a $\bar{k}[t]$-base of $\tau \Mscr(\rho)$.
The result follows immediately from the fact (cf.\ \cite[Proposition 4.3.10]{NP21}, see also \cite[Proposition 6.2.4]{GP19} and \cite[Proposition 3.3.9]{Papanikolas}) that
\[
(\langle \tau m_i \mid \Gcal_{\lambda_j}\rangle)_{1\leq i,j\leq n} \in \GL_r(\TT).
\qedhere
\]
\end{proof}

Recall that $\TT^\dagger$ is the ring of rigid analytic functions on $\CC_\infty\setminus \{\theta^{q^i}\mid i \in \NN\}$.
As $E_\rho$ is uniformizable, so is $\eM(\rho)$.
Thus by~\cite[Prop.~2.3.30]{HJ20}  we have that
\[
H_{\mathrm{Betti}}(\eM(\rho)) \subseteq \TT^\dagger \otimes_{\bar{k}[t]}\eM(\rho) \subseteq
\CC_\infty[\![t-\theta]\!] \otimes_{\bar{k}[t]}\eM(\rho).
\]
Let $\bar{\gamma}_\lambda$ be the image of $\gamma_\lambda$ in $\Hom_{\bar{k}}(\Mscr(\rho)/\tau\Mscr(\rho),\CC_\infty)$ under the following maps
\begin{align*}
\CC_\infty[\![t-\theta]\!] \otimes_{\bar{k}[t]}\eM(\rho)\twoheadrightarrow \CC_\infty[\![t-\theta]\!]\otimes_{\bar{k}[t]}\frac{\eM(\rho)}{\sigma \eM(\rho)} & \cong \CC_\infty \otimes_{\bar{k}} \frac{\eM(\rho)}{\sigma \eM(\rho)} \\
& \cong \Hom_{\bar{k}}(\frac{\Mscr(\rho)}{\tau\Mscr(\rho)},\CC_\infty),
\end{align*}
where the last isomorphism comes from \eqref{eqn: perfect pairing}. We obtain
\[
\bar{\gamma}_\lambda(m) = -\Res_{t=\theta}\Bigl( \langle m |\Gcal_\lambda\rangle dt\Bigr),
\]
which coincides with the image of $\lambda$ in $\Hom_{\bar{k}}(\Mscr(\rho)/\tau \Mscr(\rho),\CC_\infty)$ under the isomorphism \eqref{eqn: Lie1} by \cite[Lemma 1.3.4 and Proposition 3.3.2]{Anderson86}.
In conclusion we obtain the following.

\begin{proposition}\label{P:PeriodsDiagComm}
The following diagram commutes:
\[
\begin{tabular}{ccccc}
$\Lie(E)(\CC_\infty)$ & $\stackrel{\sim}{\longrightarrow}$ &
$\displaystyle\Hom_{\bar{k}}(\frac{\Mscr(\rho)}{\tau \Mscr(\rho)},\CC_\infty)$ & $\stackrel[\eqref{eqn: perfect pairing}]{\sim}{\longleftarrow}$ &
$\hspace{-0.3cm}\displaystyle \CC_\infty \otimes_{\bar{k}} \frac{\eM(\rho)}{\sigma \eM(\rho)}$ \\
$\cup$ & & $\uparrow$ & & \rotatebox{90}{\scalebox{1}[1]{$\cong$}} \\
$\Lambda_\rho$ & $\stackrel{\sim}{\longrightarrow}$ &
$H_{\mathrm{Betti}}(\eM(\rho))$ & $\longrightarrow$ & $\displaystyle \CC_\infty[\![t-\theta]\!] \otimes_{\bar{k}[t]} \frac{\eM(\rho)}{\sigma \eM(\rho)}$. \\
\rotatebox{90}{\scalebox{1}[1]{$\in$}} & & \rotatebox{90}{\scalebox{1}[1]{$\in$}} & & \\
$\lambda$ & $\longmapsto$ & $\gamma_\lambda$ & &
\end{tabular}
\]
\end{proposition}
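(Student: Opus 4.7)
The plan is to prove commutativity of the diagram by tracking a fixed period $\lambda \in \Lambda_\rho$ along both routes and showing that the resulting elements of $\Hom_{\bar{k}}(\Mscr(\rho)/\tau\Mscr(\rho),\CC_\infty)$ coincide. Essentially all of the work is already encapsulated in the paragraph preceding the proposition; what remains is to assemble the identifications cleanly.

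First, I would trace $\lambda$ along the upper route. The left vertical inclusion lands $\lambda$ in $\Lie(E_\rho)(\CC_\infty)$, and by Anderson's version of the Lie algebra identification (\eqref{eqn: Lie1}, from \cite[Lemma~1.3.4]{Anderson86}), this corresponds to the $\bar{k}$-linear functional
\[
\bar{\lambda} : \Mscr(\rho)/\tau\Mscr(\rho) \longrightarrow \CC_\infty,\quad m \longmapsto (dm)(\lambda),
\]
where $dm$ denotes the differential of the $\FF_q$-linear morphism $m : E_\rho \to \Ga$ at the identity.

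Next, I would compute the image along the lower route. By Lemma~\ref{lem: L(rho)-H(rho)}, $\lambda \mapsto \gamma_\lambda = -\Gcal_\lambda|_{\tau\Mscr(\rho)}\,dt \in H_{\mathrm{Betti}}(\eM(\rho))$. Because $(t-\theta)^{N}\eM(\rho) \subset \sigma\eM(\rho)$ for sufficiently large $N$, the action of $t-\theta$ is nilpotent on the quotient, so $\CC_\infty[\![t-\theta]\!]\otimes_{\ok[t]} \eM(\rho)/\sigma\eM(\rho)$ canonically identifies with $\CC_\infty \otimes_{\ok} \eM(\rho)/\sigma\eM(\rho)$, which in turn is identified with $\Hom_{\ok}(\Mscr(\rho)/\tau\Mscr(\rho),\CC_\infty)$ via the residue pairing \eqref{eqn: perfect pairing}. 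Unwinding these identifications produces precisely the functional
\[
m \longmapsto -\Res_{t=\theta}\bigl(\langle m \mid \Gcal_\lambda\rangle \,dt\bigr),
\]
as asserted in the paragraph preceding the statement.

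The remaining content is the equality of the two functionals, namely
\[
(dm)(\lambda) \;=\; -\Res_{t=\theta}\bigl(\langle m \mid \Gcal_\lambda\rangle\,dt\bigr)\qquad \forall\, m \in \Mscr(\rho).
\]
This is exactly Anderson's formula \cite[Proposition~3.3.2]{Anderson86}, which expresses the linear part of $m$ evaluated at $\lambda$ in terms of the Anderson generating function $\Gcal_\lambda$ of \eqref{eqn: AGF}. Granting this formula, the commutativity of the diagram is immediate.

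The only genuine obstacle is the bookkeeping needed to justify the chain of isomorphisms in the lower-right corner: one must check that the reduction map $\TT^\dagger\otimes_{\ok[t]}\eM(\rho) \to \CC_\infty[\![t-\theta]\!]\otimes_{\ok[t]}\eM(\rho)/\sigma\eM(\rho)$, followed by the perfect pairing \eqref{eqn: perfect pairing}, really converts the element $-\Gcal_\lambda|_{\tau\Mscr(\rho)}\,dt$ into the residue-at-$\theta$ functional displayed above. Once this compatibility is verified (which is essentially the definition of the residue pairing together with Laurent expansion at $t=\theta$), Anderson's formula finishes the proof.
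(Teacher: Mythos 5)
Your proposal is correct and takes essentially the same route as the paper: the proof preceding the proposition traces $\gamma_\lambda$ through the chain of identifications to obtain $\bar\gamma_\lambda(m) = -\Res_{t=\theta}\bigl(\langle m\mid \Gcal_\lambda\rangle\,dt\bigr)$, and then appeals to Anderson's Lemma~1.3.4 together with Proposition~3.3.2 to conclude that this agrees with the image of $\lambda$ under~\eqref{eqn: Lie1}, which is precisely the argument you lay out.
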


\subsection{Comparisons between de Rham modules and pairings}\label{sec: comparison DR}
Let $E_\rho$ be an abelian $t$-module over $\bar{k}$.
The de Rham module of $E_\rho$ was first introduced via \lq\lq bi-derivations\rq\rq\ in~\cite{Ge89, Yu90, BP02}. When $E_\rho$ is uniformizable, the de Rham pairing of $E_\rho$ was constructed using the Anderson generating function (cf.\ \cite{BP02}, see also \cite[(4.3.3) and (4.3.4)]{NP21}). Here we shall identify canonically the de Rham modules of $E_\rho$ with the one of the Hartl-Juschka dual $t$-motive $\eM(\rho)$ associated with $E_\rho$, and make a natural comparison between their de Rham pairings.

Recall that the de Rham module of $E_\rho$ is isomorphic to (see~\cite[\S 3.1]{BP02} and \cite[Proposition 4.1.3]{NP21})
\begin{equation}\label{E:Isom HdR and QtauM}
H_{\mathrm{dR}}(E_\rho,\bar{k}) \cong \frac{\tau \Mscr(\rho)}{(t-\theta) \cdot \tau \Mscr(\rho)}.
\end{equation}
From the canonical isomorphism $\tau \Mscr(\rho) \cong \Hom_{\bar{k}[t]}(\eM(\rho), \Omega_{\bar{k}[t]/\bar{k}})$,
we may identify $(t-\theta)\cdot \tau \Mscr(\rho)$ with $\Hom_{\bar{k}[t]}(\eM(\rho), (t-\theta)\Omega_{\bar{k}[t]/\bar{k}})$.
This induces the following isomorphism.

\begin{lemma}\label{lem: dR-iso} In the setting above, we have the following identifications:
\begin{multline*}
\hspace{-0.4cm} H_{\mathrm{dR}}(E_\rho,\bar{k}) \cong \frac{ \Hom_{\bar{k}[t]}(\eM(\rho), \Omega_{\bar{k}[t]/\bar{k}})}{\Hom_{\bar{k}[t]}(\eM(\rho), (t-\theta)\Omega_{\bar{k}[t]/\bar{k}})}
\cong \Hom_{\bar{k}[t]}\left(\eM(\rho),\frac{\Omega_{\bar{k}[t]/\bar{k}}}{(t-\theta)\Omega_{\bar{k}[t]/\bar{k}}}\right) \\
\cong \Hom_{\bar{k}}\left(\frac{\eM(\rho)}{(t-\theta)\eM(\rho)},\bar{k}\right)
= H_{\mathrm{dR}}(\eM(\rho),\bar{k}).
\end{multline*}
\end{lemma}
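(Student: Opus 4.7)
The plan is to verify each of the four asserted identifications in turn; they are all formal manipulations once one exploits the $\bar{k}[t]$-freeness of $\eM(\rho)$ (which is built into the definition of a dual $t$-motive) and the defining relation $\eM(\rho)=\Hom_{\bar{k}[t]}(\tau\Mscr(\rho),\Omega_{\bar{k}[t]/\bar{k}})$.

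First I would establish the leftmost isomorphism by combining \eqref{E:Isom HdR and QtauM} with a double-duality identification. Since $\Omega_{\bar{k}[t]/\bar{k}}=\bar{k}[t]\,dt$ is free of rank one and both $\tau\Mscr(\rho)$ and $\eM(\rho)$ are finitely generated free $\bar{k}[t]$-modules of the same rank (this uses that $E_\rho$ is abelian and $\eM(\rho)$ is a dual $t$-motive, cf.\ Theorem~\ref{thm: M(rho)}), the canonical biduality map $\tau\Mscr(\rho)\to\Hom_{\bar{k}[t]}(\eM(\rho),\Omega_{\bar{k}[t]/\bar{k}})$ is an isomorphism. Under this isomorphism the submodule $(t-\theta)\cdot\tau\Mscr(\rho)$ goes over precisely to $\Hom_{\bar{k}[t]}(\eM(\rho),(t-\theta)\Omega_{\bar{k}[t]/\bar{k}})$, which—together with \eqref{E:Isom HdR and QtauM}—yields the first claimed isomorphism.

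Next, for the second isomorphism I would invoke the short exact sequence
\[
0\longrightarrow (t-\theta)\Omega_{\bar{k}[t]/\bar{k}}\longrightarrow \Omega_{\bar{k}[t]/\bar{k}}\longrightarrow \Omega_{\bar{k}[t]/\bar{k}}/(t-\theta)\Omega_{\bar{k}[t]/\bar{k}}\longrightarrow 0
\]
and apply $\Hom_{\bar{k}[t]}(\eM(\rho),-)$. Because $\eM(\rho)$ is free (hence projective) over $\bar{k}[t]$, the resulting sequence remains exact, which gives precisely
\[
\frac{\Hom_{\bar{k}[t]}(\eM(\rho),\Omega_{\bar{k}[t]/\bar{k}})}{\Hom_{\bar{k}[t]}(\eM(\rho),(t-\theta)\Omega_{\bar{k}[t]/\bar{k}})}\;\cong\;\Hom_{\bar{k}[t]}\Bigl(\eM(\rho),\frac{\Omega_{\bar{k}[t]/\bar{k}}}{(t-\theta)\Omega_{\bar{k}[t]/\bar{k}}}\Bigr).
\]

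Finally, for the third isomorphism I would note that $\Omega_{\bar{k}[t]/\bar{k}}/(t-\theta)\Omega_{\bar{k}[t]/\bar{k}}\cong \bar{k}$ as a $\bar{k}[t]$-module (the action of $t$ is by $\theta$), so any $\bar{k}[t]$-homomorphism from $\eM(\rho)$ into it kills $(t-\theta)\eM(\rho)$ and thereby factors through $\eM(\rho)/(t-\theta)\eM(\rho)$, which is a $\bar{k}$-vector space. This produces the canonical isomorphism
\[
\Hom_{\bar{k}[t]}\Bigl(\eM(\rho),\frac{\Omega_{\bar{k}[t]/\bar{k}}}{(t-\theta)\Omega_{\bar{k}[t]/\bar{k}}}\Bigr)\;\cong\;\Hom_{\bar{k}}\Bigl(\frac{\eM(\rho)}{(t-\theta)\eM(\rho)},\bar{k}\Bigr),
\]
and the last term is $H_{\mathrm{dR}}(\eM(\rho),\bar{k})$ by definition. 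There is no real obstacle here; the only thing to watch is bookkeeping of the canonical identifications—particularly the $(t-\theta)$-compatibility in the biduality step—so that the composite isomorphism is canonical and not merely an abstract isomorphism of $\bar{k}$-vector spaces.
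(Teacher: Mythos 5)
Your proposal is correct and follows essentially the same route as the paper: the paper likewise combines \eqref{E:Isom HdR and QtauM} with the canonical biduality $\tau\Mscr(\rho)\cong \Hom_{\bar{k}[t]}(\eM(\rho),\Omega_{\bar{k}[t]/\bar{k}})$, identifies $(t-\theta)\tau\Mscr(\rho)$ with $\Hom_{\bar{k}[t]}(\eM(\rho),(t-\theta)\Omega_{\bar{k}[t]/\bar{k}})$, and treats the remaining identifications as the same formal consequences of the $\bar{k}[t]$-freeness of $\eM(\rho)$ that you spell out. Your write-up merely supplies the routine details (Hom-exactness from projectivity, $t$ acting by $\theta$ on $\Omega/(t-\theta)\Omega$) that the paper leaves implicit.
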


Assume that $E_\rho$ is uniformizable.
Let $\Lambda_\rho \subset \Lie(E_\rho)(\CC_\infty)$ be the period lattice of $E_\rho$.
For each $\lambda \in \Lambda_\rho$,
let $\Gcal_\lambda: \Mscr(\rho)\rightarrow \TT $ be the Anderson generating function in \eqref{eqn: AGF}. Recall that the de Rham pairing of $E_\rho$ is defined by (cf.\ \cite[(4.3.3) and (4.3.4)]{NP21})
\begin{equation}\label{eqn: dR-P t-module}
\begin{tabular}{ccl}
$H_{\mathrm{dR}}(E_\rho,\bar{k})\times\Lambda_\rho$ & $\longrightarrow$ & $\CC_\infty$, \\
\quad \quad \quad $(\delta, \lambda)$ & $\longmapsto$ & $[\delta,\lambda]\assign  \langle \tau m_\delta | \Gcal_\lambda \rangle\big|_{t=\theta}$,
\end{tabular}
\end{equation}
where $\tau m_\delta  \in \tau \Mscr(\rho)$, whose image in $H_{\mathrm{dR}}(E_\rho,\bar{k})$ is $\delta$ via~\eqref{E:Isom HdR and QtauM}.
On the other hand, for each $\delta \in H_{\mathrm{dR}}(E_\rho,\bar{k})$ and $\lambda \in \Lambda_\rho$,
let $\omega_\delta \in H_{\mathrm{dR}}(\eM(\rho),\bar{k})$ be the differential corresponding to $\delta \in H_{\mathrm{dR}}(E_\rho,\bar{k})$ under the isomorphism in Lemma~\ref{lem: dR-iso}, and
let $\gamma_\lambda \in H_{\mathrm{Betti}}(\eM(\rho))$ be the cycle
corresponding to $\lambda$ under the isomorphism in Lemma~\ref{lem: L(rho)-H(rho)}.
Identifying $\tau m_\delta$ as an element of $\Hom_{\TT^\dagger}(\TT^\dagger \otimes_{\bar{k}[t]}\eM(\rho), \TT^\dagger dt)$,
we have the following commutative diagram:
\[
\SelectTips{cm}{}
\xymatrix{
\TT^\dagger \otimes_{\bar{k}[t]} \eM(\rho) \ar[r]^>>>>>{\omega_\delta} \ar[d]_{\tau m_\delta} & \CC_\infty
\\
\TT^\dagger\, dt \ar[r]^{\sim}
& \TT^\dagger \ar[u]_{(\cdot)|_{t=\theta}}
}
\]
This gives us the comparison,
\[
\int_{\gamma_\lambda}\omega_\delta = \omega_{\delta}(\gamma_\lambda) = \langle \tau m_\delta | -\Gcal_\lambda\rangle|_{t=\theta} = -
\langle \tau m_\delta | \Gcal_\lambda \rangle\big|_{t=\theta} = -[\delta,\lambda],
\]
from which we conclude the following.

\begin{proposition}\label{prop: comp-dR-pairing}
Let $E_\rho$ be a uniformizable abelian $t$-module over $\bar{k}$.
We have the following comparison between the de Rham pairings of $E_\rho$ and $\eM(\rho)$:
\[
\SelectTips{cm}{}
\xymatrixrowsep{0.4cm}
\xymatrixcolsep{0.05cm}
\xymatrix{
[\ ,\ ]: & H_{\mathrm{dR}}(E_\rho,\bar{k})\ar@{<->}[dd]^{\rotatebox{90}{\scalebox{1}[1]{$\sim$}}}_{\text{\rm Lem.\ \ref{lem: dR-iso}}} & \times & \Lambda_\rho\ar@{<->}[dd]^{\rotatebox{90}{\scalebox{1}[1]{$\sim$}}}_{\text{\rm Lem.\ \ref{lem: L(rho)-H(rho)}}} \ar[rrr] & & & \CC_\infty \ar@{<->}[dd]^{\rotatebox{90}{\scalebox{1}[1]{$\sim$}}} & z \ar@{<->}[dd] \\
&&&& \\
\int : & H_{\mathrm{dR}}(\eM(\rho),\bar{k}) & \times & H_{\mathrm{Betti}}(\eM(\rho)) \ar[rrr] & & & \CC_\infty & -z\\
}
\]
\end{proposition}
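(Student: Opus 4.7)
The plan is to verify commutativity of the diagram element by element: track a pair $(\delta,\lambda) \in H_{\mathrm{dR}}(E_\rho,\bar{k})\times \Lambda_\rho$ through the two vertical identifications, compute the image under the bottom integration pairing, and compare with the top pairing $[\delta,\lambda]$. Almost all of the content is already packaged in the two lemmas; what remains is to unwind the isomorphism of Lemma~\ref{lem: dR-iso} in a way that is compatible with the natural $\TT^\dagger$-linear extension used to define $\int_\gamma \omega$.

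First I would make the vertical isomorphism on differentials concrete. A class $\delta \in H_{\mathrm{dR}}(E_\rho,\bar{k})$ is represented by some $\tau m_\delta \in \tau\Mscr(\rho)$, which under the canonical duality $\tau\Mscr(\rho) \cong \Hom_{\bar{k}[t]}(\eM(\rho),\Omega_{\bar{k}[t]/\bar{k}})$ becomes a $\bar{k}[t]$-linear map $\eum \mapsto \tau m_\delta(\eum) \in \bar{k}[t]\,dt$. The remaining isomorphisms in Lemma~\ref{lem: dR-iso} amount to reducing mod $(t-\theta)$ and trivializing $\Omega_{\bar{k}[t]/\bar{k}}/(t-\theta)\Omega_{\bar{k}[t]/\bar{k}} \cong \bar{k}$ via $dt \mapsto 1$, so $\omega_\delta(\eum)$ is nothing other than the $dt$-coefficient of $\tau m_\delta(\eum)$ specialized at $t=\theta$. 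Extending $\CC_\infty$-linearly yields the functional $\omega_\delta$ on $\TT^\dagger \otimes_{\bar{k}[t]}\eM(\rho)/(t-\theta)$ used in Section~\ref{sec: dR module}, realized as the composite
\[
\TT^\dagger \otimes_{\bar{k}[t]} \eM(\rho) \xrightarrow{\ \tau m_\delta\ } \TT^\dagger\, dt \;\xrightarrow{\ dt\mapsto 1\ }\; \TT^\dagger \xrightarrow{\ (\cdot)|_{t=\theta}\ } \CC_\infty.
\]

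Next I would feed $\gamma_\lambda$ from Lemma~\ref{lem: L(rho)-H(rho)} through this composite. By construction $\gamma_\lambda = -\Gcal_\lambda|_{\tau\Mscr(\rho)}\, dt$, viewed as an element of $\Hom_{\TT^\dagger}(\TT^\dagger\otimes_{\bar{k}[t]}\tau\Mscr(\rho), \TT^\dagger\,dt) \cong \TT^\dagger\otimes_{\bar{k}[t]}\eM(\rho)$. Pairing with $\tau m_\delta$ produces $-\langle \tau m_\delta \mid \Gcal_\lambda\rangle\, dt \in \TT^\dagger\,dt$, and applying the remaining two arrows in the composite gives
\[
\int_{\gamma_\lambda}\omega_\delta \;=\; \omega_\delta(\gamma_\lambda) \;=\; -\langle \tau m_\delta\mid \Gcal_\lambda\rangle\Big|_{t=\theta} \;=\; -[\delta,\lambda],
\]
where the last equality is the definition \eqref{eqn: dR-P t-module} of the de Rham pairing of $E_\rho$. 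This is exactly the sign $z \leftrightarrow -z$ asserted on the right edge of the diagram.

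The only step that requires care is the first one: verifying that the chain of canonical identifications in Lemma~\ref{lem: dR-iso} really is the one above after the $\TT^\dagger$-linear extension of $\omega_\delta$ to $\TT^\dagger \otimes_{\bar{k}[t]}\eM(\rho)$. Once this is pinned down, well-definedness is automatic: a different representative $\tau m_\delta + (t-\theta)\tau m'$ changes the pairing by $(t-\theta)\cdot(\text{something in }\TT^\dagger)$, which vanishes upon specialization at $t=\theta$, and similarly the Betti representative is canonical. Thus the sign discrepancy originates entirely from the conventional minus sign built into $\gamma_\lambda = -\Gcal_\lambda|_{\tau\Mscr(\rho)}\, dt$, chosen so that $\sigma\gamma_\lambda = \gamma_\lambda$.
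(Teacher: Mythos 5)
Your proposal is correct and follows essentially the same route as the paper: unwind Lemma~\ref{lem: dR-iso} to realize $\omega_\delta$ as the composite $\TT^\dagger\otimes_{\bar{k}[t]}\eM(\rho)\xrightarrow{\tau m_\delta}\TT^\dagger\,dt\xrightarrow{\sim}\TT^\dagger\xrightarrow{(\cdot)|_{t=\theta}}\CC_\infty$, then evaluate on $\gamma_\lambda=-\Gcal_\lambda|_{\tau\Mscr(\rho)}\,dt$ to get $\int_{\gamma_\lambda}\omega_\delta=-\langle\tau m_\delta\mid\Gcal_\lambda\rangle|_{t=\theta}=-[\delta,\lambda]$. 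Your extra remarks on independence of the representative $\tau m_\delta$ are a harmless (and correct) addition not spelled out in the paper.
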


\subsection{\texorpdfstring{Quasi-periods of CM abelian $t$-modules}{Quasi-periods of CM abelian t-modules}}
\label{sec: app CM t-mod}

Let $E_\rho$ be a CM abelian $t$-module with generalized CM type $(\eK,\Xi)$ over $\bar{k}$.
By Theorem~\ref{thm: Uniform}, we know that $E_\rho$ is always uniformizable.
Let $\Lambda_\rho$ be the period lattice of $E_\rho$.
For each $\delta \in H_{\mathrm{dR}}(E_\rho,\bar{k})$ and $\lambda \in \Lambda_\rho$, we call $[\delta,\lambda]$ the \emph{quasi-period with respect to $\delta$ associated to $\lambda$} (see~\cite[p.~113]{BP02}).
From the natural comparison between the de Rham pairings of $E_\rho$ and $\eM(\rho)$ in Proposition~\ref{prop: comp-dR-pairing}, we obtain the following:

\begin{theorem}\label{thm: CM t-mod}
Let $E_\rho$ be a CM abelian $t$-module with generalized CM type $(\eK,\Xi)$ over~$\ok$.
Let $\eK^+$ be the maximal totally real subfield of $\eK$.
Then
\begin{align}\label{eqn: trdeg}
1+\frac{([\eK:\eK^+]-1)}{[\eK:\eK^+]}\cdot [\eK:\FF_q(t)] &\geq
\trdeg_{\bar{k}}\, \bar{k}\bigl([\delta,\lambda] \bigm| \delta \in H_{\mathrm{dR}}(E_\rho,\bar{k}),\ \lambda \in \Lambda_\rho \bigr) \\
&\geq \rank_\ZZ I_\Xi^0. \notag
\end{align}
In particular, if the generalized CM type $\Xi$ of $\eK$ is non-degenerate, then
\[
\trdeg_{\bar{k}} \bar{k}\bigl([\delta,\lambda] \bigm|  \delta \in H_{\mathrm{dR}}(E_\rho,\bar{k}),\ \lambda \in \Lambda_\rho \bigr)
= 1+\frac{[\eK:\eK^+]-1}{[\eK:\eK^+]}\cdot [\eK:\FF_q(t)].
\]
\end{theorem}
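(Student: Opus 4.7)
The plan is to reduce the statement directly to Theorem~\ref{thm: lower-bound} and the Legendre relations via the comparison of de~Rham pairings established in Proposition~\ref{prop: comp-dR-pairing}. First I would note that since $E_\rho$ is CM with generalized CM type $(\eK,\Xi)$, the Hartl--Juschka dual $t$-motive $\eM(\rho)$ is a CM dual $t$-motive with the same generalized CM type, and hence is uniformizable by Theorem~\ref{thm: Uniform}; in particular $E_\rho$ itself is uniformizable so the quasi-periods $[\delta,\lambda]$ make sense. Proposition~\ref{prop: comp-dR-pairing} then yields
\[
\bar{k}\bigl([\delta,\lambda] \bigm| \delta \in H_{\mathrm{dR}}(E_\rho,\bar{k}),\ \lambda \in \Lambda_\rho\bigr)
\;=\;
\bar{k}\Bigl(\int_\gamma \omega \Bigm| \omega \in H_{\mathrm{dR}}(\eM(\rho),\bar{k}),\ \gamma \in H_{\mathrm{Betti}}(\eM(\rho))\Bigr),
\]
because the bijections of Lemmas~\ref{lem: dR-iso} and~\ref{lem: L(rho)-H(rho)} match the two pairings up to a sign.

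Next I would apply Proposition~\ref{P:fields equality} to the CM dual $t$-motive $\eM(\rho)$, which identifies the right-hand field with $\bar{k}\bigl(p_\eK(\xi,\Xi) \bigm| \xi \in J_\eK\bigr)$. At this point the theorem is entirely a statement about period symbols attached to the generalized CM type $(\eK,\Xi)$, so both bounds in \eqref{eqn: trdeg} fall within the framework already developed. The lower bound $\trdeg_{\bar{k}} \bar{k}(p_\eK(\xi,\Xi)\mid \xi \in J_\eK) \geq \rank_\ZZ I_\Xi^0$ is precisely Theorem~\ref{thm: lower-bound}, while the upper bound is a consequence of the Legendre relations of Theorem~\ref{thm: Period Symbol}~(5): for each $\xi^+ \in J_{\eK^+}$ the relation
\[
\prod_{\pi_{\bX/\bX^+}(\xi) = \xi^+} p_\eK(\xi,\Xi) \;\sim\; \tilde{\pi}^{\mathrm{wt}(\Xi)}
\]
cuts the number of algebraically independent symbols by $[\eK^+:\FF_q(t)]$, giving
\[
\trdeg_{\bar{k}} \bar{k}(p_\eK(\xi,\Xi)\mid \xi \in J_\eK) \;\leq\; 1 + [\eK:\FF_q(t)] - [\eK^+:\FF_q(t)] \;=\; 1 + \tfrac{[\eK:\eK^+]-1}{[\eK:\eK^+]}[\eK:\FF_q(t)].
\]

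Finally, the particular case of a non-degenerate $\Xi$ follows immediately from Remark~\ref{rem: Legendre}: non-degeneracy forces $\rank_\ZZ I_\Xi^0 = 1 + \tfrac{[\eK:\eK^+]-1}{[\eK:\eK^+]}[\eK:\FF_q(t)]$, so the two bounds coincide. There is no real obstacle here once Proposition~\ref{prop: comp-dR-pairing} is in hand; the only point requiring care is that the bijections on de~Rham modules and period lattices from Lemmas~\ref{lem: dR-iso} and~\ref{lem: L(rho)-H(rho)} are $\bar{k}$-linear and $\FF_q[t]$-linear respectively, so they preserve the field generated by the pairing values (which is what allows the replacement of quasi-periods of $E_\rho$ by periods of $\eM(\rho)$). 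All the genuinely hard work has already been absorbed into Theorem~\ref{thm: lower-bound} and the Hodge--Pink cocharacter construction of Section~\ref{Sub:The case of CM dual t-motives}.
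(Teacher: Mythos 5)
Your proposal is correct and follows essentially the same route as the paper: it reduces the statement to period symbols via the de Rham pairing comparison of Proposition~\ref{prop: comp-dR-pairing} (together with the field equality of Proposition~\ref{P:fields equality}), then applies the Legendre relations for the upper bound and Theorem~\ref{thm: lower-bound} for the lower bound, with non-degeneracy giving $\rank_\ZZ I_\Xi^0 = \rank_\ZZ I_\eK^0$. The only difference is that you spell out steps (uniformizability via Theorem~\ref{thm: Uniform}, the explicit use of Proposition~\ref{P:fields equality}) that the paper's proof leaves implicit.
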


\begin{proof}
From Proposition~\ref{prop: comp-dR-pairing},
we have that
\[
\bar{k}\bigl([\delta,\lambda] \bigm|  \delta \in H_{\mathrm{dR}}(E_\rho,\bar{k}),\ \lambda \in \Lambda_\rho \bigr)
= \bar{k}\bigl(p_\eK(\xi,\Xi) \bigm| \xi \in J_\eK\bigr).
\]
Hence \eqref{eqn: trdeg} follows immediately from Lemma~\ref{L:Algebraic relation} and Theorem~\ref{thm: lower-bound}.
When $\Xi$ is non-degenerate, one obtains
\[
\rank_\ZZ I_\Xi^0 = \rank_\ZZ I_\eK^0 = 1+\frac{[\eK:\eK^+]-1}{[\eK:\eK^+]}\cdot [\eK:\FF_q(t)].
\]
This completes the proof.
\end{proof}

\begin{remark}
(1) Let $E_\rho$ be a CM abelian $t$-module with a  generalized CM type $(\eK,\Xi)$.
Suppose that $\Xi$ is non-degenerate.
Then all of the algebraic relations among the quasi-periods of $E_\rho$ come from the following Legendre relation of the period symbols (cf.\ Remark~\ref{rem: Legendre}):
\begin{equation}\label{eqn: Legendre relation}
\prod_{\substack{\xi \in J_\eK \\ \pi_{\bX/\bX^+(\xi) = \xi^+}}} p_\eK(\xi,\Xi) \sim \tilde{\pi}^{\text{\rm wt}(\Xi)}, \quad \forall \xi^+ \in J_{\eK^+}.
\end{equation}

(2) Let $E_\rho = (\GG_{a/\ok},\rho)$ be a Drinfeld $\FF_q[t]$-module of rank $r$ over $\bar{k}$.
Suppose $E_\rho$ has CM by $O_\eK$, where $O_\eK$ is the integral closure of $\FF_q[t]$ in a separable imaginary field $\eK$ (see Remark~\ref{rem: CM fields}~(2)) and $[\eK:\FF_q(t)] = r$.
Extending $\rho$ to an $\FF_q$-algebra homomorphism $\rho: O_\eK \rightarrow \End_{\FF_q}(\GG_{a/\ok})$, its derivation $\partial \rho : O_\eK \rightarrow \Lie(\GG_{a/\ok})$ induces an embedding $\nu_\rho: \eK \hookrightarrow \CC_\infty$.
Let $\eM(\rho)$ be the Hartl-Juschka dual $t$-motive associated to $E_\rho$. Then $\eM(\rho)$ is a CM dual $t$-motive over $\ok$ with CM type $(\eK,\xi_\rho)$,
where $\xi_\rho \in J_\eK$ is the point corresponding to $\nu_\rho$.
As $\xi_\rho$ is non-degenerate, Theorem~\ref{thm: app CM t-mod} implies that
\[
\trdeg_{\bar{k}} \bar{k}\bigl([\delta,\lambda] \bigm|  \delta \in H_{\mathrm{dR}}(E_\rho,\bar{k}),\ \lambda \in \Lambda_\rho \bigr) = r.
\]
This coincides with \cite[Theorem~1.2.2]{CP12}.
\end{remark}

\subsection{\texorpdfstring{Periods of CM Hilbert-Blumenthal $t$-modules}{Periods of CM Hilbert-Blumenthal t-modules}}\label{sec: CM HB t-mod}
Now, let $\eK$ be a CM field and $\Xi$ be a CM type of $\eK$.
Suppose $E_\rho$ is a CM abelian $t$-module with CM type $(\eK,\Xi)$ over $\bar{k}$.
Let $\eK^+$ be the maximal totally real subfield of $\eK$, and write $\Xi = \sum_{i=1}^d \xi_i$, where $d = [\eK^+:\FF_q(t)]$.
For $1\leq i \leq d$, recall that $\Pfk_i$ is the maximal ideal of $O_{\bK}$ corresponding to $\xi_i$.
By Lemma~\ref{lem: comp Lie} and the equality~\eqref{eqn: sigma-M iso} in the appendix, we have the following identification:
\begin{equation}\label{eqn: Lie id}
\Lie(E_\rho) \cong \frac{\eM(\rho)}{\sigma \eM(\rho)} = \frac{\eM(\rho)}{\Ifk_\Xi \eM(\rho)} \cong \prod_{i=1}^d \frac{\eM(\rho)}{\Pfk_i \eM(\rho)} \cong \ok^d.
\end{equation}
Let $\xi_i^+\assign \pi_{\bX/\bX^+}(\xi_i)$ and $\nu_i^+ = \nu_{\xi_i^+} \in \Emb(\eK^+,\CC_\infty)$.
With respect to \eqref{eqn: Lie id}, the multiplication of $O_\eK^+$ on $\eM(\rho)$ induces a structure homomorphism (still denoted by) $\rho: O_\eK^+\rightarrow \End_{\rm ab.\ t-mod.}(E_\rho)$ such that the induced $\CC_\infty$-linear homomorphism $\partial \rho: O_{\eK^+}\rightarrow \End_{\CC_\infty}(\Lie(E_\rho)(\CC_\infty))$ is isomorphic to $\nu_1^+\oplus\cdots \oplus \nu_d^+$.
In other words, $E_\rho$ is actually a Hilbert-Blumenthal $O_\eK^+$-module (introduced in the beginning of Section~\ref{subsec: U-CM}).

Conversely, let $E_\rho$ be a Hilbert-Blumenthal $O_\eK^+$-module satisfying the property that $\rank_A(\Lambda_\rho) = [\eK:\FF_q(t)]$, and suppose that the structure homomorphism $\rho$ can be extended to a homomorphism (still denoted by $\rho$) $O_\eK \rightarrow \End_{\rm ab.\ t-mod.}(E_\rho)$.
Then $\eM(\rho)$ is pure, $\rank_{\bar{k}[t]}\eM(\rho) = [\eK:\FF_q(t)]$, and $\rho$ induces an $\FF_q[t]$-algebra embedding $O_\eK \hookrightarrow \End_{\bar{k}[t,\sigma]}(\eM(\rho))$.
Moreover, one checks that
\[
\sigma \eM(\rho) = \Ifk_{\Xi}\eM(\rho),
\]
where $\Xi$ is the CM type of $\eK$ fixed above. Therefore by Proposition~\ref{prop: being CM} we have that $E_\rho$ is a CM abelian $t$-module with CM type $(\eK,\Xi)$.

In~\cite{Yu89}, Yu showed that for any Hilbert-Blumenthal $t$-module $E_\rho$ defined over $\ok$, every coordinate of a nonzero period vector of $E_\rho$ is transcendental over $k$.
When $E_\rho$ is in particular a CM abelian $t$-module with non-degenerate CM type,
we extend Yu's result as follows.

\begin{theorem}\label{thm: CM HB t-mod}
Let $E_\rho$ be a CM abelian $t$-module of dimension $d$ with a non-degenerate CM type $(\eK,\Xi)$ over $\ok$.
Let $\eK^+$ be the maximal totally real subfield of $\eK$ and suppose $\eK \neq \eK^+$.
Fix an identification $\Lie(E_\rho) \cong \bar{k}^d$, where $d = [\eK^+:\FF_q(t)]$, and extend it to $\Lie(E_\rho)(\CC_\infty)\cong \CC_\infty^d$.
Given a nonzero period vector  $\lambda \in \Lambda_\rho \subseteq \Lie(E_\rho)(\CC_\infty)$, let $(\lambda_1,\ldots,\lambda_d)$ be the corresponding vector in $\CC_\infty^d$. Then the coordinates $\lambda_{1},\ldots,\lambda_{d}$ are algebraically independent over $\ok$.
\end{theorem}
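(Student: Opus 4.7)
The plan is to identify each coordinate $\lambda_i$, up to a factor in $\ok^\times$, with the period symbol $p_\eK(\xi_i,\Xi)$ for the $d$ points $\xi_1,\dots,\xi_d$ appearing in the CM type $\Xi = \xi_1+\cdots+\xi_d$, and then to exploit Remark~\ref{rem: Legendre} to single out $d$ of these symbols from distinct fibers that must be algebraically independent. For the identification, I first observe that algebraic independence is invariant under any $\GL_d(\ok)$-change of basis, so I may work in the canonical basis coming from the CM decomposition \eqref{eqn: Lie id},
\[
\Lie(E_\rho) \ \cong \ \eM(\rho)/\sigma\eM(\rho) \ \cong \ \prod_{i=1}^d \eM(\rho)/\Pfk_i\eM(\rho),
\]
where $\Pfk_i \subset O_\bK$ is the maximal ideal corresponding to $\xi_i$ and each factor is one-dimensional over $\ok$. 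By Proposition~\ref{P:PeriodsDiagComm}, the period $\lambda$ corresponds to the Betti cycle $\gamma_\lambda \in H_{\mathrm{Betti}}(\eM(\rho))$, and $\lambda_i$ is the $i$-th component of the image of $\gamma_\lambda$ in $\CC_\infty\otimes_\ok \eM(\rho)/\Pfk_i\eM(\rho)$. On the dual side, Proposition~\ref{Prop: delta_M} characterizes $\omega_{\eM(\rho),\xi_i}$ as (up to $\ok^\times$) the unique element of $H_{\mathrm{dR}}(\eM(\rho),\ok)$ factoring through the $i$-th projection $\eM(\rho)/(t-\theta)\eM(\rho) \twoheadrightarrow \eM(\rho)/\Pfk_i\eM(\rho)$. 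Pairing the two yields
\[
\lambda_i \ \sim \ \int_{\gamma_\lambda} \omega_{\eM(\rho),\xi_i} \ = \ p_\eK(\xi_i,\Xi) \quad \in \CC_\infty^\times/\ok^\times.
\]

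For the second step, the non-degeneracy of $\Xi$ together with Remark~\ref{rem: Legendre} gives
\[
\trdeg_\ok \ok\bigl(p_\eK(\xi,\Xi)\bigm| \xi\in J_\eK\bigr) \ = \ 1 + d([\eK:\eK^+]-1),
\]
with every algebraic relation among these period symbols generated by the Legendre relations $\prod_{\pi_{\bX/\bX^+}(\xi) = \xi^+} p_\eK(\xi,\Xi) \sim \tilde{\pi}$ for $\xi^+ \in J_{\eK^+}$. Since $\eK \neq \eK^+$, every fiber $\pi_{\bX/\bX^+}^{-1}(\xi_i^+)$ has cardinality $[\eK:\eK^+] \geq 2$, so I can choose $\xi_i' \in \pi_{\bX/\bX^+}^{-1}(\xi_i^+)\setminus\{\xi_i\}$ for each $i=1,\dots,d$ (noting that the $\xi_i'$ lie in mutually disjoint fibers, hence all differ from each $\xi_j$). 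Setting
\[
S \ \assign \ \{\tilde{\pi}\}\, \cup\, \bigl\{p_\eK(\xi,\Xi) \bigm| \xi\in J_\eK\setminus\{\xi_1',\dots,\xi_d'\}\bigr\},
\]
the set $S$ contains $\{p_\eK(\xi_1,\Xi),\dots,p_\eK(\xi_d,\Xi)\}$, has cardinality $1+d([\eK:\eK^+]-1)$, and generates the full field, because each omitted $p_\eK(\xi_i',\Xi)$ is recovered from the Legendre relation in its own fiber. Matching $|S|$ with the transcendence degree forces $S$, and hence the subset $\{p_\eK(\xi_1,\Xi),\dots,p_\eK(\xi_d,\Xi)\}$, to be algebraically independent over $\ok$; the independence of $\lambda_1,\dots,\lambda_d$ follows from the identification of the first step.

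The main technical subtlety I foresee lies in the identification step: one must carefully track the compatibility between the natural surjection $\eM(\rho)/(t-\theta)\eM(\rho) \twoheadrightarrow \eM(\rho)/\sigma\eM(\rho) \cong \prod_{i=1}^d \eM(\rho)/\Pfk_i\eM(\rho)$, in which the Betti image of $\gamma_\lambda$ lives via Proposition~\ref{P:PeriodsDiagComm}, and the definition of $\omega_{\eM(\rho),\xi_i}$ via this same $i$-th projection, so as to ensure that $\omega_{\eM(\rho),\xi_i}$ detects exactly the $i$-th coordinate of $\lambda$ and is insensitive to the components indexed by points of $J_\eK$ outside the support of $\Xi$. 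Once that comparison is in place, the transcendence input is supplied entirely by Remark~\ref{rem: Legendre}, and the selection of $S$ is purely formal, relying only on the structural fact that each Legendre relation involves \emph{all} points of a single fiber and none from any other.
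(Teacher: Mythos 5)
Your proposal is correct and follows essentially the same route as the paper: reduce to the identification $\Lie(E_\rho)\cong\eM(\rho)/\sigma\eM(\rho)\cong\prod_i\eM(\rho)/\Pfk_i\eM(\rho)$, use Proposition~\ref{P:PeriodsDiagComm} and Proposition~\ref{Prop: delta_M} to get $\lambda_i\sim\int_{\gamma_\lambda}\omega_{\eM(\rho),\xi_i}\sim p_\eK(\xi_i,\Xi)$, and then invoke the non-degeneracy of $\Xi$ via Remark~\ref{rem: Legendre}. Your explicit transcendence-basis count with the set $S$ (one point removed per fiber, plus $\tilde{\pi}$) is just a spelled-out version of the paper's terse appeal to Remark~\ref{rem: Legendre}, since $\xi_1,\dots,\xi_d$ lie in distinct fibers and each Legendre relation involves a whole fiber.
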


\begin{proof}
Let $\Xi\assign \xi_{1}+\cdots+\xi_{d}$ be the given  CM type of $\eK$.
Without loss of generality, we may assume that the fixed identification $\Lie(E_\rho) \cong \bar{k}^d$ coincides with \eqref{eqn: Lie id}.
Then for a nonzero period vector $\lambda \in \Lambda_\rho$  with  corresponding coordinate vector  $(\lambda_1,\dots ,\lambda_d)\in \CC_\infty^d$,
we claim that
\[
\lambda_i \sim p_\eK(\xi_i,\Xi), \quad i=1,\dots ,d.
\]
For $1\leq i \leq d$, let $\xi_i^+\assign \pi_{\bX/\bX^+}(\xi_i)$.
Then $J_{\eK^+} = \{\xi_1^+,\dots ,\xi_d^+\}$, and the result follows from Remark~\ref{rem: Legendre}.

To prove the claim above, let $\eM = \eM(\rho)$.
Take the differential $\omega_{\eM,\xi_i} \in H_{\mathrm{dR}}(\eM,\bar{k})$ associated with $\xi_i$ for $1\leq i \leq d$,  each of which extends  to a nonzero $\CC_\infty$-linear functional on $\CC_\infty \otimes_{\bar{k}} (\eM/(t-\theta) \eM)$ factoring through
$\CC_\infty \otimes_{\bar{k}} (\eM/\Pfk_i \eM)$.
Let $\gamma_\lambda \in H_{\mathrm{Betti}}(\eM)$ be the cycle corresponding to $\lambda$ in Lemma~\ref{lem: L(rho)-H(rho)}.
We use the commutative diagram in Proposition~\ref{P:PeriodsDiagComm} together with the identification~\eqref{eqn: Lie id}  extended to $\Lie(E_\rho)(\CC_\infty) \cong \CC_\infty^d$  to conclude that
\[
\lambda_i \sim \int_{\gamma_\lambda} \omega_{\eM,\xi_i} \sim p_\eK(\xi_i,\Xi), \quad 1\leq i \leq d,
\]
as desired.
\end{proof}

\appendix

\section{Non-geometric CM fields and CM types}\label{secA: CM}

\subsection{Notation on curves}\label{secA: Notation X}

Let $\eK$ be a CM field over $\FF_q(t)$.
Suppose the constant field of $\eK$ is $\FF_{q^\ell}$ with $\ell>1$.
Then $\eK$ is geometric over $\FF_{q^\ell}(t)$.
Let $X$ be the smooth, projective, geometrically connected algebraic curve defined over $\FF_{q^\ell}$ with function field $\eK$.
We call $X$ the curve associated with $\eK$.
Fix an algebraically closed field $\KK$ with $k\subset \KK \subset \CC_\infty$.
Viewing $X$ as a scheme over $\FF_q$, let $\bX$ be the base change of $X$ from $\FF_q$ to $\KK$, i.e.,
\[
\bX = \KK \times_{\FF_q} X = \coprod_{i=0}^{\ell-1}\bX_{(i)},
\quad \text{ where } \quad
\bX_{(i)}\assign  \KK \underset{\Frob_q^{-i},\FF_{q^\ell}}{\times} X, \quad i \in \ZZ,
\]
i.e.\ the base change of $X$ from $\FF_{q^\ell}$ to $\KK$ with respect to the Frobenius twist
\[
\Frob_q^{-i}: \FF_{q^\ell}\hookrightarrow \KK,
\quad \varepsilon \longmapsto  \varepsilon^{1/q^i}, \quad \forall \varepsilon \in \FF_{q^\ell}.
\]
In particular, we have $\bX_{(i+\ell)} = \bX_{(i)}$.
Moreover, let $\bK_{(i)}\assign \KK(\bX_{(i)})$ be the function field of $\bX_{(i)}$.
The Frobenius twist  $(\alpha \mapsto \alpha^{q^{-1}})$ on the scalar field $\KK$ induces a field isomorphism $\bK_{(i)}\stackrel{\sim}{\rightarrow} \bK_{(i+1)}$ corresponding to the (geometric) Frobenius twist
\[
\bX_{(i+1)}\longrightarrow \bX_{(i)}, \quad x \longmapsto x^{(1)}, \quad \forall x \in \bX_{(i+1)},\ i \in \ZZ.
\]

Let $\eK^+$ be the maximal totally real subfield of $\eK$ (which is always geometric over $\FF_q(t)$), and let $X^+$ be the curve over $\FF_q$ with function field $\eK^+$ as before.
The embeddings $$\FF_q(t)\hookrightarrow \eK^+\hookrightarrow \eK$$ induce the $\FF_q$-morphisms $\pi_{X/X^+}:X\rightarrow X^+$, $\pi_{X^+/\PP^1}:X^+\rightarrow \PP^1$, and $\pi_{X/\PP^1}:X\rightarrow \PP^1$
so that
$$
\pi_{X/\PP^1} = \pi_{X^+/\PP^1}\circ \pi_{X/X^+}.
$$
Let $\bX^+$ be the base change of $X^+$ from $\FF_q$ to $\KK$, and denote by $\pi_{\bX/\PP^1}$, $\pi_{\bX^+/\PP^1}$, and $\pi_{\bX/\bX^+}$ the base change of respective morphisms to $\KK$.

\subsection{CM types}
We continue in the setting above with a CM field $\eK$ over $\FF_{q}(t)$.
Denote by
\[J_\eK\assign  \pi_{\bX/\PP^{1}}^{-1}(\theta) = \{\xi \in \bX(\CC_{\infty})\mid \pi_{\bX/\PP^{1}}(\xi) = \theta \},\]
the set of points of $\bX$ lying above $\theta$.
Let $\Emb(\eK,\CC_\infty)$ be the set of all $\FF_q$-algebra embeddings from $\eK$ into $\CC_\infty$ which send $t$ to $\theta$.
As in the geometric case, we have a bijection between $J_\eK$ and
$\Emb(\eK,\CC_\infty)$
given as follows:
Take $\xi \in J_\eK$, corresponding to an $\FF_q$-morphism
\[
\Spec(\CC_\infty)\rightarrow X.
\]
As $\xi$ is not $\overline{\FF}_q$-valued (where $\overline{\FF}_q$ is the algebraic closure of $\FF_q$ in $\CC_\infty$),
this morphism must factor through the generic point
$\Spec(\eK)\hookrightarrow X$.
Thus we obtain an $\FF_q$-morphism $\Spec(\CC_\infty)\rightarrow \Spec(\eK)$, which corresponds to an $\FF_q$-algebra embedding $\nu_\xi:\eK\hookrightarrow \CC_\infty$ sending $t$ to $\theta$ (as $\pi_{\bX/\PP^1}(\xi) = \theta$).
Conversely, for $\nu \in \Emb(\eK,\CC_\infty)$, the induced $\FF_q$-morphism
\[
\Spec(\CC_\infty)\stackrel{\nu^*}{\longrightarrow}
\Spec(\eK)\hookrightarrow X
\]
gives a $\CC_\infty$-valued point $\xi_\nu$ of $\bX$ so that $\pi_{\bX/\PP^1}(\xi_\nu) = \theta$ (as $\nu(t) = \theta$).
It is clear that
\[
\xi_{\nu_\xi} = \xi, \quad \forall \xi \in J_\eK, \quad \text{ and } \quad \nu_{\xi_\nu} = \nu, \quad \forall \nu \in \Emb(\eK,\CC_\infty).
\]
In particular, for $\xi \in J_\eK$, one has that
\[
\xi \in \bX_{(i)} \quad \text{ if and only if } \quad
\nu_\xi \big|_{\FF_{q^\ell}} = \text{Frob}_q^{-i}, \quad 0\leq i < \ell.
\]
The generalized CM types of a given CM field are defined as follows.

\begin{definition}\label{defn: A.CM type}
Let $\eK$ be a CM field over $\FF_q(t)$ with the maximal totally real subfield $\eK^+$.
Let $I_\eK$ be the free abelian group generated by elements in $J_\eK$.
Let $I_\eK^0$ be the subgroup of $I_\eK$ consisting of elements
$$ \Phi= \sum_{\xi \in J_\eK} m_\xi \xi, \quad m_\xi \in \ZZ,$$
satisfying
\[
 \sum_{\xi\in \pi_{\bX/\bX^+}^{-1}(\xi_1^+)} m_\xi= \sum_{\xi'\in \pi_{\bX/\bX^+}^{-1}(\xi_2^+)} m_{\xi'},
\quad \forall \xi_1^+, \xi_2^+ \in J_{\eK^+}.
\]
We put \[\wt(\Phi)\assign \sum_{\xi\in \pi_{\bX/\bX^+}^{-1}(\xi^+)} m_\xi \quad \text{ for a } \xi^+ \in J_{\eK^+}.\]
A  \textit{generalized CM type} of $\eK$ is a nonzero effective divisor in $I_\eK^0$. A \textit{CM type} of $\eK$ is a generalized CM type $\Xi$ of $\eK$ for which $\text{wt}(\Xi) = 1$.
\end{definition}

The following properties and definitions extend naturally to the case of non-geometric CM fields without difficulties.
\begin{itemize}
    \item $I_\eK^0$ is generated by all the CM types of $\eK$  (Proposition~\ref{prop: CM prop}).
    \item $I_\eK^0$ is invariant under the right action of $\Aut_{\FF_q(t)}(\eK)$ and the left action of $\Gal(k^{\rm sep}/k)$ (Lemma \ref{lem: right-action} and {Remark}~\ref{rem: left-action}).
    \item The restriction and inflation maps on CM types (Section~\ref{sub:Res and Inf}).
    \item The reduction points at infinity (Proposition~\ref{prop: reduction at infinity} and Definition~\ref{Def: IXi}).
\end{itemize}

Let $\eK$ be a CM field over $\FF_q(t)$ with the constant field $\FF_{q^\ell}$, and let $X$ be the curve over $\FF_{q^\ell}$ associated with $\eK$.
Given a CM type $\Xi = \sum_{\xi \in J_\eK}m_\xi \xi$ of $\eK$, we may also define the $\overline{\FF}_q$-rational divisor $\eI_\Xi$ as in Definition~\ref{Def: IXi}.
Recall that the Frobenius twist $(\alpha \mapsto \alpha^{q^{-1}})$ on the scalar field $\KK$ corresponds to the geometric Frobenius morphism on $\bX = \coprod_{i=0}^{\ell-1} \bX_{(i)}$,
which maps $\bX_{(i+1)}$ to $\bX_{(i)}$ by sending $x$ to $x^{(1)}$ for every $x \in \bX_{(i+1)}$.
Write
\[
\Xi = \sum_{i=0}^{\ell-1} \Xi_{(i)}, \quad \text{ where } \quad \Xi_{(i)} = \sum_{\xi \in J_\eK\cap \bX_{(i)}} m_\xi \xi.
\]
We may also decompose $\eI_\Xi$ into
\[
\eI_\Xi = \sum_{i=0}^{\ell-1} \eI_{\Xi_{(i)}}, \quad \text{ where } \quad \eI_{\Xi_{(i)}} \assign  \sum_{\xi \in J_\eK \cap \bX_{(i)}} m_\xi \overline{\infty}_\xi.
\]
Set
\begin{align}\label{eqn: Sharp}
 \Xi^{\#} & \assign  (\Xi_{(1)}^{(1-\ell)}+\cdots + \Xi_{(\ell-1)}^{(-1)}) + \Xi_{(0)} \\
\text{and}\quad
\eI^\#_\Xi\  & \assign  (\eI_{\Xi_{(1)}}^{(1-\ell)}+\cdots + \eI_{\Xi_{(\ell-1)}}^{(-1)}) + \eI_{\Xi_{(0)}}. \nonumber
\end{align}
Then $\Xi^\#$ and $\eI^\#_\Xi$ are divisors on $\bX_{(0)}$ and $\Xi^\# - \eI^\#_\Xi$ has degree zero.
Next, we shall apply Lang's isogeny theorem on the Jacobian of $\bX_{(0)}$ to introduce shtuka functions for a given CM type $\Xi$ and define the CM dual $t$-motives in the case of non-geometric CM fields.

\section{\texorpdfstring{CM dual $t$-motives for non-geometric case}{CM dual t-motives for non-geometric case}}
\label{secB: CM motive}

\subsection{Shtuka functions of CM types}
We retain the notations of Appendix~\ref{secA: CM} with a CM field $\eK$ over $\FF_q(t)$ whose constant field is $\FF_{q^\ell}$.

\begin{lemma}\label{lem: B-shtuka function}
Let $\Xi$ be a generalized CM type of $\eK$.
There exists a divisor $W \in \Div(\bX_{(0)})$ and a function $h \in \bK_{(0)} = \KK(\bX_{(0)})$ so that
\begin{equation}\label{eqn: shtuka}
\divv(h) = W^{(\ell)} - W + \Xi^\# - \eI_{\Xi}^\# \quad \in \Div(\bX_{(0)}).
\end{equation}
We call $h$ a \textit{shtuka function} associated to the divisor $W$ for the generalized CM type $(\eK,\Xi)$.
\end{lemma}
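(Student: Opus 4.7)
The plan is to mirror the proof of Lemma~\ref{L:Diff,W,h} in the geometric case, but working on the connected component $\bX_{(0)}$ and using the $\ell$-fold Frobenius twist $(\cdot)^{(\ell)}$ in place of $(\cdot)^{(1)}$. The key observation is that since $X$ is a smooth, projective, geometrically connected curve over $\FF_{q^\ell}$, the component $\bX_{(0)} = \KK \times_{\FF_{q^\ell}} X$ is a smooth, projective, connected curve over $\KK$, and the $\ell$-fold twist $(\cdot)^{(\ell)}$ stabilizes $\bX_{(0)}$ (whereas $(\cdot)^{(1)}$ only permutes the components $\bX_{(0)},\ldots,\bX_{(\ell-1)}$ cyclically).

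First I would verify that $\Xi^\# - \eI_\Xi^\#$ is a well-defined degree-zero divisor on $\bX_{(0)}$. By \eqref{eqn: Sharp}, each summand $\Xi_{(i)}^{(i-\ell)}$ of $\Xi^\#$ lies in $\Div(\bX_{(0)})$ since $(\cdot)^{(i-\ell)}$ maps $\bX_{(i)}$ to $\bX_{(i-(i-\ell))}=\bX_{(\ell)}=\bX_{(0)}$; the same holds for $\eI_\Xi^\#$. Frobenius twists preserve degrees of divisors, so
\[
\deg(\Xi^\#) \,=\, \sum_{i=0}^{\ell-1} \deg(\Xi_{(i)}) \,=\, \deg(\Xi) \,=\, \sum_{\xi \in J_\eK} m_\xi,
\]
and similarly $\deg(\eI_\Xi^\#) = \deg(\eI_\Xi) = \sum_\xi m_\xi$ (each $\overline{\infty}_\xi$ is an $\overline{\FF}_q$-valued point, hence a degree-one closed point of $\bX_{(0)}$ once we regroup via \eqref{eqn: Sharp}). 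Thus $[\Xi^\# - \eI_\Xi^\#]$ is a class in the Jacobian $J_{\bX_{(0)}}(\KK) \cong \mathrm{Pic}^0(\bX_{(0)})$.

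Next I would invoke Lang's isogeny theorem. Because $\bX_{(0)}$ comes by base change from the $\FF_{q^\ell}$-scheme $X$, the $\ell$-fold Frobenius twist descends to an $\FF_{q^\ell}$-morphism and thus induces an endomorphism $\phi_\ell$ of $J_{\bX_{(0)}}$; the same argument as in Lemma~\ref{L:Diff,W,h} (now with $q$ replaced by $q^\ell$) shows that
\[
[D] \longmapsto [D - D^{(\ell)}]: J_{\bX_{(0)}}(\KK) \longrightarrow J_{\bX_{(0)}}(\KK)
\]
is surjective. Applying surjectivity to $[\Xi^\# - \eI_\Xi^\#]$ yields $W \in \Div(\bX_{(0)})$ with $W - W^{(\ell)} \sim \Xi^\# - \eI_\Xi^\#$, which rearranges to the existence of $h \in \bK_{(0)}^\times$ satisfying \eqref{eqn: shtuka}.

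The only genuinely new point beyond the geometric case is verifying that the Frobenius twist acts on $\bX_{(0)}$ and its Jacobian in a way that makes Lang's theorem applicable; this is not really an obstacle, just a bookkeeping matter once one tracks how $(\cdot)^{(1)}$ permutes components and $(\cdot)^{(\ell)}$ acts as the intrinsic $q^\ell$-Frobenius on the $\FF_{q^\ell}$-scheme $X$. Everything else (connectedness of $\bX_{(0)}$, degree computation, conclusion from Lang) is a direct transcription of the geometric argument.
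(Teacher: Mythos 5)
Your proposal follows exactly the route the paper takes: it reduces to Lang's isogeny theorem applied to the Jacobian of $\bX_{(0)}$, using the $\ell$-fold Frobenius twist (the intrinsic $q^\ell$-power Frobenius on the $\FF_{q^\ell}$-curve $X$) in place of the $1$-fold twist from Lemma~\ref{L:Diff,W,h}. The paper compresses this to a one-sentence reduction to the geometric case; your write-up correctly supplies the supporting details (that $\Xi^\#-\eI_\Xi^\#$ is supported on $\bX_{(0)}$ and has degree zero, and that Lang's theorem applies with the $q^\ell$-Frobenius).
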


\begin{proof}
As the divisor $\Xi^\#-\eI_\Xi^\#$ is of degree zero on the curve $\bX_{(0)}$ defined over $\FF_{q^\ell}$,
the proof is the same as the geometric case in Lemma~\ref{L:Diff,W,h}.
\end{proof}

\subsection{\texorpdfstring{Anderson dual $t$-motives with CM types}{Anderson dual t-motives with generalized CM types}}

Let $O_\eK$ be the integral closure of $\FF_q[t]$ in $\eK$, and put
\[
O_\bK \assign  \KK\otimes_{\FF_q} O_\eK = \prod_{i=0}^{\ell-1} O_{\bK,{(i)}}, \text{ where } O_{\bK,{(i)}}\assign  \KK \underset{\Frob_q^{-i},\FF_{q^\ell}}{\otimes} O_\eK \subset \bK_{(i)},\ i \in \ZZ.
\]
For each $n \in \ZZ$, the $n$-th Frobenius twist on $O_\bK$ is given by:
\[
(c\otimes a)^{(n)}\assign  c^{q^n} \otimes a, \quad \forall c \in \KK \text{ and } a \in O_\eK.
\]
Hence for $\alpha_{(i)} \in O_{\bK,(i)}$ one has
\[
\alpha_{(i)}^{(-n)} \in O_{\bK,(i+n)}, \quad \forall n \in \ZZ.
\]
In particular, for each $\alpha \in O_\bK$,  writing $\alpha = (\alpha_{(0)},\ldots, \alpha_{(\ell-1)}) \in O_{\bK,{(0)}}\times\ldots \times O_{\bK,{(\ell-1)}}$,
we have the following expression:
\[
\alpha^{(-1)} = (\alpha_{(\ell-1)}^{(-1)},\alpha_{(0)}^{(-1)},\alpha_{(1)}^{(-1)},\ldots, \alpha_{(\ell-2)}^{(-1)}).
\]
Set
\[
\bU \assign  \bX - \pi_{\bX/\PP^1}^{-1}(\infty) = \Spec(O_\bK) = \coprod_{i=0}^{\ell-1} \bU_{(i)},
\]
where
$\bU_{(i)} = \Spec(O_{\bK,(i)}) = \bX_{(i)} \cap \bU$.
We may identify $\bK_{(i)} = \KK(\bX_{(i)})$ with the field of fractions of $O_{\bK,(i)}$ for $0\leq i < \ell$.
Let $\Xi$ be a generalized  CM type of $\eK$, and take $h \in \bK_{(0)}$ and $W \in \Div(\bX_{(0)})$ satisfying \eqref{eqn: shtuka}.
Define
\begin{equation}\label{e:Appendix M(W,h)}
\eM_{(W,h)}\assign  \prod_{i=0}^{\ell-1}\eM_{(W,h),(i)},
\end{equation}
where
\[
\eM_{(W,h),(0)}\assign  \Gamma\bigl(\bU_{(0)}, \Ocal_{\bX_{(0)}}(-W^{(\ell)})\bigr) \quad \subset \bK_{(0)},
\]
and
\[
\eM_{(W,h),(i)}\assign  \Gamma\bigl(\bU_{(i)}, \Ocal_{\bX_{(i)}}(-W^{(\ell-i)} + \sum_{j=1}^i \Xi_{(j)}^{(j-i)})\bigr) \quad \subset \bK_{(i)}, \quad 1\leq i < \ell.
\]
The $\sigma$-action on $\eM_{(W,h)}$ is given by: for $(m_{(0)},\ldots, m_{(\ell-1)}) \in \eM_{(W,h)}$,
\[
\sigma \cdot (m_{(0)},\ldots, m_{(\ell-1)}) \assign  \Bigl(h \cdot m_{(\ell-1)}^{(-1)}, m_{(0)}^{(-1)},\ldots, m_{(\ell-2)}^{(-1)}\Bigr).
\]
Then we have
\[
\sigma \cdot ( \alpha \cdot m) = \alpha^{(-1)}\cdot  (\sigma \cdot m), \quad \forall \alpha \in O_\bK \quad \text{ and } m \in \eM_{(W,h)}.
\]
Moreover,
\begin{multline}\label{eqn: sigma-M}
\sigma \eM_{(W,h)} = \Gamma\bigl(\bU_{(0)},\Ocal_{\bX_{(0)}}(-W^{(\ell)}-\Xi_{(0)})\bigr)  \\
 {}\times \prod_{i=1}^{\ell-1}
\Gamma\Bigl(\bU_{(i)},\Ocal_{\bX{(i)}}\bigl(-W^{(\ell-i)} + (\sum_{j=1}^{i}\Xi_{(j)}^{(j-i)})-\Xi_{(i)}\bigr)\Bigr).
\end{multline}
In particular, we point out that for $(m_{(0)},\ldots, m_{(\ell-1)}) \in \eM_{(W,h)}$,
\[
\sigma^\ell \cdot (m_{(0)},\ldots, m_{(\ell-1)})
= \Bigl(h \cdot m_{(0)}^{(-\ell)}, h^{(-1)}\cdot  m_{(1)}^{(-\ell)},\ldots, h^{(1-\ell)}\cdot m_{(\ell-1)}^{(-\ell)}\Bigr).
\]
Adapting the proof of Theorem~\ref{thm: CM dual}, we have  the following:

\begin{theorem}
The $\KK[t,\sigma]$-module $\eM = \eM_{(W,h)}$ constructed above is a dual $t$-motive over $\KK$ equipped with an $O_\eK$-module structure commuting with the $\sigma$-action.
In particular,
$$\rank_{\KK[t]}\eM = [\eK:\FF_q(t)] \quad \text{ and } \quad \rank_{\KK[\sigma]}\eM = \deg \Xi.
$$
\end{theorem}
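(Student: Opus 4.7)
The plan is to adapt the strategy of Theorem~\ref{thm: CM dual}, accounting for the product structure $\bX = \coprod_{i=0}^{\ell-1} \bX_{(i)}$ induced by the nontrivial constant field extension $\FF_{q^\ell}/\FF_q$. The three things to verify are (i) freeness over $\KK[t]$ of rank $[\eK:\FF_q(t)]$, (ii) the inclusion $(t-\theta)^N \eM \subseteq \sigma\eM$ for some $N$, and (iii) freeness over $\KK[\sigma]$ of rank $\deg\Xi$, together with the commuting $O_\eK$-action.

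First I would establish freeness of $\eM$ over $\KK[t]$. Each component $\eM_{(W,h),(i)}$ is the module of global sections of a line bundle on the affine curve $\bU_{(i)}$, hence a projective rank-one $O_{\bK,(i)}$-module. Since $O_{\bK,(i)} \cong \KK[t] \otimes_{\FF_{q^\ell}[t]} O_\eK$ is free of rank $[\eK:\FF_{q^\ell}(t)]$ over $\KK[t]$, each $\eM_{(W,h),(i)}$ is free over $\KK[t]$ of the same rank, and summing over $i$ gives $\rank_{\KK[t]} \eM = \ell\cdot [\eK:\FF_{q^\ell}(t)] = [\eK:\FF_q(t)]$. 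Next, comparing the divisor conditions in the definition of $\eM$ with those in \eqref{eqn: sigma-M}, the quotient $\eM/\sigma\eM$ splits as $\bigoplus_{i=0}^{\ell-1}$ of local quotients, each supported on a (twist of a) summand $\Xi_{(i)}$ of $\Xi$. Since $\Xi$ is supported on $\pi_{\bX/\PP^1}^{-1}(\theta)$, taking $N$ larger than any multiplicity appearing in $\Xi$ yields $(t-\theta)^N \eM \subseteq \sigma\eM$. Applying Lemma~\ref{L:degE} componentwise then gives $\dim_\KK(\eM/\sigma\eM) = \sum_{i=0}^{\ell-1} \deg \Xi_{(i)} = \deg \Xi$, which will furnish the rank formula over $\KK[\sigma]$ once free generation is established.

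For finite generation over $\KK[\sigma]$, the plan is to reduce to the geometric case by studying $\sigma^\ell$. A direct computation from the definition shows that $\sigma^\ell$ stabilizes each component $\eM_{(W,h),(i)}$ and acts on it by $m \mapsto h^{(-i)} \cdot m^{(-\ell)}$. On $\bK_{(i)}$, the $(-\ell)$-twist is an endomorphism (since $\bX_{(i)}^{(-\ell)} = \bX_{(i)}$), and the divisor identity of Lemma~\ref{lem: B-shtuka function} twists to a shtuka-type relation on $\bX_{(i)}$ of period $\ell$. The argument in Theorem~\ref{thm: CM dual}, writing $\eM_{(W,h),(i)} = \bigcup_n \eM_{(i),n}$ with
\[
\eM_{(i),n} \assign \Gamma\Bigl(\bX_{(i)},\, \Ocal_{\bX_{(i)}}\bigl(-W^{(\ell-i)} + \sum_{j=1}^i \Xi_{(j)}^{(j-i)} + n \bI_{(i)}\bigr)\Bigr),
\]
where $\bI_{(i)}$ is a $\Frob_{q^\ell}$-invariant effective divisor supported on $\pi_{\bX/\PP^1}^{-1}(\infty) \cap \bX_{(i)}$, then invoking Riemann-Roch to produce a $\sigma^\ell$-induced isomorphism $\eM_{(i),n}/\eM_{(i),n-1} \iso \eM_{(i),n+1}/\eM_{(i),n}$ for all large $n$, carries over verbatim. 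This shows each $\eM_{(W,h),(i)}$ is finitely generated over $\KK[\sigma^\ell]$, hence $\eM$ is finitely generated over $\KK[\sigma]$. Finally, the $O_\eK$-module structure on $\eM$ comes from the diagonal embedding $O_\eK \hookrightarrow \prod_{i=0}^{\ell-1} O_{\bK,(i)} = O_\bK$, and commutation with $\sigma$ follows from the observation that $(1 \otimes \alpha)^{(-1)} = 1 \otimes \alpha$ as an element of $O_{\bK,(i+1)}$ whenever $\alpha \in O_\eK$, so the identity $\sigma\cdot(\alpha\cdot m) = \alpha \cdot (\sigma\cdot m)$ is immediate from the defining formulas.

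The main obstacle will be verifying that $\bI_{(i)}$ can be chosen so that its support is exactly $\pi_{\bX/\PP^1}^{-1}(\infty) \cap \bX_{(i)}$, as is needed for the Riemann-Roch step to stabilize. In the geometric case this came from the identity $\bI^{(1)} = \bI$ combined with the CM-field condition that every place of $\eK^+$ above $\infty$ is non-split in $\eK$. Here the analog must be set up so that the relevant divisor is $\Frob_{q^\ell}$-invariant and meets every $\Frob_{q^\ell}$-orbit in $\pi_{\bX/\PP^1}^{-1}(\infty) \cap \bX_{(i)}$; this should follow by running the weight-condition argument of Theorem~\ref{thm: CM dual} with the modified infinite divisor $\eI_\Xi^\#$ of \eqref{eqn: Sharp} in place of $\eI_\Xi$, and using the commutative diagram \eqref{eqn: Reduction} to translate between the places at infinity of $\eK$ and $\eK^+$ after the constant-field extension.
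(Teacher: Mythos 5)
Your overall skeleton is the same as the paper's: projectivity of rank one over $O_\bK$ gives $\KK[t]$-freeness of rank $[\eK:\FF_q(t)]$; the explicit description \eqref{eqn: sigma-M} of $\sigma\eM$ gives $(t-\theta)^N\eM\subseteq\sigma\eM$ and, componentwise via Lemma~\ref{L:degE}, $\dim_\KK(\eM/\sigma\eM)=\deg\Xi$; and the $O_\eK$-action commutes with $\sigma$ because diagonal elements $1\otimes a$ are fixed under twisting. The genuine gap is in the finite-generation step over $\KK[\sigma]$. You filter each component by $\eM_{(i),n}=\Gamma\bigl(\bX_{(i)},\Ocal_{\bX_{(i)}}(\cdots+n\bI_{(i)})\bigr)$ with $\bI_{(i)}$ an arbitrary $\Frob_{q^\ell}$-invariant effective divisor of full support at infinity, and claim a $\sigma^\ell$-induced isomorphism of successive quotients ``verbatim.'' As stated this fails: one application of $\sigma^\ell$ on the $i$-th component is multiplication by $h^{(-i)}$ composed with the $(-\ell)$-twist, so it enlarges the allowed polar divisor at infinity by exactly $(\eI_\Xi^\#)^{(-i)}$, a divisor of degree $\deg\Xi$ which is in general \emph{not} $\ell$-twist invariant; hence it cannot equal your $\ell$-twist-invariant step divisor $\bI_{(i)}$. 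Concretely, if $\deg\bI_{(i)}>\deg\Xi$ (e.g.\ a large multiple of the reduced fibre at infinity), then for $n$ large a Riemann--Roch count shows the image of $\eM_{(i),n}/\eM_{(i),n-1}$ in $\eM_{(i),n+1}/\eM_{(i),n}$ has dimension at most $\deg\Xi<\deg\bI_{(i)}$, so the map is never surjective and you cannot conclude $\eM_{(i),n+1}=\eM_{(i),n}+\sigma^\ell\eM_{(i),n}$.

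The repair is exactly the calibration the paper builds in: choose $\mu$ with $(\eI_\Xi^\#)^{(\ell\mu)}=\eI_\Xi^\#$, work with $\sigma^{\ell\mu}$ instead of $\sigma^\ell$, and take the filtration divisor to be the orbit sum $\bI=\eI_\Xi^\#+(\eI_\Xi^\#)^{(-\ell)}+\cdots+(\eI_\Xi^\#)^{(\ell-\ell\mu)}$ (twisted by $(-i)$ on the $i$-th component). Then $\bI^{(-\ell)}=\bI$, a single application of $\sigma^{\ell\mu}$ raises the infinity divisor by exactly one copy of $\bI$, both successive quotients have dimension $\deg\bI$ for $n$ large, and the Riemann--Roch argument does give the isomorphism; finite generation over $\KK[\sigma^{\ell\mu}]$ then yields it over $\KK[\sigma]$. (This also matches the geometric case, where the operator is $\sigma^j$ with $j$ the twisting period of $\eI_\Xi$, not $\sigma$ itself.) Note that the obstacle you flag is only the exhaustion half of the issue: full support of $\bI$ at infinity, needed so that the $\eM_{(i),n}$ exhaust each component, does follow as you suggest from the weight condition for $\eI_\Xi^\#$ together with non-splitness of the infinite places (each fibre of $\pi_{\bX/\bX^+}$ over an infinite place, intersected with $\bX_{(i)}$, is a single orbit under the $\ell$-fold twist), which is what the paper asserts; the part your sketch misses is the degree calibration of the step divisor against the power of $\sigma$ actually used.
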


\begin{proof}
As $\eM$ is a projective $O_\bK$-module of rank one, we have that $\eM$ is free over $\KK[t]$ with
$$
\rank_{\KK[t]}\eM = \rank_{\KK[t]} O_\bK = [\eK:\FF_q(t)].
$$
From the above description of $\sigma \eM$, we get that
$$
(t-\theta)^{\text{wt}(\Xi)} \cdot \eM \subset \sigma \eM.
$$
It suffices to show that $\eM$ is finitely generated as a $\KK[\sigma]$-module, and so $\eM$ is free over $\KK[\sigma]$ with
$$
\rank_{\KK[\sigma]}\eM = \dim_{\KK}(\eM/\sigma \eM) = \sum_{i=0}^{\ell-1}\deg \Xi_{(i)} = \deg \Xi \quad \quad \text{(by Lemma~\ref{L:degE})}.
$$
To prove this, take $\mu \in \NN$ sufficiently large so that $(\eI_{\Xi}^{\#})^{(\ell \mu)} = \eI_\Xi^\#$.
Put
$$
\bI\assign  \eI_\Xi^\#+ (\eI_{\Xi}^{\#})^{(-\ell)}+\cdots +
(\eI_{\Xi}^{\#})^{(\ell-\ell \mu)}.
$$
One has that $\bI^{(-\ell)} = \bI$, and the support of $\bI$ covers $\bX_{(0)} - \bU_{(0)} = \pi_{\bX/\PP^1}^{-1}(\infty)\cap \bX_{(0)}$.
For each $n \in \ZZ$, let
\begin{align*}
\eM_n \assign\ &  \Gamma(\bX_{(0)}, \Ocal_{\bX_{(0)}}(-W^{(\ell)} + n \bI) \\
&\times \prod_{i=1}^{\ell-1}
\Gamma\Bigl(\bX_{(i)}, \Ocal_{\bX_{(i)}}\bigl(-W^{(\ell-i)} + (\sum_{j=1}^i \Xi_{(j)}^{(j-i)})+n \bI^{(-i)}\bigr)\Bigr).
\end{align*}
Thus there exists $n_0 \in \ZZ$ so that
$\eM = \cup_{n\geq n_0} \eM_n$.

Put $\mathbf{\Xi}\assign  \Xi^\# + (\Xi^{\#})^{(-\ell)}+\cdots + (\Xi^{\#})^{(\ell-\ell \mu)}$.
Multiplication by $\sigma^{\ell \mu}$ on $\eM_n$ for each $n> n_0$ induces a $\KK$-vector space isomorphism
\begin{multline*}
\sigma^{\ell \mu} : \frac{\eM_n}{\eM_{n-1}} \stackrel{\sim}{\longrightarrow} {} \frac{\Gamma\bigl(\bX_{(0)}, \Ocal_{\bX_{(0)}}(-W^{(\ell)} - \mathbf{\Xi} + (n+1) \bI)\bigr)}{\Gamma\bigl(\bX_{(0)}, \Ocal_{\bX_{(0)}}(-W^{(\ell)} - \mathbf{\Xi} + n \bI)\bigr)} \\
\times \prod_{i=1}^{\ell-1}
\frac{\Gamma\Bigl(\bX_{(i)}, \Ocal_{\bX_{(i)}}\bigl(-W^{(\ell-i)} + (\sum_{j=1}^i \Xi_{(j)}^{(j-i)})-\mathbf{\Xi}^{(-i)}+ (n+1) \bI^{(-i)}\bigr)\Bigr)}{\Gamma\Bigl(\bX_{(i)}, \Ocal_{\bX_{(i)}}\bigl(-W^{(\ell-i)} + (\sum_{j=1}^i \Xi_{(j)}^{(j-i)})-\mathbf{\Xi}^{(-i)}+ n \bI^{(-i)}\bigr)\Bigr)}.
\end{multline*}
By the Riemann-Roch theorem (cf.\ \cite[p.\ 80, Cor.\ 2]{Mumford}), the right hand side in the above is isomorphic to $\eM_{n+1}/\eM_n$ (as $\KK$-vector spaces) under the natural inclusion map when $n$ is sufficiently large.
In other words, there exists a sufficiently large $n_1 \in \ZZ$ so that the multiplication by $\sigma^{\ell \mu}$ induces a $\KK$-vector space isomorphism
$$
\sigma^{\ell \mu} : \frac{\eM_n}{\eM_{n-1}} \stackrel{\sim}{\longrightarrow} \frac{\eM_{n+1}}{\eM_{n}} \quad \text{ for all $n\geq n_1$}.
$$
This implies that any $\KK$-basis of $\eM_{n_1}$ generates $\eM$ over $\KK[\sigma^{\ell \mu}]$, and so over $\KK[\sigma]$.
As $\eM_{n_1}$ is finite dimensional over $\KK$, the result follows.
\end{proof}

\begin{definition}\label{defnB: CM dual t-motives}
Let $\eK$ be a CM field over $\FF_q(t)$ and $\Xi$ be a (resp.\ generalized) CM type of $\eK$.
A dual $t$-motive (over $\KK$) which is isomorphic to $\eM_{(W,h)}$ for some shtuka function $h$ associated to $W$ for $(\eK,\Xi)$ is called a
\textit{CM dual $t$-motive with the $($resp.\ generalized$)$  CM type $(\eK,\Xi)$}.
\end{definition}

\begin{remark}\label{rem: sigma-M}
Let $\eK$ be a CM field over $\FF_q(t)$.
For each $\xi \in J_\eK$, let $\Pfk_\xi$ be the ideal of $O_\bK$ associated with $\xi$.
Given a generalized  CM type $\Xi = \sum_{\xi \in J_\eK}m_\xi \xi$ of $\eK$,
let $\Ifk_\Xi \assign  \prod_{\xi \in J_\eK}\Pfk_\xi^{m_\xi}$.
Let $\eM$ be a CM dual $t$-motive with generalized  CM type $(\eK,\Xi)$.
From the description of $\sigma \eM$ in \eqref{eqn: sigma-M} (when identifying $\eM \cong \eM_{(W,h)}$ for some pair $(W,h)$), we have that
\begin{equation}\label{eqn: sigma-M iso}
\sigma \eM = \Ifk_\Xi \cdot \eM \subset \eM.
\end{equation}
\end{remark}

Similarly we obtain the extension of Proposition~\ref{prop: tensor CM type}.

\begin{proposition}\label{propB: tensor}
Let $\eK$ be a CM field.
Given CM dual $t$-motives $\eM_i$ with generalized  CM types $(\eK,\Xi_i)$ for $i=1,...,n$,
the tensor product
\[
\eM_1\otimes_{O_\bK} \cdots \otimes_{O_\bK} \eM_n
\]
with $\sigma$-action defined by
\[
\sigma (m_1\otimes \cdots \otimes m_n) \assign  \sigma  m_1 \otimes \cdots \otimes \sigma  m_n, \quad \forall m_i \in \eM_i,\ i = 1,\ldots, n,
\]
is a CM dual $t$-motive with generalized  CM type $(\eK,\Xi_1+\cdots +\Xi_n)$.
\end{proposition}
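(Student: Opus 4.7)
The plan is to mimic the proof of Proposition~\ref{prop: tensor CM type} in the geometric setting, but carried out component-by-component along the decomposition $\eM_{(W,h)} = \prod_{i=0}^{\ell-1} \eM_{(W,h),(i)}$. By induction on $n$ it suffices to treat $n=2$, so fix $\eM_1 \cong \eM_{(W_1,h_1)}$ and $\eM_2 \cong \eM_{(W_2,h_2)}$ with shtuka functions for $(\eK, \Xi_1)$ and $(\eK,\Xi_2)$ respectively. Set $W \assign W_1 + W_2 \in \Div(\bX_{(0)})$, $h \assign h_1 \cdot h_2 \in \bK_{(0)}$, and $\Xi \assign \Xi_1 + \Xi_2$. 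Since the operation $\Phi \mapsto \Phi^\#$ in \eqref{eqn: Sharp} and the divisor $\eI_\Phi^\#$ are both additive in $\Phi$, adding the two shtuka identities \eqref{eqn: shtuka} gives
\[
\divv(h) = W^{(\ell)} - W + \Xi^{\#} - \eI_\Xi^{\#},
\]
so $h$ is a shtuka function associated with $W$ for the generalized CM type $(\eK,\Xi)$, and $\eM \assign \eM_{(W,h)}$ is a CM dual $t$-motive with generalized CM type $(\eK,\Xi)$.

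Next I would construct the isomorphism $\Theta \colon \eM_1 \otimes_{O_\bK} \eM_2 \stackrel{\sim}{\to} \eM$ component by component, via multiplication inside $\bK_{(i)}$: on the $i$-th factor $\Theta$ sends $m_{1,(i)} \otimes m_{2,(i)}$ to $m_{1,(i)} \cdot m_{2,(i)}$. For $i=0$ this lands in $\Gamma(\bU_{(0)}, \Ocal_{\bX_{(0)}}(-W_1^{(\ell)} - W_2^{(\ell)})) = \eM_{(W,h),(0)}$, and for $1 \le i < \ell$ the additivity of the ``$\sum_{j=1}^{i} \Xi_{(j)}^{(j-i)}$''-correction together with additivity of $W$ shows the product lands in $\eM_{(W,h),(i)}$. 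Since each factor $\eM_{r,(i)}$ is an invertible $O_{\bK,(i)}$-module, multiplication induces an $O_{\bK,(i)}$-isomorphism $\eM_{1,(i)} \otimes_{O_{\bK,(i)}} \eM_{2,(i)} \stackrel{\sim}{\to} \eM_{(W,h),(i)}$; assembling over $i$ gives an $O_\bK$-isomorphism $\Theta$.

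Finally I would verify $\sigma$-equivariance of $\Theta$. Using the explicit formula for the $\sigma$-action on $\eM_{(W,h)}$ recalled just before \eqref{eqn: sigma-M}, a direct computation gives
\[
\Theta\bigl(\sigma(m_1 \otimes m_2)\bigr)
= \bigl(h_1 h_2 (m_{1,(\ell-1)} m_{2,(\ell-1)})^{(-1)},\, (m_{1,(0)} m_{2,(0)})^{(-1)},\, \ldots \bigr)
= \sigma \Theta(m_1 \otimes m_2),
\]
so $\Theta$ is a $\KK[t,\sigma]$-linear isomorphism, establishing the claim. I do not expect genuine obstacles here: the whole argument is a bookkeeping exercise because the definition of $\eM_{(W,h)}$ in \eqref{e:Appendix M(W,h)}, the shtuka relation \eqref{eqn: shtuka}, the operation $\#$, and the $\sigma$-twist are all additive in $(W,h,\Xi)$ in a strictly parallel fashion; the only subtlety is keeping track of the cyclic shift by $\sigma$ on the $\ell$ factors $\bX_{(i)}$, which is handled uniformly by the component-wise multiplication above.
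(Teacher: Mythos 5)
Your proposal is correct and follows essentially the same route as the paper: reduce to $n=2$, observe the additivity of $(W,h,\Xi)\mapsto(W,h,\Xi^\#,\eI_\Xi^\#)$ so that $(W_1+W_2,\,h_1h_2)$ is a shtuka pair for $\Xi_1+\Xi_2$, build $\Theta$ by component-wise multiplication inside each $\bK_{(i)}$, and verify $\sigma$-equivariance from the explicit cyclic-shift formula. The paper's proof is exactly this argument.
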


\begin{proof}
Without loss of generality, we may assume that $n=2$ and
$\eM_i = \eM_{(W_i,h_i)}$, where
$h_i \in \KK(\bX_{(0)})$ is a shtuka function associated to $W_i \in \Div(\bX_{(0)})$ for $(\eK,\Xi_i)$ for $i=1,2$.
Put
$$
W \assign  W_1+W_2, \quad h \assign  h_1 h_2, \quad \text{ and } \quad \Xi \assign  \Xi_1+\Xi_2.
$$
One has
$$
\divv(h) = W^{(\ell)}-W+\Xi^\#-\eI_\Xi^\# \quad \in \Div(\bX_{(0)}).
$$
Let $\eM = \eM_{(W,h)}$.
For $0\leq i \leq \ell-1$,
the $\Ocal_{\bX_{(i)}}$-module isomorphism
\begin{multline*}
\Ocal_{\bX_{(i)}}\bigl(-W_1^{(\ell-i)}+(\sum_{j=1}^i (\Xi_{1})_{(j)}^{(j-i)})\bigr) \otimes_{\Ocal_{\bX_{(i)}}}  \Ocal_{\bX_{(i)}}\bigl(-W_2^{(\ell-i)}+(\sum_{j=1}^i (\Xi_{2})_{(j)}^{(j-i)})\bigr) \\
\stackrel{\sim}{\longrightarrow} \Ocal_{\bX_{(i)}}\bigl(-W^{(\ell-i)}+(\sum_{j=1}^i \Xi_{(j)}^{(j-i)})\bigr)
\end{multline*}
induces the following $O_\bK$-module isomorphism
\[
\SelectTips{cm}{}
\xymatrix{
\eM_1\otimes_{O_\bK} \eM_2 \ar[d]_{\Theta}^{\rotatebox{90}{\scalebox{1}[1]{$\sim$}}} & ((m_1)_{(1)},\ldots, (m_1)_{(\ell-1)})\otimes ((m_2)_{(1)},\ldots, (m_2)_{(\ell-1)}) \ar@{|->}[d] \\
\eM &
((m_1)_{(1)} (m_2)_{(1)},\ldots, (m_1)_{(\ell-1)}(m_2)_{(\ell-1)}).
}
\]
Moreover, it is straightforward to see that $\Theta$ is invariant under the action of $\sigma$.
Hence $\eM_1\otimes_{O_\bK}\eM_2$ is isomorphic to the CM dual $t$-motive $\eM$ with generalized  CM type $(\eK,\Xi)$, and the proof is complete.
\end{proof}

To end this section, we generalize Proposition~\ref{L:IsogW1W2} to arbitrary CM fields.

\begin{proposition}\label{propB: isogeny}
Any CM dual $t$-motives with the same generalized  CM type must be isogeneous.
\end{proposition}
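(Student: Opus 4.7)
The plan is to transcribe the geometric argument of Proposition~\ref{L:IsogW1W2} to the single curve $\bX_{(0)}$ (which is defined over $\FF_{q^\ell}$) and then propagate the resulting rational function through the $\ell$ components of $\eM_{(W,h)}$ by Frobenius twists, using additivity of twists to make everything $\sigma$-equivariant. Write $\eM_j \cong \eM_{(W_j,h_j)}$ for $j=1,2$, with $h_j \in \bK_{(0)}$ and $W_j \in \Div(\bX_{(0)})$ satisfying \eqref{eqn: shtuka} for the common generalized CM type $\Xi$. Subtracting the two shtuka equations yields
\[
\divv(h_2/h_1) = (W_2-W_1)^{(\ell)} - (W_2-W_1).
\]
After subtracting $(\deg W_2 - \deg W_1)D_0$ for an $\FF_{q^\ell}$-rational divisor $D_0$ of degree one on $\bX_{(0)}$, one obtains a degree-zero class in $J_{\bX_{(0)}}$ fixed by the $q^\ell$-power Frobenius. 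Chow's theorem---now applied to $J_{\bX_{(0)}}$ over $\FF_{q^\ell}$---provides an $\FF_{q^\ell}$-rational representative $D_1$, and the Riemann-Roch step from the geometric proof then produces $f \in \bK_{(0)}^{\times}$ satisfying $\divv(f) = W_2 - W_1 + D_3$, where $D_3$ is $\FF_{q^\ell}$-rational on $\bX_{(0)}$ and effective on $\bU_{(0)}$.

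Because $D_3^{(\ell)} = D_3$, applying the $\ell$-fold twist to $\divv(f)$ gives $\divv(f^{(\ell)}/f) = (W_2-W_1)^{(\ell)} - (W_2-W_1) = \divv(h_2/h_1)$. A suitable $\KK^\times$-rescaling of $f$ (available since $\KK$ is algebraically closed, making the equation $c^{q^\ell-1}=\mathrm{const.}$ solvable) then forces
\[
f^{(\ell)}\, h_1 = h_2 \, f.
\]
Now define $\Theta : \eM_1 \rightarrow \eM_2$ componentwise by $\Theta_{(0)}(m_{(0)}) \assign f^{(\ell)} m_{(0)}$ and $\Theta_{(i)}(m_{(i)}) \assign f^{(\ell-i)} m_{(i)}$ for $1\leq i \leq \ell-1$. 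The conventions in Appendix~\ref{secA: Notation X} ensure $f^{(\ell-i)} \in \bK_{(i)}$, and since $\divv(f^{(\ell-i)}) = (W_2-W_1)^{(\ell-i)} + D_3^{(\ell-i)}$ with $D_3^{(\ell-i)}$ effective on $\bU_{(i)}$ (being $\FF_{q^\ell}$-rational, just viewed on $\bX_{(i)}$ instead of $\bX_{(0)}$), each $\Theta_{(i)}$ does land in $\eM_{2,(i)}$.

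The verification of $\sigma$-equivariance splits into two types of identities. The ``cyclic'' step, comparing the zeroth coordinates of $\sigma\Theta(m)$ and $\Theta(\sigma m)$, amounts exactly to $f^{(\ell)} h_1 = h_2 f$; the ``straight shift'' steps on coordinates $1 \leq i \leq \ell-1$ amount to $f^{(\ell-i)} = (f^{(\ell-i+1)})^{(-1)}$, which is immediate from the additivity of twists. $O_\bK$-linearity is automatic because each component acts as multiplication by a scalar in $\bK_{(i)}$, and finite-dimensionality of the cokernel follows componentwise from Lemma~\ref{L:degE} applied to $\bU_{(i)}$, with total cokernel dimension $\ell \cdot \deg\bigl(D_3|_{\bU_{(0)}}\bigr)$.

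The main obstacle---and the reason the geometric argument does not transfer verbatim---is the twist-bookkeeping needed to connect the single curve $\bX_{(0)}$, where the shtuka data live, to the $\ell$-fold decomposition $\bX = \coprod_{i}\bX_{(i)}$ underlying $\eM_{(W,h)}$, while preserving $\sigma$-equivariance. In particular, the $\FF_{q^\ell}$-rationality of the auxiliary divisor $D_3$ must be exploited at two distinct stages: to identify $D_3^{(\ell)} = D_3$ in extracting $f^{(\ell)}/f = h_2/h_1$, and independently to see that $D_3^{(\ell-i)}$ remains effective on $\bU_{(i)}$ for every $i$. Once these twists are disentangled, the remainder of the argument is a direct translation of the geometric case.
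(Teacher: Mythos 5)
Your argument is correct and is essentially the paper's own proof: the same reduction to $\bX_{(0)}$ via Chow's theorem and Riemann--Roch over $\FF_{q^\ell}$ to produce an $\FF_{q^\ell}$-rational $D_3$ effective on $\bU_{(0)}$ and an $f\in\bK_{(0)}^\times$ with $f^{(\ell)}h_1=h_2 f$, followed by the identical componentwise isogeny $\Theta_{(i)} = $ multiplication by $f^{(\ell-i)}$ with the same $\sigma$-equivariance check. The only cosmetic point is that effectivity of $D_3^{(\ell-i)}$ on $\bU_{(i)}$ follows simply because Frobenius twisting preserves effectivity and carries $\bU_{(0)}$ to $\bU_{(i)}$, rather than from $\FF_{q^\ell}$-rationality per se, which you need only for $D_3^{(\ell)}=D_3$.
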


\begin{proof}
Let $\eK$ be a CM field over $\FF_q(t)$ with constant field $\FF_{q^\ell}$.
Let $\eM_{(W_1,h_1)}$ and $\eM_{(W_2,h_2)}$ be two CM dual $t$-motives with the same generalized  CM type $(\eK,\Xi)$.
Take an $\FF_{q^\ell}$-rational divisor $D_0$ of the curve $X$ (associated with $\eK$) over $\FF_{q^\ell}$ with degree one (see~\cite[Cor.~VII.~5.5]{Weil}).
Viewing $D_0$ as a divisor of $\bX_{(0)}$, let
$$W = W_2 - W_1 - (\deg W_2-\deg W_1)\cdot D_0.$$
Then
$$
W^{(\ell)} - W = \divv(h_2/h_1) \quad \in \Div(\bX_{(0)}).
$$
Thus as in the geometric case, there exist an $\FF_{q^\ell}$-rational divisor $D_1$ of $\bX_{(0)}$ with degree $0$ and $f \in \KK(\bX_{(0)})^\times = \bK_{(0)}^\times$ such that
$W = D_1 + \divv(f)$, i.e.,
\begin{equation}\label{eqn: isogeny f}
\hspace{1.5cm} W_2-W_1 +\bigl(-D_1 -(\deg W_2-\deg W_1) \cdot D_0\bigr)= \divv(f) \quad \in \Div(\bX_{(0)}).
\end{equation}
Applying the Riemann-Roch Theorem on the curve $X$ over $\FF_{q^\ell}$ as in the geometric case, we may assume that $D_1$ has been chosen so that
\[
D_3 \assign -D_1-(\deg W_2-\deg W_1)\cdot D_0 \quad
\text{is effective on $\bU_{(0)}$.}
\]
Then $W+D_3 = \divv(f)$ and
\[
\divv(h_2/h_1) = W^{(\ell)} - W = \divv(f^{(\ell)}/f) \quad \in \Div(\bX_{(0)}).
\]
Up to a $\KK$-scalar multiple, we may choose $f$ so that
\[
h_2 \cdot f =  f^{(\ell)} \cdot h_1.
\]

Define $\Theta : \eM_{(W_1,h_1)} \rightarrow \eM_{(W_2,h_2)}$ as follows:
for $((m_1)_{(0)},\ldots, (m_1)_{(\ell-1)}) \in \eM_{(W_1,h_1)}$, set
\begin{align*}
 \Theta & \bigl((m_1)_{(0)},(m_1)_{(1)},\ldots, (m_1)_{(\ell-1)}\bigr) \\
& \assign
\bigl(f^{(\ell)}\cdot (m_1)_{(0)}, f^{(\ell-1)} \cdot (m_1)_{(1)},\ldots,  f^{(1)} \cdot (m_1)_{(\ell-1)}\bigr).
\end{align*}
From the equality \eqref{eqn: isogeny f}, the map  $\Theta$ is an injective $O_\bK$-module homomorphism and the cokernel is of finite dimension over $\KK$ (by Lemma~\ref{L:degE}).
Moreover,
\begin{align*}
\Theta {}&\bigl(\sigma \cdot ((m_1)_{(0)},(m_1)_{(1)},\ldots, (m_1)_{(\ell-1)})\bigr) \\
&= \Theta\bigl( h_1\cdot (m_1)_{(\ell-1)}^{(-1)}, (m_1)_{(0)}^{(-1)},\ldots, (m_1)_{(\ell-2)}^{(-1)}\bigr) \\
&= \bigl(f^{(\ell)} \cdot h_1
\cdot (m_1)_{(\ell-1)}^{(-1)}, f^{(\ell-1)} \cdot (m_1)_{(0)}^{(-1)},\ldots, f^{(1)}\cdot  (m_1)_{(\ell-2)}^{(-1)}\bigr) \\
&= \bigl(h_2 \cdot (f^{(1)}\cdot (m_1)_{(\ell-1)})^{(-1)}, (f^{(\ell)}\cdot  (m_1)_{(0)})^{(-1)},\ldots, (
f^{(2)} \cdot (m_1)_{(\ell-2)})^{(-1)}\bigr) \\
&= \sigma \cdot \bigl(f^{(\ell)}\cdot  (m_1)_{(0)},\ldots,  f^{(1)}\cdot (m_1)_{(\ell-1)}\bigr) \\
&= \sigma \cdot \Theta\bigl((m_1)_{(0)},(m_1)_{(1)},\ldots, (m_1)_{(\ell-1)})\bigr).
\end{align*}
Hence $\Theta:\eM_{(W_1,h_1)}\rightarrow \eM_{(W_2,h_2)}$ is an isogeny, and the statement holds.
\end{proof}

\section{\texorpdfstring{Uniformizability of CM dual $t$-motives}{Uniformizability of CM dual t-motives}}\label{AppendixSubsec: Unif. of CM dual t-motives}

\subsection{\texorpdfstring{Characterization of CM dual $t$-motives}{Characterization of CM dual t-motives}}

Fix an algebraically closed field $\KK$ with $k\subset \KK \subset \CC_\infty$.
We first characterize when a given dual $t$-motive is ``CM''.

\begin{lemma}\label{lemB: projectivity}
Let $\eK$ be a CM field over $\FF_q(t)$, and $O_\eK$ be the integral closure of $\FF_q[t]$ in $\eK$.
Given a dual $t$-motive $\eM$, suppose that $\rank_{\KK[t]}(\eM) = [\eK:\FF_q(t)]$ and that there is an $\FF_q[t]$-algebra embedding $O_\eK \hookrightarrow \End_{\KK[t,\sigma]}(\eM)$.
Then $\eM$ is projective of rank one over $O_\bK \assign  \KK\otimes_{\FF_q} O_\eK$.
\end{lemma}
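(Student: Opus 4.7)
The plan is to establish projectivity by decomposing $O_\bK$ into Dedekind domain components and analyzing each piece separately. The $\KK[t]$-module structure on $\eM$ combined with the given embedding $O_\eK \hookrightarrow \End_{\KK[t,\sigma]}\eM$ — whose image by assumption commutes with the $\KK[t]$-action — canonically extends to an $O_\bK$-module structure via the universal property of tensor products. Let $\FF_{q^\ell}$ denote the constant field of $\eK$; the idempotent decomposition $\KK \otimes_{\FF_q} \FF_{q^\ell} \cong \KK^\ell$ yields the product decomposition $O_\bK \cong \prod_{i=0}^{\ell-1} O_{\bK,(i)}$ described in Appendix~\ref{secA: Notation X}, with each factor $O_{\bK,(i)}$ the coordinate ring of the smooth affine curve $\bU_{(i)}$ and hence a Dedekind domain. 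Writing $\eM \cong \prod_{i=0}^{\ell-1}\eM_{(i)}$ accordingly, it suffices to show that each $\eM_{(i)}$ is finitely generated projective of rank one over $O_{\bK,(i)}$.

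Finite generation is immediate: $O_\bK$ is finite and projective over $\KK[t]$, and $\eM$ is free of finite rank over $\KK[t]$, so each $\eM_{(i)}$ is finitely generated over $O_{\bK,(i)}$. To compare $\KK[t]$-ranks across components, I would use that $\sigma$ carries $\eM_{(i)}$ into $\eM_{(i+1 \bmod \ell)}$ (since the $(-1)$-Frobenius twist cyclically permutes the idempotents $e_i$), is $\FF_q[t]$-linear, and is $\KK$-semilinear through $c \mapsto c^{1/q}$. The relation $(t-\theta)^N\eM \subseteq \sigma\eM$ forces $\sigma\eM_{(i)}$ to have finite $\KK$-codimension in $\eM_{(i+1)}$, which together with Frobenius semilinearity yields $\rank_{\KK[t]}\eM_{(i)} = \rank_{\KK[t]}\eM_{(i+1)}$. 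Summing, each equals $[\eK:\FF_q(t)]/\ell = [\eK:\FF_{q^\ell}(t)] = \rank_{\KK[t]} O_{\bK,(i)}$.

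Torsion-freeness of $\eM_{(i)}$ over $O_{\bK,(i)}$ follows from a Cayley--Hamilton argument: if $\alpha \in O_{\bK,(i)} \setminus \{0\}$ annihilates $m \in \eM_{(i)}$, then applying Cayley--Hamilton to multiplication by $\alpha$ on the free $\KK[t]$-module $O_{\bK,(i)}$ produces its norm $N_{O_{\bK,(i)}/\KK[t]}(\alpha) \in \KK[t] \setminus \{0\}$ as an element of $\alpha \cdot O_{\bK,(i)}$, which must then annihilate $m$. Since $\eM$ is free over the PID $\KK[t]$ and $\eM_{(i)}$ is a direct factor, $m = 0$. Over the Dedekind domain $O_{\bK,(i)}$, finitely generated torsion-free modules are projective, so $\eM_{(i)}$ is projective. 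To identify the rank, localize at the generic point: the equality $\KK(t)\otimes_{\KK[t]}\eM_{(i)} = \bK_{(i)} \otimes_{O_{\bK,(i)}}\eM_{(i)}$ and the $\KK[t]$-rank computation above give
\[
[\bK_{(i)}:\KK(t)] \cdot \rank_{O_{\bK,(i)}}\eM_{(i)} = \rank_{\KK[t]}\eM_{(i)} = [\eK:\FF_{q^\ell}(t)] = [\bK_{(i)}:\KK(t)],
\]
so $\rank_{O_{\bK,(i)}}\eM_{(i)} = 1$. Assembling the components yields that $\eM$ is projective of rank one over $O_\bK$.

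The main obstacle is the non-geometric case where $O_\bK$ fails to be a domain, which forces the component-wise analysis and the rank-equidistribution step via the Frobenius-semilinearity of $\sigma$. In the geometric case ($\ell = 1$) the argument collapses to a direct application of the Cayley--Hamilton torsion-freeness argument and the generic-fiber rank computation over the already-Dedekind ring $O_\bK$.
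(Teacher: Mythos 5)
Your proof follows essentially the same route as the paper's: decompose $O_\bK$ via the idempotents into the Dedekind factors $O_{\bK,(i)}$, use the cyclic action of $\sigma$ together with the finiteness of $\dim_\KK(\eM/\sigma\eM)$ to equalize the $\KK[t]$-ranks of the components $\eM_{(i)}$ at $[\eK:\FF_{q^\ell}(t)]$, and conclude that each $\eM_{(i)}$ is projective of rank one over $O_{\bK,(i)}$. The only difference is that you spell out the final step (torsion-freeness via the norm/Cayley--Hamilton argument and the generic-fiber rank count) which the paper compresses into a single ``therefore,'' and this is done correctly.
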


\begin{proof}
Suppose that the constant field of $\eK$ is $\FF_{q^\ell}$. Let $e_0,\ldots, e_{\ell-1}$ be the idempotents in
$$O_\bK = \prod_{i=0}^{\ell-1}O_{\bK,{(i)}}, \quad \text{ where } \quad
O_{\bK,{(i)}}
\assign  \CC_\infty \underset{\Frob_q^{-i}, \FF_{q^\ell}}{\otimes} O_\eK,
$$
so that $O_\bK \cdot e_i = O_{\bK,(i)}$ for $0\leq i <\ell$.
Put $\eM_{(i)}\assign  e_i \cdot \eM$ and then we may write
$$\eM = \prod_{i=0}^{f-1}\eM_{(i)}.
$$
The Frobenius twisting permutes $e_0,\ldots, e_{\ell-1}$ by
\[
e_{i}^{(-1)} = e_{i+1}, \quad 0\leq i \leq \ell-2, \quad \text{ and } \quad e_{\ell-1}^{(-1)} = e_0.
\]
It follows that
\[
\sigma \cdot \eM_{(i)} \subset \eM_{(i+1)} \quad \text{ for } 0\leq i<\ell-1, \quad
\sigma \cdot \eM_{(\ell-1)} \subset \eM_{(0)}.
\]
As $\dim_{\KK} (\eM/\sigma \eM)$ is finite, we obtain the equalities
\[
\rank_{\KK[t]} \eM_{(0)} = \rank_{\KK[t]} \eM_{(1)} = \cdots = \rank_{\KK[t]} \eM_{(\ell-1)},
\]
whence
\[
\rank_{\KK[t]} M_{(i)} = \frac{1}{\ell} \cdot \rank_{\KK[t]}(\eM) = [\eK:\FF_{q^\ell}(t)], \quad 0\leq i < \ell.
\]
Therefore
$\eM_{(i)}$ is a projective $O_{\bK,(i)}$-module of rank one for $0\leq i \leq \ell-1$,
showing that $\eM$ is a projective $O_\bK$-module of rank one.
\end{proof}

\begin{remark}\label{rem: I-M and Xi-M}
Let $\eM$ be a dual $t$-motive satisfing the conditions in the above lemma.
As $\eM$ is projective of rank one over $O_\bK$ and $(t-\theta)^N \cdot \eM \subseteq \sigma \eM$ when $N$ is sufficiently large,
there exists a unique ideal $\Ifk_\eM$ of $O_\eK$ which is of the form
$$
\Ifk_\eM = \prod_{\xi \in J_\eK}\Pfk_\xi^{m_\xi}, \quad m_\xi \geq 0,
$$
where $\Pfk_\xi$ is the maximal ideal of $O_\bK$ corresponding to $\xi$ for each $\xi \in J_\eK$, so that $\sigma \eM = \Ifk_\eM\cdot \eM$.
In this case, we set
$$\Xi_\eM \assign  \sum_{\xi \in J_\eK}m_\xi \xi \quad \in I_\eK.$$
\end{remark}

\begin{proposition}\label{prop: being CM}
Let $\eK$ be a CM field over $\FF_q(t)$,
and $O_\eK$ be the integral closure of $\FF_q[t]$ in $\eK$.
Given a dual $t$-motive $\eM$, suppose the following conditions hold:
\begin{itemize}
    \item[(1)] $\rank_{\KK[t]}\eM = [\eK:\FF_q(t)]$;
    \item[(2)] there is an $\FF_q[t]$-algebra embedding $O_\eK \hookrightarrow \End_{\KK[t,\sigma]} \eM$;
    \item[(3)] $\Xi_\eM$ is a generalized CM type of $\eK$;
    \item[(4)] $\eM$ is pure (see \cite[Definition~2.4.6~(a)]{HJ20}).
\end{itemize}
Then $\eM$ is a CM dual $t$-motive with  generalized CM type $(\eK,\Xi_\eM)$.
\end{proposition}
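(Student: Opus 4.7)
The plan is to follow the sketch given immediately after Proposition~\ref{prop: being CM-0} and produce an explicit shtuka datum $(W,h)$ together with a $\KK[t,\sigma]$-module isomorphism $\eM_{(W,h)} \stackrel{\sim}{\rightarrow} \eM$. Conditions~(1) and~(2) let me invoke Lemma~\ref{lemB: projectivity} to conclude that $\eM$ is a rank-one projective $O_\bK$-module. Choosing any nonzero $\bm \in \bK \otimes_{O_\bK} \eM$, I can write $\eM = \Jfk^{(1)} \cdot \bm$ for a unique fractional ideal $\Jfk^{(1)}$ of $O_\bK$ (the twist in the notation is for later bookkeeping). Since $\sigma$ is compatible with the $O_\bK$-action up to Frobenius twist, the element $\sigma \bm$ lies in $\bK \cdot \bm$, so $\sigma \bm = h \bm$ for a unique $h \in \bK^\times$, and condition~(3) translates into the fractional-ideal identity
\[
h \cdot O_\bK = \Jfk^{(1)} \cdot \Jfk^{-1} \cdot \Ifk_{\Xi_\eM}.
\]

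Set $W_\Jfk \assign \sum_{x \in \bU} \ord_{\Pfk_x}(\Jfk) \cdot x$ and extend it by zero to $\bX$. In the geometric case, the ideal identity above rewrites as the divisor equation
\[
\divv(h) = W_\Jfk^{(1)} - W_\Jfk + \Xi_\eM - \Upsilon,
\]
where $\Upsilon$ is some divisor supported on the finite set $\bX \setminus \bU = \pi_{\bX/\PP^1}^{-1}(\infty)$ accounting for the behavior of $h$ at infinity. To match the shtuka relation~\eqref{E:Shtuka}, I plan to absorb the discrepancy $\eI_{\Xi_\eM} - \Upsilon$ into $W$: construct a divisor $W_\infty \in \Div(\bX)$ supported on $\bX \setminus \bU$ satisfying
\[
W_\infty^{(1)} - W_\infty = \eI_{\Xi_\eM} - \Upsilon,
\]
and then set $W \assign W_\Jfk + W_\infty$. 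In the non-geometric case the strategy is identical, with \eqref{E:Shtuka} replaced by \eqref{eqn: shtuka} and with the component decomposition $\bX = \coprod_i \bX_{(i)}$ of Section~\ref{secA: Notation X} tracked throughout.

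The main obstacle is proving the existence of $W_\infty$, and this is precisely where purity~(4) is essential. Solving $W_\infty^{(1)} - W_\infty = \eI_{\Xi_\eM} - \Upsilon$ on the zero-dimensional locus $\pi_{\bX/\PP^1}^{-1}(\infty)$ is an additive Lang-type problem, which reduces orbit-by-orbit under Frobenius to the vanishing of the sum of coefficients of $\eI_{\Xi_\eM} - \Upsilon$ along each orbit. I plan to verify this balance by comparing two inputs: first, purity assigns $\eM$ a single rational slope, which constrains the valuations of $\eM$ at every place above $\infty$ and thereby pins down the multiplicities of $\Upsilon$; second, the CM-field hypothesis forces each Frobenius orbit in $\pi_{\bX/\PP^1}^{-1}(\infty)$ to sit inside a fiber of $\pi_{\bX/\bX^+}$ via the commutative diagram~\eqref{eqn: Reduction}, so the constancy of $\wt(\Xi_\eM)$ across such fibers yields the corresponding sum for $\eI_{\Xi_\eM}$. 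The hard part will be extracting the precise numerical matching on each orbit from the intrinsic purity condition, especially in the non-geometric setting where the Frobenius cycles through the different components $\bX_{(i)}$.

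Once $(W,h)$ is in hand, the last step is routine. By construction $\eM_{(W,h)} = \Gamma(\bU, \Ocal_\bX(-W^{(1)}))$ coincides with $\Jfk^{(1)}$ as a fractional ideal of $O_\bK$, so the $O_\bK$-linear map $\Theta : \eM_{(W,h)} \rightarrow \eM$ defined by $x \mapsto x \bm$ is bijective, and the relation $\sigma \bm = h\bm$ gives $\Theta(\sigma x) = h x^{(-1)}\bm = x^{(-1)}\sigma\bm = \sigma\Theta(x)$, showing that $\Theta$ is $\KK[t,\sigma]$-linear. This produces the desired isomorphism and exhibits $\eM$ as a CM dual $t$-motive with generalized CM type $(\eK, \Xi_\eM)$.
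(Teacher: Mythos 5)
Your roadmap is essentially the paper's: invoke Lemma~\ref{lemB: projectivity} to get rank-one projectivity over $O_\bK$, express $\eM = \Jfk^{(1)}\cdot\bm$, extract a shtuka-type function $h$ from $\sigma\bm = h\bm$ and condition (3), and then absorb the discrepancy at infinity into a divisor $W_\infty$ solving a Lang-type equation. You also correctly identify that the existence of $W_\infty$ is where all the difficulty lives and that purity is what makes it work. So the high-level structure is right.

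However, the content of that crucial step --- the paper's Lemma~\ref{lemB: Infinity part} --- is left as a plan rather than carried out, and that lemma is the real substance of the proposition. You acknowledge this yourself (``the hard part will be extracting the precise numerical matching on each orbit from the intrinsic purity condition''), so the proposal as written has a genuine gap: you must actually derive, from the lattice condition $t^u L = \sigma^w L$ in the definition of purity, the Frobenius-averaged divisor identity (the analogue of \eqref{eqn: purity}) which forces the coefficient sum of $\Upsilon$ over each Frobenius orbit at infinity to equal $\wt(\Xi_\eM)$. Without that computation the balance condition is unverified.

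Two further inaccuracies in the sketch are worth flagging. First, purity does \emph{not} ``pin down the multiplicities of $\Upsilon$'' at individual points above $\infty$; it only constrains the Frobenius-orbit sums of those multiplicities (that is all that \eqref{eqn: purity} gives, and all that is needed). Second, the role of the CM-field hypothesis is stronger than you state: it is not merely that each Frobenius orbit in $\pi_{\bX/\PP^1}^{-1}(\infty)$ sits inside a fiber of $\pi_{\bX/\bX^+}$ (that containment is automatic since the infinite places of $\eK^+$ are $\FF_q$-rational), but that each fiber $\pi_{\bX/\bX^+}^{-1}(\infty_i^+)$ is a \emph{single} Frobenius orbit, which is exactly the non-split condition defining a CM field. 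Only with fiber $=$ orbit can you equate the orbit sum (controlled by purity) with the fiber sum $\wt(\Xi_\eM)$ (controlled by condition (3)). Finally, in the non-geometric case you should be working systematically with $\sigma^\ell$, the decomposition $\bX = \coprod_i \bX_{(i)}$, and the twisted divisors $\Xi^\#$, $\eI_\Xi^\#$ on $\bX_{(0)}$ as in \eqref{eqn: Sharp}; the Lang equation there reads $W_\infty^{(\ell)} - W_\infty = \eI_\Xi^\# - \eI_{h_\eM}^\#$, not the untwisted version.
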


\begin{proof}
As in Lemma~\ref{lemB: projectivity}, we identify
$$ \eM = \prod_{i=0}^{\ell-1} \eM_{(i)},$$
where $\eM_{(i)} = e_i \cdot \eM$ is projective of rank one over $O_{\bK,(i)}$ for $0\leq i < \ell$.
Write $$
\Xi \assign  \Xi_\eM = \sum_{\xi \in J_\eK} m_\xi \xi = \sum_{i=0}^{\ell -1} \Xi_{(i)}, \quad
\text{ where } \Xi_{(i)} = \sum_{\xi \in J_\eK\cap \bX_{(i)}} m_\xi \xi, \quad 0\leq i < \ell.
$$
Let $\Ifk_{\eM,(i)} \assign  (\prod_{\xi \in J_\eK \cap \bX_{(i)}} \Pfk_\xi^{m_\xi}) \cdot O_{\bK,(i)}$ for $0\leq i < \ell$.
As $\sigma \eM = \Ifk_\eM \cdot \eM$, we have that \begin{equation}\label{eqn: M(i)}
\sigma \eM_{(i)} = \Ifk_{\eM,(i+1)} \cdot \eM_{(i+1)}, \quad \textup{for $0\leq i \leq \ell-2$,} \quad \textup{and $\sigma \eM_{(\ell-1)} = \Ifk_{\eM,(0)} \cdot \eM_{(0)}$.}
\end{equation}

Take a suitable divisor $W_0$ of $\bX_{(0)}$ and $\bm \in \eM_{(0)}$ so that
\begin{equation}\label{eqn: M0-1}
\eM_{(0)} = \Gamma\bigl(\bU_{(0)}, \Ocal_{\bX_{(0)}}(-W_0^{(\ell)})\bigr) \cdot \bm.
\end{equation}
It follows from \eqref{eqn: M(i)} that for $1\leq i \leq \ell-1$,
\[
\eM_{(i)} = \Gamma\bigl(\bU_{(i)},\Ocal_{\bX_{(i)}}(-W_0^{(\ell-i)}+ \sum_{j=1}^i \Xi_{(j)}^{(j-i)})\bigr) \cdot (\sigma^i \cdot \bm).
\]
In particular,
\[
\sigma \cdot  \eM_{(\ell-1)}
= \Gamma
\bigl(\bU_{(0)},\Ocal_{\bX_{(0)}}(-W_0+ \sum_{j=1}^{\ell-1} \Xi_{(j)}^{(j-\ell)})\bigr)
\cdot (\sigma^\ell \cdot \bm),
\]
and so
\begin{align}\label{eqn: M0-2}
\eM_{(0)} &= \Ifk_{\eM,(0)}^{-1} \cdot \bigl(\sigma \cdot \eM_{(\ell-1)}\bigr) \\
&=
\Gamma
\bigl(\bU_{(0)},\Ocal_{\bX_{(0)}}(-W_0+ (\sum_{j=1}^{\ell-1} \Xi_{(j)}^{(j-\ell)})+\Xi_{(0)})\bigr)
\cdot (\sigma^\ell \cdot \bm) \notag \\
&=  \Gamma\bigl(\bU_{(0)},\Ocal_{\bX_{(0)}}(-W_0+\Xi^\#)\bigr)\cdot (\sigma^\ell \cdot \bm), \notag
\end{align}
where $\Xi^\#$ is defined in \eqref{eqn: Sharp}.

From the two expressions for $\eM_{(0)}$ in \eqref{eqn: M0-1} and \eqref{eqn: M0-2}, we have an $h_\eM \in \bK_{(0)}^\times$ such that
\[
\sigma^\ell \cdot \bm = h_\eM \cdot \bm,
\]
and
\[
\Gamma\bigl(\bU_{(0)},\Ocal_{\bX_{(0)}}(-W_0+\Xi^\#)\bigr)\cdot h_\eM = \Gamma\bigl(\bU_{(0)},\Ocal_{\bX_{(0)}}(-W_0^{(\ell)})\bigr).
\]
Therefore
\[
\divv(h_\eM) = W_0^{(\ell)}-W_0 + \Xi^\# - \eI^\#_{h_\eM} \quad \in \Div(\bX_{(0)}),
\]
where $\eI_{h_\eM}^\# \in \Div(\bX_{(0)})$ has support at the points of $\bX_{(0)}$ lying above $\infty$.
Finally, to apply the conditions (3) and (4), we invoke the following technical lemma (whose proof will then be given):

\begin{lemma}\label{lemB: Infinity part}
There exists $W_\infty \in \Div(\bX_{(0)})$ so that
$
W_\infty^{(\ell)} - W_\infty = \eI_\Xi^\# - \eI_{h_\eM}^\#
$.
\end{lemma}

Put $W\assign  W_0+W_\infty$.
We then have
$$
\divv(h_\eM) = W^{(\ell)}-W + \Xi^\# - \eI_\Xi^\# \quad \in \Div(\bX_{(0)}).
$$
In conclusion, we obtain a dual $t$-motive isomorphism
$\eM_{(W,h_\eM)}\stackrel{\sim}{\rightarrow} \eM$ defined by sending
$
(m_{(0)},\ldots, m_{(\ell-1)})
$
to $(m_{(0)}\cdot \bm,m_{(1)} \cdot \sigma \bm,\ldots,  m_{(\ell-1)}\cdot \sigma^{\ell-1} \bm)$
for every $(m_{(0)},\ldots, m_{(\ell-1)})$ in $\eM_{(W,h_\eM)}$.
Therefore $\eM$ is a CM dual $t$-motive with generalized CM type $(\eK,\Xi)$.
\end{proof}

\begin{proof}[Proof of Lemma~\ref{lemB: Infinity part}]
Regarding $X$ as a curve over $\FF_{q^\ell}$, we may view $\bX_{(0)}$ as the base change of $X$ from $\FF_{q^\ell}$ to $\KK$, and $\eK$ is a geometric extension over $\FF_{q^\ell}(t)$ (corresponding to an $\FF_{q^\ell}$-morphism from $X$ to $\PP^1$).
Let $\pi_{\bX_{(0)}/\PP^1}: \bX_{(0)}\rightarrow \PP^1$ be the base change from $\FF_{q^\ell}$ to $\KK$, which coincides with the composition of the inclusion $\bX_{(0)}\subset \bX$ and $\pi_{\bX/\PP^1}: \bX \rightarrow \PP^1$.
Let $\Ocal_{\eK,\infty}$ be the integral closure of $\FF_{q^\ell}[1/t]$ in $\eK$.
Since the smoothness of $X$ (over $\FF_{q^\ell}$) is preserved under the base change from $\FF_{q^\ell}$ to $\KK$ by \cite[Chapter III Proposition 10.1(b)]{Hartshorne},
the integral closure of $\KK[1/t]$ in $\bK_{(0)}$ $(=\KK(t)\otimes_{\FF_{q^\ell}(t)} \eK = \KK(t) \otimes_{\FF_{q^\ell}[1/t]}\Ocal_{\eK,\infty})$ can be identified with $\KK\otimes_{\FF_{q^\ell}} \Ocal_{\eK,\infty} = \KK[1/t]\otimes_{\FF_{q^\ell}[1/t]} \Ocal_{\eK,\infty}$.
For each $\overline{\infty} \in \pi^{-1}_{\bX_{(0)}/\PP^1}(\infty)$,
let
$\hat{\bK}_{\overline{\infty}}$ be the completion of $\bK_{(0)}$ ($=\KK(t)\otimes_{\FF_{q^\ell}(t)}\eK$) with respect to $\overline{\infty}$.
We have the following identifications,
\[
\hat{\bK}_{(0),\infty}\assign  \prod_{\overline{\infty} \in \pi_{\bX_{(0)}/\PP^1}^{-1}(\infty)}\hat{\bK}_{\overline{\infty}} = \KK(\!(1/t)\!)\otimes_{\KK(t)}\bK_{(0)}
= \KK(\!(1/t)\!)\otimes_{\FF_{q^\ell}[1/t]} \Ocal_{\eK,\infty}.
\]
Moreover, put
\[
\hat{\Ocal}_{\bX,\overline{\infty}}
\assign  \{\alpha \in \hat{\bK}_{\overline{\infty}}\mid \ord_{\overline{\infty}}(\alpha) \geq 0 \} \ \text{ and }\  \Mfk_{\overline{\infty}}\assign  \{\alpha \in \hat{\bK}_{\overline{\infty}}\mid \ord_{\overline{\infty}}(\alpha) > 0 \}.
\]
By \cite[Exercise 3.17 in \S 4.3]{Liu}) we obtain
\begin{align}\label{E:Ocal_{(0),infty}}
\hat{\Ocal}_{\bX_{(0)},\infty}\assign  \prod_{\overline{\infty} \in \pi_{\bX_{(0)}/\PP^1}^{-1}(\infty)} \hat{\Ocal}_{\bX,\overline{\infty}}
&=
\KK[\![1/t]\!] \otimes_{\KK[1/t]}(\KK[1/t]\otimes_{\FF_{q^\ell}[1/t]}\Ocal_{\eK,\infty}) \\
&= \KK[\![1/t]\!] \otimes_{\FF_{q^\ell}[1/t]} \Ocal_{\eK,\infty}. \notag
\end{align}

Now, the purity of $\eM$ (from the condition (4) in Proposition~\ref{prop: being CM}) says that there exists a  $\KK[\![1/t]\!]$-lattice $L$ of full rank in
$\KK(\!(1/t)\!)\otimes_{\KK[t]}\eM$
and positive integers $u,w$
satisfying that
\[
t^u L = \sigma^w L.
\]
After iterating the action of $t^u$ on the equation above $\ell$ times, we may assume that $\ell$ divides $w$.
Since the idempotents $e_0,\ldots, e_{\ell-1} \in O_\bK = \prod_{i=0}^{\ell-1} O_{\bK,(i)}$
acting on the module $\KK(\!(1/t)\!)\otimes_{\KK[t]}\eM$ actually commute with $\sigma^\ell$,
we get that $L_0\assign  e_0 \cdot L$ is a $\KK[\![1/t]\!]$-lattice of full rank in $\eM_{(0)}(\!(1/t)\!) \assign \KK(\!(1/t)\!)\otimes_{\KK[t]} \eM_{(0)}$ satisfying
\[
t^u L_0 = \sigma^w L_0.
\]
Notice that the action of $\sigma^\ell$ on $\eM_{(0)}(\!(1/t)\!)$ commutes with multiplication by elements in $\hat{\eK}_{(0),\infty}$.
After replacing $L_0$ by $\Ocal_{\eK,\infty} \cdot L_0$, we may assume that $L_0$ is an $\hat{\Ocal}_{\bX_{(0)},\infty}$-module via~\eqref{E:Ocal_{(0),infty}}.
Identifying $\eM_{(0)}(\!(1/t)\!)$ as a free $\hat{\bK}_{(0),\infty}$-module of rank one generated by $\bm$, we
can write $L_0 = \Lcal_0 \cdot \bm$, where
\[
\Lcal_0 = \prod_{\overline{\infty} \in \pi_{\bX_{(0)}/\PP^1}^{-1}(\infty)} \Mfk_{\overline{\infty}}^{\ell_{\overline{\infty}}} \subset \prod_{\overline{\infty} \in \pi_{\bX_{(0)}/\PP^1}^{-1}(\infty)} \hat{\bK}_{\overline{\infty}} \quad
\text{ for $\ell_{\overline{\infty}} \in \ZZ$.}
\]
Then
\[
(t^u \Lcal_0) \cdot \bm = t^u L_0 = \sigma^w L_0 = \Bigl(\prod_{i=0}^{(w/\ell)-1}h_\eM^{(-\ell i)}\Bigr) \cdot \Lcal_0^{(-w)} \cdot
\bm.
\]
Since $\pi_{\bX_{(0)}/\PP^1}^{-1}(\infty)$ is invariant under the $\ell$-th Frobenius twisting (as the maximal ideals $\Mfk_{\overline{\infty}}$ with $\overline{\infty} \in \pi_{\bX_{(0)}/\PP^1}^{-1}(\infty)$ are permuted by the $\ell$-th Frobenius twisting), there exists a positive integer $\ell_\infty$ so that
\[
\overline{\infty}^{(\ell_\infty)} = \overline{\infty}, \quad \forall \overline{\infty} \in \pi_{\bX_{(0)}/\PP^1}^{-1}(\infty).
\]
we may increase $u$ and $w$ to be multiples of $\ell_\infty$, and get $\Lcal_0^{(-w)} = \Lcal_0$.
Hence
\[
t^u \cdot \Lcal_0 = \Bigl(\prod_{i=0}^{(w/\ell)-1} h_\eM^{(-\ell i)}\Bigr) \cdot \Lcal_0 \quad \quad \subset \prod_{\overline{\infty} \in \pi_{\bX_{(0)}/\PP^1}^{-1}(\infty)}  \hat{\bK}_{\overline{\infty}}.
\]
The above equality implies that
\begin{equation}\label{eqn: purity}
u \cdot \pi_{\bX_{(0)}/\PP^1}^*(\infty)=  \sum_{i=0}^{(w/\ell)-1} (\eI_{h_\eM}^\#)^{(-\ell i)},
\end{equation}
where $\pi_{\bX_{(0)}/\PP^1}^*(\infty) \in \Div(\bX_{(0)})$ is the pull-back of the divisor $\infty$.
In particular, let $X^+$ be the curve over $\FF_q$ associated with the maximal totally real subfield $\eK^+$ of $\eK$, and let $\pi_{\bX^+/\PP^1}^{-1}(\infty) = \{\infty_1^+,\ldots, \infty_d^+\}$ where $d \assign [\eK^+:\FF_q(t)]$.
Write
\[
\eI_{h_\eM}^\# = \sum_{\overline{\infty} \in \pi_{\bX/\PP^1}^{-1}(\infty) \cap \bX_{(0)}} n_{\overline{\infty}} \overline{\infty} = \sum_{i=1}^d\left( \sum_{\substack{\overline{\infty} \in \bX_{(0)} \\ \pi_{\bX/\bX^+}(\overline{\infty}) =   \infty_i^+}}n_{\overline{\infty}} \overline{\infty}\right).
\]
As $\deg \eI_{h_\eM}^\# = \deg \Xi^\# = d \cdot {\rm wt}(\Xi)$, equality~\eqref{eqn: purity} assures us that
\[
\sum_{\substack{\overline{\infty} \in \bX_{(0)} \\ \pi_{\bX/\bX^+}(\overline{\infty}) =   \infty_i^+}}n_{\overline{\infty}} = {\rm wt}(\Xi), \quad \forall i = 1,\ldots, d.
\]

On the other hand,
we may write
\[
\eI_\Xi^\# = \sum_{\overline{\infty} \in \pi_{\bX/\PP^1}^{-1}(\infty)\cap \bX_{(0)}} m_{\overline{\infty}} \overline{\infty}
\ \text{ with}\!
\sum_{\substack{\overline{\infty} \in \bX_{(0)} \\ \pi_{\bX/\bX^+}(\overline{\infty}) =   \infty_i^+}}m_{\overline{\infty}} = {\rm wt}(\Xi), \ \forall i = 1,\ldots, d.
\]
For $1\leq i \leq d$, we take a point $\overline{\infty}_i \in \pi_{\bX/\bX^+}^{-1}(\infty_i^+) \cap \bX_{(0)}$.
Since $\infty_i^+$, as a place of $\eK^+$, is non-split in $\eK$, every point $\overline{\infty} \in  \pi_{\bX/\bX^+}^{-1}(\infty_i^+)\cap \bX_{(0)}$ must be equal to the Frobenius twist $(\overline{\infty}_i)^{(\ell \mu_{\overline{\infty}})}$ for some non-negative integer $\mu_{\overline{\infty}}$.
Therefore,
\begin{align*}
\eI_\Xi^\# - \eI_{h_\eM}^\# &=  \sum_{\overline{\infty} \in  \pi_{\bX/\PP^1}^{-1}(\infty) \cap \bX_{(0)}} (m_{\overline{\infty}}-n_{\overline{\infty}}) \overline{\infty} \\
&= \sum_{i=1}^d \left( \sum_{\substack{\overline{\infty} \in \bX_{(0)} \\ \pi_{\bX/\bX^+}(\overline{\infty}) =   \infty_i^+}} (m_{\overline{\infty}}-n_{\overline{\infty}}) ( \overline{\infty}_i^{(\ell \mu_{\overline{\infty}})} - \overline{\infty}_i)\right).
\end{align*}
Consequently we can take
\[
W_\infty \assign  \sum_{i=1}^d \left( \sum_{\substack{\overline{\infty} \in \bX_{(0)} \\ \pi_{\bX/\bX^+}(\overline{\infty}) =   \infty_i^+}} (m_{\overline{\infty}}-n_{\overline{\infty}}) \sum_{j=0}^{\mu_{\overline{\infty}}-1} \overline{\infty}_i^{(\ell j)}\right)
\]
and obtain
\[
W_\infty^{(\ell)} - W_\infty = \eI_\Xi^\# - \eI_{h_\eM}^\#.
\qedhere
\]
\end{proof}

\subsection{\texorpdfstring{Dual $t$-motives arising from Hilbert-Blumenthal $t$-modules}{Dual t-motives arising from Hilbert-Blumenthal t-modules}}

Throughout this section,
we take $\KK = \CC_\infty$.
Let $\eK/\FF_{q}(t)$ be a CM field with maximal totally real subfield $\eK^+$.
Put $d = [\eK^+:\FF_q(t)]$.
Let $\Xi = \xi_1+\cdots + \xi_d$ be a  CM type of $\eK$.
Notice that
$\nu_{\xi_1}\big|_{\eK^+},\ldots, \nu_{\xi_d}\big|_{\eK^+}$ give all the $\FF_q$-algebra embeddings from $\eK^+$ into $\CC_\infty$ sending $t$ to $\theta$.
Set
$$
\nu_\Xi:\eK \rightarrow \CC_\infty^d, \quad \nu_\Xi(\lambda) \assign  (\nu_{\xi_1}(\lambda),\ldots, \nu_{\xi_d}(\lambda)), \quad \forall \lambda \in \eK.
$$
Let $O_\eK$ be the integral closure of $\FF_q[t]$ in $\eK$. Then $\nu_\Xi(O_\eK)$ becomes a discrete $A$-lattice in $\CC_\infty^d$.

\begin{proposition}
There exists a Hilbert-Blumenthal $O_{\eK^+}$-module $E_{(\eK,\Xi)} = (\GG_a^d,\rho^\Xi)$ whose period lattice is $\nu_\Xi(O_\eK)$.
\end{proposition}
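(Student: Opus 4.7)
The plan is to invoke Anderson's uniformization theorem \cite[Theorem~7~(II)]{Anderson86}, which associates to any discrete $A$-lattice of full rank in $\CC_\infty^d$ that is stable under the diagonal action of $O_{\eK^+}$ via $\nu_1^+\oplus\cdots\oplus\nu_d^+$ a Hilbert-Blumenthal $O_{\eK^+}$-module realizing that lattice as its period lattice. What needs verification is therefore only that $\nu_\Xi(O_\eK)\subset\CC_\infty^d$ satisfies these hypotheses in the present (possibly non-geometric) setting.

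First I would check stability under $O_{\eK^+}$: for $\alpha\in O_{\eK^+}$ and $\lambda\in O_\eK$, since each $\nu_{\xi_i}$ restricts to $\nu_i^+$ on $\eK^+$,
\[
\nu_\Xi(\alpha\lambda) = \bigl(\nu_{\xi_1}(\alpha)\nu_{\xi_1}(\lambda),\ldots,\nu_{\xi_d}(\alpha)\nu_{\xi_d}(\lambda)\bigr) = \bigl(\nu_1^+(\alpha)\oplus\cdots\oplus\nu_d^+(\alpha)\bigr)\cdot\nu_\Xi(\lambda),
\]
so the lattice is $O_{\eK^+}$-stable with the required diagonal action. The $A$-rank is correct since $O_\eK$ is a finitely generated projective $A$-module of rank $[\eK:\FF_q(t)]$ and the map $\nu_\Xi$ is $A$-linear and injective (because even a single $\nu_{\xi_i}$ is already injective on $\eK$).

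The essential point, and the main technical step, is discreteness of $\nu_\Xi(O_\eK)$ in $\CC_\infty^d$. This is where the CM property of $\eK$ is used: because every infinite place $\infty_i^+$ of $\eK^+$ has a \emph{unique} extension to $\eK$, the $d$ embeddings $\nu_{\xi_1}|_{\eK^+},\ldots,\nu_{\xi_d}|_{\eK^+}=\nu_1^+,\ldots,\nu_d^+$ are precisely the $d$ distinct $\FF_q$-embeddings of $\eK^+$ into $\CC_\infty$ sending $t\mapsto\theta$, and each $\nu_{\xi_i}$ is then (up to the residue-field identification $\FF_{q^\ell}\hookrightarrow\CC_\infty$) the unique extension of $\nu_i^+$ to $\eK$. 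Discreteness then follows by viewing $\nu_\Xi$ as a product map into $\prod_{i=1}^d(\eK\otimes_{\eK^+,\nu_i^+}\CC_\infty)=\eK\otimes_{\eK^+}\prod_i\CC_\infty$, inside which $O_\eK$ embeds as a discrete cocompact $A$-lattice in the standard way, exactly as for archimedean lattices in CM abelian variety theory.

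I do not foresee a serious obstacle: all ingredients are formal once the CM hypothesis has been used to identify the embeddings $\nu_{\xi_i}$ with the places of $\eK$ above $\infty$. The only mildly delicate point, arising precisely because the present Appendix treats the non-geometric case, is tracking the constant-field extension in the discreteness argument --- that is, in ensuring that the $d$ chosen embeddings $\nu_{\xi_i}$ really do exhaust the infinite places of $\eK$ with the correct multiplicities, so that $\nu_\Xi$ does not collapse the lattice. This is guaranteed by the bijection of Proposition~\ref{prop: reduction at infinity} (and its non-geometric analogue set up in Appendix~\ref{secA: CM}) together with the non-splitting of infinite places in $\eK/\eK^+$ built into Definition~\ref{Def: CM fields}.
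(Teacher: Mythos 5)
Your proposal is correct and takes essentially the same route as the paper, whose entire proof is a direct appeal to Anderson's Theorem~7~(II); your verification of the hypotheses ($O_{\eK^+}$-stability, $A$-rank, and discreteness via the non-splitting of the infinite places of $\eK^+$ in $\eK$) is precisely the content implicitly delegated to that citation. The only loose point is the claim of cocompactness inside $\prod_i \CC_\infty$ --- cocompactness neither holds in this non-locally-compact setting nor is needed, since discreteness already follows because an element of $O_\eK$ bounded under every $\nu_{\xi_i}$ is bounded at \emph{all} infinite places of $\eK$ (by uniqueness of the extensions) and integral elsewhere, so it lies in a finite-dimensional $\FF_q$-vector space of Riemann--Roch type.
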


\begin{proof}
This is directly from \cite[Theorem~7~(II)]{Anderson86}.
\end{proof}

From \cite[Theorem 7 I]{Anderson86} we can actually extend $\rho^\Xi$ to an $\FF_q$-algebra homomorphism from $O_\eK$ into $\End_{\FF_q}(\GG^d_{a/\CC_\infty})$ so that $$\partial \rho^\Xi = \nu_{\xi_1}\oplus \cdots \oplus \nu_{\xi_d} \quad \text{ on $\Lie(\GG_a^d)$}.$$

Recall in Definition~\ref{defn: t-module} that we set
\[\Mscr(\rho^\Xi)\assign  \Hom_{\FF_q}({\GG_a^d}_{/\CC_\infty}, {\GG_a}_{/\CC_\infty}),
\]
together with the $O_\bK$-module structure given by
$$
(c \otimes a) \cdot m \assign  c \cdot m \circ \rho_\eK(a), \quad \forall c \in \CC_\infty,\ a \in O_\eK,\ m \in \Mscr(\rho_\eK),
$$
where $O_{\bK}=\CC_{\infty}\otimes_{\FF_{q}}O_{\eK}$.
As $E_{\rho^\Xi}$ is pure and uniformizable (from the definition of a Hilbert-Blumenthal $O_{\eK^+}$-module), the $t$-motive $\Mscr(\rho^\Xi)$ is pure and uniformizable
with
\begin{align*}
& \rank_{\CC_\infty[t]}\Mscr(\rho^\Xi) = [\eK:\FF_q(t)] \rassign  r \\
\text{and} \quad &
\rank_{\CC_\infty[\tau]}\Mscr(\rho^\Xi) = \dim_{\CC_\infty}\left(\frac{\Mscr(\rho^\Xi)}{\tau \Mscr(\rho^\Xi)}\right) = d
\end{align*}
(see \cite[Theorem 4 (III)]{Anderson86}).
In particular,
let
$$
\Ifk_\Xi \assign  \prod_{i=1}^d \Pfk_i \subset O_\bK,
$$
where $\Pfk_i$ is the maximal ideal of $O_\bK$ generated by $a - \nu_{\xi_i}(a)$ for $a \in O_\eK$ (i.e.\ the maximal ideal corresponding to $\xi_{i}$).
We have {the following:}
\begin{lemma}
\begin{equation}\label{Ifk Mscr}
\Ifk_\Xi  \cdot \Mscr(\rho^\Xi) \subset \tau  \cdot \Mscr(\rho^\Xi).
\end{equation}
\end{lemma}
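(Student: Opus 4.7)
The plan is to establish the equivalent assertion that the ideal $\Ifk_\Xi$ annihilates the finite-dimensional $\CC_\infty$-vector space $\Mscr(\rho^\Xi)/\tau\Mscr(\rho^\Xi)$, from which the inclusion \eqref{Ifk Mscr} follows immediately.

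First, I would invoke the canonical isomorphism~\eqref{eqn: Lie1}, which identifies $\Mscr(\rho^\Xi)/\tau\Mscr(\rho^\Xi)$ (after $\CC_\infty$-linear duality) with $\Hom_{\CC_\infty}(\Lie(\GG_a^d),\CC_\infty)$. Multiplication by $1\otimes a \in O_\bK$ on $\Mscr(\rho^\Xi)$, given by $m\mapsto m\circ\rho^\Xi(a)$, induces on the quotient the $\CC_\infty$-linear dual of $\partial\rho^\Xi(a) \in \End_{\CC_\infty}(\Lie(\GG_a^d))$. By the construction of $(\GG_a^d,\rho^\Xi)$ as a Hilbert-Blumenthal $O_{\eK^+}$-module extended to $O_\eK$ via \cite[Thm.~7~(I)]{Anderson86}, we have
\[
\partial\rho^\Xi(a) = \mathrm{diag}\bigl(\nu_{\xi_1}(a),\ldots,\nu_{\xi_d}(a)\bigr), \quad \forall\, a\in O_\eK,
\]
in the standard basis of $\Lie(\GG_a^d)\cong\CC_\infty^d$. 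Since the dual of a diagonal matrix is, in the dual basis, diagonal with the same entries, the $O_\bK$-action on the quotient $\Mscr(\rho^\Xi)/\tau\Mscr(\rho^\Xi)$ decomposes canonically as the direct sum of the $d$ characters $\chi_i : c\otimes a \mapsto c\cdot\nu_{\xi_i}(a)$ for $i=1,\ldots,d$.

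By definition, $\Pfk_i$ is precisely the kernel of $\chi_i$, so one obtains an $O_\bK$-module isomorphism
\[
\Mscr(\rho^\Xi)/\tau\Mscr(\rho^\Xi) \cong \bigoplus_{i=1}^d O_\bK/\Pfk_i.
\]
The maximal ideals $\Pfk_1,\ldots,\Pfk_d$ are pairwise distinct (as the points $\xi_1,\ldots,\xi_d$ of the CM type $\Xi$ are distinct in $J_\eK$) and therefore pairwise coprime, so by the Chinese Remainder Theorem the right-hand side is $O_\bK$-isomorphic to $O_\bK/\Ifk_\Xi$. In particular, $\Ifk_\Xi$ annihilates $\Mscr(\rho^\Xi)/\tau\Mscr(\rho^\Xi)$, which is exactly the containment \eqref{Ifk Mscr}.

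There is no substantial obstacle here: the argument reduces entirely to the explicit diagonal form of $\partial\rho^\Xi$ guaranteed by the Hilbert-Blumenthal construction, together with the pairwise coprimality of the prime ideals $\Pfk_i$ attached to the distinct embeddings $\nu_{\xi_i}$.
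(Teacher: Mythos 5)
Your proof is correct, but it takes a genuinely different and more structural route than the paper's. The paper's argument is a direct element-wise computation: it expands $m = \sum_j v_j \tau^j$, observes that the constant ($\tau^0$) coefficient of $(a - \nu_\xi(a))\cdot m$ is $v_0 \cdot (\partial\rho_a^\Xi - \nu_\xi(a)I_d)$, and then shows by hand that the product $\prod_{i=1}^d(\partial\rho_{a_i}^\Xi - \nu_{\xi_i}(a_i)I_d)$ vanishes because each factor kills a different diagonal entry. Your version instead lifts the problem to the dual Lie algebra via the Anderson isomorphism \eqref{eqn: Lie1} and identifies the $O_\bK$-module $\Mscr(\rho^\Xi)/\tau\Mscr(\rho^\Xi)$ as $\bigoplus_{i=1}^d O_\bK/\Pfk_i$, from which annihilation by $\Ifk_\Xi$ is immediate (the CRT step is pleasant but not strictly needed, since each $\Pfk_i \supseteq \Ifk_\Xi$ already suffices). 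Both arguments reduce to the same fact — that $\partial\rho^\Xi$ is diagonal with entries $\nu_{\xi_i}(a)$ — so the mathematical content is essentially equivalent; what your approach buys is the stronger statement $\Mscr(\rho^\Xi)/\tau\Mscr(\rho^\Xi) \cong O_\bK/\Ifk_\Xi$, which anticipates the equality the paper establishes separately in Lemma~\ref{lem: CM type of M(rho)} for the associated dual $t$-motive, whereas the paper's computational proof is more self-contained and avoids invoking the duality with $\Lie(E_\rho)$. One small point worth making explicit in your write-up: the coprimality of the $\Pfk_i$ uses that the $\xi_i$ comprising a CM type of weight one are pairwise distinct, which follows because each lies over a distinct point of $J_{\eK^+}$.
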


\begin{proof}
It suffices to show that for any $a_{1},\ldots,a_{d}\in O_{\eK}$,
\[ (a_{1}-\nu_{\xi_{1}}(a_{1})\cdots (a_{d}-\nu_{\xi_{d}}(a_{d}))\cdot m \in \tau  \cdot \Mscr(\rho^\Xi), \quad \forall m \in  \Mscr(\rho^\Xi).    \]Now, given any $m\in \Mscr(\rho^\Xi)\cong \Mat_{1\times d}(\CC_{\infty}[\tau])$ we write
\[m=\sum_{j=0}^{s}v_{j} \tau^{j}\quad {\rm{for}} \quad v_{o},\ldots,v_{s}\in \Mat_{1\times d}(\CC_{\infty}). \]
Note that for any $a\in O_{\eK}$ and $\xi\in J_{\eK}$, we have
\[(a-\nu_{\xi}(a))\cdot m\assign  m \rho_{a}^{\Xi}- \nu_{\xi}(a)\cdot m=\left(\sum_{j=0}^{s}v_{j}\tau^{j} \right) \rho_{a}^{\Xi}-\nu_{\xi}(a) \left( \sum_{j=0}^{s}v_{j}\tau^{j}\right) \]
and hence the constant vector of $(a-\nu_{\xi}(a))\cdot m$ is given by
\[ v_{0}\cdot \left(\partial(\rho_{a}^{\Xi})-\nu_{\xi}(a)I_{d} \right) .\]
It follows that for any $a_{1},\ldots,a_{d}\in O_{\eK}$, the constant vector of
\[(a_{1}-\nu_{\xi_{1}}(a_{1}))\cdots (a_{d}-\nu_{\xi_{d}}(a_{d}))\cdot m \]
is given by
\[v_{0}\cdot\left(\partial(\rho_{a_{d}}^{\Xi})-\nu_{\xi_{d}}(a_{d})I_{d} \right)\cdots \left(\partial(\rho_{a_{1}}^{\Xi})-\nu_{\xi_{1}}(a_{1})I_{d} \right),\]which is zero because
\[ \partial(\rho_{a}^{\Xi})=\begin{pmatrix}\nu_{\xi_{1}}(a)& &\\
&\ddots&\\
& & \nu_{\xi_{d}}(a)
\end{pmatrix} \quad \forall a\in O_{\eK}.
\]
That is, $(a_{1}-\nu_{\xi_{1}}(a_{1})\cdots (a_{d}-\nu_{\xi_{d}}(a_{d}))\cdot m \in \tau  \cdot \Mscr(\rho^\Xi)$ as claimed.
\end{proof}

In \eqref{eqn: M(rho)} we define the Hartl-Juschka dual $t$-motive associated with $E_{(\eK,\Xi)}$ by
\[
\eM(\rho^\Xi)\assign \Hom_{\CC_\infty[t]}(\tau \Mscr(\rho^\Xi), \Omega_{\CC_\infty[t]/\CC_\infty}),
\]
where $\Omega_{\CC_\infty[t]/\CC_\infty} = \CC_\infty[t]dt$ is the module of K\"ahler differentials of $\CC_\infty[t]$ over $\CC_\infty$, together with the following $\sigma$-action
\[
(\sigma \cdot \phi)(\tau m)
\assign  \phi( \tau^2 m)^{(-1)}, \quad \forall \tau m \in \tau \Mscr(\rho^\Xi).
\]
The following result is known from \cite[Remark 2.4.4 (c), Proposition 2.4.9, and Proposition 2.4.17]{HJ20}:
\begin{proposition}\label{propC: PU-Mrho}
$\eM(\rho^\Xi)$ is pure and uniformizable with
\begin{align*}
& \rank_{\CC_\infty[t]} \eM(\rho^\Xi) = \rank_{\CC_\infty[t]} \Mscr(\rho^\Xi) = r\\
\text{and}\quad  &
\dim_{\CC_\infty} (\frac{\eM(\rho^\Xi)}{\sigma \eM(\rho^\Xi)}) = \dim_{\CC_\infty}(\frac{\Mscr(\rho^\Xi)}{\tau \Mscr(\rho^\Xi)}) = d.
\end{align*}
\end{proposition}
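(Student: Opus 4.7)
The plan is to transfer the purity and uniformizability of $\Mscr(\rho^\Xi)$, which are built into the definition of a Hilbert-Blumenthal $O_{\eK^+}$-module, across to $\eM(\rho^\Xi)$ via the Hartl-Juschka correspondence, and to extract the rank/dimension identities from the perfect pairing already established in Section~\ref{sec: Com Lie}.

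First I would record the rank and dimension identities. Since $E_{(\eK,\Xi)}$ is a Hilbert-Blumenthal $O_{\eK^+}$-module, it is abelian with $\dim E_{(\eK,\Xi)} = d$, and its period lattice $\nu_\Xi(O_\eK)$ has $\FF_q[t]$-rank $r = [\eK : \FF_q(t)]$, so \cite[Theorem~4~(III)]{Anderson86} gives $\rank_{\CC_\infty[t]}\Mscr(\rho^\Xi) = r$ and $\dim_{\CC_\infty}\Mscr(\rho^\Xi)/\tau\Mscr(\rho^\Xi) = d$. Because $\eM(\rho^\Xi) = \Hom_{\CC_\infty[t]}(\tau\Mscr(\rho^\Xi),\Omega_{\CC_\infty[t]/\CC_\infty})$ is the $\CC_\infty[t]$-dual of the free module $\tau\Mscr(\rho^\Xi)$ (which has the same rank as $\Mscr(\rho^\Xi)$ by freeness of $\Mscr(\rho^\Xi)$ over $\CC_\infty[\tau]$), one obtains $\rank_{\CC_\infty[t]}\eM(\rho^\Xi) = r$. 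The dimension identity for $\eM(\rho^\Xi)/\sigma\eM(\rho^\Xi)$ is then immediate from the perfect pairing in \eqref{eqn: perfect pairing} (this is the content of \cite[Remark~2.4.4~(c)]{HJ20}), giving $\dim_{\CC_\infty}\eM(\rho^\Xi)/\sigma\eM(\rho^\Xi) = d$.

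For purity, the Hilbert-Blumenthal hypothesis forces $E_{(\eK,\Xi)}$, and hence $\Mscr(\rho^\Xi)$, to be pure in Anderson's sense. The Hartl-Juschka functor $\Mscr \mapsto \Hom_{\CC_\infty[t]}(\tau\Mscr,\Omega_{\CC_\infty[t]/\CC_\infty})$ is compatible with the Hodge-Pink/weight structures, and \cite[Proposition~2.4.9]{HJ20} asserts that it preserves purity; applying it to $\Mscr(\rho^\Xi)$ yields the purity of $\eM(\rho^\Xi)$. For uniformizability, the Hilbert-Blumenthal condition again gives that $E_{(\eK,\Xi)}$ is uniformizable by definition, so by Theorem~\ref{thm: Uni-eqv} (i.e., \cite[Proposition~2.4.17]{HJ20}), the associated dual $t$-motive $\eM(\rho^\Xi)$ is uniformizable as well.

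The main obstacle, really the only one, is bookkeeping: the references in \cite{HJ20} are phrased in terms of several parallel categories (Anderson $t$-motives $\Mscr(\rho)$, the dual $t$-motives $\eN(\rho)$, and the Hartl-Juschka dual $t$-motives $\eM(\rho)$), and one must verify that the isomorphism $\eM(\rho^\Xi)\cong \eN(\rho^\Xi)$ from \cite[Theorem~2.5.13]{HJ20} (already invoked in Theorem~\ref{thm: M(rho)}) intertwines the notions of purity and uniformizability in the two settings so that the transfer is valid. Once this is spelled out, no further computation is required.
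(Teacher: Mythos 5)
Your proposal is correct and matches the paper's treatment: the paper itself does not supply a detailed proof but simply cites \cite[Remark~2.4.4~(c), Proposition~2.4.9, and Proposition~2.4.17]{HJ20}, which are exactly the results you invoke for the rank/dimension identities, the preservation of purity under the Hartl-Juschka functor, and the preservation of uniformizability, respectively. Your additional remark about the bookkeeping between $\Mscr(\rho)$, $\eN(\rho)$, and $\eM(\rho)$ is a reasonable caution, but it is already handled in the paper by Theorem~\ref{thm: M(rho)} and Theorem~\ref{thm: Uni-eqv}.
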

We note that the $O_\bK$-module structure on $\Mscr(\rho^\Xi)$ induces an $O_\bK$-module structure on $\eM(\rho_\eK)$ as follows:
for $\phi \in \eM(\rho^\Xi)$ and $\alpha \in O_\bK$,
\[
(\alpha \cdot \phi)(\tau m) \assign  \phi(\alpha \tau m), \quad \forall \tau m \in \tau \Mscr(\rho^\Xi).
\]
In particular, this induces an $\FF_q(t)$-algebra embedding $O_\eK\hookrightarrow \End_{\CC_\infty[t,\sigma]}(\eM(\rho^\Xi))$, and the following lemma holds:

\begin{lemma}\label{lem: CM type of M(rho)}
$\Ifk_\Xi \cdot \eM(\rho^\Xi) = \sigma \cdot \eM(\rho^\Xi)$.
\end{lemma}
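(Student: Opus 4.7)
The plan is to establish $\Ifk_\Xi\cdot \eM(\rho^\Xi)\subseteq \sigma\cdot \eM(\rho^\Xi)$ by duality from the containment \eqref{Ifk Mscr} just proved, and then upgrade this to equality through a $\CC_\infty$-dimension count.

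First, I would reinterpret the containment $\Ifk_\Xi\cdot \Mscr(\rho^\Xi)\subseteq \tau\cdot \Mscr(\rho^\Xi)$ as saying that $\Ifk_\Xi$ annihilates the quotient $\Mscr(\rho^\Xi)/\tau\Mscr(\rho^\Xi)$. The perfect $\bar k$-bilinear pairing
\[
\frac{\Mscr(\rho^\Xi)}{\tau\Mscr(\rho^\Xi)}\times \frac{\eM(\rho^\Xi)}{\sigma\eM(\rho^\Xi)}\longrightarrow \bar k
\]
from \eqref{eqn: perfect pairing} is in fact $O_\bK$-bilinear. Indeed, the $O_\eK$-action on $\Mscr(\rho^\Xi)$ (by composition with $\rho^\Xi$) commutes with left multiplication by $\tau$, so $\tau\Mscr(\rho^\Xi)$ is $O_\bK$-stable; and under the identification of $\eum \in \eM(\rho^\Xi)$ with an element of $\Hom_{\bar k[t]}(\Mscr(\rho^\Xi),\bar k[t][\tfrac{1}{t-\theta}]dt)$ furnished by the inclusion $(t-\theta)^N\Mscr(\rho^\Xi)\subseteq \tau\Mscr(\rho^\Xi)$, the rule $(\alpha\eum)(m')=\eum(\alpha m')$ yields
\[
\langle \alpha m,\eum\rangle = \mathrm{Res}_{t=\theta}\eum(\alpha m) = \mathrm{Res}_{t=\theta}(\alpha\eum)(m) = \langle m,\alpha\eum\rangle
\]
for every $\alpha\in O_\bK$.

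With $O_\bK$-bilinearity and non-degeneracy of the pairing in hand, I would conclude that every element of $\Ifk_\Xi$ annihilates $\eM(\rho^\Xi)/\sigma\eM(\rho^\Xi)$, giving the inclusion $\Ifk_\Xi\cdot \eM(\rho^\Xi)\subseteq \sigma\cdot \eM(\rho^\Xi)$. To promote this to equality, I would compare $\CC_\infty$-dimensions of the two quotients of $\eM(\rho^\Xi)$. By Proposition~\ref{propC: PU-Mrho}, $\dim_{\CC_\infty}\bigl(\eM(\rho^\Xi)/\sigma\eM(\rho^\Xi)\bigr)=d$. On the other hand, Lemma~\ref{lemB: projectivity} implies that $\eM(\rho^\Xi)$ is projective of rank one over $O_\bK$, so $\eM(\rho^\Xi)/\Ifk_\Xi\eM(\rho^\Xi)$ is locally free of rank one over $O_\bK/\Ifk_\Xi$, and the Chinese Remainder decomposition $O_\bK/\Ifk_\Xi \cong \prod_{i=1}^d O_\bK/\Pfk_i \cong \CC_\infty^d$ gives $\CC_\infty$-dimension $d$ as well. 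Since $\Ifk_\Xi\eM(\rho^\Xi)\subseteq \sigma\eM(\rho^\Xi)$ and both submodules have codimension $d$, they must coincide.

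The main technical obstacle will be verifying precisely the $O_\bK$-bilinearity of the pairing \eqref{eqn: perfect pairing}. One must carefully unwind the definition of the $O_\bK$-action on $\eM(\rho^\Xi)$ (which is specified a priori only on $\tau\Mscr(\rho^\Xi)$) and check its compatibility with the canonical extension of any $\eum$ to all of $\Mscr(\rho^\Xi)$ using $(t-\theta)^N\Mscr(\rho^\Xi)\subseteq \tau\Mscr(\rho^\Xi)$. Once this compatibility is pinned down, the remainder is a short duality-plus-dimension computation.
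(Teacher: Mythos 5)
Your proposal is correct, and it takes a genuinely different route from the paper on the containment step. Both arguments begin with the same dimension count: $\eM(\rho^\Xi)$ is projective of rank one over $O_\bK$ by Lemma~\ref{lemB: projectivity}, so $\eM(\rho^\Xi)/\Ifk_\Xi\eM(\rho^\Xi)$ has $\CC_\infty$-dimension $d = \dim_{\CC_\infty}\bigl(O_\bK/\Ifk_\Xi\bigr)$, matching $\dim_{\CC_\infty}\bigl(\eM(\rho^\Xi)/\sigma\eM(\rho^\Xi)\bigr)=d$ from Proposition~\ref{propC: PU-Mrho}, so it suffices to prove $\Ifk_\Xi\eM(\rho^\Xi)\subseteq\sigma\eM(\rho^\Xi)$. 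For that inclusion, the paper works directly inside $\eM(\rho^\Xi)$: given $\alpha\in\Ifk_\Xi$ and $\phi$, it writes down $\phi'_\alpha(\tau m):=\phi(\alpha m)^{(1)}$, uses \eqref{Ifk Mscr} to justify well-definedness, checks $\CC_\infty[t]$-linearity, and verifies $\sigma\phi'_\alpha=\alpha\phi$ by hand, thereby exhibiting an explicit $\sigma$-preimage. You instead transfer \eqref{Ifk Mscr} across the residue pairing \eqref{eqn: perfect pairing}: after establishing the adjunction $\langle\alpha m,\eum\rangle=\langle m,\alpha\eum\rangle$ for $\alpha\in O_\bK$ (the extension-across-$(t-\theta)^N$ compatibility check you flag is exactly the right verification, and it goes through since $O_\bK$ is commutative and $\eum$ is $\CC_\infty[t]$-linear), non-degeneracy of the pairing forces $\alpha\eum\equiv0$ in $\eM(\rho^\Xi)/\sigma\eM(\rho^\Xi)$. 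Your duality argument is shorter and more conceptual, making the relationship between the $\tau$-side containment and the $\sigma$-side containment transparent, but it leans on the perfectness of the residue pairing as an extra input (already established in Section~\ref{sec: Com Lie}); the paper's explicit construction avoids invoking perfectness and is entirely elementary. Both are valid.
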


\begin{proof}
By Lemma~\ref{lemB: projectivity}, we know that $\eM(\rho^\Xi)$ is projective of rank one over $O_\bK$.
As
$$
\dim_{\CC_\infty}\left(\frac{\eM(\rho^\Xi)}{\sigma \eM(\rho^\Xi)}\right) = d = \dim_{\CC_\infty}\left(\frac{O_\bK}{\Ifk_\Xi}\right) = \dim_{\CC_\infty}\left(\frac{\eM(\rho^\Xi)}{\Ifk_\Xi \eM(\rho^\Xi)}\right),
$$
it suffices to show that
$\Ifk_\Xi \cdot \eM(\rho^\Xi) \subseteq \sigma \cdot \eM(\rho^\Xi)$.
Given $\alpha \in \Ifk_\Xi$ and $\phi \in \eM(\rho^\Xi)$, define $\phi'_\alpha \in \eM(\rho^\Xi)$ by
$$
\phi'_\alpha (\tau m) \assign  \phi(\alpha m)^{(1)}, \quad \forall \tau m \in \tau   \Mscr(\rho^\Xi).
$$
The map $\phi'_{\alpha}$ is well-defined since $\alpha m \in \alpha \Mscr(\rho^\Xi) \subset \tau \Mscr(\rho^\Xi)$, and for $a \in \CC_\infty[t]$ one has
\begin{align*}
\phi'_\alpha(a \tau m) = \phi'_\alpha(\tau a^{(-1)} m ) & = \phi(\alpha a^{(-1)} m)^{(1)} \\
& = \Bigl(a^{(-1)} \phi(\alpha m)\Bigr)^{(1)} = a  \phi(\alpha m)^{(1)} = a  \phi'_\alpha( \tau m).
\end{align*}
We shall check that $\alpha \cdot \phi = \sigma \cdot \phi_\alpha' \in \sigma \cdot \eM(\rho^\Xi)$ and the result follows.
Indeed, given any $\tau m \in \tau \Mcal(\rho^\Xi)$, we have
$$
(\sigma \cdot \phi'_\alpha)(\tau m)
= \phi'_\alpha(\tau^2 m)^{(-1)}
=\Bigl(\phi(\alpha \tau m)^{(1)}\Bigr)^{(-1)} = \phi(\alpha \tau m) = (\alpha \cdot \phi)(\tau m),
$$whence
\[
\alpha \cdot \phi = \sigma \cdot \phi_\alpha' \subset \sigma \cdot \eM(\rho^\Xi).
\]
\end{proof}

From the characterization in Proposition~\ref{prop: being CM}, we obtain

\begin{corollary}
$\eM(\rho^\Xi)$ is a CM dual $t$-motive with the CM type $(\eK,\Xi)$.
\end{corollary}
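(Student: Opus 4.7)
The plan is to deduce this corollary directly from the characterization of CM dual $t$-motives given in Proposition~\ref{prop: being CM} by verifying, one by one, the four hypotheses of that proposition for $\eM = \eM(\rho^\Xi)$. Each of these hypotheses has already been established in the preceding lemmas and propositions, so the work here is mainly one of assembly rather than of fresh computation.

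First, I will check condition (1): the rank formula $\rank_{\CC_\infty[t]}\eM(\rho^\Xi) = r = [\eK:\FF_q(t)]$ is given in Proposition~\ref{propC: PU-Mrho}. For condition (2), recall that the $O_\bK$-module structure on $\Mscr(\rho^\Xi)$ transports through the definition $\eM(\rho^\Xi) = \Hom_{\CC_\infty[t]}(\tau \Mscr(\rho^\Xi), \Omega_{\CC_\infty[t]/\CC_\infty})$ by setting $(\alpha\cdot \phi)(\tau m) \assign \phi(\alpha \tau m)$; I will verify that this action commutes with the $\sigma$-action and with multiplication by $t$, which yields the desired $\FF_q[t]$-algebra embedding $O_\eK\hookrightarrow \End_{\CC_\infty[t,\sigma]}\eM(\rho^\Xi)$. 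Injectivity is automatic since $O_\eK$ is a domain acting faithfully on the free $\CC_\infty[t]$-module $\eM(\rho^\Xi)$.

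For condition (3), Lemma~\ref{lem: CM type of M(rho)} provides the key identity $\sigma\eM(\rho^\Xi) = \Ifk_\Xi \cdot \eM(\rho^\Xi)$, where $\Ifk_\Xi = \prod_{i=1}^d \Pfk_{\xi_i}$. In the language of Remark~\ref{rem: I-M and Xi-M}, this says exactly that $\Xi_{\eM(\rho^\Xi)} = \xi_1 + \cdots + \xi_d = \Xi$, which is a CM type of $\eK$ by hypothesis (with $\text{wt}(\Xi) = 1$). Condition (4), the purity of $\eM(\rho^\Xi)$, is again part of Proposition~\ref{propC: PU-Mrho}, which was in turn deduced from the purity and uniformizability of the Hilbert-Blumenthal $O_{\eK^+}$-module $E_{(\eK,\Xi)} = (\GG_a^d, \rho^\Xi)$.

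With all four conditions in hand, Proposition~\ref{prop: being CM} applies directly and yields that $\eM(\rho^\Xi)$ is a CM dual $t$-motive with CM type $(\eK, \Xi)$. The only place requiring genuine attention — and really the only step that is not immediate — is the compatibility check in condition (2): one must confirm that the action of $\alpha \in O_\bK$ on $\phi \in \eM(\rho^\Xi)$ defined above commutes with the $\sigma$-action, i.e.\ that $\sigma(\alpha \cdot \phi) = \alpha \cdot (\sigma \phi)$ for all $\alpha \in O_\eK$ (noting that $\alpha$ is $\FF_q$-rational, hence fixed by the Frobenius twist). Everything else is a matter of invoking the cited results.
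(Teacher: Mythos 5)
Your proof is correct and follows essentially the same route as the paper's: invoke Proposition~\ref{propC: PU-Mrho} for the rank and purity conditions, Lemma~\ref{lem: CM type of M(rho)} for the identification $\sigma\eM(\rho^\Xi)=\Ifk_\Xi\eM(\rho^\Xi)$, and the $O_\bK$-module structure set up just before that lemma for the embedding $O_\eK\hookrightarrow\End_{\CC_\infty[t,\sigma]}(\eM(\rho^\Xi))$, then apply Proposition~\ref{prop: being CM}. The paper's proof is more telegraphic (it lumps everything into one sentence), whereas you spell out each hypothesis explicitly; the content is the same.
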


\begin{proof}
By Proposition~\ref{propC: PU-Mrho} and Lemma~\ref{lem: CM type of M(rho)}, the dual $t$-motive $\eM(\rho^\Xi)$ satisfies the conditions (1)--(4) in Proposition~\ref{prop: being CM}.
Hence the result follows.
\end{proof}

\bibliographystyle{plain}

\end{document}